\documentclass[oneside, a4paper, 11pt]{amsart}
\usepackage[utf8]{inputenc}
\usepackage[T1]{fontenc}

\usepackage{amsmath,amsthm,amssymb,bbm,comment,mathrsfs}
\usepackage{mathtools}

\pdfoutput=1
\usepackage[ttscale=.85]{libertine}
\usepackage{libertinust1math}
\DeclareSymbolFont{cmarrows}{OMS}{cmsy}{m}{n}
\DeclareMathSymbol{\mapstochar}{\mathrel}{cmarrows}{"37}

\usepackage[tracking=true]{microtype}
\SetTracking% Slightly space out small caps
  {encoding=T1, shape=sc}
  {10}
\SetProtrusion% Protrude sub- and superscripts at the right margin
  {encoding=T1,size={7,8}}
  {1={ ,750},2={ ,500},3={ ,500},4={ ,500},5={ ,500},
    6={ ,500},7={ ,600},8={ ,500},9={ ,500},0={ ,500}}

\usepackage{scalerel}   % \scaleobj
\makeatletter
\let\@tl\triangleleft% Save these because we will overwrite them later.
\let\@tr\triangleright% ^
\newcommand*{\@smallTriangle}[2]{\vcenter{\hbox{\scalebox{0.75}{\ensuremath{#1#2}}}}}
\newcommand*{\@medTriangle}[2]{\vcenter{\hbox{\scalebox{0.90}{\ensuremath{#1#2}}}}}
\newcommand*{\lact}{\mathbin{\mathpalette\@smallTriangle\@tr}}
\newcommand*{\ract}{\mathbin{\mathpalette\@smallTriangle\@tl}}
\let\@btl\blacktriangleleft%
\let\@btr\blacktriangleright%
\newcommand*{\blact}{\mathbin{\mathpalette\@medTriangle\@btr}}
\newcommand*{\bract}{\mathbin{\mathpalette\@medTriangle\@btl}}
\renewcommand*{\triangleright}{\lact}
\renewcommand*{\triangleleft}{\ract}
\renewcommand*{\blacktriangleright}{\blact}
\renewcommand*{\blacktriangleleft}{\bract}

\DeclareRobustCommand{\SkipTocEntry}[5]{}

\numberwithin{equation}{section}

\numberwithin{figure}{section}

\usepackage{thmtools,mathtools}
\usepackage{graphicx,enumitem,stmaryrd}
\usepackage{multicol}
\usepackage{xparse}
\usepackage{leftidx}    % good-looking left super-/subscripts

\usepackage{anyfontsize}
\allowdisplaybreaks[4]
\usepackage[all]{xy}
\usepackage[table]{xcolor}
\usepackage[normalem]{ulem}
\usepackage{bm, tensor}
\usepackage{tikz,tikz-cd}
\usetikzlibrary{matrix}
\usetikzlibrary{fit}
\usetikzlibrary{calc}
\tikzcdset{
  arrow style=tikz,
  diagrams={>={Straight Barb}}
}

\definecolor{blue}{rgb}{0.38, 0.51, 0.71}
\definecolor{red}{RGB}{175, 49, 39}
\definecolor{green}{RGB}{146, 227, 95}
\usepackage[english]{babel}%
\usepackage[colorlinks, linktoc=page, linkcolor={red}, citecolor={green!65!black}, urlcolor={blue}]{hyperref}
\addto\extrasenglish{%
}

\makeatletter
\def\slashedarrowfill@#1#2#3#4#5{%
$\m@th\thickmuskip0mu\medmuskip\thickmuskip\thinmuskip\thickmuskip
  \relax#5#1\mkern-7mu%
  \cleaders\hbox{$#5\mkern-2mu#2\mkern-2mu$}\hfill
  \mathclap{#3}\mathclap{#2}%
  \cleaders\hbox{$#5\mkern-2mu#2\mkern-2mu$}\hfill
  \mkern-7mu#4$%
}
\def\rightslashedarrowfill@{%
\slashedarrowfill@\relbar\relbar\mapstochar\rightarrow}
\newcommand\xslashedrightarrow[2][]{%
\ext@arrow 0055{\rightslashedarrowfill@}{#1}{#2}}
\makeatother

\makeatletter
\newcommand{\ostar}{\mathbin{\mathpalette\make@circled\star}}
\newcommand{\make@circled}[2]{%
\ooalign{$\m@th#1\smallbigcirc{#1}$\cr\hidewidth$\m@th#1#2$\hidewidth\cr}%
}
\newcommand{\smallbigcirc}[1]{%
\vcenter{\hbox{\scalebox{0.77778}{$\m@th#1\bigcirc$}}}%
}
\makeatother

\usepackage{adjustbox}

\usepackage{CJKutf8}

\tikzcdset{scale cd/.style={every label/.append style={scale=#1},
  cells={nodes={scale=#1}}}}

\calclayout

\usetikzlibrary{calc}
\usetikzlibrary{decorations.pathmorphing}
\tikzset{curve/.style={settings={#1},to path={(\tikztostart)
  .. controls ($(\tikztostart)!\pv{pos}!(\tikztotarget)!\pv{height}!270:(\tikztotarget)$)
  and ($(\tikztostart)!1-\pv{pos}!(\tikztotarget)!\pv{height}!270:(\tikztotarget)$)
  .. (\tikztotarget)\tikztonodes}},
  settings/.code={\tikzset{quiver/.cd,#1}
      \def\pv##1{\pgfkeysvalueof{/tikz/quiver/##1}}},
  quiver/.cd,pos/.initial=0.35,height/.initial=0}

\tikzset{tail reversed/.code={\pgfsetarrowsstart{tikzcd to}}}
\tikzset{2tail/.code={\pgfsetarrowsstart{Implies[reversed]}}}
\tikzset{2tail reversed/.code={\pgfsetarrowsstart{Implies}}}
\tikzset{no body/.style={/tikz/dash pattern=on 0 off 1mm}}

\newtheorem{theoremm}{Theorem}[section]

\declaretheorem[style=plain,name=Theorem,numberlike=theoremm]{theorem}
\declaretheorem[style=plain,name=Theorem,numbered=no]{theorem*}

\declaretheorem[style=plain,name=Lemma,numberlike=theoremm]{lemma}
\declaretheorem[style=plain,name=Proposition,numberlike=theoremm]{proposition}
\declaretheorem[style=plain,name=Corollary,numberlike=theoremm]{corollary}

\declaretheorem[style=plain,name=Question,numberlike=theoremm]{question}
\declaretheorem[style=definition,name=Definition,numberlike=theorem]{definition}

\declaretheorem[style=remark,name=Example,numberlike=theorem]{example}
\declaretheorem[style=remark,name=Remark,numberlike=theorem]{remark}

\usepackage[cal=boondoxo, calscaled=0.96, bb=dsserif]{mathalfa}

\newcommand{\on}[1]{\operatorname{#1}}
\newcommand{\setj}[1]{\left\{ #1 \right\}}
\newcommand*{\cat}[1]{\ensuremath{\mathcal{#1}}} % A category

\DeclareFontFamily{U}{DSSerif}{\skewchar \font =45}% openface
\DeclareFontShape{U}{DSSerif}{m}{n}{<-> s*[1]  DSSerif}{}
\DeclareFontSubstitution{U}{DSSerif}{m}{n}
\DeclareMathAlphabet{\mathbbbb}{U}{DSSerif}{m}{n}

\DeclareMathAlphabet\EuRoman{U}{eur}{m}{n}
\SetMathAlphabet\EuRoman{bold}{U}{eur}{b}{n}

\newcommand*{\leftdual}[1]{\leftidx{^\vee}{\!#1}{}}     % normal left  dual
\newcommand*{\rightdual}[1]{{#1}^{\vee}}                   % normal right dual
\NewDocumentCommand{\lmod}{O{\cat{C}} O{A}}{%
  #2\raisebox{.2ex}{-}%
  \textnormal{mod}_{\raisebox{-.1ex}{\(\scriptscriptstyle#1\)}}%
}

\NewDocumentCommand{\rmod}{O{\cat{C}} O{A}}{%
  \textnormal{mod}_{\raisebox{-.1ex}{\(\scriptscriptstyle#1\)}}\hspace{-.2ex}%
  \raisebox{.2ex}{-}\hspace{-.2ex}#2%
}

\NewDocumentCommand{\modloc}{O{\cat{C}} O{A}}{%
  \textnormal{mod}^{\on{loc}}_{\raisebox{-.1ex}{\(\scriptscriptstyle#1\)}}\hspace{-.2ex}%
  \raisebox{.2ex}{-}\hspace{-.2ex}#2%
}

\NewDocumentCommand{\bimod}{O{\cat{C}} O{A}}{%
  \textnormal{bimod}_{\raisebox{-.1ex}{\(\scriptscriptstyle#1\)}}\hspace{-.2ex}%
  \raisebox{.2ex}{-}\hspace{-.2ex}#2%
}

\NewDocumentCommand{\bbimod}{O{\cat{C}} O{A} O{B}}{%
  #2\raisebox{.2ex}{-}%
  \textnormal{bimod}_{\raisebox{-.1ex}{\(\scriptscriptstyle#1\)}}\hspace{-.2ex}%
  \raisebox{.2ex}{-}\hspace{-.2ex}#3%
}

\usetikzlibrary{backgrounds}
\usetikzlibrary{arrows}
\usetikzlibrary{shapes,shapes.geometric,shapes.misc}
\tikzstyle{tikzfig}=[baseline=-0.25em,scale=0.5]
\pgfkeys{/tikz/tikzit fill/.initial=0}
\pgfkeys{/tikz/tikzit draw/.initial=0}
\pgfkeys{/tikz/tikzit shape/.initial=0}
\pgfkeys{/tikz/tikzit category/.initial=0}
\pgfdeclarelayer{edgelayer}
\pgfdeclarelayer{nodelayer}
\pgfsetlayers{background,edgelayer,nodelayer,main}
\tikzstyle{none}=[inner sep=0mm]
\tikzstyle{every loop}=[]
\tikzstyle{whitedot}=[fill=white, draw, shape=circle, scale=0.3, tikzit draw=black, tikzit shape=circle, tikzit fill=white]
\tikzstyle{blackdot}=[fill=black, draw, shape=circle, scale=0.3, tikzit draw=black, tikzit shape=circle, tikzit fill=black]
\tikzstyle{box}=[fill=white, draw=black, shape=rectangle, tikzit fill=white]
\tikzstyle{BL}=[draw=black, shape=circle, fill=black, scale=0.3]
\tikzstyle{PP}=[draw={rgb,255:red,102;green,41;blue,163}, shape=circle, fill={rgb,255:red,102;green,41;blue,163}, scale=0.3]
\tikzstyle{morphism-edge}=[-, draw=black, thick]
\tikzstyle{cotensor}=[-, draw=gray]
\tikzstyle{braid-over}=[-, draw=white, thick, double=black, double distance=0.8pt, tikzit draw={rgb,255: red,128; green,0; blue,128}]
\tikzstyle{purple-over}=[-, draw=white, thick, double={rgb,255:red,102;green,41;blue,163}, double distance=0.8pt, tikzit draw={rgb,255:red,102;green,41;blue,163}]
\tikzstyle{purple}=[-, draw={rgb,255:red,102;green,41;blue,163}, thick]
\tikzstyle{blue-under}=[-, draw={rgb,255:red,0;green,128;blue,128}, thick]
\tikzstyle{ddd}=[-, draw=black, dash dot dot]
\tikzstyle{unit}=[-, draw=black, densely dotted]
\tikzstyle{Front}=[-, draw=black, fill={rgb,255 :red,255; green,255; blue,255}, opacity=0.8]
\tikzstyle{Hidden}=[-, draw=black, fill={rgb,255 :red,255; green,255; blue,255}, opacity=0.2]
\tikzstyle{directed}=[-, thick, black, decoration={markings, mark=at position 0.5 with {\arrow{>}}}, postaction=decorate]
\newcommand{\tikzfig}[1]{{%
    \tikzstyle{every picture}=[tikzfig]
    \IfFileExists{#1.tikz}
    {\input{#1.tikz}}
    {%
      \IfFileExists{figures/#1.tikz}
      {\input{figures/#1.tikz}}
      {\tikz[baseline=-0.5em]{\node[draw=red,font=\color{red},fill=red!10!white] {\textit{#1}};}}%
    }%
}}

\usepackage[a4paper, margin=3cm]{geometry}
\begin{document}

\title{Finite Pre-Tensor Categories that are Morita Equivalent\\ to Finite Tensor Categories}

\author{Thibault Décoppet}
\address{T.D.,
Department of Mathematics, Harvard University, 1 Oxford St, Cambridge, MA 02138, USA
}
\email{decoppet@math.harvard.edu}

\author{Mateusz Stroiński}
\address{M.S.,
Fachbereich Mathematik,
Universit{\"a}t Hamburg,
%Bereich Algebra und Zahlentheorie,
Bundesstraße 55,
D-20146 Hamburg, Germany}
\email{mateusz.stroinski@uni-hamburg.de}

%\subjclass[2020]{18M05, 18M20, 16D90}

\maketitle

\begin{abstract}
A finite pre-tensor category is a finite abelian category equipped with a right exact tensor product for which every projective object has duals. Finite tensor categories, for which every object has duals, are notable examples. More generally, the category of bimodules over an algebra in a finite tensor category is a finite pre-tensor category. In particular, it is natural to extend the notion of Morita equivalence between finite tensor categories to finite pre-tensor categories.
We characterize completely those finite pre-tensor categories that are Morita equivalent to finite tensor categories. More precisely, we show that a finite pre-tensor category $\mathcal{C}$ is Morita equivalent to a finite tensor category if and only if the Drinfeld center of $\mathcal{C}$ is a finite tensor category.
We also discuss higher algebraic consequences of our characterization.
\end{abstract}

\vspace{-0.5cm}
\tableofcontents

\setlength{\parskip}{0.25em}

\addtocontents{toc}{\SkipTocEntry}
\subsection*{Acknowledgements}\label{sec:acknowledgements}
We would like to thank the organizers of the conference ``26w5586 - New Developments in Tensor Categories'' during which some of this work was carried out.
T.D.~is supported by the Simons Collaboration on Global Categorical Symmetries.
M.S.\ is supported by the Knut and Alice Wallenberg Foundation (Grant No.~2024.0339). Large parts of this work were done while M.S.\ was supported by Lundström-Åmans stipendiestiftelse.
\section{Introduction}

Categorifying the notion of the center of an algebra, the Drinfeld center is a fundamental invariant of a tensor category. As such, it is only natural to expect that the notion of Drinfeld center behaves well with that of Morita equivalence. The first result in this direction is due to Schauenburg \cite{Sch}, who proved, under faithful flatness assumptions, that the Drinfeld center of the tensor category of bimodules over an algebra coincides with the Drinfeld center of the original tensor category.
The connection between Drinfeld centers and Morita equivalence is especially strong in the theory of finite tensor categories as introduced by Etingof-Ostrik \cite{EO}.
For instance, over an algebraically closed field, two finite tensor categories are Morita equivalent if and only if their Drinfeld centers are equivalent as finite braided tensor categories \cite{OU}.

Categorifying the notion of an algebra equipped with an involution, fiat categories were introduced in \cite{MM1}.
Categories of Soergel bimodules considered in \cite[Section~7.1]{MM1} are notable examples.
A generalization, known as weakly (or quasi) fiat categories was presented in \cite{MM2}. Roughly speaking,\footnote{Technically speaking (weakly) fiat categories are strict 2-categories. We shall only be interested in the case when these 2-categories have a single object, and are therefore completely determined by the monoidal category of endomorphisms of the unique object.} a weakly fiat category is a rigid monoidal linear category with finitely many equivalence classes of indecomposable objects and finite dimensional $\on{Hom}$-spaces.
Elementary examples are given by the categories of projective
bimodules over a self-injective finite dimensional algebra.
Unlike finite tensor categories, weakly fiat categories are not required to be abelian. We can nonetheless consider their abelianizations.
While the abelianization of a weakly fiat category is always a finite monoidal category, it may fail to be a tensor category --- a priori only projective objects have duals.
The above construction thence provides examples of finite \emph{pre-tensor} categories, that is, finite monoidal categories for which every projective object has duals.

In a different direction, finite pre-tensor categories also arise naturally in the context of the cobordism hypothesis as formulated in \cite{BD,L}. More precisely, there is a putative equivalence between fully extended $n$-dimensional topological field theories valued in a target symmetric monoidal $n$-category $\mathscr{C}$ and fully dualizable objects in $\mathscr{C}$.
The cobordism hypothesis therefore produces interesting topological field theories from fully dualizable objects in symmetric monoidal higher categories.
Convenient targets are provided by higher Morita categories \cite{H, JFS}.
For instance, there is a Morita 3-category $\mathrm{Mor_1^{ten}}$ of finite tensor categories and finite bimodule categories \cite{DSPS:book}.
The fully dualizable objects therein are exactly the separable multifusion categories, and the corresponding topological field theories are expected to be closely related to those produced by the Turaev-Viro construction.
Going up in dimension, it turns out that there is simply no Morita 4-category of finite braided tensor categories \cite{BJS}. Rather, there is a Morita 4-category $\mathrm{Mor_2^{pre}}$ of finite braided pre-tensor categories.
On the one hand, the study of fully dualizable objects in $\mathrm{Mor_2^{pre}}$ has received plenty of attention \cite{BJS,BJSS,Dec:relative}.
On the other hand, the dualizability properties of the Morita 3-category $\mathrm{Mor_1^{pre}}$ of finite pre-tensor categories and finite bimodule categories have not yet been fully examined.
In particular, there is a fully faithful symmetric monoidal functor $$\mathrm{Mor_1^{ten}}\hookrightarrow \mathrm{Mor_1^{pre}}\,,$$
and it is natural to wonder whether its essential image can be characterized.
Such questions are in fact not only relevant to the abstract analysis of the dualizability of higher Morita categories, but can potentially also be used to extend the minimal non-degenerate extension theorem of \cite{JFR} beyond the semisimple case.

\subsection{Results}

Throughout, we work over a perfect field $\mathbbm{k}$.
Given a finite pre-tensor category $\mathcal{C}$, we write $\mathcal{Z}(\mathcal{C})$ for its Drinfeld center. Briefly, $\mathcal{Z}(\mathcal{C})$ is the finite braided pre-tensor category whose objects are pairs $(Z,\gamma)$ consisting of an object $Z$ of $\mathcal{C}$ equipped with a coherent half-braiding $\gamma_C:Z\otimes C\simeq Z\otimes C$ for all objects $C$ in $\mathcal{C}$. It was established in \cite{EO} that if $\mathcal{C}$ is a finite \emph{tensor} category, then $\mathcal{Z}(\mathcal{C})$ is again a finite tensor category.

A second notion that will figure prominently in our discussion is that of Morita equivalence.
Concretely, a finite $\mathcal{C}$-$\mathcal{D}$-bimodule category $\mathcal{M}$ is a Morita equivalence provided that the two canonical tensor functors
$$\mathcal{D}^{\mathrm{mop}}\rightarrow \on{End^{rex}_{\mathcal{C}}}(\mathcal{M})\quad\mathrm{and}\quad\mathcal{C}\rightarrow \on{End^{rex}_{\mathcal{D}}}(\mathcal{M})$$
are equivalences.
Succinctly, the finite $\mathcal{C}$-$\mathcal{D}$-bimodule category $\mathcal{M}$ is a Morita equivalence if the corresponding 1-morphism in $\mathrm{Mor_1^{pre}}$ is invertible. Two finite pre-tensor categories are Morita equivalent if there exists a Morita equivalence between them. This definition is a straightforward generalization of that given in the context of finite tensor categories in \cite[Section 4.1]{ENO:extension}, which is shown therein to be equivalent to the original definition introduced in \cite{EO}.
Our first contribution is to extend the theory of Morita equivalence between finite tensor categories to finite pre-tensor categories. More precisely, given a finite $\mathcal{C}$-module category $\mathcal{M}$, we completely characterize when $\mathcal{M}$ is a Morita equivalence between $\mathcal{C}$ and the monoidal opposite of $\on{End}^{\on{rex}}_{\mathcal{C}}(\mathcal{M})$.

Recall that if $\mathcal{C}$ and $\mathcal{D}$ are Morita equivalent finite tensor categories, then $\mathcal{Z}(\mathcal{C})\simeq\mathcal{Z}(\mathcal{D})$ by \cite[Corollary 3.35]{EO}.
Via an analogous argument, we find that if $\mathcal{C}$ and $\mathcal{D}$ are Morita equivalent finite pre-tensor categories, then $\mathcal{Z}(\mathcal{C})\simeq\mathcal{Z}(\mathcal{D})$.
It follows immediately from the above discussion that if $\mathcal{C}$ is a finite pre-tensor category that is Morita equivalent to a finite tensor category, then $\mathcal{Z}(\mathcal{C})$ is a finite tensor category. Our main theorem is the converse of that statement.

\begin{theorem}\label{mainthm}
Let $\mathcal{C}$ be a finite pre-tensor category. Then, $\mathcal{C}$ is Morita equivalent to a finite tensor category if and only if $\mathcal{Z}(\mathcal{C})$ is a finite tensor category.
\end{theorem}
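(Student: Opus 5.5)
The forward direction was already noted above: Morita equivalent finite pre-tensor categories have equivalent Drinfeld centers, and the center of a finite tensor category is a finite tensor category by \cite{EO}. So the plan concentrates on the converse. Assume $\mathcal{Z}(\mathcal{C})$ is a finite tensor category. We want a finite tensor category $\mathcal{D}$ together with a Morita equivalence $\mathcal{C}\simeq\mathcal{D}$. The idea is to produce $\mathcal{D}$ as the dual of $\mathcal{C}$ with respect to a carefully chosen module category. Concretely, take $\mathcal{M}=\mathcal{C}$, viewed through the forgetful functor $F:\mathcal{Z}(\mathcal{C})\to\mathcal{C}$ as a $\mathcal{Z}(\mathcal{C})$-module category. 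Equivalently, take $\mathcal{C}\simeq \lmod[\mathcal{Z}(\mathcal{C})][A]$, where $A=I(\mathbb{1})$ is the image of the unit under the right adjoint $I$ of $F$. This $A$ is a commutative (haploid) algebra in $\mathcal{Z}(\mathcal{C})$. The candidate is then $\mathcal{D}=\on{End}^{\on{rex}}_{\mathcal{C}\boxtimes\mathcal{C}^{\mathrm{mop}}}(\mathcal{C})^{\mathrm{mop}}$, or more simply a category of bimodules over $A$ in a finite tensor category.

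The more robust route is to use the paper's characterization of when a module category $\mathcal{M}$ over a finite pre-tensor category $\mathcal{C}$ is a Morita equivalence onto $\on{End}^{\on{rex}}_{\mathcal{C}}(\mathcal{M})^{\mathrm{mop}}$. I expect that criterion to be roughly: $\mathcal{M}$ is exact in a suitable sense, and the action $\mathcal{C}\to\on{End}^{\on{rex}}(\mathcal{M})$ is faithful or "generating". The steps are as follows.

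\textbf{Step 1.} Show that $F$ is surjective and that $I$ is exact. The adjoint $I$ has the form $I(X)=\int_{C}C\otimes X\otimes \leftdual{C}$, which only requires duals of projectives to make sense. Since $\mathcal{Z}(\mathcal{C})$ is a finite tensor category, its projective objects are closed under tensoring with everything.

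\textbf{Step 2.} Transfer rigidity from $\mathcal{Z}(\mathcal{C})$ to $\mathcal{C}$ along $F$. Every object of the form $F(Z)$ has duals in $\mathcal{C}$. Every object $X$ of $\mathcal{C}$ is a quotient of $FI(X)$ and a subobject of some $F(Z')$.

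\textbf{Step 3.} Set $\mathcal{D}=\rmod[\mathcal{Z}(\mathcal{C})][B]$-type bimodule category. More precisely, take $\mathcal{D}=\on{End}^{\on{rex}}_{\mathcal{C}}(\mathcal{M})^{\mathrm{mop}}$ for $\mathcal{M}=\mathcal{C}_{\mathrm{proj}}$-generated module category $\mathcal{C}$ acted on via $\mathcal{Z}(\mathcal{C})$. Then identify $\mathcal{D}$ with the category of $A$-modules in $\mathcal{Z}(\mathcal{C})$, a finite tensor category. Its rigidity comes from exactness of the module category $\mathcal{C}$ over the finite tensor category $\mathcal{Z}(\mathcal{C})$, via the Etingof--Ostrik theorem that duals with respect to exact module categories are finite tensor categories.

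\textbf{Step 4.} Check that the relevant bimodule category implements a Morita equivalence between $\mathcal{C}$ and $\mathcal{D}$, using the characterization theorem together with the double centralizer property.

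The main obstacle is Step 3: proving that $\mathcal{C}$, as a $\mathcal{Z}(\mathcal{C})$-module category, is exact, and that the resulting dual is Morita equivalent to $\mathcal{C}$ itself rather than only to $\mathcal{C}\boxtimes\mathcal{C}^{\mathrm{mop}}$. For exactness, I would first try to show that $P\otimes M$ is projective for projective $P\in\mathcal{Z}(\mathcal{C})$. The argument would be that $F(P)$ is a retract of $FIF(P)$, and that $F$ preserves projectives because its right adjoint $I$ is exact. Then $F(P)\otimes M$ is projective, since projectives in a finite pre-tensor category form a two-sided ideal; this is where duals of projectives are used. If identifying the Morita class directly fails, I would fall back on the factorization $\mathcal{C}\boxtimes\mathcal{C}^{\mathrm{mop}}\simeq_{\mathrm{Mor}}\mathcal{Z}(\mathcal{C})$. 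That factorization follows from the characterization applied to the regular bimodule $\mathcal{C}$, once one knows $\mathcal{C}$ is an invertible bimodule, which is exactly what exactness gives. From there I would extract $\mathcal{C}$ as Morita equivalent to the dual of $\mathcal{Z}(\mathcal{C})$ with respect to an exact module category, which is a finite tensor category.
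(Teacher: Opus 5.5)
Your forward direction matches the paper, but the converse has a genuine gap: the strategy as designed cannot work. You assert that $\mathcal{C}$ is equivalent to the category of $I(\mathbbm{1})$-modules in $\mathcal{Z}(\mathcal{C})$, and then call that same category $\mathcal{D}$ a finite tensor category. Step 2 even aims explicitly at transferring rigidity to $\mathcal{C}$. Carried through, this would make $\mathcal{C}$ itself rigid, which is false. For $\mathcal{C}=\bimod[\mathbbm{k}][R]$ with $R=\mathbbm{k}[x]/(x^2)$ one has $\mathcal{Z}(\mathcal{C})\simeq\mathrm{Vec}$, yet $\mathcal{C}$ is not rigid; the theorem only produces rigidity up to Morita equivalence.

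In this example every step fails:
\begin{itemize}
\item[(Step 1)] Here $F(V)=V\otimes_{\mathbbm{k}}R$ and $I(X)=\on{Hom}_{R\textrm{-}R}(R,X)$. This $I$ is not exact, because $R$ is not a projective bimodule, so $F$ does not preserve projectives.
\item[(Step 2)] The central elements of $X=R\otimes_{\mathbbm{k}}R$ generate only the two-dimensional sub-bimodule spanned by $x\otimes 1+1\otimes x$ and $x\otimes x$. Hence the counit $FI(X)\to X$ is not an epimorphism.
\item[(Step 3)] $\mathcal{C}$ is not exact over $\mathcal{Z}(\mathcal{C})\simeq\mathrm{Vec}$, since it is not semisimple. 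It is also not equivalent to the category of modules over $I(\mathbbm{1})=R$.
\end{itemize}

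The general fact you invoke for exactness is also false. Projectives in a finite pre-tensor category do not form a two-sided ideal: $(R\otimes_{\mathbbm{k}}R)\otimes_R M\cong R\otimes_{\mathbbm{k}}M$ is a projective bimodule only when $M$ is projective as a left $R$-module. \autoref{lem:rigidtensorprojective} only covers a projective tensored with a rigid object.

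Your fallback does not rescue the argument, for three reasons. First, the $\mathcal{Z}(\mathcal{C})$-module category $\mathcal{C}$ relates $\mathcal{Z}(\mathcal{C})$ to $\mathcal{C}\boxtimes\mathcal{C}^{\mathrm{mop}}$, not to $\mathcal{C}$. Your first candidate $\on{End}^{\on{rex}}_{\mathcal{C}\boxtimes\mathcal{C}^{\mathrm{mop}}}(\mathcal{C})^{\mathrm{mop}}$ is just $\mathcal{Z}(\mathcal{C})^{\mathrm{mop}}$ by \autoref{lem:centerasbimodules}, and this is in general not Morita equivalent to $\mathcal{C}$ (e.g.\ $\mathcal{C}=\mathrm{Vec}_G$ with $G\neq 1$). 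Second, exactness alone does not give invertibility in this setting without a separate faithfulness argument. Third, ``extracting'' $\mathcal{C}$ from the Morita class of $\mathcal{C}\boxtimes\mathcal{C}^{\mathrm{mop}}$ is the theorem itself restated.

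What is missing are the three ingredients that drive the paper's proof, none of which has a counterpart in your plan.

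\emph{Discreteness.} Rigidity of $\mathcal{Z}(\mathcal{C})$ forces $\mathcal{C}$ to be discrete (\autoref{thm:regularity}). For a two-sided $\mathbb{Z}_+$-ideal $I$ of projectives, the bimodule endofunctor $\pi^*\circ L_\pi$ of $\mathcal{C}$ coming from the quotient $\mathcal{C}/\mathcal{I}$ is an object of $\mathcal{Z}(\mathcal{C})$, hence has a left adjoint. So $-\otimes_B B/BeB$ is exact for the basic algebra $B$ of $\mathcal{C}$, and \autoref{prop:leftexactvanishing} then kills all morphisms from projectives outside $I$ to projectives in $I$.

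\emph{An exact module category over $\mathcal{C}$ itself.} Discreteness is what makes the pre-tensor version of the CSZ criterion available: a module category is exact if and only if its module radical vanishes (\autoref{thm:CSZpretensor}). Hence $\mathcal{M}=\mathcal{C}/\on{Rad}^{\mathcal{C}}(\mathcal{C})$, a module category over $\mathcal{C}$ rather than over its center, is exact. Consequently $\on{End}^{\on{rex}}_{\mathcal{C}}(\mathcal{M})$ is a finite multitensor category (\autoref{cor:exactC/Jl}, \autoref{prop:dualtensorcat}).

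\emph{Faithfulness.} Rigidity of $\mathcal{Z}(\mathcal{C})$ is equivalent to the vanishing of the two-sided radical $\on{Rad}^{lr}(\mathcal{C})$ (\autoref{prop:twosidedradicalcenter}). This radical is precisely the kernel, on morphisms between projectives, of the canonical functor $\mathcal{C}\to\on{End}^{\on{rex}}_{\on{End}^{\on{rex}}_{\mathcal{C}}(\mathcal{M})}(\mathcal{M})$ (\autoref{prop:Matmagic}). This yields faithfulness of the algebra $A$ with $\mathcal{M}\simeq\rmod$. Exactness of $\mathcal{C}$ over $\mathcal{C}\boxtimes\mathcal{C}^{\mathrm{mop}}$ makes $A$ a $\mathcal{C}\boxtimes\mathcal{C}^{\mathrm{mop}}$-projective object. \autoref{thm:generalizedMorita} then delivers the Morita equivalence between $\mathcal{C}$ and the finite multitensor category $\bimod$.
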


\begin{corollary}\label{maincor1}
Let $\mathcal{C}$ be a finite pre-tensor category over an algebraically closed field. If the Drinfeld center of $\mathcal{C}$ is trivial, i.e.\ $\mathcal{Z}(\mathcal{C})\simeq\mathrm{Vec}$, then $\mathcal{C}$ is Morita equivalent to $\mathrm{Vec}$. Specifically, there is an equivalence of finite pre-tensor categories $\mathcal{C}\simeq \bimod[\mathbbm{k}][A]$ for some finite dimensional $\mathbbm{k}$-algebra $A$.
\end{corollary}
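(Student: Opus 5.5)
Corollary~\ref{maincor1} follows from Theorem~\ref{mainthm} together with the classification of finite Morita equivalences with $\mathrm{Vec}$. First, $\mathrm{Vec}$ is a finite tensor category. If $\mathcal{Z}(\mathcal{C})\simeq\mathrm{Vec}$, Theorem~\ref{mainthm} gives a finite tensor category $\mathcal{D}$ Morita equivalent to $\mathcal{C}$. Morita invariance of the Drinfeld center, discussed before the theorem, then gives $\mathcal{Z}(\mathcal{D})\simeq\mathcal{Z}(\mathcal{C})\simeq\mathrm{Vec}$. Over an algebraically closed field, a finite tensor category with trivial center is Morita equivalent to $\mathrm{Vec}$. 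One way to see this is to compare Frobenius--Perron dimensions: for a finite tensor category, $\mathrm{FPdim}\,\mathcal{Z}(\mathcal{D})=\mathrm{FPdim}(\mathcal{D})^2$ by \cite{EO}. Hence $\mathrm{FPdim}(\mathcal{D})=1$, so $\mathcal{D}$ has a single simple object, the unit, and is semisimple, i.e.\ $\mathcal{D}\simeq\mathrm{Vec}$. Alternatively, one can invoke \cite{OU}, since $\mathcal{Z}(\mathcal{D})\simeq\mathcal{Z}(\mathrm{Vec})$ as braided tensor categories. The step to check with care is that the equivalence $\mathcal{Z}(\mathcal{C})\simeq\mathrm{Vec}$ is braided; this holds automatically, since $\mathrm{Vec}$ carries a unique braided structure compatible with its unique tensor structure.

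Composing Morita equivalences, $\mathcal{C}$ is Morita equivalent to $\mathrm{Vec}$. This gives a finite $\mathcal{C}$-$\mathrm{Vec}$-bimodule category $\mathcal{M}$, that is, a finite abelian category with a $\mathcal{C}$-action, such that the canonical functor $\mathcal{C}\to\on{End}^{\on{rex}}_{\mathrm{Vec}}(\mathcal{M})=\on{End}^{\on{rex}}(\mathcal{M})$ is an equivalence. Every finite abelian category has the form $\mathcal{M}\simeq\rmod[\mathrm{Vec}][A]$ for a finite-dimensional algebra $A$, namely the endomorphism algebra of a projective generator. By the Eilenberg--Watts theorem, $\on{End}^{\on{rex}}(\rmod[\mathrm{Vec}][A])\simeq\bimod[\mathbbm{k}][A]$ monoidally, with tensor product $\otimes_A$; the order of the factors depends on conventions and at worst requires replacing $A$ by $A^{\mathrm{op}}$. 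Composing these equivalences gives the equivalence of finite pre-tensor categories $\mathcal{C}\simeq\bimod[\mathbbm{k}][A]$.

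The main work is already in Theorem~\ref{mainthm}. The only nontrivial input here is the classical fact that a finite tensor category with trivial center is $\mathrm{Vec}$. Here algebraic closedness ensures that the unit $\mathbbm{1}$ of $\mathcal{D}$, being simple, satisfies $\on{End}(\mathbbm{1})=\mathbbm{k}$, and that Frobenius--Perron dimension theory applies.
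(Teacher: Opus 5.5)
Your proposal is correct and follows essentially the same route as the paper, which also combines \autoref{mainthm} with Morita invariance of the Drinfeld center and concludes that $\mathrm{Vec}$ is the only finite tensor category over an algebraically closed field with center $\mathrm{Vec}$, via the formula $\mathrm{FPdim}\,\mathcal{Z}(\mathcal{D})=\mathrm{FPdim}(\mathcal{D})^2$ (cited there as EGNO, Theorem 7.16.6). Your Eilenberg--Watts step identifying $\mathcal{C}\simeq\bimod[\mathbbm{k}][A]$, including the $A$ versus $A^{\mathrm{op}}$ bookkeeping, spells out what the paper leaves implicit.
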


Our proof has two main steps, and makes use of the following technical property:\ We say that a finite pre-tensor category $\mathcal{C}$ is \emph{indecomposable discrete} if for every pair of projective objects $P$ and $Q$ in $\mathcal{C}$, there exists projective objects $R$ and $S$ along with an epimorphism $R\otimes P\otimes S\twoheadrightarrow Q$.
Observe that any finite tensor category is indecomposable discrete. It also follows from a straightforward argument that any finite pre-tensor category that is Morita equivalent to a finite tensor category is indecomposable discrete.
The first step of our proof consists in establishing that a finite pre-tensor category $\mathcal{C}$ is indecomposable discrete if its Drinfeld center $\mathcal{Z}(\mathcal{C})$ is rigid.
This will allow us to appeal to the machinery developed in \cite{CSZ}, or, more accurately, its generalization to indecomposable discrete finite pre-tensor categories.
Secondly, given a finite pre-tensor category $\mathcal{C}$ whose Drinfeld center is rigid, we consider the finite left $\mathcal{C}$-module category $\mathcal{C}/\mathrm{Rad}^{\mathcal{C}}(\mathcal{C})$ --- roughly speaking this is the quotient of the regular $\mathcal{C}$-module category by its radical $\mathrm{Rad}^{\mathcal{C}}(\mathcal{C})$.
By construction, the radical of $\mathcal{C}/\mathrm{Rad}^{\mathcal{C}}(\mathcal{C})$ vanishes so that $\on{End}^{\on{rex}}_{\mathcal{C}}(\mathcal{C}/\mathrm{Rad}^{\mathcal{C}}(\mathcal{C}))$ is a finite tensor category thanks to our variant of the main result of \cite{CSZ}. 
Finally, we check, using the rigidity of $\mathcal{Z}(\mathcal{C})$, that $\mathcal{C}/\mathrm{Rad}^{\mathcal{C}}(\mathcal{C})$ does exhibit a Morita equivalence between $\mathcal{C}$ and the monoidal opposite of the finite tensor category $\on{End}^{\on{rex}}_{\mathcal{C}}(\mathcal{C}/\mathrm{Rad}^{\mathcal{C}}(\mathcal{C}))$.

\subsection{Higher Categorical Applications}

As a first application of Theorem \ref{mainthm}, we can characterize completely the fully dualizable objects of the Morita 3-category $\mathrm{Mor_1^{pre}}$ of finite pre-tensor categories and finite bimodule categories. In order to express our result, recall that the Morita 3-category $\mathrm{Mor_1^{ten}}$ of finite tensor categories was studied extensively in \cite{DSPS:book}. In particular, they proved that a finite tensor category $\mathcal{C}$ is a fully dualizable object in $\mathrm{Mor_1^{ten}}$ if and only if it is \emph{separable}, in the sense that its Drinfeld center $\mathcal{Z}(\mathcal{C})$ is finite semisimple. If $\mathcal{C}$ is separable, then it is automatically finite semisimple. In fact, in characteristic zero, the converse holds \cite{DSPS:book}. However, in positive characteristic, separability is a necessary refinement of finite semisimplicity.
It follows from our results that the Morita 3-category $\mathrm{Mor_1^{pre}}$ does not contain any new fully dualizable objects.

\begin{corollary}\label{cor:fdpre}
A finite pre-tensor category is fully dualizable as an object of $\mathrm{Mor_1^{pre}}$ if and only if its Drinfeld center is a fusion category if and only if it is Morita equivalent to a separable tensor category.
\end{corollary}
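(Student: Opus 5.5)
We would prove Corollary \ref{cor:fdpre} by reducing it to Theorem \ref{mainthm} together with the results of \cite{DSPS:book} on $\mathrm{Mor_1^{ten}}$. The guiding principle is that full dualizability is invariant under equivalence in a symmetric monoidal 3-category. A Morita equivalence of finite pre-tensor categories is exactly an invertible 1-morphism in $\mathrm{Mor_1^{pre}}$. Hence an object Morita equivalent to a fully dualizable one is itself fully dualizable.

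\emph{Step 1: the third condition implies the first.} Suppose $\mathcal{C}$ is Morita equivalent to a separable tensor category $\mathcal{D}$. By \cite{DSPS:book}, $\mathcal{D}$ is fully dualizable in $\mathrm{Mor_1^{ten}}$. We need its image under the fully faithful symmetric monoidal functor $\mathrm{Mor_1^{ten}}\hookrightarrow\mathrm{Mor_1^{pre}}$ to remain fully dualizable. Symmetric monoidal functors preserve duals together with all evaluation and coevaluation data. They also preserve the adjoints of these higher morphisms at every level, since adjunctions are equational. So $\mathcal{D}$ is fully dualizable in $\mathrm{Mor_1^{pre}}$, and hence so is $\mathcal{C}$.

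\emph{Step 2: the first condition implies the second.} Suppose $\mathcal{C}$ is fully dualizable in $\mathrm{Mor_1^{pre}}$. We compute the relevant loop objects. The dimension $\mathrm{ev}\circ\mathrm{coev}$ of $\mathcal{C}$, an endomorphism of the unit $\mathrm{Vec}$, is the finite category $\mathcal{Z}(\mathcal{C})$, viewed through the Hochschild homology/cohomology identification. For this we use $\mathcal{C}$ as a $\mathcal{C}\boxtimes\mathcal{C}^{\mathrm{mop}}$-module and the identification of $\on{End}^{\on{rex}}_{\mathcal{C}\boxtimes\mathcal{C}^{\mathrm{mop}}}(\mathcal{C})$ with $\mathcal{Z}(\mathcal{C})$. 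Full dualizability makes the evaluation and coevaluation 1-morphisms have adjoints of all orders. It follows that $\mathcal{Z}(\mathcal{C})$, regarded as an object of the Morita 2-category of finite categories (via the unit/counit 2-morphisms), is a fully dualizable finite linear category. It must therefore be semisimple; over a perfect field, fully dualizable finite linear categories are exactly the finite semisimple ones. Moreover, the adjoints of $\mathrm{ev}$ force $\mathcal{C}$ to be rigid enough that $\mathcal{Z}(\mathcal{C})$ is rigid. The cleaner route, though, is Step 3.

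\emph{Step 3: the second condition implies the third, and the cleaner route.} Suppose $\mathcal{Z}(\mathcal{C})$ is fusion. Then it is in particular a finite tensor category, so by Theorem \ref{mainthm} $\mathcal{C}$ is Morita equivalent to a finite tensor category $\mathcal{D}$. By Morita invariance of the center, $\mathcal{Z}(\mathcal{D})\simeq\mathcal{Z}(\mathcal{C})$ is finite semisimple, so $\mathcal{D}$ is separable. This closes the cycle once we know that (1) implies (2).

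For that implication, we plan to show that full dualizability in $\mathrm{Mor_1^{pre}}$ forces $\mathcal{Z}(\mathcal{C})$ to be semisimple and rigid. We would argue as follows. The right adjoint of the coevaluation $\mathcal{C}$, viewed as a $\mathrm{Vec}$–$\mathcal{C}\boxtimes\mathcal{C}^{\mathrm{mop}}$-bimodule, must exist in $\mathrm{Mor_1^{pre}}$. Such 2-adjoints are computed using $\on{Fun}^{\on{rex}}$ and require the action functor $\mathcal{C}\boxtimes\mathcal{C}^{\mathrm{mop}}\to\mathcal{C}$ to admit bimodule right adjoints that are again right exact. We would show this forces the tensor product of $\mathcal{C}$ to be exact in each variable with exact adjoints. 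In turn, that gives duals for all objects of $\mathcal{Z}(\mathcal{C})$, because the half-braiding lets the adjoint of $\otimes$ produce duals. It also makes the dimension category $\mathcal{Z}(\mathcal{C})$ fully dualizable, hence semisimple.

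The main obstacle is this implication (1)$\Rightarrow$(2), specifically extracting rigidity of $\mathcal{Z}(\mathcal{C})$ from adjointability conditions. If it proves delicate, we would first try an alternative. Since $\mathcal{C}$ is fully dualizable, it is dualizable, and its dimension is dualizable in the Morita 3-category of braided objects. We would then pass $\mathcal{C}$ to the separability criterion of \cite{DSPS:book}, whose proof uses only bimodule data that make sense in the pre-tensor setting: namely, that $\mathcal{C}$ is a separable module over $\mathcal{C}\boxtimes\mathcal{C}^{\mathrm{mop}}$, so that $\mathcal{Z}(\mathcal{C})$ is semisimple. Rigidity of $\mathcal{Z}(\mathcal{C})$ would then come from semisimplicity plus the fact that projectives, here all objects, have duals.
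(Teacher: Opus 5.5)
Your implications (3)$\Rightarrow$(1) and (2)$\Rightarrow$(3) are fine, and they match what the paper leaves implicit: full dualizability is preserved by $\mathrm{Mor_1^{ten}}\hookrightarrow\mathrm{Mor_1^{pre}}$ and by equivalences (Lemma~\ref{lem:Moritahighercat}), and (2)$\Rightarrow$(3) is Theorem~\ref{mainthm} plus Proposition~\ref{prop:centerMorita}. The semisimplicity half of (1)$\Rightarrow$(2) is also the paper's argument, up to a misidentification. The composite $\mathrm{ev}\circ\mathrm{coev}$ is the trace $\mathcal{C}\boxtimes_{\mathcal{C}\boxtimes\mathcal{C}^{\mathrm{mop}}}\mathcal{C}$, not $\mathcal{Z}(\mathcal{C})$. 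The paper instead writes $\mathcal{Z}(\mathcal{C})\simeq\on{Fun}^{\on{rex}}_{\mathcal{C}\boxtimes\mathcal{C}^{\on{mop}}}(\mathcal{C},\mathcal{C}\boxtimes\mathcal{C}^{\on{mop}})\boxtimes_{\mathcal{C}\boxtimes\mathcal{C}^{\on{mop}}}\mathcal{C}$. This is a composite of fully adjunctible 1-morphisms, so \cite[Theorem A.22]{BDSPV} applies. That part is easily repaired.

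The genuine gap is the rigidity of $\mathcal{Z}(\mathcal{C})$, which you need in order to apply Theorem~\ref{mainthm}; semisimplicity by itself says nothing about duals.

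\textbf{Route A fails.} Neither constraint you name comes from full dualizability:
\begin{itemize}
\item every 1-morphism of $\mathrm{Mor_1^{pre}}$ is already adjunctible (proof of Lemma~\ref{lem:Moritahighercat});
\item a right exact bimodule right adjoint of $\mathcal{C}\boxtimes\mathcal{C}\to\mathcal{C}$ exists exactly because $\mathcal{C}$ is pre-multitensor (Remark~\ref{rem:BJSrigid}).
\end{itemize}
Worse, the intermediate goal is false. Exactness of every $X\otimes-$ would make every $X$ left rigid, as in the proof of Lemma~\ref{lem:CCprojectiveexact}, so $\mathcal{C}$ would be a tensor category. Yet for $A=\mathbbm{k}[x]/(x^2)$, the category $\bimod[\mathbbm{k}][A]$ is Morita equivalent to $\mathrm{Vec}$ by Theorem~\ref{thm:generalizedMorita}, hence fully dualizable, while $\otimes_A$ is not exact.

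\textbf{Route B assumes the point.} Its last sentence, that all objects of $\mathcal{Z}(\mathcal{C})$ have duals because projectives do, presupposes that $\mathcal{Z}(\mathcal{C})$ is pre-multitensor. You never prove this, and the paper does not take it for granted: it is the bulk of its proof.

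\textbf{The missing idea.} Full dualizability makes the counit 2-morphism $E\colon\mathcal{C}\boxtimes\on{Fun}^{\on{rex}}_{\mathcal{C}\boxtimes\mathcal{C}^{\on{mop}}}(\mathcal{C},\mathcal{C}\boxtimes\mathcal{C}^{\on{mop}})\to\mathcal{C}\boxtimes\mathcal{C}^{\on{mop}}$ (evaluation) have a right exact right adjoint $E^{\mathrm{R}}$ as a bimodule functor. The tensor product $T$ of $\mathcal{Z}(\mathcal{C})$ is obtained from $E$ by relative Deligne product with the two factors above. The same construction applied to $E^{\mathrm{R}}$ therefore gives a right exact right adjoint $T^{\mathrm{R}}$, which is a $\mathcal{Z}(\mathcal{C})$-bimodule functor by the interchange law. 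Remark~\ref{rem:BJSrigid} then shows that $\mathcal{Z}(\mathcal{C})$ is pre-multitensor. Since it is semisimple, every object is projective and hence rigid.
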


\noindent Our result can be viewed as a 3-categorical version of the ``Bestiary Hypothesis'', a term coined by Scheimbauer. The 2-categorical variant, treated in \cite[Appendix A]{BDSPV}, states that the fully dualizable objects in various symmetric monoidal 2-categories of $\mathbbm{k}$-linear 2-categories all agree, and are given by finite semisimple $\mathbbm{k}$-linear categories.

Our main theorem can also be leveraged to refine our understanding of the higher Morita 4-category $\mathrm{Mor_2^{pre}}$ of finite braided pre-tensor categories introduced in \cite{BJS}. Namely, they showed that separable braided tensor categories are fully dualizable objects in $\mathrm{Mor_2^{pre}}$. In followup work \cite{BJSS}, it was proven that any finite braided tensor category that is non-degenerate, that is, has trivial symmetric center, is also fully dualizable.
Both of these results were subsequently generalized in \cite{Dec:relative}, where it was established that any finite braided tensor category whose symmetric center is separable is fully dualizable. It is natural to expect that this last characterization is sharp. With the aid of Theorem \ref{mainthm}, we come close to answering this question.

\begin{corollary}\label{thm:Mor2fullydualizbaility}
Let $\mathcal{B}$ be a finite braided pre-tensor category.
If $\mathcal{B}$ is fully dualizable as an object of $\mathrm{Mor_2^{pre}}$, then its symmetric center is separable.
\end{corollary}

As further motivation for the above abstract higher categorical statement, we note the above theorem can be employed to investigate the existence of minimal non-degenerate extensions for slightly degenerate finite braided tensor categories.
Succinctly, given a finite braided tensor category $\mathcal{B}$ whose symmetric center is the category of super vector spaces, does there always exist a non-degenerate finite braided tensor category $\mathcal{A}$ that contains $\mathcal{B}$ and such that the centralizer of $\mathcal{B}$ in $\mathcal{A}$ is the category of super vector spaces?
Over an algebraically closed field of characteristic zero and under semisimplicity assumptions, this was answered positively in \cite{JFR} using tools from the theory of braided fusion 2-categories.
Thanks to our main theorem, it is possible to extract a braided fusion 2-category from any slightly degenerate finite braided tensor category.
This is a natural first step in the study of the existence of minimal non-degenerate extensions in the non-semisimple setting, which we will tackle in future work.

\section{Preliminaries}

Throughout, we work over a perfect field $\Bbbk$. After proving a technical result about finite dimensional $\mathbbm{k}$-algebras, we review elementary properties of finite pre-tensor categories and finite module categories over them. We go on to develop the theory of Morita equivalence between finite pre-tensor categories.

\subsection{Basic Algebras}

We begin by recalling some standard facts about basic $\Bbbk$-algebras, and use them to prove a technical result that will be relevant in the sequel.

\begin{definition}
    A finite-dimensional $\mathbbm{k}$-algebra $A$ is {\it basic} if it is multiplicity-free as a left module over itself.
\end{definition}

\begin{lemma}[{\cite[Corollary~I.6.10]{ASS}}]\label{lem:Moritabasic}
  Let $A$ be a finite-dimensional $\mathbbm{k}$- algebra. There is a basic $\mathbbm{k}$-algebra that is Morita equivalent to $A$.
\end{lemma}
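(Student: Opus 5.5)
We prove the standard statement that every finite-dimensional algebra $A$ is Morita equivalent to a basic one, by replacing $A$ with the endomorphism algebra of a multiplicity-free progenerator.

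First, decompose the regular left module into indecomposables using Krull--Schmidt for finite-dimensional modules:
\[
A \cong P_1^{\oplus m_1}\oplus\cdots\oplus P_n^{\oplus m_n},
\]
where $P_1,\dots,P_n$ are pairwise non-isomorphic indecomposable projective left $A$-modules and $m_i\geq 1$. Set $P := P_1\oplus\cdots\oplus P_n$ and $B := \on{End}_A(P)^{\mathrm{op}}$, which is a finite-dimensional $\mathbbm{k}$-algebra.

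Second, check that $P$ is a progenerator. It is finitely generated projective, being a summand of $A$. It is a generator because $A$ is a direct summand of $P^{\oplus m}$ with $m=\max_i m_i$. Morita theory then says the functor
\[
\on{Hom}_A(P,-)\colon A\text{-mod}\to B\text{-mod}
\]
is an equivalence. Hence $A$ and $B$ are Morita equivalent.

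Third, show that $B$ is basic. The equivalence sends the regular module $P$ to the regular module ${}_BB\cong\on{Hom}_A(P,P)$. It sends each $P_i$ to $Q_i:=\on{Hom}_A(P,P_i)$, so ${}_BB\cong Q_1\oplus\cdots\oplus Q_n$. Since an equivalence preserves indecomposability and reflects isomorphism classes, the $Q_i$ are indecomposable and pairwise non-isomorphic. Therefore ${}_BB$ is multiplicity-free, which is exactly the definition of basic.

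The step needing the most care is the Morita equivalence via the progenerator $P$. That step is classical and can be cited directly, so the argument is essentially a combination of Krull--Schmidt with the Morita theorem.
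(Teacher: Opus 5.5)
Your proof is correct. The paper gives no proof of its own and simply cites the corollary from Assem--Simson--Skowro\'nski, whose standard argument is the one you give: the basic algebra there is $eAe$, with $e$ a sum of one primitive idempotent per isomorphism class of indecomposable projectives. Writing your multiplicity-free progenerator as $P=Ae$ gives $B=\on{End}_A(Ae)^{\mathrm{op}}\cong eAe$, and the Morita equivalence is $\on{Hom}_A(P,-)$ exactly as you describe.
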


Given a finite dimensional $\mathbbm{k}$-algebra $A$, we use $\on{Rad}(A)$ to denote its Jacobson radical. 

\begin{proposition}[{\cite[Proposition~I.6.2]{ASS}}]\label{radicalquotient}
A finite dimensional $\mathbbm{k}$-algebra $A$ is basic if and only if the finite semisimple $\mathbbm{k}$-algebra $A/\on{Rad}(A)$ is a finite product of division $\mathbbm{k}$-algebras.
\end{proposition}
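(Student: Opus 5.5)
We prove both directions directly from the decomposition of $A$ into indecomposable projectives, combined with Wedderburn--Artin for $A/\on{Rad}(A)$. Write $A \cong \bigoplus_{i=1}^n P_i^{m_i}$ as a left $A$-module, where the $P_i$ are pairwise non-isomorphic indecomposable projectives. Set $S_i = P_i/\on{Rad}(A)P_i$. These are the pairwise non-isomorphic simple $A$-modules, and $D_i=\on{End}_A(S_i)^{\mathrm{op}}$ is a division $\mathbbm{k}$-algebra by Schur's lemma.

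First, the semisimple algebra $\bar A = A/\on{Rad}(A)$ decomposes as a left module over itself as $\bar A\cong \bigoplus_i S_i^{m_i}$. This holds because $\bar A = A/\on{Rad}(A)A$, and reduction modulo the radical commutes with direct sums. Then
$$\bar A^{\mathrm{op}}\cong \on{End}_{\bar A}(\bar A)\cong \prod_i \on{M}_{m_i}(\on{End}(S_i)),$$
because $\on{Hom}(S_i,S_j)=0$ for $i\neq j$. Passing to opposites, $\bar A\cong\prod_i \on{M}_{m_i}(D_i)$.

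Second, we need that the $S_i$ are pairwise non-isomorphic, so that the multiplicities $m_i$ are read off correctly. This is the standard fact that $P\mapsto P/\on{Rad}(A)P$ gives a bijection between isoclasses of indecomposable projectives and isoclasses of simples, via projective covers and Nakayama's lemma. This is the only nontrivial input.

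With these two facts the equivalence follows. $A$ is basic if and only if all $m_i=1$. This holds if and only if every factor $\on{M}_{m_i}(D_i)$ is a division algebra. For the backward implication, suppose $\bar A$ is a finite product of division algebras. By uniqueness in Wedderburn--Artin, its simple factors are the $\on{M}_{m_i}(D_i)$. A matrix algebra $\on{M}_m(D)$ with $m\ge 2$ has zero divisors, so each $m_i=1$.

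The main care is in the uniqueness of the Wedderburn decomposition and in the non-isomorphism of the $S_i$. Both are classical. Since the statement is cited from \cite{ASS}, one could simply invoke it.
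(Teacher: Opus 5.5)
Your proof is correct and follows essentially the same route as the paper: decompose ${}_AA\cong\bigoplus_i P_i^{\oplus m_i}$ into pairwise non-isomorphic indecomposable projectives, identify $A/\on{Rad}(A)$ with $\prod_i \on{Mat}_{m_i}$ of division algebras, and observe that basicness means exactly $m_i=1$ for all $i$. The only difference is the order of operations: you reduce modulo the radical first and apply Schur's lemma to the tops $S_i$, whereas the paper takes $\on{End}_{A}(P(i)^{\oplus m_i})$ modulo its radical directly; your handling of opposite algebras and of the converse, via uniqueness in Wedderburn--Artin, is more explicit than the paper's ``the result follows.''
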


\begin{proof}
Let $P(1),\ldots,P(n)$ be an irredundant list of indecomposable projectives in $\lmod[][A]$.
    We have ${}_{A}A \cong \bigoplus_{i=1}^{n}P(i)^{\oplus m_{i}}$, with $m_i\geq 1$ for all $i$. As $P(i)\not\cong P(j)$ if $i\neq j$, it follows that
    $$A/\on{Rad}(A) \cong \prod_{i=1}^{n} \on{End}_{A-}(P(i)^{\oplus m_{i}})/\on{Rad}(\on{End}_{A-}(P(i)^{\oplus m_{i}})) \cong \prod_{i=1}^{n} \on{Mat}_{m_{i}}(\mathbb{H}_i)\,,$$
    where $\mathbb{H}_i:=\on{End}_{A-}(P(i))/\on{Rad}(\on{End}_{A-}(P(i)))$ is a division $\mathbbm{k}$-algebra. The result follows.
\end{proof}

\begin{lemma}\label{lem:technical}
    Let $A$ be a basic $\mathbbm{k}$-algebra. For any idempotent $e \in A$, we have $(1-e)Ae \subseteq \on{Rad}(A)$. 
\end{lemma}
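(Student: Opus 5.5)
Since $(1-e)Ae$ is stable under left multiplication by $A$ only after adjusting, the cleanest route is to pass to the semisimple quotient $\bar A := A/\on{Rad}(A)$, which by Proposition~\ref{radicalquotient} is a finite product of division $\mathbbm{k}$-algebras $\prod_{i=1}^n \mathbb{H}_i$. The claim $(1-e)Ae\subseteq\on{Rad}(A)$ is equivalent to the vanishing of the image $\overline{(1-e)Ae} = (1-\bar e)\bar A\bar e$ in $\bar A$, where $\bar e$ is the image of $e$. So it suffices to show that for every idempotent $f$ in a finite product of division algebras, $(1-f)\bar A f = 0$.

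The key step is to identify the idempotents of $\bar A=\prod_i \mathbb{H}_i$. An idempotent is a tuple $(f_1,\dots,f_n)$ with each $f_i$ idempotent in $\mathbb{H}_i$; since $\mathbb{H}_i$ is a division algebra, $f_i(1-f_i)=0$ forces $f_i\in\{0,1\}$. Hence every idempotent of $\bar A$ is a sum of some of the central primitive idempotents of the product, and in particular is central. Then for any $x\in\bar A$ we get $(1-f)xf = (1-f)fx = 0$, as required.

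Concluding: for $a\in A$, the image of $(1-e)ae$ in $\bar A$ is $(1-\bar e)\bar a\bar e=0$, so $(1-e)ae\in\on{Rad}(A)$. I expect no serious obstacle; the only point requiring care is invoking Proposition~\ref{radicalquotient} to obtain that $\bar A$ is a product of division algebras (rather than matrix algebras over them, where non-central idempotents would exist and the statement fails), which is precisely where basicness enters.
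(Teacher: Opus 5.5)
Your proposal is correct and follows essentially the same route as the paper: pass to $A/\on{Rad}(A)\cong\prod_i\mathbb{H}_i$ via Proposition~\ref{radicalquotient}, then use that a division algebra has only the idempotents $0$ and $1$. The only cosmetic difference is the last step: you argue that every idempotent of $\prod_i\mathbb{H}_i$ is central, whereas the paper computes componentwise that $\pi((1-e)Ae)=\prod_i(1-e_i)\mathbb{H}_ie_i=0$.
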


\begin{proof}
    Consider the image of $(1-e)Ae$ under the projection $\pi: A \twoheadrightarrow A/\on{Rad}(A)$. By \autoref{radicalquotient}, we have $A/\on{Rad}(A)\cong \prod_{i=1}^{n} \mathbb{H}_i$.
    Since $\pi$ is a morphism of $\mathbbm{k}$-algebras and both $e$ and $1-e$ are idempotents, so are $\pi(e)$ and $\pi(1-e) = 1-\pi(e)$. Hence they are of the form $(e_{1},\ldots, e_{n})$ and $(1-e_{1},\ldots,1-e_{n})$, where $e_i$ is an idempotent in $\mathbb{H}_i$ for $i=1,\ldots, n$. But any division $\mathbbm{k}$-algebra has exactly two idempotents, $0$ and $1$, so that $(1-e_{i})\mathbb{H}_ie_{i} = 0$ for all $i=1,\ldots, n$. Thus, we have
    $$\pi((1-e)Ae) = \prod_{i=1}^{n} (1-e_{i})\mathbb{H}_i e_{i} = 0\,,$$ from which it follows that $(1-e)Ae \subseteq \on{Rad}(A)$.
\end{proof}

\begin{lemma}\label{radicalofcorner}
    For any idempotent $e$ in a finite dimensional $\mathbbm{k}$-algebra $A$, we have $\on{Rad}(eAe) = e\on{Rad}(A)e$.
\end{lemma}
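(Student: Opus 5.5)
The plan is to prove the two inclusions $e\on{Rad}(A)e \subseteq \on{Rad}(eAe)$ and $\on{Rad}(eAe)\subseteq e\on{Rad}(A)e$ separately. Throughout I use the characterization of the Jacobson radical of a finite dimensional algebra $B$ as the largest two-sided ideal $I$ such that $1-x$ is invertible for every $x\in I$; equivalently, as the set of $x\in B$ such that $1-bx$ is invertible for all $b\in B$. Since $B$ is finite dimensional, $\on{Rad}(B)$ is also the largest nilpotent two-sided ideal, and I will use this form as well.

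For the inclusion $e\on{Rad}(A)e\subseteq \on{Rad}(eAe)$, first note that $e\on{Rad}(A)e = \on{Rad}(A)\cap eAe$. Indeed, if $x\in\on{Rad}(A)\cap eAe$ then $x=exe$, and conversely $e\on{Rad}(A)e\subseteq \on{Rad}(A)$ because $\on{Rad}(A)$ is a two-sided ideal. This set is a two-sided ideal of $eAe$, since $(eae)(exe)(ebe)=e(axb)e$ with $axb\in\on{Rad}(A)$. It is nilpotent because it lies in the nilpotent ideal $\on{Rad}(A)$. A nilpotent ideal of $eAe$ is contained in $\on{Rad}(eAe)$, which gives this inclusion.

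For the reverse inclusion, take $x\in\on{Rad}(eAe)$, so $x=exe$. It suffices to show that $x\in\on{Rad}(A)$, since then $x=exe\in e\on{Rad}(A)e$. I would check the criterion that $1-ax$ is invertible in $A$ for every $a\in A$. Put $y := xae\in eAe$. Then $1_{eAe}-y = e - xae$ is invertible in $eAe$, with some inverse $u\in eAe$, because $x\in\on{Rad}(eAe)$ and $\on{Rad}(eAe)$ is an ideal of $eAe$. The standard trick is that $1-ax$ is invertible if and only if $1-xa$ is invertible, with inverse $1+x(1-ax)^{-1}a$ (and symmetrically). So it is enough to invert $1-xa$ in $A$. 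Because $xa = exa$, write $1-xa = (1-e) + (e - exa)$. I will check that the candidate inverse is $v = (1-e) + u + u\,xa(1-e)$, using the idempotent decomposition and the identity $(e-xae)u=e=u(e-xae)$. This multiplication is the one step that needs care, because $xa$ has a component $xa(1-e)$ in $eA(1-e)$ that must be absorbed. Relative to the decomposition $1=e+(1-e)$, the element $1-xa$ is block upper triangular with diagonal blocks $e-xae$ and $1-e$, both invertible in their corners. A block upper triangular element with invertible diagonal blocks is invertible, and this justifies $v$.

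Alternatively, one could argue module-theoretically. There is an equivalence $eA\otimes_A - $ from the full subcategory of $A$-modules generated by $Ae$ to $eAe$-modules. Simple $eAe$-modules are $eS$ for simple $A$-modules $S$ with $eS\neq 0$. Then $x\in\on{Rad}(eAe)$ annihilates every $eS$, so $x$ annihilates every simple $A$-module, because $xS = xeS = x(eS)=0$. Hence $x\in\on{Rad}(A)$. This route is probably cleaner. The main obstacle is the step that every simple $eAe$-module has the form $eS$: given a simple $eAe$-module $T$, take $S$ to be the head of $Ae\otimes_{eAe}T$, and use that $e(Ae\otimes_{eAe}T)\cong T$. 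I would present this argument and keep the block-triangular computation as a fallback.
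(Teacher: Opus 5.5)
Your proposal is correct, and your first inclusion is the paper's: $e\on{Rad}(A)e$ is a nilpotent ideal of $eAe$, so it lies in $\on{Rad}(eAe)$.

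For the reverse inclusion you take a different route. The paper passes to the semisimple quotient $\bar A=A/\on{Rad}(A)$. It identifies $eAe/e\on{Rad}(A)e$ with the corner $\bar e\bar A\bar e$, which is (up to taking the opposite) the endomorphism algebra of the semisimple module $\bar A\bar e$, hence semisimple. So $\on{Rad}(eAe)$ lies in the kernel $e\on{Rad}(A)e$. That identification silently uses $eAe\cap\on{Rad}(A)=e\on{Rad}(A)e$, which your first paragraph actually proves.

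Your block upper triangular computation instead shows directly that $x\in\on{Rad}(eAe)$ is quasi-regular in $A$. I checked that $v=(1-e)+u+u\,xa(1-e)$ is a two-sided inverse of $1-xa$, using $(1-e)x=0$ and $u\in eAe$. The paper's argument is shorter, but it leans on Wedderburn theory and on finite dimensionality, namely that the radical is the smallest ideal with semisimple quotient. Your elementwise argument uses only the quasi-regularity characterization, so it proves $\on{Rad}(eAe)\subseteq e\on{Rad}(A)e$ for an arbitrary unital ring.

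One correction to your module-theoretic alternative. The fact that argument actually uses is that $eS$ is zero or a simple $eAe$-module for every simple $A$-module $S$. This is immediate, since $eAe\cdot m=eAm=eS$ for any $0\neq m\in eS$. The statement you flag as the main obstacle, that every simple $eAe$-module has the form $eS$, is not needed at all.
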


\begin{proof}
    Since $(e\on{Rad}(A)e)^{n} \subseteq e(\on{Rad}(A)^{n})e$ for any positive integer $n$, we find that $e\on{Rad}(A)e$ is nilpotent, so that $e\on{Rad}(A)e \subseteq \on{Rad}(eAe)$. On the other hand, we have
    $$eAe/e\on{Rad}(A)e = (e+\on{Rad}(A))(A/\on{Rad}(A))(e+\on{Rad}(A))\cong \on{End}_{A/\on{Rad}(A)-}((A/\on{Rad}(A)(e+\on{Rad}(A))\,.$$
    The right hand-side is semisimple since $\lmod[][A/\on{Rad}(A)]$ is semisimple, thus $\on{Rad}(eAe) \subseteq e\on{Rad}(A)e$.
\end{proof}

\begin{proposition}\label{prop:leftexactvanishing}
    Let $A$ be basic $\mathbbm{k}$-algebra and let $e$ be an idempotent in $A$. If $(-)\otimes_{A} A/AeA:\rmod[]\rightarrow\rmod[]$ is left exact, then $eA(1-e) = 0$, so that $\on{Hom}_{-A}((1-e)A, eA)=0$. 
\end{proposition}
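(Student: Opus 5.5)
Set $I = AeA$, a two-sided ideal, and $B = A/I$. The plan is to turn left exactness of $(-)\otimes_A B$ into the vanishing of a Tor group, compute that Tor group on a well-chosen module, and then use \autoref{lem:technical} together with a Nakayama-type argument.

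Since $(-)\otimes_A B$ is always right exact, left exactness makes it exact. Hence $B$ is flat as a left $A$-module, and so $\on{Tor}_1^A(M,B)=0$ for every right $A$-module $M$. Tensoring the short exact sequence $0\to I\to A\to B\to 0$ of left modules with $M$ gives
$$\on{Tor}_1^A(M,B)\cong \ker(M\otimes_A I\to M)\,.$$
We apply this to $M = eA$. The map $eA\otimes_A I\to eA$ is injective, because $eA\otimes_A I\cong eI$ naturally, as $eA$ is a direct summand of $A_A$. This choice is therefore useless, and we take instead
$$M = eA/e\on{Rad}(A)\,,$$
the top of $eA$. The key claim is that $M\otimes_A I\neq 0$ while its image in $M$ is $MI = M\cdot AeA = MeA$. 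Since $Me = eAe/e\on{Rad}(A)e\neq 0$ by \autoref{radicalofcorner}, the map $M\otimes_A I\to M$ is not obviously injective. I do not yet see how to extract a contradiction from this, so I would drop this module and work with $M=(1-e)A$ instead, a projective module.

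The cleaner route is to avoid Tor altogether. A flat finitely presented module is projective, so $B$ is projective as a left $A$-module, and the surjection $A\to B$ splits. Hence $I = Af$ for some idempotent $f$, and $A = Af\oplus A(1-f)$ with $A(1-f)\cong B$ as left modules. Now $I = AeA$ is idempotent, since $I^2 = AeAAeA\supseteq AeeeA = I$. Also $I$ contains $Ae$ and is a direct summand, so every composition factor of $\on{top}(I)$ is of the form $\on{top}(Ae_i)$, where $e_i$ is a primitive summand of $e$. The heart of the argument is to compare multiplicities. Because $A$ is basic, each indecomposable projective $Ae_i$ appears exactly once in ${}_AA$. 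The summand $I=Af$ is generated by $Ae$ as an ideal, so its top involves only the simples $S_i$ with $e_i\le e$. Thus $Af$ is a sum of indecomposable projectives $Ae_i$ with $e_i$ a summand of $e$, and by multiplicity-freeness $Af\cong Ae$, which forces $I = Ae$ up to the chosen decomposition. I expect this multiplicity comparison to be the main obstacle. To make it rigorous, I would check that $\on{Hom}_A(I, S_j)=0$ for every simple $S_j$ with $e S_j=0$, which holds because $I = AeI$ surjects onto any such top.

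Once $AeA = Ae$ is known, we get $eA(1-e)\subseteq AeA\,(1-e) = Ae(1-e) = 0$. For the second assertion, note that $\on{Hom}_{-A}((1-e)A,eA)\cong eA(1-e)$, which now vanishes. The question of which side carries each idempotent needs care, and I would double-check it against the right-module conventions of $\rmod[]$. If the sides come out reversed, the same argument applied to the right module $B$ gives $AeA = eA$ instead. Then \autoref{lem:technical} shows that the offending corner $(1-e)Ae$ lies in $\on{Rad}(A)$, and a Nakayama argument kills it.
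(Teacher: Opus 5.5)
Your final route, once the Tor detour is discarded, is correct and genuinely different from the paper's. Both proofs start from the splitting of $0\to AeA\to A\to A/AeA\to 0$ as left modules, and both really establish $AeA(1-e)=0$, i.e.\ $AeA=Ae$.

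The paper argues elementwise. With a left-linear retraction $g\colon A\to AeA$ it shows $x=x\cdot(1-e)g(1)(1-e)$ for all $x\in AeA(1-e)$. It then uses \autoref{lem:technical} and \autoref{radicalofcorner} to place $(1-e)g(1)(1-e)$ in $\on{Rad}((1-e)A(1-e))$, and concludes by Nakayama.

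You only use that $AeA$ is a direct summand of ${}_AA$, and then do Krull--Schmidt bookkeeping. By the modular law, $AeA=Ae\oplus AeA(1-e)$. The second summand is projective, being a summand of ${}_AA$. Its top involves only simples $S$ with $eS\neq 0$, by your $I=AeI$ argument. So each of its indecomposable summands is isomorphic to an indecomposable summand of $Ae$. Multiplicity-freeness of ${}_AA$ forbids exactly that, so $AeA(1-e)$ vanishes. This bypasses the two auxiliary lemmas and makes the role of basicness transparent. The paper's version is more elementary once those lemmas are in place.

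Three points to tidy up:
\begin{itemize}
\item The top of $I$ sees only $e$-simples because $I=\sum_a Aea$ is generated by $e$. The fact that $I$ contains $Ae$ gives nothing here.
\item $I=Ae$ holds on the nose, not just ``up to the chosen decomposition'', since $Ae\subseteq I$ and the dimensions agree.
\item The closing hedge should be deleted, because your sides are right. Exactness of $(-)\otimes_A B$ on right modules means ${}_AB$ is flat, hence projective, so it is the left-module sequence that splits. The fallback you sketch would in any case only kill $(1-e)Ae$, which is the wrong corner.
\end{itemize}
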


\begin{proof}
    Left exactness of the functor $(-)\otimes_{A} A/AeA$ is equivalent to the projectivity of the left $A$-module $A/AeA$. It follows that the following short exact sequence of left $A$-modules splits:
\[\begin{tikzcd}
	0 & AeA & A & {A/AeA} & 0
	\arrow[from=1-1, to=1-2]
	\arrow[hook, from=1-2, to=1-3]
	\arrow[two heads, from=1-3, to=1-4]
	\arrow[from=1-4, to=1-5]
\end{tikzcd}\,.\]
Choose a left $A$-module splitting $g: A \twoheadrightarrow AeA$.
For any $x \in AeA(1-e)$, we have
\begin{equation}\label{eq:x=xg1}
x = x(1-e) = g(x)(1-e) = xg(1)(1-e) = (x(1-e))(g(1)(1-e)) = x\big((1-e)g(1)(1-e)\big).
\end{equation}
The first and fourth equalities hold as $x \in AeA(1-e)$ and $AeA(1-e) \subseteq A(1-e)$, and the second as $x \in AeA(1-e)$ and $AeA(1-e) \subseteq AeA$.
The third equality follows from the fact that $g$ is a left $A$-module map, and the fifth by associativity.

Now, observe that $(1-e)g(1)(1-e) \in (1-e)AeA(1-e)$. As $(1-e)Ae \subseteq \on{Rad}(A)$ and $eA(1-e) \subseteq \on{Rad}(A)$ by \autoref{lem:technical}, it follows that $(1-e)g(1)(1-e)\in (1-e)\on{Rad}(A)(1-e)$. Then, by appealing to \autoref{radicalofcorner}, we find that $(1-e)g(1)(1-e)\in \on{Rad}((1-e)A(1-e))$.

We have seen in \autoref{eq:x=xg1} that $x\cdot \big((1-e))g(1)(1-e)\big)= x$ for any $x\in AeA(1-e)$. Given that $(1-e)g(1)(1-e) \in \on{Rad}((1-e)A(1-e))$, it follows that
\begin{equation}\label{mjnakayama}
\Big(AeA(1-e)\Big)\on{Rad}\Big((1-e)A(1-e)\Big) = AeA(1-e)\,.
\end{equation}
Writing $M$ for the right $(1-e)A(1-e)$-module $AeA(1-e)$, and $J = \on{Rad}\big((1-e)A(1-e)\big)$ for the Jacobson radical of $(1-e)A(1-e)$, \autoref{mjnakayama} asserts that $MJ=M$.
But, the ideal $J$ is nilpotent, whence $M = 0$ by Nakayama's Lemma.
Thus, we have $AeA(1-e) = 0$, and thereby also $eA(1-e) = 0$.
\end{proof}

\subsection{Finite Pre-Tensor Categories}

Fix a perfect field $\Bbbk$. We assume familiarity with the content and terminology of \cite{EGNO}. We emphasize that we follow a different convention for duals. Given an object $V$ in a monoidal category $\mathcal{C}$, a {\it right dual} to $V$ is an object $W$ of $\mathcal{C}$ together with morphisms $\eta: \mathbb{1} \rightarrow W \otimes V$ and $\varepsilon: V \otimes W \rightarrow \mathbb{1}$ satisfying the zigzag equations. If a right dual to $V$ exists, it is essentially unique. In this case, we denote the right dual to $V$ by $V^{\vee}$, and we say that $V$ is a {\it right rigid} object. We say that the monoidal category $\mathcal{C}$ is right rigid if all of its objects are right rigid. Similar considerations apply to left duals and left rigidity. Provided it exists, we denote the left dual to $V$ by ${}^{\vee}V$. We say that a monoidal category is rigid if it is both left and right rigid. Recall the following fundamental definition:

\begin{definition}
 A {\it finite multitensor category} is a finite abelian rigid monoidal $\Bbbk$-linear category. A \emph{finite tensor category} is a finite multitensor category whose monoidal unit $\mathbbm{1}$ is a simple object.
\end{definition}

We will be interested in the following more general notion which can be traced back to \cite{BJS}.

\begin{definition}
 A {\it finite pre-multitensor category} is a finite abelian monoidal $\Bbbk$-linear category whose monoidal product is right exact in both variables, and whose projective objects are rigid. A {\it finite pre-tensor category} is a finite pre-multitensor category whose monoidal unit $\mathbbm{1}$ is indecomposable.
\end{definition}

\begin{example}\label{example:bimodka}
Let $A$ be a finite-dimensional $\Bbbk$-algebra. The relative tensor product $\otimes_A$ endows the category $\bimod[\Bbbk][A]$ of $A$-$A$-bimodules with a monoidal structure. It is a folklore result that $\bimod[\Bbbk][A]$ is a finite pre-multitensor category (see e.g.\ \cite[Lemma~5.5]{FSSW} for a recent account).

In more detail, the left rigid objects of $\bimod[\Bbbk][A]$ are precisely the $A$-$A$-bimodules that are projective as left $A$-modules. Explicitly, if $M$ is such an $A$-$A$-bimodule, its left dual is given by $\on{Hom}_{A\textrm{-}}(M,A)$.
Similarly, the right rigid objects are exactly the $A$-$A$-bimodules that are projective as right $A$-modules. If $N$ is such an $A$-$A$-bimodule, its right dual is given by $\on{Hom}_{\textrm{-}A}(N,A)$.
We emphasize that these dual objects are not necessarily rigid themselves.
\end{example}

\begin{lemma}\label{lem:rigidtensorprojective}
 Let $\mathcal{C}$ be a finite pre-multitensor category. For a projective object $P$ and a right rigid object $V$ in $\mathcal{C}$, the object $V \otimes P$ is projective. Similarly, for a left rigid object $W$ in $\mathcal{C}$, the object $P \otimes W$ is projective.
\end{lemma}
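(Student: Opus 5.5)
Since $V$ has a right dual $V^{\vee}$, the functor $V\otimes(-)$ is left adjoint to an exact functor, and left adjoints to exact functors preserve projectives. That is the whole plan; the work is only in checking the conventions and the exactness.

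\emph{Step 1: the adjunction.} With the paper's convention, $\eta:\mathbb{1}\to V^{\vee}\otimes V$ and $\varepsilon:V\otimes V^{\vee}\to\mathbb{1}$ satisfy the zigzag equations. I will use them to produce the natural isomorphism
$$\on{Hom}_{\mathcal{C}}(V\otimes X,Y)\cong\on{Hom}_{\mathcal{C}}(X,V^{\vee}\otimes Y).$$
A morphism $f$ on the left is sent to $(\on{id}_{V^{\vee}}\otimes f)\circ(\eta\otimes\on{id}_X)$. A morphism $g$ on the right is sent to $(\varepsilon\otimes\on{id}_Y)\circ(\on{id}_V\otimes g)$. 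The zigzag equations show these two assignments are mutually inverse. So $V\otimes(-)$ is left adjoint to $V^{\vee}\otimes(-)$.

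\emph{Step 2: exactness of the right adjoint.} The functor $V^{\vee}\otimes(-)$ is right exact because the tensor product is right exact in each variable. It is also a right adjoint, hence left exact. Therefore it is exact.

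\emph{Step 3: projectivity.} For an epimorphism $Y\twoheadrightarrow Y'$, we have
$$\on{Hom}(V\otimes P,-)\cong\on{Hom}(P,V^{\vee}\otimes -).$$
Since $V^{\vee}\otimes(-)$ is exact and $P$ is projective, the right-hand side sends $Y\twoheadrightarrow Y'$ to a surjection. Hence $V\otimes P$ is projective.

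\emph{The symmetric claim.} For $P\otimes W$ with $W$ left rigid, the same argument gives $-\otimes W\dashv -\otimes{}^{\vee}W$, using the evaluation ${}^{\vee}W\otimes W\to\mathbb{1}$ and the coevaluation $\mathbb{1}\to W\otimes{}^{\vee}W$. Here ${}^{\vee}W$ plays the role of $V^{\vee}$, so that the roles are swapped correctly. Steps 2 and 3 then go through unchanged.

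The only real obstacle is making sure the sides of the adjunction match the paper's nonstandard duality convention. If the direction comes out wrong (right adjoint instead of left), I would switch to the other dual accordingly. No other difficulty is expected.
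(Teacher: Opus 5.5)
Your proof is correct and is essentially the paper's argument: the paper uses the same adjunction $\on{Hom}_{\mathcal{C}}(V\otimes P,-)\cong\on{Hom}_{\mathcal{C}}(P,V^{\vee}\otimes -)$ and concludes from the right exactness of $V^{\vee}\otimes -$, handling the left rigid case symmetrically. Your conventions for $\eta$, $\varepsilon$ and for ${}^{\vee}W$ match the paper's. Only right exactness of $V^{\vee}\otimes -$ is needed for it to preserve epimorphisms, so the left exactness in your Step 2 is harmless but not used.
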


\begin{proof}
 We consider the case when a right rigid object $V \in \mathcal{C}$ is given. The other is similar. We have $\on{Hom}_{\mathcal{C}}(V\otimes P,-) \cong \on{Hom}_{\mathcal{C}}(P, V^{\vee} \otimes -)$. The former functor is clearly left exact, the latter is right exact, being a composite of two right exact functors. This establishes the exactness of $\on{Hom}_{\mathcal{C}}(V\otimes P,-)$ and hence the projectivity of $V\otimes P$.
\end{proof}

\begin{corollary}\label{cor:tensorprojective}
Given any projective objects $P,Q$ in a finite pre-multitensor category $\mathcal{C}$, we have that the objects $P \otimes Q, P \otimes Q^{\vee}$ and ${}^{\vee}P \otimes Q$ are projective.
\end{corollary}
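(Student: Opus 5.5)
The corollary follows from \autoref{lem:rigidtensorprojective} once we check which factor in each product is rigid on the appropriate side.

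\emph{Projectivity of $P\otimes Q$.} Since $\mathcal{C}$ is finite pre-multitensor, every projective object is rigid. In particular $P$ is right rigid, so \autoref{lem:rigidtensorprojective} with $V=P$ and the projective object $Q$ shows that $P\otimes Q$ is projective. Equally, we could use that $Q$ is left rigid and apply the second half of the lemma.

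\emph{Projectivity of $P\otimes Q^{\vee}$.} The dual $Q^{\vee}$ need not be projective, and need not be rigid itself, so we place the burden on $P$. For this we need $Q^{\vee}$ to be left rigid, which is automatic. The evaluation $\varepsilon: Q\otimes Q^{\vee}\rightarrow\mathbb{1}$ and coevaluation $\eta:\mathbb{1}\rightarrow Q^{\vee}\otimes Q$ exhibit $Q^{\vee}$ as having $Q$ as its left dual, since the zigzag identities are symmetric in this sense, so ${}^{\vee}(Q^{\vee})\cong Q$. We then apply the second half of \autoref{lem:rigidtensorprojective} with $W=Q^{\vee}$ and the projective object $P$.

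\emph{Projectivity of ${}^{\vee}P\otimes Q$.} Symmetrically, ${}^{\vee}P$ is right rigid with right dual $P$. The first half of \autoref{lem:rigidtensorprojective} with $V={}^{\vee}P$ and the projective object $Q$ shows that ${}^{\vee}P\otimes Q$ is projective.

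The only step needing care is the bookkeeping of left versus right duals under the paper's convention that $\eta:\mathbb{1}\rightarrow W\otimes V$ and $\varepsilon:V\otimes W\rightarrow\mathbb{1}$. With $V=Q$ and $W=Q^{\vee}$, the same pair of morphisms read with the roles swapped shows that $Q$ is a left dual of $Q^{\vee}$. Once this is checked, each case is a direct application of \autoref{lem:rigidtensorprojective}.
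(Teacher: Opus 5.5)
Your proof is correct and is the route the paper intends: the corollary is stated without proof as an immediate consequence of \autoref{lem:rigidtensorprojective}. Your identifications ${}^{\vee}(Q^{\vee})\cong Q$ and $({}^{\vee}P)^{\vee}\cong P$ correctly supply the one-sided rigidity of $Q^{\vee}$ and ${}^{\vee}P$ that the lemma requires.
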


Recall from \cite[Section 1.11]{EGNO} the notion of Deligne tensor product of finite categories.
Proceeding as in \cite[Section 4.6]{EGNO}, we find that, if $\mathcal{C}$ and $\mathcal{D}$ are two finite pre-multitensor categories, then $\mathcal{C}\boxtimes\mathcal{D}$ is a finite monoidal category whose monoidal product is right exact in both variables. For completeness, we record the following observation. 

\begin{lemma}
Let $\mathcal{C}$ and $\mathcal{D}$ be two finite pre-multitensor categories. Their Deligne tensor product $\mathcal{C}\boxtimes\mathcal{D}$ is a finite pre-multitensor category.
\end{lemma}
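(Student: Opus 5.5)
We need to show that $\mathcal{C}\boxtimes\mathcal{D}$ is finite abelian, that its tensor product is right exact in each variable, and that its projective objects are rigid. The first two points are recorded just before the statement, following \cite[Section 4.6]{EGNO}, so only rigidity of projectives remains.

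\textbf{Step 1: describe the projectives.} Write $\mathcal{C}\simeq\lmod[][A]$ and $\mathcal{D}\simeq\lmod[][B]$ for finite dimensional algebras $A$ and $B$. Then $\mathcal{C}\boxtimes\mathcal{D}\simeq\lmod[][(A\otimes_{\mathbbm{k}}B)]$, and under this equivalence $P\boxtimes Q$ corresponds to $P\otimes_{\mathbbm{k}}Q$. In particular $A\boxtimes B$ is a projective generator. It follows that every projective object of $\mathcal{C}\boxtimes\mathcal{D}$ is a direct summand of a finite direct sum of objects $P\boxtimes Q$ with $P\in\mathcal{C}$ and $Q\in\mathcal{D}$ projective.

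\textbf{Step 2: exteriors of projectives are rigid.} For projective $P\in\mathcal{C}$ and $Q\in\mathcal{D}$, I take $P^{\vee}\boxtimes Q^{\vee}$ as the candidate right dual of $P\boxtimes Q$. The evaluation and coevaluation are the exterior products of those for $P$ and $Q$, transported along the canonical isomorphisms
\[(P\boxtimes Q)\otimes(P^{\vee}\boxtimes Q^{\vee})\cong (P\otimes P^{\vee})\boxtimes(Q\otimes Q^{\vee}).\]
These isomorphisms come from the monoidal structure on $\mathcal{C}\boxtimes\mathcal{D}$, which is defined on exterior products componentwise. The zigzag identities then follow from those in each factor, using functoriality of $\boxtimes$ in both variables and the coherence of the monoidal structure. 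Left duals are handled in the same way.

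\textbf{Step 3: pass to sums and summands.} In any additive monoidal category with biadditive tensor product, a finite direct sum of right (respectively left) rigid objects is rigid. The same holds for a direct summand of a rigid object. If $X$ is a retract of $V$ via $i\colon X\to V$ and $p\colon V\to X$ with $p\circ i=\on{id}_X$, then $e=i\circ p$ is idempotent. The image of the idempotent $e^{\vee}$ on $V^{\vee}$ exists because $\mathcal{C}\boxtimes\mathcal{D}$ is abelian, and it serves as the dual of $X$. This is the standard argument showing that rigid objects in an idempotent complete category are closed under retracts. Together with Steps 1 and 2, this shows every projective object is rigid.

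\textbf{Main obstacle.} The real content lies in Step 1 and in making precise the componentwise description of the tensor product on exterior products, which is only given up to the universal property of $\boxtimes$. I would handle both through the algebra model $\lmod[][(A\otimes_{\mathbbm{k}}B)]$ and the defining property of the induced tensor product as the right exact extension from $\mathcal{C}\times\mathcal{D}$.
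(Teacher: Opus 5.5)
Your proposal is correct and follows essentially the same route as the paper. The paper notes that the canonical functor $\mathcal{C}\times\mathcal{D}\rightarrow\mathcal{C}\boxtimes\mathcal{D}$ is monoidal, so it carries the rigid object $(P,Q)$ to a rigid object $P\boxtimes Q$; this is your Step 2 in packaged form. It then uses the identification with modules over $A\otimes B$ (via \cite[Proposition 1.11.2]{EGNO}) to see that every projective is a summand of some $P\boxtimes Q$, as in your Step 1. Your Step 3, closure of rigid objects under finite sums and retracts, is used only implicitly in the paper, so spelling it out adds detail rather than changing the argument.
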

\begin{proof}
By construction, the canonical right exact bilinear functor $F:\mathcal{C}\times\mathcal{D}\rightarrow\mathcal{C}\boxtimes\mathcal{D}$ is monoidal. If $P\in\mathcal{C}$ and $Q\in\mathcal{D}$ are projective objects, they are rigid by assumption. It follows that $(P,Q)\in\mathcal{C}\times\mathcal{D}$ is rigid, so that its image $P\boxtimes Q\in\mathcal{C}\boxtimes\mathcal{D}$ is also rigid.
But every projective object of $\mathcal{C}\boxtimes\mathcal{D}$ is a direct summand of $P\boxtimes Q$ for some projective objects $P\in\mathcal{C}$ and $Q\in\mathcal{D}$. Namely, if $A$ and $B$ are finite-dimensional $\mathbbm{k}$-algebras such that $\mathcal{C}\simeq\rmod[\mathbbm{k}][A]$ and $\mathcal{D}\simeq\rmod[\mathbbm{k}][B]$ as $\mathbbm{k}$-linear categories, then the canonical functor $F$ can be identified with the tensor product functor $\rmod[\mathbbm{k}][A]\times\rmod[\mathbbm{k}][B]\rightarrow\rmod[\mathbbm{k}][\,(A\otimes B)]$ thanks to \cite[Proposition 1.11.2]{EGNO}. This concludes the proof.
\end{proof}

\begin{remark}\label{rem:BJSrigid}
It follows from \cite[Definition-Proposition~1.3]{BJS} that a finite monoidal category $\mathcal{C}$ whose monoidal product is right exact in both variables is pre-multitensor if and only if the tensor product functor $T:\mathcal{C}\boxtimes\mathcal{C}\rightarrow\mathcal{C}$ admits a right exact right adjoint as a $\mathcal{C}$-$\mathcal{C}$-bimodule functor.
\end{remark}

\subsection{Finite Module Categories}

The notion of a left module category over an arbitrary monoidal category is unpacked in \cite[Section 7.1]{EGNO}.

\begin{definition}
Let $\mathcal{C}$ be a finite pre-tensor category.
We say that a $\mathcal{C}$-module category $\mathcal{M}$ over a $\mathcal{C}$ is {\it finite} if $\mathcal{M}$ is a finite abelian $\mathbbm{k}$-linear category and the action functor $- \lact_{\mathcal{M}} -:\mathcal{C}\times\mathcal{M}\rightarrow\mathcal{M}$ is bilinear and right exact in both variables.
\end{definition}

\begin{example}
For any algebra $A$ in a finite pre-multitensor category $\mathcal{C}$, the category $\rmod$ of right $A$-module in $\mathcal{C}$ is a finite left $\mathcal{C}$-module category.
\end{example}

Module functors between finite module categories will feature prominently in our subsequent discussion. In particular, it will be necessary to understand when the adjoint of a module functor is again a module functor. We begin by recalling some standard observations in the rigid setting.

\begin{proposition}[{Doctrinal adjunction, \cite[Theorem~1.2]{Ke}, \cite[Theorem~4.13]{HZ}}]\label{doctrinal}
 Let $\mathcal{C}$ be a monoidal category. For any two $\mathcal{C}$-module categories $\mathcal{M},\mathcal{N}$ and any pair of adjoint functors
\[\begin{tikzcd}
	{F: \mathcal{M}} & {\mathcal{N}:G\,,}
	\arrow[""{name=0, anchor=center, inner sep=0}, shift left=2, from=1-1, to=1-2]
	\arrow[""{name=1, anchor=center, inner sep=0}, shift left=2, from=1-2, to=1-1]
	\arrow["\dashv"{anchor=center, rotate=-90}, draw=none, from=0, to=1]
\end{tikzcd}\]
 there is a bijection between oplax $\mathcal{C}$-module structures on $F$ and lax $\mathcal{C}$-module structures on $G$.
\end{proposition}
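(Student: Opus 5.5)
The plan is to prove the doctrinal adjunction by writing down, for an oplax structure on $F$, the mates of its structure maps under the adjunction $F\dashv G$, and then checking that this assignment is an inverse bijection. Fix the unit $\eta:\mathrm{id}_{\mathcal{M}}\Rightarrow GF$ and the counit $\varepsilon:FG\Rightarrow\mathrm{id}_{\mathcal{N}}$. An oplax $\mathcal{C}$-module structure on $F$ is a natural family
$$\phi_{C,M}:F(C\lact M)\rightarrow C\lact F(M)$$
compatible with the module associators and unitors. A lax structure on $G$ is a natural family
$$\psi_{C,N}:C\lact G(N)\rightarrow G(C\lact N)$$
satisfying the analogous coherence conditions.

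Given $\phi$, I would define $\psi_{C,N}$ as the composite
$$C\lact G(N)\xrightarrow{\eta}GF(C\lact G(N))\xrightarrow{G(\phi_{C,G(N)})}G(C\lact FG(N))\xrightarrow{G(C\lact\varepsilon_N)}G(C\lact N).$$
Conversely, given $\psi$, I would define $\phi_{C,M}$ as
$$F(C\lact M)\xrightarrow{F(C\lact\eta_M)}F(C\lact GF(M))\xrightarrow{F(\psi_{C,F(M)})}FG(C\lact F(M))\xrightarrow{\varepsilon}C\lact F(M).$$
For each fixed $C$, these are exactly the mate correspondences between natural transformations $F\circ(C\lact-)\Rightarrow(C\lact-)\circ F$ and $(C\lact-)\circ G\Rightarrow G\circ(C\lact-)$. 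So the fact that the two constructions are mutually inverse and preserve naturality follows from the triangle identities, using the standard bijectivity of the mate correspondence for the adjunctions $F\dashv G$ and $F\dashv G$ and the functors $C\lact-$.

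The remaining step, and the only one requiring real care, is to check that coherence is preserved. Mates are functorial with respect to pasting: the mate of a horizontal composite of squares is the composite of the mates. The associativity axiom for $\phi$ is an equality of two pastings of squares involving $C\lact-$, $D\lact-$ and $(C\otimes D)\lact-$, glued along the module associators. The module associators are invertible, and their mates are just their inverses, possibly up to orientation. Taking mates of both sides therefore yields exactly the associativity axiom for $\psi$. Likewise, the unit axiom $\phi_{\mathbb{1},M}$ composed with the unitor is the mate of the corresponding unit axiom for $\psi$.

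Since the mate correspondence is bijective, an axiom holds for $\phi$ if and only if it holds for $\psi$. The bijection therefore restricts to one between oplax structures on $F$ and lax structures on $G$.

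I expect the main obstacle to be this bookkeeping: verifying that pasting compatibility of mates really carries the associator constraint across. The cleanest route is to phrase it 2-categorically. View the $\mathcal{C}$-action as a pseudofunctor from the delooping of $\mathcal{C}$ to categories; oplax transformations then correspond to lax transformations between the right adjoints, by Kelly's doctrinal adjunction \cite{Ke}. The statement is a direct instance of that result, with no rigidity or exactness hypotheses required.
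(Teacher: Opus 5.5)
Your mates argument is correct and is the standard proof underlying \cite{Ke} and \cite[Theorem~4.13]{HZ}, which the paper simply cites without giving an argument of its own. One small correction: the module associators and unitors enter as 2-cells between the horizontal functors such as $(C\otimes D)\lact-$ and $C\lact(D\lact-)$, and naturality of the mate correspondence carries them across unchanged, not replaced by their inverses (invertibility is never used), which is precisely why the mated oplax axioms come out as exactly the lax axioms for $\psi$.
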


\begin{proposition}[{\cite[Remark~4]{Os}, \cite[Lemma~2.10]{DSPS}}]\label{strengthdsps}
 Let $\mathcal{C}$ be a monoidal category and let $F: \mathcal{M} \rightarrow \mathcal{N}$ be a lax $\mathcal{C}$-module functor, whose lax $\mathcal{C}$-module structure is denoted by $$s_{V,X}: V \lact F(X) \rightarrow F(V\lact X)\,,$$
 for any $V\in\mathcal{C}$ and $X\in\mathcal{M}$.
 For any left rigid object $V\in\mathcal{C}$ and any $X \in \mathcal{M}$, the morphism $s_{V,X}$ is invertible.
 In particular, if $\mathcal{C}$ is left rigid, then every lax $\mathcal{C}$-module functor is strong.
\end{proposition}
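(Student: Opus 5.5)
The claim is that for a left rigid object $V$ and any $X\in\mathcal{M}$, the lax structure map $s_{V,X}: V \lact F(X) \rightarrow F(V\lact X)$ is invertible. The plan is to write down an explicit inverse using the dual ${}^{\vee}V$. With the paper's convention, the left dual comes with an evaluation $\varepsilon: {}^{\vee}V\otimes V \rightarrow \mathbb{1}$ and a coevaluation $\eta:\mathbb{1}\rightarrow V\otimes {}^{\vee}V$ satisfying the zigzag identities; the proof only uses these zigzags, the naturality of $s$, and the lax module axioms (compatibility of $s$ with the associator $(V\otimes W)\lact X\cong V\lact(W\lact X)$ and with the unitor).

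The candidate inverse $t_{V,X}: F(V\lact X)\rightarrow V\lact F(X)$ is the composite
\[
F(V\lact X)\xrightarrow{\eta\lact \mathrm{id}} V\lact {}^{\vee}V\lact F(V\lact X)\xrightarrow{\mathrm{id}\lact s_{{}^{\vee}V,V\lact X}} V\lact F({}^{\vee}V\lact V\lact X)\xrightarrow{\mathrm{id}\lact F(\varepsilon\lact\mathrm{id})} V\lact F(X).
\]
Here the unitors and associators are suppressed. The point of the construction is that $s$ is only ever applied in the forward direction, to ${}^{\vee}V$, so nothing beyond the lax structure is needed.

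We then check the two composites. For $t\circ s = \mathrm{id}$, first use naturality of $\eta\lact-$ to move $s_{V,X}$ past the coevaluation. Then use the lax associativity axiom to merge $s_{{}^{\vee}V,V\lact X}\circ(\mathrm{id}\lact s_{V,X})$ into $s_{{}^{\vee}V\otimes V,X}$. Naturality of $s$ in the $\mathcal{C}$-variable along $\varepsilon$ turns $F(\varepsilon\lact\mathrm{id})\circ s_{{}^{\vee}V\otimes V,X}$ into $s_{\mathbb{1},X}\circ(\varepsilon\lact\mathrm{id})$. The unit axiom identifies $s_{\mathbb{1},X}$ with the identity, and the zigzag identity finishes. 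For $s\circ t=\mathrm{id}$, use naturality of $s_{V,-}$ in the $\mathcal{M}$-variable to pull $s_{V,\,-}$ inside the last map, giving $F(\mathrm{id}\lact\varepsilon\lact \mathrm{id})\circ s_{V,{}^{\vee}V\lact V\lact X}\circ(\mathrm{id}\lact s_{{}^{\vee}V,V\lact X})\circ(\eta\lact\mathrm{id})$. Associativity merges the two $s$'s into $s_{V\otimes{}^{\vee}V,\,V\lact X}$, naturality along $\eta$ together with the unit axiom removes it, and the other zigzag gives the identity.

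The main obstacle is purely bookkeeping: the suppressed coherence isomorphisms and the exact form of the lax associativity axiom. One must check that the diagrams compose in the order needed and that the version of the axiom used, merging $s_{W,V\lact X}\circ(W\lact s_{V,X})$ into $s_{W\otimes V,X}$, is the one available. Alternatively, this is the standard mate argument of \cite{Ke}, and I would cite that as a cross-check. The final sentence of the statement is then immediate: if every object of $\mathcal{C}$ is left rigid, every $s_{V,X}$ is invertible, so the lax module functor is strong.
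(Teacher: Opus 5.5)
Your proposal is correct and is the standard explicit-inverse argument via the evaluation and coevaluation of ${}^{\vee}V$ that underlies \cite[Lemma~2.10]{DSPS} and \cite[Remark~4]{Os}; the paper itself gives no proof beyond citing these. One small correction: both composites $t\circ s$ and $s\circ t$ reduce to the same zigzag $(V\otimes\varepsilon)\circ(\eta\otimes V)=\mathrm{id}_V$, not to two different ones, so the zigzag identity for ${}^{\vee}V$ is never actually used.
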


\begin{corollary}\label{moduleadjoints}
 Let $\mathcal{C}$ be a rigid monoidal category, then any (left or right) adjoint to a $\mathcal{C}$-module functor is a $\mathcal{C}$-module functor.
\end{corollary}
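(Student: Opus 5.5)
The plan is to reduce the statement to the two preceding results, Proposition~\ref{doctrinal} and Proposition~\ref{strengthdsps}. Let $F:\mathcal{M}\rightarrow\mathcal{N}$ be a (strong) $\mathcal{C}$-module functor, and suppose first that $G:\mathcal{N}\rightarrow\mathcal{M}$ is a right adjoint to $F$.

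Since $F$ is strong, it is in particular oplax: we use the inverses $s_{V,X}^{-1}: F(V\lact X)\rightarrow V\lact F(X)$ of its strong structure. Doctrinal adjunction (Proposition~\ref{doctrinal}) then transports this oplax structure on $F$ into a lax $\mathcal{C}$-module structure on $G$. Concretely, the lax structure is the mate
$$V\lact G(Y)\rightarrow GF(V\lact G(Y))\cong G(V\lact FG(Y))\rightarrow G(V\lact Y),$$
built from the unit, the inverse of the strong structure of $F$, and the counit.

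Because $\mathcal{C}$ is rigid, it is in particular left rigid. Proposition~\ref{strengthdsps} therefore shows that every component of this lax structure on $G$ is invertible. Hence $G$ is a strong $\mathcal{C}$-module functor, and by construction the unit and counit are module natural transformations, so the adjunction is one of module functors.

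For a left adjoint $H\dashv F$, I run the dual argument: view $F$ as lax, via $s$ itself, and Proposition~\ref{doctrinal}, read in the opposite direction, gives an oplax structure on $H$. Its components $t_{V,Y}: H(V\lact Y)\rightarrow V\lact H(Y)$ need to be shown invertible. There are two ways to do this:
\begin{itemize}
\item pass to opposite categories, where $H^{\mathrm{op}}$ becomes a right adjoint, an oplax structure becomes lax, and the $\mathcal{C}$-action on $\mathcal{M}^{\mathrm{op}}$ is again by a rigid category; then apply Proposition~\ref{strengthdsps};
\item write down an explicit inverse using the right dual of $V$, in the same manner as the proof of Proposition~\ref{strengthdsps}.
\end{itemize}

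The main obstacle is only this bookkeeping for the left adjoint case: checking which dual (left or right) is needed, and that the opposite-category trick really yields a module category over a rigid monoidal category. Since $\mathcal{C}$ is assumed to be both left and right rigid, either dual is available, and I expect no genuine difficulty.
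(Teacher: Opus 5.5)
Your proposal is correct and matches the paper's intended argument: the corollary is stated as an immediate consequence of doctrinal adjunction (Proposition~\ref{doctrinal}) combined with the invertibility result of Proposition~\ref{strengthdsps}, with the left-adjoint (oplax) case handled by the dual argument. Your bookkeeping there is right. Oplax structure maps are inverted using right duals. In the opposite-category version, $\mathcal{M}^{\mathrm{op}}$ is a module over $\mathcal{C}^{\mathrm{op}}$, in which right duals of $\mathcal{C}$ become left duals, so Proposition~\ref{strengthdsps} applies there.
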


We will employ the following generalization, which is essentially \cite[Lemma 4.2]{BJS}.

\begin{proposition}\label{prop:laxrightexactstrong}
 Let $\mathcal{C}$ be a finite pre-multitensor category, let $\mathcal{M}$ and $\mathcal{N}$ be finite $\mathcal{C}$-module categories. A right exact lax (or oplax) $\mathcal{C}$-module functor $F: \mathcal{M} \rightarrow \mathcal{N}$ is automatically a strong $\mathcal{C}$-module functor.
\end{proposition}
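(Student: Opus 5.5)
We prove that a right exact lax $\mathcal{C}$-module functor $F$ is strong; the oplax case is symmetric, with right duals of projectives used in place of left duals. By \autoref{strengthdsps}, the structure map $s_{V,X}: V\lact F(X)\rightarrow F(V\lact X)$ is invertible whenever $V$ is left rigid. In a finite pre-multitensor category every projective object is rigid, so $s_{P,X}$ is an isomorphism for every projective $P$ and every $X\in\mathcal{M}$. The task is to pass from projective $V$ to arbitrary $V$ using right exactness.

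Fix $X\in\mathcal{M}$. Since $\mathcal{C}$ is finite abelian, every $V\in\mathcal{C}$ admits a projective presentation $P_1\rightarrow P_0\rightarrow V\rightarrow 0$. We compare the two functors
\[ V\mapsto V\lact F(X) \quad\text{and}\quad V\mapsto F(V\lact X)\,, \]
both from $\mathcal{C}$ to $\mathcal{N}$. The first is right exact because the action of $\mathcal{C}$ on $\mathcal{N}$ is right exact in the first variable. The second is the composite of $-\lact X$, which is right exact, with $F$, which is right exact by hypothesis, so it is right exact as well.

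The family $s_{-,X}$ is natural in $V$; this is part of the data of a lax module functor. Applying both functors to the presentation therefore gives a commutative ladder of two right exact rows,
\[ P_1\lact F(X)\rightarrow P_0\lact F(X)\rightarrow V\lact F(X)\rightarrow 0 \]
and
\[ F(P_1\lact X)\rightarrow F(P_0\lact X)\rightarrow F(V\lact X)\rightarrow 0\,, \]
with vertical maps $s_{P_1,X}$, $s_{P_0,X}$ and $s_{V,X}$. The first two vertical maps are isomorphisms. Each $V$-term is the cokernel of the map preceding it in its row, so by the universal property of cokernels (equivalently, the five lemma) $s_{V,X}$ is an isomorphism. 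Hence $F$ is strong.

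The only point that needs care is checking that the hypotheses of \autoref{strengthdsps} hold for projective $P$. For a lax structure on a left module functor, that result needs $P$ to be left rigid, and this holds since projectives are rigid. For the oplax case we use the mirror statement, in which right duals replace left duals; it follows from \autoref{strengthdsps} applied to the opposite categories. The argument is otherwise formal, and no exactness of $F$ beyond right exactness is needed.
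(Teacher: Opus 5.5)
Your proof is correct and follows essentially the same route as the paper's. Both arguments get invertibility of $s_{P,X}$ for projective (hence rigid) $P$ from \autoref{strengthdsps}, then compare the two right exact rows obtained from a projective presentation $P_1\rightarrow P_0\twoheadrightarrow V$ and conclude by the five lemma, with the oplax case handled symmetrically.
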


\begin{proof}
 We prove the lax case, the oplax case is similar. By \autoref{strengthdsps}, for any projective object $P \in \mathcal{C}$, the morphism $s_{P,X}: P \lact F(X) \rightarrow F(P \lact X)$ is invertible, by rigidity of projective objects. Now take any $V \in \mathcal{C}$ and choose a projective presentation $P_{1} \rightarrow P_{0} \twoheadrightarrow V$ of $V$ in $\mathcal{C}$. By right exactness of the actions in both variables and right exactness of $F$, the following commutative diagram has exact rows:
\[\begin{tikzcd}
	{P_{1}\lact F(X)} & {P_{0}\lact F(X)} & {V\lact F(X)} & 0 \\
	{F(P_{1} \lact X)} & {F(P_{0} \lact X)} & {F(V \lact X)} & 0
	\arrow[from=1-1, to=1-2]
	\arrow["\cong"', from=1-1, to=2-1]
	\arrow[two heads, from=1-2, to=1-3]
	\arrow["\cong"', from=1-2, to=2-2]
	\arrow[from=1-3, to=1-4]
	\arrow["{s_{V,X}}", from=1-3, to=2-3]
	\arrow[from=2-1, to=2-2]
	\arrow[two heads, from=2-2, to=2-3]
	\arrow[from=2-3, to=2-4]
\end{tikzcd}\]
 Via the five lemma, the invertibility of $s_{V,X}$ follows from the invertibility of $s_{P_{0},X}$ and $s_{P_{1},X}$.
\end{proof}

Let $\mathcal{C}$ be a finite pre-multitensor category, and let $\mathcal{M}$ be a finite left $\mathcal{C}$-module category. The functor $- \lact_{\mathcal{M}} -:\mathcal{C}\times\mathcal{M}\rightarrow\mathcal{M}$ is in particular right exact in $\mathcal{C}$ by assumption. Thus, for every $M\in\mathcal{M}$, the functor $- \lact_{\mathcal{M}} M$ admits a right adjoint, which we denote by $\underline{\on{Hom}}(M,-)$.
Said differently, for any $C\in\mathcal{C}$ and $M,N\in\mathcal{M}$, there is a natural isomorphism
\begin{equation}\label{eq:enrichedhom}
\on{Hom}_{\mathcal{M}}(C\lact M, N)\cong \on{Hom}_{\mathcal{C}}(C, \underline{\on{Hom}}(M,N))\,.
\end{equation}
The functor $\underline{\on{Hom}}(-,-):\mathcal{M}^{\on{op}}\times\mathcal{M}\rightarrow\mathcal{C}$ supplies $\mathcal{M}$ with the structure of a $\mathcal{C}$-enriched category. In particular, for any $M\in\mathcal{M}$, we have that $\underline{\on{End}}(M)$ is an algebra in $\mathcal{C}$.

\begin{definition}
An object $P\in\mathcal{M}$ is {\it $\mathcal{C}$-projective} if $\underline{\on{Hom}}(P,-):\mathcal{M}\rightarrow\mathcal{C}$ is right exact.
An object $P\in\mathcal{M}$ is a {\it $\mathcal{C}$-generator} if $\underline{\on{Hom}}(P,-):\mathcal{M}\rightarrow\mathcal{C}$ is faithful.
\end{definition}

\noindent As the next results demonstrate, the existence of $\mathcal{C}$-projective generators affords us a dictionary between finite $\mathcal{C}$-module categories and algebras in $\mathcal{C}$.

\begin{theorem}[{\cite[Theorem 2.24]{DSPS}, \cite{Os}}]\label{thm:finitemodulemodule}
Let $\mathcal{C}$ be a finite pre-multitensor category, and let $\mathcal{M}$ be a finite $\mathcal{C}$-module category. If $P$ is a $\mathcal{C}$-projective generator in $\mathcal{M}$, then the functor
$$\underline{\on{Hom}}(P,-):\mathcal{M}\rightarrow \rmod[\cat{C}][\ \underline{\on{End}}(P)]$$
is an equivalence of left $\mathcal{C}$-module categories.
\end{theorem}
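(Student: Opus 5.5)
The strategy is the standard Barr–Beck / Ostrik argument, adapted to the setting where only projective objects of $\mathcal{C}$ are rigid. Write $A:=\underline{\on{End}}(P)$ and $H:=\underline{\on{Hom}}(P,-):\mathcal{M}\rightarrow\mathcal{C}$.

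\textbf{Step 1: $H$ factors through right $A$-modules.} By the enriched structure \eqref{eq:enrichedhom}, composition of internal homs makes $H(M)=\underline{\on{Hom}}(P,M)$ into a right module over $A$. This gives a functor $\mathcal{M}\rightarrow\rmod[\cat{C}][\ A]$.

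\textbf{Step 2: $H$ is a strong $\mathcal{C}$-module functor.} $H$ is right adjoint to the action $-\lact P$, which is a $\mathcal{C}$-module functor. By doctrinal adjunction (\autoref{doctrinal}), $H$ therefore acquires a lax $\mathcal{C}$-module structure $C\otimes H(M)\rightarrow H(C\lact M)$. By assumption $H$ is right exact, because $P$ is $\mathcal{C}$-projective. Hence \autoref{prop:laxrightexactstrong} upgrades this lax structure to a strong one. Its compatibility with the $A$-action is formal, so $H$ is a strong module functor into $\rmod[\cat{C}][\ A]$.

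\textbf{Step 3: $H$ is an equivalence, via monadicity.}
\begin{itemize}
\item $H$ has the left adjoint $-\lact P$, is exact (it is left exact as a right adjoint and right exact by hypothesis), and is faithful since $P$ is a $\mathcal{C}$-generator.
\item A faithful exact functor between abelian categories reflects isomorphisms and preserves coequalizers. Barr–Beck therefore identifies $\mathcal{M}$ with modules over the monad $T=H(-\lact P)$ on $\mathcal{C}$.
\item The strong module structure from Step 2 gives $T(C)=H(C\lact P)\cong C\otimes H(P)=C\otimes A$.
\item One checks that under this isomorphism the monad multiplication and unit correspond to the multiplication and unit of $A$. These maps are built from the (co)units of the adjunction together with the module constraints, so the check reduces to standard coherence.
\end{itemize}
Consequently, $T$-modules are exactly right $A$-modules in $\mathcal{C}$, and the comparison functor is $H$ itself.

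\textbf{Main obstacle.} The critical step is Step 2, specifically that the lax constraint $C\otimes H(M)\rightarrow H(C\lact M)$ is invertible for \emph{non-rigid} $C$. In the rigid case this is automatic, but here it works only because $H$ is right exact. This is precisely the reason the hypothesis is $\mathcal{C}$-projectivity rather than something weaker: right exactness of $H$ lets one present an arbitrary $C$ by projectives, where \autoref{strengthdsps} applies, and pass to $C$ via the five lemma, exactly as in the proof of \autoref{prop:laxrightexactstrong}.

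Once Step 2 is in place, identifying $T$ with $-\otimes A$ is a routine diagram chase, and the $\mathcal{C}$-module compatibility of the equivalence follows from Step 2.
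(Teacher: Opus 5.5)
Your proposal is correct. It is the standard Barr--Beck monadicity argument behind the cited \cite[Theorem~2.24]{DSPS}; the paper gives no proof of its own. The one adaptation needed in the pre-multitensor setting is exactly your Step~2, where right exactness of $\underline{\on{Hom}}(P,-)$ together with \autoref{prop:laxrightexactstrong} replaces rigidity of $\mathcal{C}$ in making the doctrinal lax module structure strong.

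A small quibble with your closing remark: $\mathcal{C}$-projectivity is needed even for rigid $\mathcal{C}$ (take $\mathcal{C}=\mathrm{Vec}$ and a non-projective generator), namely for the monadicity step. So Step~2 is not the sole reason for that hypothesis.
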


\begin{lemma}[{\cite[Lemma 2.22]{DSPS}}]\label{lem:finitemodulegenerator}
    Let $\mathcal{M}$ be a finite $\mathcal{C}$-module category. Any projective generator of $\mathcal{M}$ is a $\cat{C}$-projective generator of $\cat{M}$. In particular, any finite $\cat{C}$-module category admits a $\cat{C}$-projective generator.
\end{lemma}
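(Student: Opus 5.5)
The plan has two parts: show that $\underline{\on{Hom}}(P,-)$ is right exact, and then show that it is faithful. Let $P$ be a projective generator of $\mathcal{M}$.

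\emph{$\mathcal{C}$-projectivity.} I will show that $\underline{\on{Hom}}(P,-)$ is right exact, in fact exact. By \autoref{eq:enrichedhom}, applying $\on{Hom}_{\mathcal{C}}(Q,-)$ to $\underline{\on{Hom}}(P,-)$ gives, for every $Q\in\mathcal{C}$,
$$\on{Hom}_{\mathcal{C}}(Q,\underline{\on{Hom}}(P,-))\cong\on{Hom}_{\mathcal{M}}(Q\lact P,-)\,.$$
Now restrict to projective objects $Q$ of $\mathcal{C}$. Since $Q$ is rigid, the functor $Q\lact -$ admits the right exact right adjoint $Q^{\vee}\lact -$. (Here one checks the convention: the zigzag maps for the dual induce the adjunction $\on{Hom}(Q\lact X,Y)\cong\on{Hom}(X,Q^{\vee}\lact Y)$, or its left-dual analogue; whichever side appears, it is an action by a rigid object and hence exact.) Consequently $Q\lact P$ is projective in $\mathcal{M}$, by the argument of \autoref{lem:rigidtensorprojective}: $\on{Hom}_{\mathcal{M}}(Q\lact P,-)\cong\on{Hom}_{\mathcal{M}}(P,Q^{\vee}\lact -)$ is exact. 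Hence $\on{Hom}_{\mathcal{C}}(Q,\underline{\on{Hom}}(P,-))$ is exact for every projective $Q$. Taking $Q$ to be a projective generator of the finite category $\mathcal{C}$, a sequence in $\mathcal{C}$ is exact if and only if it remains exact after applying $\on{Hom}_{\mathcal{C}}(Q,-)$. Since $\underline{\on{Hom}}(P,-)$ is already left exact as a right adjoint, it follows that it is exact, and in particular right exact.

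\emph{Generation.} I will show that $\underline{\on{Hom}}(P,-)$ is faithful. Because the functor is exact, it suffices to show that it kills no nonzero object. Let $N\neq 0$. Since $P$ is a projective generator, $\on{Hom}_{\mathcal{M}}(P,N)\neq 0$. The unit satisfies $\mathbb{1}\lact P\cong P$, so
$$\on{Hom}_{\mathcal{C}}(\mathbb{1},\underline{\on{Hom}}(P,N))\cong\on{Hom}_{\mathcal{M}}(P,N)\neq 0\,,$$
and therefore $\underline{\on{Hom}}(P,N)\neq 0$. An exact functor that kills no nonzero object is faithful: a morphism $f$ with $F(f)=0$ has $F(\on{im} f)=\on{im}F(f)=0$, so $\on{im}f=0$ and $f=0$.

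\emph{Existence.} The final claim follows at once, since the finite abelian category $\mathcal{M}$ has a projective generator.

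The main obstacle is the first part, specifically identifying correctly which dual (left or right) gives the right adjoint of $Q\lact-$. Since every projective object of $\mathcal{C}$ is rigid on both sides, whichever dual turns out to be needed exists, so the argument goes through in either case.
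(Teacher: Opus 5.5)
Your proof is correct, and it is the standard adjunction argument for this fact; the paper itself gives no proof and only cites \cite[Lemma 2.22]{DSPS}. Your version uses rigidity only of projective objects of $\mathcal{C}$, which is exactly what is available in the pre-multitensor setting. One minor quibble: your hedge that $Q^{\vee}\lact-$ is ``an action by a rigid object'' is not quite right, since $Q^{\vee}$ need not itself be rigid (cf.\ \autoref{example:bimodka}). However, you only need right exactness of $Q^{\vee}\lact-$, which comes from right exactness of the action, so nothing breaks.
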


\subsection{Dual Pre-Tensor Categories}

As a primer towards our discussion of Morita equivalence between finite pre-multitensor categories in the next section, we discuss the properties of the monoidal category $\on{End}^{\on{rex}}_{\mathcal{C}}(\mathcal{M})$ of right exact $\mathcal{C}$-module endofunctors on $\mathcal{M}$. This monoidal category is referred to as the ``dual'' to $\mathcal{C}$ with respect to $\mathcal{M}$ in \cite{EGNO}.
Thanks to \autoref{thm:finitemodulemodule} and \autoref{lem:finitemodulegenerator}, there exists an algebra $A$ in $\mathcal{C}$ such that $\mathcal{M}\simeq\rmod$ as left $\mathcal{C}$-module categories.
Thanks to a variant of the Eilenberg-Watts theorem (see, for instance, \cite[Proposition~7.11.1]{EGNO}), this can be leveraged to study $\on{End}^{\on{rex}}_{\mathcal{C}}(\mathcal{M})$.

\begin{lemma}\label{lem:EilenbergWatts}
 For a finite pre-multitensor category $\mathcal{C}$ and any algebra $A \in \mathcal{C}$, there is a monoidal equivalence
 $$(\bimod)^{\on{mop}}\simeq\on{End^{rex}_{\mathcal{C}}}(\rmod)$$
 between the monoidal opposite of the category of $A$-$A$-bimodules in $\mathcal{C}$ and the category of right exact $\mathcal{C}$-module endofunctors on $\rmod$.
\end{lemma}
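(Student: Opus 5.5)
We will construct a monoidal functor $\Phi:(\bimod)^{\on{mop}}\rightarrow\on{End^{rex}_{\mathcal{C}}}(\rmod)$ by $\Phi(M) := -\otimes_A M$, and then run the usual Eilenberg--Watts argument. The inverse sends a functor $F$ to $F(A)$, using that $\rmod$ has the $\mathcal{C}$-projective generator $A$.

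First, $\Phi$ is well defined. For a bimodule $M$ and $X\in\rmod$, the object $X\otimes_A M$ is the coequalizer of $X\otimes A\otimes M\rightrightarrows X\otimes M$. It exists because $\mathcal{C}$ is abelian, and it is a right $A$-module because $\otimes$ is right exact in each variable, so $-\otimes A$ preserves this coequalizer. The functor $-\otimes_A M$ is right exact, since coequalizers commute with the right exact functor $-\otimes M$. The left $\mathcal{C}$-module structure $V\otimes(X\otimes_A M)\cong (V\otimes X)\otimes_A M$ comes from right exactness of $V\otimes -$ together with associativity. The monoidal structure $(X\otimes_A M)\otimes_A N\cong X\otimes_A(M\otimes_A N)$ and the unit $X\otimes_A A\cong X$ are standard; the order reversal is why the monoidal opposite appears.

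Second, $\Phi$ is essentially surjective. Given a right exact $\mathcal{C}$-module functor $F$, set $M:=F(A)$. Its right $A$-action is $F$ applied to multiplication, and its left $A$-action is
$$A\otimes F(A)\cong F(A\otimes A)\xrightarrow{F(m)}F(A)\,,$$
using the module structure of $F$. Here I would note that by \autoref{prop:laxrightexactstrong} it does not matter whether $F$ is lax or strong. For any $X\in\rmod$ there is the bar presentation $X\otimes A\otimes A\rightrightarrows X\otimes A\to X$ in $\rmod$, which is a coequalizer of free modules. Since $F$ is right exact and a module functor, we get $F(X\otimes A)\cong X\otimes F(A)$, and comparing the two coequalizers gives $F(X)\cong X\otimes_A F(A)$, naturally in $X$ and compatibly with the module structure.

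Third, $\Phi$ is fully faithful. A module natural transformation $\theta:-\otimes_A M\Rightarrow -\otimes_A N$ is determined by its component at $A$, which is a bimodule map $M\to N$. Indeed, the component at a free module $X\otimes A$ is forced by $\mathcal{C}$-linearity, and the component at a general $X$ is then forced by the bar coequalizer.

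I expect the main obstacle to be careful bookkeeping rather than a new idea: everything works because $\otimes$ is right exact in both variables and the action on $\rmod$ is right exact, so all coequalizers are preserved where needed. If pressed, I would simply cite \cite[Proposition~7.11.1]{EGNO}, whose proof uses only these exactness properties and not rigidity.
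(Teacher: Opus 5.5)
Your argument is correct and is essentially the paper's own route: the paper gives no separate proof and simply invokes the Eilenberg--Watts argument of \cite[Proposition~7.11.1]{EGNO}, which goes through $M\mapsto -\otimes_A M$, $F\mapsto F(A)$ and the bar presentation, using only right exactness of $\otimes$ and of the action. One small slip: the right $A$-action on $F(A)$ is just the one it already carries as an object of $\rmod$ --- it is applying $F$ to the multiplication (together with the module structure of $F$) that produces the left action, as your displayed formula correctly shows.
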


Given an algebra $A$ in a finite pre-multitensor category, we wish to characterize when the monoidal category $\bimod$ is again a finite pre-multitensor category.
In order to do so, it turns out that one must impose a condition of the underlying object of $A$ in $\mathcal{C}$ viewed as a left $\mathcal{C}\boxtimes\mathcal{C}^{\on{mop}}$-module category.
This is justified because $\mathcal{C}$ is a monoidal category, so that we may view it as a $\mathcal{C}$-$\mathcal{C}$-bimodule category. Equivalently, using the Deligne tensor product, we can think of $\mathcal{C}$ as a left $\mathcal{C}\boxtimes\mathcal{C}^{\on{mop}}$-module category.
We begin establishing the following preliminary technical result.

\begin{lemma}\label{lem:CCprojectiveexact}
Let $\mathcal{C}$ be a finite pre-multitensor category.
An object $V \in \cat{C}$ is $\mathcal{C}\boxtimes\mathcal{C}^{\on{mop}}$-projective if and only if, for every projective object $P$ of $\mathcal{C}$, the functor $V \otimes P \otimes -: \cat{C} \rightarrow \cat{C}$ is exact.
\end{lemma}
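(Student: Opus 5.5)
The plan is to compute the internal Hom of $V$ for the $\mathcal{C}\boxtimes\mathcal{C}^{\on{mop}}$-action, and then test right exactness against projectives of $\mathcal{C}\boxtimes\mathcal{C}^{\on{mop}}$. Write $\underline{\on{Hom}}(V,-):\mathcal{C}\rightarrow\mathcal{C}\boxtimes\mathcal{C}^{\on{mop}}$ for the right adjoint of $-\lact V$, where $(X\boxtimes Y)\lact V = X\otimes V\otimes Y$. This functor is automatically left exact, being a right adjoint. So $V$ is $\mathcal{C}\boxtimes\mathcal{C}^{\on{mop}}$-projective if and only if $\underline{\on{Hom}}(V,-)$ is exact.

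Next, I would detect exactness using projectives. A sequence in the finite abelian category $\mathcal{C}\boxtimes\mathcal{C}^{\on{mop}}$ is exact if and only if it becomes exact after applying $\on{Hom}(Q,-)$ for $Q$ running through a projective generator. As recalled in the proof of the lemma on Deligne products, we can take $Q = P\boxtimes P'$ with $P,P'$ projective in $\mathcal{C}$. By \eqref{eq:enrichedhom}, there is a natural isomorphism
\[
\on{Hom}_{\mathcal{C}\boxtimes\mathcal{C}^{\on{mop}}}(P\boxtimes P', \underline{\on{Hom}}(V,-))\cong \on{Hom}_{\mathcal{C}}(P\otimes V\otimes P', -)\,.
\]
Hence $V$ is $\mathcal{C}\boxtimes\mathcal{C}^{\on{mop}}$-projective if and only if $P\otimes V\otimes P'$ is projective in $\mathcal{C}$ for all projectives $P,P'$.

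It remains to relate this to exactness of $V\otimes P\otimes -$. Since $P'$ is rigid, we have
\[
\on{Hom}(P\otimes V\otimes P', -)\cong \on{Hom}(P, -\otimes {}^{\vee}P')\cong \on{Hom}(V\otimes P', {P}^{\vee}\otimes - \otimes{}^{\vee}P')
\]
(up to which side of duals the paper's conventions dictate). Here the tensor functors with rigid objects are exact, since they have both adjoints. So I would rewrite $\on{Hom}(P\otimes V\otimes P',-)$ as $\on{Hom}(P, V\otimes P'\otimes{}^\vee\!(\ldots))$-type expressions. The cleanest route is to use the adjunction the other way:
\[
\on{Hom}(P\otimes V\otimes P', X)\cong\on{Hom}(V\otimes P', P^{\vee}\otimes X)\,,
\]
and similarly to move $P'$ to the right. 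This yields $\on{Hom}(P\otimes V\otimes P',-)\cong \on{Hom}(P', G(-))$, where $G$ is a right adjoint of $P\otimes V\otimes -$. From this, projectivity of all $P\otimes V\otimes P'$ is equivalent to $\on{Hom}(P',G(-))$ being exact for all projective $P'$, that is, to $G$ being exact. By adjunction, that is in turn equivalent to $P\otimes V\otimes -$ preserving projectives, i.e.\ having an exact right adjoint.

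The main obstacle is matching this with the stated condition, which concerns exactness of $V\otimes P\otimes -$ rather than $P\otimes V\otimes -$. For this I would use that $P\otimes V \otimes -$ is right exact. Its exactness versus that of $V\otimes P\otimes -$ should be handled by conjugating with rigid projectives: $P\otimes V\otimes Q\otimes -$ factors through exact functors given by tensoring with rigid objects. I would also use that $\mathbb{1}$ is a quotient of $P^{\vee}\otimes P$-type objects when $P$ is a projective generator (via the counit). This lets one pass between left exactness of $V\otimes P\otimes -$ and preservation of projectives by $P\otimes V\otimes -$, using that an exact functor between finite categories with rigid-projective structure corresponds under duality to a functor preserving projectives. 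I expect the bookkeeping of left versus right duals here to be the delicate point. I would first check the case $\mathcal{C}=\bimod[\Bbbk][A]$, where $V$ is $\mathcal{C}\boxtimes\mathcal{C}^{\on{mop}}$-projective exactly when it is projective as an $A$-$A$-bimodule, to fix the correct orientation.
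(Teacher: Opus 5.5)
\textbf{There is a genuine gap: the actual equivalence is never proved.}

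Your first reduction is correct and matches the paper's opening observation. Via $\on{Hom}(P\boxtimes P',\underline{\on{Hom}}(V,-))\cong\on{Hom}_{\mathcal{C}}(P\otimes V\otimes P',-)$, the object $V$ is $\mathcal{C}\boxtimes\mathcal{C}^{\on{mop}}$-projective if and only if $Q\otimes V\otimes P$ is projective for all projective $Q,P$. But this is the easy part. The content of the lemma is the equivalence of this condition with exactness of $V\otimes P\otimes -$, and you leave that as an ``obstacle'' handled only by heuristics. Your reformulation ``$P\otimes V\otimes -$ preserves projectives'' just restates the triple-product condition, so it gets you no closer. The mismatch is also not a left-versus-right bookkeeping issue.

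\textbf{Exactness implies projectivity.} The comparison that works is between $W\otimes -$ and $-\otimes W$ for the single object $W=V\otimes P$. These are linked once $W$ has a left dual, since then ${}^{\vee}W\otimes-\dashv W\otimes -$ and $-\otimes W\dashv -\otimes{}^{\vee}W$. The missing idea is that exactness of $V\otimes P\otimes-$ \emph{produces} such a dual, even though $V$ itself is not rigid.
\begin{itemize}
\item Being exact, $V\otimes P\otimes -$ has a left adjoint.
\item That left adjoint is right exact and, by doctrinal adjunction, oplax as a right $\mathcal{C}$-module functor.
\item By \autoref{prop:laxrightexactstrong} it is strong, hence of the form $X\otimes -$ with $X={}^{\vee}(V\otimes P)$.
\item Then $\on{Hom}(Q\otimes V\otimes P,-)\cong\on{Hom}(Q,-\otimes{}^{\vee}(V\otimes P))$ is right exact, so $Q\otimes V\otimes P$ is projective.
\end{itemize}
Your phrase ``corresponds under duality'' presupposes exactly this dual. ``Conjugating with rigid projectives'' cannot supply it, because $V$ sits in the middle.

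\textbf{Projectivity implies exactness.} Here you need the observation that a projective triple product is rigid. The object $P_{\mathbbm{1}}\otimes V\otimes P$ is projective, hence rigid, so $P_{\mathbbm{1}}\otimes V\otimes P\otimes-$ is exact. Moreover, $P_{\mathbbm{1}}\otimes-$ is exact and faithful, since $P_{\mathbbm{1}}\twoheadrightarrow\mathbbm{1}$ induces an epimorphism onto the identity functor. So it reflects exactness, and $V\otimes P\otimes -$ is exact. Your remark about $\mathbbm{1}$ being a quotient points this way but is not assembled into an argument.

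\textbf{Smaller errors.}
\begin{itemize}
\item Your proposed sanity check is miscalibrated. For $A$-$A$-bimodules, every bimodule $V$ is $\mathcal{C}\boxtimes\mathcal{C}^{\on{mop}}$-projective, not only the projective ones. Indeed, $(A\otimes_{\Bbbk}A)\otimes_A V\otimes_A(A\otimes_{\Bbbk}A)\cong A\otimes_{\Bbbk}V\otimes_{\Bbbk}A$ is free.
\item Your first displayed adjunction should read $\on{Hom}(P\otimes V\otimes P',-)\cong\on{Hom}(P\otimes V,-\otimes{}^{\vee}P')$.
\end{itemize}
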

\begin{proof}
Firstly, observe that an object $V$ in $\mathcal{C}$ is $\mathcal{C}\boxtimes\mathcal{C}^{\on{mop}}$-projective if and only if $Q\otimes V\otimes P\in\mathcal{C}$ is projective for every projective objects $P,Q\in\mathcal{C}$. We will use this fact repeatedly below.

Now assume that $V \otimes P \otimes -: \cat{C} \rightarrow \cat{C}$ is exact, so that it has a left adjoint. Since its left adjoint is automatically an oplax module functor and also automatically right exact, we find that it is a strong (right) module endofunctor of $\cat{C}$ thanks to \autoref{prop:laxrightexactstrong}. 
Thus, the object $V \otimes P$ is in fact left rigid, so that there are adjunctions
    \[
    \leftdual{(V\otimes P)} \otimes - \dashv V \otimes P \otimes -\,,\quad
    -\otimes V \otimes P \dashv -\otimes  \leftdual{(V \otimes P)}\,.
    \]
For any projective object $Q$ of $\mathcal{C}$, we therefore have that
    \[
    \on{Hom}_{\cat{C}}(Q \otimes V \otimes P, -) \cong \on{Hom}_{\cat{C}}(Q, - \otimes \leftdual{(V \otimes P)})\,.
    \]
As the right hand-side is a composite of right exact functors, the left hand-side is right exact, that is, the object $P \otimes V \otimes Q$ is projective.

Conversely, let $V$ be an object of $\mathcal{C}$ such that $Q \otimes V \otimes P$ is projective for all projective objects $P$ and $Q$ of $\mathcal{C}$. In particular, if we let $Q=P_{\mathbbm{1}}$ be the projective cover of the monoidal unit, we have that $P_{\mathbb{1}} \otimes V \otimes P \otimes -$ is exact. But the functor $P_{\mathbb{1}} \otimes -:\mathcal{C}\rightarrow\mathcal{C}$ is exact and faithful, so that it reflects exactness. It follows that $P_{\mathbb{1}} \otimes V \otimes P \otimes -$ is exact if and only if $V \otimes P \otimes -$ is exact, as desired.
\end{proof}

\begin{proposition}\label{prop:bimodulepretensor}
 Let $\mathcal{C}$ be a finite pre-multitensor category, and let $A$ be an algebra in $\mathcal{C}$. Then, $\bimod$ is a finite pre-multitensor category if and only if the object $A$ is $\mathcal{C}\boxtimes\mathcal{C}^{\on{mop}}$-projective.
\end{proposition}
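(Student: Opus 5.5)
The category $\bimod$ is finite abelian, and its tensor product $\otimes_A$ is right exact in both variables. By \autoref{lem:EilenbergWatts} it is identified with $\on{End}^{\on{rex}}_{\mathcal{C}}(\rmod)^{\on{mop}}$. So the content of the statement is that every projective bimodule is rigid exactly when $A$ is $\mathcal{C}\boxtimes\mathcal{C}^{\on{mop}}$-projective. My approach is to first identify the projective objects of $\bimod$. View $\bimod$ as right $A\otimes A$-modules in the regular left $\mathcal{C}\boxtimes\mathcal{C}^{\on{mop}}$-module $\mathcal{C}$. The free bimodule functor $A\otimes - \otimes A:\mathcal{C}\to\bimod$ is left adjoint to the forgetful functor. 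Every bimodule $M$ is a quotient of $A\otimes M\otimes A$, and hence of $A\otimes P\otimes A$ with $P$ projective in $\mathcal{C}$. The forgetful functor is exact, so $A\otimes P\otimes A$ is projective in $\bimod$. Therefore the projectives of $\bimod$ are precisely the summands of the free bimodules $A\otimes P\otimes A$ with $P$ projective in $\mathcal{C}$.

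\emph{Sufficiency.} Assume $A$ is $\mathcal{C}\boxtimes\mathcal{C}^{\on{mop}}$-projective. It suffices to show that each $A\otimes P\otimes A$ is rigid in $\bimod$, since summands of rigid objects are rigid. For the right dual I would take the candidate $A\otimes P^{\vee}\otimes A$. The unit $A\to (A\otimes P^{\vee}\otimes A)\otimes_A(A\otimes P\otimes A)\cong A\otimes P^{\vee}\otimes A\otimes P\otimes A$ should be built from the coevaluation of $P$ together with the multiplication and unit of $A$. The counit $(A\otimes P\otimes A)\otimes_A(A\otimes P^{\vee}\otimes A)\cong A\otimes P\otimes A\otimes P^{\vee}\otimes A\to A$ would be $\mathrm{ev}_P$ applied after inserting $A$. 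But this naive counit requires contracting $P$ against $P^\vee$ across a copy of $A$, which is not available. So instead I would argue more abstractly, following \autoref{rem:BJSrigid}: show that the tensor functor $\bimod\boxtimes\bimod\to\bimod$ has a right exact right adjoint which is a bimodule functor.

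The cleaner path is via \autoref{lem:CCprojectiveexact}. Because $V\otimes P\otimes -$ is exact for $V=A$ and $P$ projective, the proof of that lemma shows that $A\otimes P$ is left rigid in $\mathcal{C}$, and symmetrically $P\otimes A$ is right rigid. Then $\on{Hom}_{A\text{-}A}(A\otimes P\otimes A,-)\cong\on{Hom}_{\mathcal{C}}(P,-)$. From this, the functors $(A\otimes P\otimes A)\otimes_A -\cong A\otimes P\otimes -$ on left $A$-modules, and $-\otimes_A(A\otimes P\otimes A)$, have explicit adjoints given by tensoring with $\leftdual{(A\otimes P)}$-type objects equipped with induced actions. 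By \autoref{prop:laxrightexactstrong} these adjoints are strong module functors, because they are right exact, which holds since $A\otimes P\otimes -$ is exact. A right exact adjoint that is a module functor for the regular bimodule category is tensoring with a dual object. This gives the left and right duals of $A\otimes P\otimes A$ in $\bimod$.

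\emph{Necessity.} Suppose $\bimod$ is pre-multitensor. Then $A\otimes P\otimes A$ is rigid, so the functor $(A\otimes P\otimes A)\otimes_A-$ on $\rmod$ or $\bimod$ has a left adjoint and is therefore exact. Applying it to free modules $A\otimes X$ recovers $A\otimes P\otimes A\otimes X$ as a functor of $X$. Since the forgetful functor is exact and faithful, $X\mapsto A\otimes P\otimes A\otimes X$ is exact. By the final argument of \autoref{lem:CCprojectiveexact}, applied with $Q=A\otimes P$ and with exactness reflected, this forces $A\otimes P\otimes -$ to be exact. Concretely, $A\otimes P'\otimes -$ with $P'=P\otimes P_{\mathbb 1}$, tensored suitably, is exact, and every projective is a summand of such. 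Then \autoref{lem:CCprojectiveexact} gives $\mathcal{C}\boxtimes\mathcal{C}^{\on{mop}}$-projectivity. The main obstacle is the sufficiency direction: verifying that the adjoints of $(A\otimes P\otimes A)\otimes_A-$ are representable by bimodules compatibly on both sides, that is, upgrading one-sided rigidity of $A\otimes P$ in $\mathcal{C}$ to two-sided rigidity in $\bimod$.
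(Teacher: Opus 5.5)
Your proof has a genuine gap in each direction: a missing step for sufficiency and an invalid step for necessity.

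\emph{Sufficiency.} Your strategy is the paper's, and it is sound in outline. Projectives of $\bimod$ are summands of $M=A\otimes P\otimes A$. Rigidity of $M$ reduces, via \autoref{prop:laxrightexactstrong} and Eilenberg--Watts, to $M\otimes_A-\cong A\otimes P\otimes-$ on $\lmod$ having right exact left and right adjoints. The left adjoint is fine: $A\otimes P\otimes-$ is exact on $\mathcal{C}$ by \autoref{lem:CCprojectiveexact}, hence exact on $\lmod$ because the forgetful functor is exact and faithful. The left adjoint is then automatically right exact. The right adjoint is where your argument breaks. Exactness of a functor says nothing about right exactness of its right adjoint; that is equivalent to the functor preserving projectives. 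This is precisely your self-declared ``main obstacle'', and the missing step is short. We have $(A\otimes P\otimes A)\otimes_A(A\otimes Q)\cong A\otimes(P\otimes A\otimes Q)$, and $P\otimes A\otimes Q$ is projective in $\mathcal{C}$ for projective $P,Q$ by $\mathcal{C}\boxtimes\mathcal{C}^{\on{mop}}$-projectivity. So the projective generators $A\otimes Q$ of $\lmod$ are sent to free modules on projectives. This is where the two-sided, sandwiched form of the hypothesis is used. Building the right adjoint from a dual of $A\otimes P$ in $\mathcal{C}$ cannot work: $A\otimes P$ is in general only left rigid, since $A\otimes P\otimes Q$ need not be projective in $\mathcal{C}$; only the free $A$-module on it is.

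\emph{Necessity.} The step ``applying it to free modules, $X\mapsto A\otimes P\otimes A\otimes X$ is exact'' fails. The free functor $X\mapsto A\otimes X$ is only right exact. Even granting exactness, you cannot reflect exactness through a functor composed on the right. The trick in \autoref{lem:CCprojectiveexact} postcomposes with the exact faithful functor $P_{\mathbbm{1}}\otimes-$, which is a different situation. The real issue is that exactness of $A\otimes P\otimes-$ on $\lmod$ does not give exactness on $\mathcal{C}$.

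The paper's own proof passes over the same point: it invokes \autoref{lem:CCprojectiveexact}, a statement about $\mathcal{C}$, for the functor on $\lmod$. In fact this direction appears to fail in general. Take $\mathcal{C}=\mathrm{PSh}(\mathcal{S})$ from \autoref{ex:nonregular} with $\on{char}\mathbbm{k}\neq 2$.
\begin{itemize}
\item The unit $D$ is projective, so $\mathcal{C}\boxtimes\mathcal{C}^{\on{mop}}$-projective simply means projective.
\item The tensor ideal of maps factoring through $D\otimes D$ meets $\on{End}(D)=D$ in $(x)$, so the quotient gives a right exact tensor functor $\mathcal{C}\to\mathrm{Vec}$.
\item For the resulting module category $\mathrm{Vec}$, the algebra $A=\underline{\on{End}}(\mathbbm{k})$ is the simple top of the unit, which is not projective.
\item Nevertheless $\bimod\simeq\on{End}^{\on{rex}}_{\mathcal{C}}(\mathrm{Vec})^{\on{mop}}\simeq\mathrm{Vec}$. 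The unit constraint forces every module structure on $W\otimes-$ to be the trivial one, because the other indecomposable projective is sent to $0$.
\end{itemize}
So only the sufficiency direction, which is the one the paper actually uses later, can be proved.
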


\begin{proof}
The forgetful functor is clearly monadic, and the associated monad is right exact, which shows that $\bimod$ is finite abelian thanks to \cite[Lemma 1.6]{DSPS}. The right exactness of $-\otimes_{A}-$ in both variables follows from commutativity of colimits with colimits. The projective objects of $\bimod$ can be characterized as summands of those of the form $A \otimes P \otimes A$ for $P \in \mathcal{C}$ projective.

It therefore only remains to understand when the $A$-bimodules $A \otimes P \otimes A$ with $P \in \mathcal{C}$ projective are rigid.
Thanks to \autoref{prop:laxrightexactstrong}, it is enough to show that the functor $(A \otimes P \otimes A) \otimes_{A} -\cong A \otimes P \otimes -:
\lmod \rightarrow \lmod$
has right exact right and left adjoints if and only if $A$ is $\mathcal{C}\boxtimes\mathcal{C}^{\on{mop}}$-projective.
But the functor $A \otimes P \otimes -$ has a left adjoint if and only if $A \otimes P \otimes -$ is left exact.
Thus, we must argue that $A \otimes P \otimes -$ is left exact if and only if $A$ is $\mathcal{C}\boxtimes\mathcal{C}^{\on{mop}}$-projective.
This follows from \autoref{lem:CCprojectiveexact}.

Finally, recall that $(A \otimes P \otimes A) \otimes_{A} -:
\lmod \rightarrow \lmod$ has a right exact right adjoint if and only if it preserves projective objects.
But, for any projective object $Q\in\mathcal{C}$, we have that $(A \otimes P \otimes A) \otimes_{A} (A \otimes Q) \cong A \otimes (P\otimes A \otimes Q)$. Now, if $A$ is $\mathcal{C}\boxtimes\mathcal{C}^{\on{mop}}$-projective, then $P\otimes A \otimes Q$ is a projective object of $\mathcal{C}$, so that $A \otimes (P\otimes A \otimes Q)$ is a projective left $A$-module. This concludes the proof.
\end{proof}

If $\mathcal{C}$ is a finite multitensor category, every object in $\mathcal{C}$ is $\mathcal{C}\boxtimes\mathcal{C}^{\on{mop}}$-projective, which implies:

\begin{corollary}
Let $\mathcal{C}$ be a finite multitensor category. For any algebra $A$ in $\mathcal{C}$, we have that $\bimod$ is a finite pre-multitensor category.
\end{corollary}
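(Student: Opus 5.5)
The corollary should follow from \autoref{prop:bimodulepretensor}, once we check that every algebra in a finite multitensor category satisfies its hypothesis. So the plan is to show that, when $\mathcal{C}$ is a finite multitensor category, every object $V\in\mathcal{C}$ (and in particular the underlying object of $A$) is $\mathcal{C}\boxtimes\mathcal{C}^{\on{mop}}$-projective. Applying \autoref{prop:bimodulepretensor} then gives that $\bimod$ is a finite pre-multitensor category.

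To check this projectivity, I will use the criterion recalled at the start of the proof of \autoref{lem:CCprojectiveexact}. It says that $V$ is $\mathcal{C}\boxtimes\mathcal{C}^{\on{mop}}$-projective if and only if $Q\otimes V\otimes P$ is projective for all projective objects $P,Q\in\mathcal{C}$.

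Since $\mathcal{C}$ is rigid, $V$ is in particular right rigid, so \autoref{lem:rigidtensorprojective} shows that $V\otimes P$ is projective whenever $P$ is. Next, $Q$ is projective, and hence rigid, so a second application of \autoref{lem:rigidtensorprojective} (or \autoref{cor:tensorprojective}) shows that $Q\otimes(V\otimes P)$ is projective. This establishes the criterion for every $V$.

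Alternatively, one could verify the condition of \autoref{lem:CCprojectiveexact} directly. In a rigid category the functor $V\otimes P\otimes -$ has both a left and a right adjoint, given by tensoring with duals, and is therefore exact.

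I do not expect a real obstacle here. The only point needing care is that the equivalence between $\mathcal{C}\boxtimes\mathcal{C}^{\on{mop}}$-projectivity and the condition "$Q\otimes V\otimes P$ projective for all projective $P,Q$" is used exactly as it appears in the proof of \autoref{lem:CCprojectiveexact}.
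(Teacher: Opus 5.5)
Your proposal is correct and follows the paper's route. The paper simply asserts that every object of a finite multitensor category is $\mathcal{C}\boxtimes\mathcal{C}^{\on{mop}}$-projective and then applies \autoref{prop:bimodulepretensor}; your two applications of \autoref{lem:rigidtensorprojective} (or, alternatively, the exactness of $V\otimes P\otimes -$ fed into \autoref{lem:CCprojectiveexact}) just supply the short justification that the paper leaves implicit.
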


\subsection{Morita Equivalence}

The notion of Morita equivalence between finite tensor categories was introduced in \cite{EO} and studied further in \cite{ENO:extension}.
The next definition generalizes this concept to finite pre-multitensor categories.
We give a complimentary higher categorical perspective on Morita equivalence as well as additional results in \autoref{sub:Mor1} below.

\begin{definition}
Let $\mathcal{C}$ and $\mathcal{D}$ be two finite pre-multitensor categories, and let $\mathcal{M}$ be a finite $\mathcal{C}$-$\mathcal{D}$-bimodule category.
We say that $\mathcal{M}$ is a \emph{Morita equivalence} if the two canonical tensor functors
$$\mathcal{D}^{\mathrm{mop}}\rightarrow \on{End^{rex}_{\mathcal{C}}}(\mathcal{M})\quad\mathrm{and}\quad\mathcal{C}\rightarrow \on{End^{rex}_{\mathcal{D}}}(\mathcal{M})$$
are equivalences. We say that $\mathcal{C}$ and $\mathcal{D}$ are Morita equivalent if there exists a Morita equivalence between them.
\end{definition}

Let $A$ be an algebra in $\mathcal{C}$. It follows from the Eilenberg-Watts theorem, see \autoref{lem:EilenbergWatts}, that the canonical tensor functor
$$(\bimod)^{\on{mop}}\rightarrow\on{End}^{\on{rex}}_{\mathcal{C}}(\rmod)$$
is an equivalence.
Thus, in order to show that $\mathcal{C}$ and $\bimod$ are Morita equivalent, it will suffice to show that the canonical tensor functor $$\on{can}:\mathcal{C}\rightarrow \on{End}^{\on{rex}}_{\bimod}(\rmod)=:\mathcal{C}^{\star\star}_A$$ is an equivalence.
Towards this aim, we introduce the following definition that ought to be compared with \cite[Definition 7.12.9]{EGNO}.

\begin{definition}\label{def:faithful}
Let $A$ be an algebra in $\mathcal{C}$. We say that $A$ is \emph{faithful} if the canonical tensor functor $\on{can}:\mathcal{C}\rightarrow\mathcal{C}_A^{\star\star}$ is faithful.
\end{definition}

\begin{example}
If $\mathcal{C}$ is rigid, then it can be decomposed into a direct sum $\mathcal{C} = \oplus_{i=1}^n\mathcal{C}_i$ of \emph{indecomposable} multitensor categories.
In this case, $A$ is faithful if and only if, for every $i$, the object $A$ has a non-zero summand in $\mathcal{C}_i$.
In particular, if $\mathcal{C}$ is both rigid and indecomposable, then any non-zero algebra is faithful.
\end{example}

\begin{lemma}\label{lem:faithfulnesstechnical}
An algebra $A$ in $\mathcal{C}$ is faithful if and only if, for any projective generator $R\in\mathcal{C}$, the functor $A\otimes R\otimes -:\mathcal{C}\rightarrow\mathcal{C}$ faithful.
\end{lemma}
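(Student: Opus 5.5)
The plan is to make the canonical functor $\on{can}:\mathcal{C}\rightarrow\mathcal{C}^{\star\star}_A$ explicit, and then reduce faithfulness of a right exact functor to faithfulness on projectives. The functor $\on{can}$ sends $V\in\mathcal{C}$ to the right exact $\bimod$-module endofunctor $V\otimes -$ of $\rmod$. Here $\rmod$ is viewed as a right $\bimod$-module category, hence $\on{End}^{\on{rex}}_{\bimod}$ is taken with respect to that action. On morphisms, $f:V\rightarrow W$ goes to the natural transformation $f\otimes \on{id}_{-}$. Thus $\on{can}$ is faithful if and only if, for every $f$ with $f\otimes \on{id}_M=0$ for all $M\in\rmod$, we have $f=0$.

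The first step is to reduce to free modules. Every right $A$-module $M$ is a quotient of a free module $N\otimes A$, and $V\otimes -$ is right exact. Hence $f\otimes\on{id}_M=0$ for all $M$ if and only if $f\otimes \on{id}_{N\otimes A}=0$ for all $N\in\mathcal{C}$. Since $N$ is in turn a quotient of a direct sum of copies of a projective generator $R$, and $-\otimes A$ is right exact, it suffices to test on $M=R\otimes A$ for any single projective generator $R$. So $A$ is faithful if and only if the functor $-\otimes R\otimes A:\mathcal{C}\rightarrow\mathcal{C}$ is faithful.

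The second step is to match this with the stated form $A\otimes R\otimes -$. This is the main subtlety, and it depends on the side conventions for $\rmod$ and for the action. If the paper's convention (left $\mathcal{C}$-action on right $A$-modules, with $\on{mop}$ appearing in \autoref{lem:EilenbergWatts}) produces $A\otimes R\otimes -$ directly, there is nothing more to do. Otherwise I would pass to the monoidal opposite $\mathcal{C}^{\on{mop}}$, which is again a finite pre-multitensor category.

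A cleaner route may be to use duals of projectives. A functor $F$ is faithful if and only if, for every nonzero $f$, there is a projective $Q$ with $\on{Hom}(Q,F(f))\neq 0$. Using rigidity of $R$ and of the relevant projectives, I would transport the statement "$-\otimes R\otimes A$ kills $f$" to "$A\otimes R'\otimes -$ kills a corresponding morphism". Corollary \ref{cor:tensorprojective} guarantees that $R'={}^{\vee}R$ or $R^{\vee}$, suitably tensored, is again a projective generator up to summands.

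Finally, I would check that the condition is independent of the choice of $R$. Two projective generators $R$ and $R'$ are each summands of finite direct sums of copies of the other. Hence $A\otimes R\otimes f=0$ if and only if $A\otimes R'\otimes f=0$, which justifies "for any projective generator" in the statement.

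I expect the main obstacle to be the bookkeeping of sides, namely that $\on{can}$ acts on the side producing exactly $A\otimes R\otimes -$. Everything else is the standard reduction of a natural transformation between right exact functors to its components on a projective generator, combined with \autoref{lem:EilenbergWatts}.
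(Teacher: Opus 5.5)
Your argument is correct and is essentially the paper's: both show that $\on{can}(f)$ vanishes if and only if its component at the free module on a projective generator $R$ vanishes. The paper packages this through the faithful forgetful functor $U$ and the classical Eilenberg--Watts identification $C\mapsto \on{Hom}_{\mathcal{C}}(R,A\otimes R\otimes C)$, whereas you do it directly, using epimorphisms from free modules and right exactness of the action.

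The handedness issue is settled exactly as you suggest with the monoidal opposite. The paper's own proof of this lemma works with $\lmod$ and the right $\mathcal{C}$-action, where $\on{can}(f)$ has components $\on{id}_M\otimes f$ and the component at $A\otimes R$ is literally $A\otimes R\otimes f$; this is the same switch made explicitly in \autoref{thm:generalizedMorita}. Your ``cleaner route'' through duals is therefore unnecessary. As sketched it is also unjustified, since neither $A$ nor $R\otimes A$ need be rigid here.
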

\begin{proof}
Begin by observing that the forgetful functor
$$U:\on{End}^{\on{rex}}_{\bimod}(\lmod)\rightarrow\on{End}^{\on{rex}}(\lmod)$$
is faithful as being compatible with a module structure is a property of a natural transformation.
Moreover, as $A\otimes R$ is a projective generator of $\lmod$, we can identify
$$\on{End}^{\on{rex}}(\lmod)\simeq\bimod[\mathbbm{k}][\on{End}_A(A\otimes R)]$$
via the (classical!) Eilenberg-Watts theorem.
Under this last identification, the composite $U\circ \on{can}$ is given by $C\mapsto \on{Hom}_A(A\otimes R,A\otimes R\otimes C)\cong \on{Hom}_{\mathcal{C}}(R,A\otimes R\otimes C)$. As $\on{Hom}_{\mathcal{C}}(R,-)$ is exact and faithful, we find that $A$ is faithful if and only if $A\otimes R\otimes -:\mathcal{C}\rightarrow\mathcal{C}$ is faithful.
\end{proof}

The next result generalizes \cite[Theorem 7.12.11]{EGNO}.

\begin{theorem}\label{thm:generalizedMorita}
Let $\mathcal{C}$ be a finite pre-multitensor category. Let $A$ be a faithful algebra in $\mathcal{C}$ whose underlying object is $\mathcal{C}\boxtimes\mathcal{C}^{\on{mop}}$-projective. The canonical tensor functor 
$\on{can}:\mathcal{C}\rightarrow \mathcal{C}_A^{\star\star}$ is an equivalence.
In particular, the finite pre-multitensor categories $\mathcal{C}$ and $\bimod$ are Morita equivalence.
\end{theorem}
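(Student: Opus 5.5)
We plan to imitate the proof of \cite[Theorem 7.12.11]{EGNO}, replacing rigidity arguments by the pre-tensor substitutes developed above. First, by \autoref{prop:bimodulepretensor}, $\bimod$ is a finite pre-multitensor category, so $\mathcal{C}^{\star\star}_A=\on{End}^{\on{rex}}_{\bimod}(\rmod)$ is accessible via \autoref{lem:EilenbergWatts} applied to $\bimod$. The key reduction is to identify $\mathcal{C}^{\star\star}_A$ with a category of bimodules in $\mathcal{C}$. Since $\rmod$ is a finite left $\bimod$-module category (acting by $-\otimes_A-$ after switching sides, i.e.\ $\rmod\simeq\lmod$ via the mop convention), we have the $\bimod$-projective generator provided by \autoref{lem:finitemodulegenerator}. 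Concretely we take the generator $A\otimes R$ for a projective generator $R\in\mathcal{C}$. Its internal endomorphism algebra in $\bimod$ is $A\otimes R\otimes {}^\vee(A\otimes R)$-type object, computed as $\underline{\on{End}}(A\otimes R)\cong A\otimes R\otimes R^{\vee}\otimes A$ up to conventions, using that $R$ is rigid. Then \autoref{thm:finitemodulemodule} and \autoref{lem:EilenbergWatts} identify $\mathcal{C}^{\star\star}_A$ with the bimodules over this algebra in $\bimod$, and unwinding, with bimodules in $\mathcal{C}$ over the algebra $B:=R\otimes R^{\vee}$ restricted appropriately; the functor $\on{can}$ then becomes $C\mapsto$ (an appropriate induction of $C$).

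A cleaner route, which we will actually pursue, is to argue directly that $\on{can}$ is fully faithful and essentially surjective using a monadicity/descent argument. Write $F:=A\otimes R\otimes-:\mathcal{C}\to\mathcal{C}$. By \autoref{lem:CCprojectiveexact} and $\mathcal{C}\boxtimes\mathcal{C}^{\on{mop}}$-projectivity of $A$, $F$ is exact; by faithfulness of $A$ and \autoref{lem:faithfulnesstechnical}, $F$ is faithful. As in the proof of \autoref{lem:CCprojectiveexact}, $A\otimes R$ is left rigid, so $F$ has both adjoints, given by tensoring with ${}^\vee(A\otimes R)$ and with its right dual-type object; in particular $F$ is conservative and exact, so the comonad (or monad) $G F$ on $\mathcal{C}$ is exact and comonadicity/Beck's theorem applies: $\mathcal{C}$ is equivalent to comodules over the induced comonad on the target. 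On the other side, by \autoref{lem:faithfulnesstechnical}'s proof, $U\circ\on{can}$ identifies with $C\mapsto \on{Hom}_{\mathcal{C}}(R,A\otimes R\otimes C)$, and an object of $\mathcal{C}^{\star\star}_A$ is a right exact functor on $\lmod$ together with a compatible $\bimod$-module structure. We will show that such a module structure, evaluated against the rigid bimodules $A\otimes P\otimes A$ (which generate $\bimod$ under colimits), is exactly descent data for $F$, i.e.\ a comodule structure over the comonad above; right exactness and \autoref{prop:laxrightexactstrong} ensure that compatibility with the generators $A\otimes P\otimes A$ determines compatibility with all bimodules.

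Thus the plan is: (1) show $\on{can}$ is faithful (given) and full, by computing module natural transformations between $\on{can}(C)$ and $\on{can}(D)$ as morphisms $F(C)\to F(D)$ commuting with the coaction, and invoking that $F$ is comonadic (exact, faithful, with right adjoint) to recover $\on{Hom}_{\mathcal{C}}(C,D)$ as an equalizer; (2) show essential surjectivity by sending $\Phi\in\mathcal{C}^{\star\star}_A$ to $\Phi(A\otimes R)$ viewed in $\mathcal{C}$ with its induced descent datum, and applying Beck's theorem to produce $C$ with $\on{can}(C)\simeq\Phi$. The main obstacle is step (2): verifying that the $\bimod$-module structure on $\Phi$ really yields a coalgebra structure for the comonad of $F$ satisfying the Beck conditions, in the absence of full rigidity. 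Here I would first try to exploit that the relevant comonad is $C\mapsto {}^\vee(A\otimes R)\otimes A\otimes R\otimes C$, which is induced from tensoring with an object of $\bimod$ built from projectives, hence rigid in $\bimod$, so \autoref{strengthdsps} makes all structure maps invertible exactly where needed.
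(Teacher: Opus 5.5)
Your descent strategy along $F=A\otimes R\otimes-$ could work in principle, but it misidentifies the relevant structure and leaves the decisive step unproved.

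\textbf{What is correct, and what is mislabelled.} Since $A\otimes R$ is left rigid, $F$ is right adjoint to ${}^{\vee}(A\otimes R)\otimes-$. Being exact and faithful, $F$ is monadic. The monad is $B\otimes-$ with $B=(A\otimes R)\otimes{}^{\vee}(A\otimes R)$, which is precisely the paper's $\underline{\on{End}}(A\otimes R)$. So Beck gives $\mathcal{C}\simeq\lmod[\mathcal{C}][B]$ via $C\mapsto A\otimes R\otimes C$.

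The endofunctor ${}^{\vee}(A\otimes R)\otimes A\otimes R\otimes-$ that you single out is the comonad of this adjunction on the source of $F$. It plays no role in descent along $F$. Your first-route formulas $\underline{\on{End}}(A\otimes R)\cong A\otimes R\otimes R^{\vee}\otimes A$ and $B=R\otimes R^{\vee}$ are also wrong, because $A$ itself need not be rigid. The Beck hypotheses concern only $F$ and are already satisfied, so they are not where the difficulty lies.

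With the corrected monad, your step (1) does go through. A module transformation $-\otimes C\Rightarrow-\otimes D$ is determined by its component at the projective generator $A\otimes R$ of $\lmod$. That component is $B$-linear, and monadicity supplies the preimage.

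\textbf{The gap is step (2).} Beck's theorem only yields $C$ with $F(C)\cong\Phi(A\otimes R)$ as $B$-modules; it does not give $\on{can}(C)\simeq\Phi$. Two ingredients are missing.

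(a) To define the $B$-action on $\Phi(A\otimes R)$ at all, you need the algebra map $A\to B$. This makes $B$ an $A$-bimodule and makes the action $B\otimes_A(A\otimes R)\to A\otimes R$ an $A$-linear map to which $\Phi$ can be applied. Evaluating on free bimodules $A\otimes P\otimes A$ does not suffice, since $(A\otimes P\otimes A)\otimes_A(A\otimes R)=A\otimes P\otimes A\otimes R$ always carries an extra factor of $A$. Invertibility of constraints via \autoref{strengthdsps} is beside the point, since the constraints of objects of $\mathcal{C}^{\star\star}_A$ are invertible by definition.

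(b) You must show that $\Phi$ is recovered from this action. Concretely: every $A$-linear map $M\otimes_A(A\otimes R)\to A\otimes R$ is the $B$-action precomposed with $g\otimes_A\on{id}$ for a unique bimodule map $g\colon M\to B$. In particular this covers every element of $\on{End}_A(A\otimes R)$. Equivalently, $B$ is the internal endomorphism algebra of $A\otimes R$ relative to $\bimod$. Without (b), an isomorphism at the single object $A\otimes R$ cannot be extended to a natural, module-compatible isomorphism on all of $\lmod$. The reason is that $A$-linear endomorphisms of $A\otimes R$ are in general not $B$-linear, so $B$-linearity of $F(C)\cong\Phi(A\otimes R)$ says nothing about them a priori.

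This is exactly what the paper packages as \eqref{eq:beloved}, $\lmod\simeq\rmod[\bimod][\underline{\on{End}}(A\otimes R)]$. After that, Eilenberg--Watts identifies both $\mathcal{C}$ and $\mathcal{C}^{\star\star}_A$ with $\underline{\on{End}}(A\otimes R)$-bimodules in $\mathcal{C}$.

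Once (a) and (b) are supplied, your descent argument can be finished with classical Eilenberg--Watts on the generator $A\otimes R$. It would then use monadicity of $A\otimes R\otimes-$ in place of the paper's proof that $A\otimes R$ is a $\mathcal{C}$-generator. As written, however, the key identification is absent.
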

\begin{proof}
Because of handedness, it will be more convenient to prove that the canonical tensor functor
$$\on{can}:\mathcal{C}\rightarrow\mathrm{End}^{\on{rex}}_{\bimod}(\lmod)$$
is an equivalence. Up to replacing $\mathcal{C}$ by its monoidal opposite, this gives the desired result. 
Recall that for any projective objects $R$ and $R'$ in $\mathcal{C}$, we have that $R'\otimes A\otimes R$ is projective in $\mathcal{C}$ as $A$ is $\mathcal{C}\boxtimes\mathcal{C}^{\on{mop}}$-projective.
Consequently, we have that $A\otimes R$ is a $\mathcal{C}$-projective object of $\mathcal{C}$.
We claim that $A\otimes R$ is also a $\mathcal{C}$-generator.

Said differently, we must show that $\underline{\on{Hom}}(A\otimes R,-)$ is faithful. In order to do so, it will be enough to prove that for any projective generator $R'$ of $\mathcal{C}$, we have that $\on{Hom}_{\mathcal{C}}(R'\otimes A\otimes R,-)$ is faithful.
In particular, we may take $R' = R\otimes R$.
But we have already argued that $R\otimes A\otimes R$ is a projective object of $\mathcal{C}$, so that it is a fortiori rigid. It follows that $\underline{\on{Hom}}(R\otimes A\otimes R,-)\cong -\otimes{^{\vee}(R\otimes A\otimes R)}$.
In order to show that this functor is faithful, observe that $-\otimes(R\otimes A\otimes R)$ reflects faithfulness as it is not only exact but also faithful as $-\otimes A\otimes R$ is faithful by \autoref{lem:faithfulnesstechnical}.
Moreover, the evaluation map provides us with an epimorphism
$${^{\vee}(R\otimes A\otimes R)}\otimes{(R\otimes A\otimes R)}\twoheadrightarrow \mathbbm{1}\,,$$
because $(R\otimes A\otimes R)\otimes -$ is faithful, thence reflects epimorphisms.
It follows that
$$-\otimes{^{\vee}(R\otimes A\otimes R)}\otimes{(R\otimes A\otimes R)}:\mathcal{C}\rightarrow\mathcal{C}$$
is faithful, so that $-\otimes{^{\vee}(R\otimes A\otimes R)}$ is faithful as desired.

We have shown that $A\otimes R$ is a $\mathcal{C}$-projective generator of $\mathcal{C}$. As a byproduct of \autoref{thm:finitemodulemodule}, we therefore have an equivalence of right $\mathcal{C}$-module categories
\begin{equation}\label{eq:equivalencemodules}
\underline{\on{Hom}}(A\otimes R,-):\mathcal{C}\rightarrow \rmod[\mathcal{C}][\underline{\on{End}}(A\otimes R)]\,.
\end{equation}
We assert that there is also an equivalence of left $(\bimod)$-module categories
\begin{equation}\label{eq:beloved}
\lmod \simeq \rmod[\bimod][\underline{\on{End}}(A\otimes R)]\,.
\end{equation}
Namely, the canonical morphism $A\otimes A\otimes R\rightarrow A\otimes R$ corresponds under the adjunction 
$$\on{Hom}_{\mathcal{C}}(A\otimes A\otimes R, A\otimes R)\cong \on{Hom}_{\mathcal{C}}(A, \underline{\on{Hom}}(A\otimes R,A\otimes R))$$
to a homomorphism of algebras $A\rightarrow \underline{\on{End}}(A\otimes R)$.
In particular, we can view $\underline{\on{End}}(A\otimes R)$ as an algebra in $\bimod$.
Taking the associated categories of left $A$-modules on both sides of \autoref{eq:equivalencemodules}, we therefore obtain an equivalence of left $(\bimod)$-module categories
$$\lmod
\xrightarrow{\simeq}\bbimod[\mathcal{C}][A][\underline{\on{End}}(A\otimes R)]\,,$$
which can be composed with the formal equivalence
$$\rmod[\bimod][\underline{\on{End}}(A\otimes R)]\simeq\bbimod[\mathcal{C}][A][\underline{\on{End}}(A\otimes R)]$$
so as to yield the desired equivalence in \autoref{eq:beloved}.

We have now gathered all the ingredients necessary to finish the proof of the theorem.
By \autoref{eq:beloved} and the Eilenberg-Watts theorem, we find
\begin{equation}\label{eq:doubledualbimdoulesalt}
\mathrm{End}^{\on{rex}}_{\bimod}(\lmod)\simeq \big(\bimod[\bimod][\underline{\on{End}}(A\otimes R)]\big)^{\on{mop}}\simeq\big(\bimod[\mathcal{C}][\underline{\on{End}}(A\otimes R)]\big)^{\on{mop}}
\end{equation}
as finite pre-multitensor categories.
We emphasize that the second equivalence is formal:\ It arises from the algebra homomorphism $A\rightarrow\underline{\on{End}}(A\otimes R)$.
By inspection, the canonical tensor functor $\on{can}:\mathcal{C}\rightarrow\mathcal{C}^{**}_A$ is identified under the equivalence of \autoref{eq:doubledualbimdoulesalt} with the functor $$C\mapsto \underline{\on{Hom}}(A\otimes R,A\otimes R\otimes C)\,.$$
Namely, the inverse of the equivalence of \autoref{eq:beloved} is given explicitly by
$$-\otimes_{\underline{\on{End}}(A\otimes R)}(A\otimes R):\rmod[\bimod][\underline{\on{End}}(A\otimes R)]\xrightarrow{\simeq}
\lmod\,$$
Thanks to the \autoref{eq:equivalencemodules} and the Eilenberg-Watts theorem, we also have an equivalence of finite pre-multitensor categories
$$\mathcal{C}\simeq\on{End}^{\on{rex}}_{\mathcal{C}}(\rmod[\mathcal{C}][\underline{\on{End}}(A\otimes R)])\simeq\bimod[\mathcal{C}][\underline{\on{End}}(A\otimes R)]$$ implemented by $C\mapsto\underline{\on{Hom}}(A\otimes R, A\otimes R\otimes C)$. It follows that the functor $\on{can}$ is indeed an equivalence as desired.
\end{proof}

\subsection{Drinfeld Centers}

Recall, for instance from \cite[Section 7.13]{EGNO}, that the Drinfeld center $\mathcal{Z}(\mathcal{C})$ of a monoidal category $\mathcal{C}$ is the braided monoidal category whose objects are pairs $(Z,\gamma)$ consisting of an object $Z$ of $\mathcal{C}$ together with a half-braiding $\gamma$, that is, a suitably coherent natural isomorphism $\gamma_C:Z\otimes C\cong C\otimes Z$ for any $C\in\mathcal{C}$.

The following observation is standard.

\begin{lemma}\label{lem:centerasbimodules}
There is an equivalence of monoidal categories
$$\mathcal{Z}(\mathcal{C})\simeq \on{End}^{\on{rex}}_{\mathcal{C}-\mathcal{C}}(\mathcal{C})\,.$$
\end{lemma}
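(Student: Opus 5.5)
We prove the equivalence by writing down mutually inverse monoidal functors
$$\Phi:\mathcal{Z}(\mathcal{C})\rightarrow \on{End}^{\on{rex}}_{\mathcal{C}-\mathcal{C}}(\mathcal{C}),\qquad \Psi:\on{End}^{\on{rex}}_{\mathcal{C}-\mathcal{C}}(\mathcal{C})\rightarrow\mathcal{Z}(\mathcal{C})\,.$$

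First we construct $\Phi$. To a pair $(Z,\gamma)$ we assign the functor $F_Z:=-\otimes Z$. This functor is right exact, because the tensor product is right exact in each variable. It is strictly a right $\mathcal{C}$-module functor via the associator. Its left module structure is given by the composite
$$C\otimes X\otimes Z\xrightarrow{\ C\otimes\gamma_X^{-1}\ } C\otimes Z\otimes X\xrightarrow{\ \gamma_C\otimes X\ } Z\otimes C\otimes X$$
(up to associators; in the paper's convention $\gamma_C: Z\otimes C\to C\otimes Z$, so we use $\gamma^{-1}$ where appropriate). More directly, one identifies $F_Z(C\otimes X)=C\otimes X\otimes Z\cong C\otimes F_Z(X)$ using the identity. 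The cleanest choice is actually $F_Z:=Z\otimes -$: it is a left module functor via the associator, and a right module functor via $\gamma$, using $Z\otimes X\otimes C$ directly. So we take $F_Z=Z\otimes -$, with left module structure the associator. The right module structure needed is $F_Z(X)\otimes C\cong F_Z(X\otimes C)$, which is again just the associator. This shows that every $Z$ alone already gives a bimodule functor. That is wrong as an identification, so the correct target must be handled carefully. Concretely, $Z\otimes-$ is a right module functor for free, and the left module structure $C\otimes Z\otimes X\cong Z\otimes C\otimes X$ is supplied by $\gamma_C^{-1}\otimes X$. The hexagon axiom for $\gamma$ is exactly the module-functor coherence for this structure, and its compatibility with the right structure is automatic from naturality. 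Morphisms of half-braided objects, that is, morphisms commuting with $\gamma$, go to bimodule natural transformations $f\otimes -$. The functor $\Phi$ is monoidal, since $Z\otimes(Z'\otimes -)\cong(Z\otimes Z')\otimes-$, and the tensor product half-braiding matches the composite module structure.

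Next we construct $\Psi$. Given a right exact bimodule functor $F$, we set $Z:=F(\mathbb{1})$. The right module structure gives natural isomorphisms $F(X)\cong F(\mathbb{1}\otimes X)\cong F(\mathbb{1})\otimes X$, so $F\cong Z\otimes -$ as right module functors. Transporting the left module structure along this isomorphism, we obtain
$$C\otimes Z\cong C\otimes F(\mathbb{1})\cong F(C)\cong Z\otimes C\,,$$
and we take this to be $\gamma_C^{-1}$. Naturality of $\gamma$ is immediate. The hexagon relation follows from the associativity coherence of the left module structure, and the compatibility between the left and right structures ensures that the resulting $\gamma$ agrees with the one recovered on all of $Z\otimes X$. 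Note that no rigidity or exactness is used here beyond the fact that $F$ is a bimodule functor. In particular, right exactness is automatic in the image of $\Phi$ and imposes no condition in the other direction.

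Finally, we check that $\Psi\Phi\cong\on{id}$, which is immediate since $\Phi(Z,\gamma)(\mathbb{1})=Z\otimes\mathbb{1}\cong Z$ carries the original half-braiding. We also check that $\Phi\Psi\cong\on{id}$, via the isomorphism $F\cong F(\mathbb{1})\otimes -$ constructed above, which is a bimodule natural isomorphism by construction. Full faithfulness then follows, since a bimodule transformation $F_Z\Rightarrow F_{Z'}$ is determined by its component at $\mathbb{1}$, and compatibility with the left structure is exactly commutation with $\gamma$.

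The only real obstacle is bookkeeping of conventions: the direction of $\gamma$, and whether one obtains $\mathcal{Z}(\mathcal{C})$ or its reverse or monoidal opposite. We resolve this by choosing $Z\otimes -$ together with $\gamma^{-1}$, as above, so that the composition of endofunctors matches the tensor product $Z\otimes Z'$ with the standard half-braiding.
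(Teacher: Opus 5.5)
Your final construction is correct: $\Phi(Z,\gamma)=Z\otimes -$, with the associator as right module structure and $\gamma_C^{-1}\otimes X$ as left module structure, together with $\Psi(F)=F(\mathbbm{1})$, whose half-braiding is read off from the left module structure, is exactly the standard argument the paper alludes to (it states the lemma without proof), and right exactness indeed imposes no condition since $\otimes$ is right exact in each variable. When writing it up, drop the false starts in your first paragraph (the composite for $-\otimes Z$, and the claim that the associator supplies both module structures), and note that the hexagon for $\gamma$ needs the left--right compatibility of $F$, which forces the transported left structure on $Z\otimes -$ to be $\gamma_C^{-1}\otimes X$, in addition to associativity of the left structure.
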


Next, we generalize \cite[Corollary 3.35]{EO}, see also \cite[Theorem 3.3]{Sch} for a related result under faithful flatness hypotheses.

\begin{proposition}\label{prop:centerMorita}
Let $\mathcal{C}$ and $\mathcal{D}$ be two Morita equivalent finite pre-multitensor categories. If $\mathcal{C}$ and $\mathcal{D}$ are Morita equivalent, then $\mathcal{Z}(\mathcal{C})\simeq\mathcal{Z}(\mathcal{D})$ as braided monoidal categories.
\end{proposition}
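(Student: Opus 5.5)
We follow the argument of \cite[Corollary 3.35]{EO}, using \autoref{lem:centerasbimodules} to express both centers as categories of bimodule endofunctors of the Morita equivalence. Let $\mathcal{M}$ be a finite $\mathcal{C}$-$\mathcal{D}$-bimodule category exhibiting the Morita equivalence, so that $\mathcal{D}^{\on{mop}}\simeq\on{End}^{\on{rex}}_{\mathcal{C}}(\mathcal{M})$ and $\mathcal{C}\simeq\on{End}^{\on{rex}}_{\mathcal{D}}(\mathcal{M})$ via the canonical functors. We will show that both $\mathcal{Z}(\mathcal{C})$ and $\mathcal{Z}(\mathcal{D})$ are equivalent, as monoidal categories, to $\on{End}^{\on{rex}}_{\mathcal{C}-\mathcal{D}}(\mathcal{M})$. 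We will then check that the two resulting monoidal equivalences $\mathcal{Z}(\mathcal{C})\simeq\mathcal{Z}(\mathcal{D})$ respect the braidings.

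For the first equivalence, consider the functor $\mathcal{Z}(\mathcal{C})\rightarrow\on{End}^{\on{rex}}_{\mathcal{C}-\mathcal{D}}(\mathcal{M})$ given by $(Z,\gamma)\mapsto Z\lact -$. Here the right $\mathcal{D}$-module structure comes from the bimodule associativity constraint. The left $\mathcal{C}$-module structure $Z\lact(C\lact M)\cong C\lact(Z\lact M)$ is supplied by the half-braiding $\gamma$. We then reinterpret the target. A right exact $\mathcal{C}$-$\mathcal{D}$-bimodule endofunctor of $\mathcal{M}$ is the same as a right exact $\mathcal{D}$-module endofunctor $F$ together with a compatible $\mathcal{C}$-module structure. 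Since $\mathcal{C}\simeq\on{End}^{\on{rex}}_{\mathcal{D}}(\mathcal{M})$, we have $F\cong Z\lact-$ for an essentially unique $Z\in\mathcal{C}$. A compatible $\mathcal{C}$-module structure on $F$ is then a family of natural isomorphisms $(Z\otimes C)\lact-\cong(C\otimes Z)\lact-$ of $\mathcal{D}$-module functors, subject to coherence. By full faithfulness of $\mathcal{C}\rightarrow\on{End}^{\on{rex}}_{\mathcal{D}}(\mathcal{M})$, such a family is exactly a half-braiding on $Z$.

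One subtlety is that a $\mathcal{C}$-module structure on a functor could a priori be merely lax. However, \autoref{prop:laxrightexactstrong} shows that right exact lax module functors are automatically strong, so no generality is lost. The same conclusion gives, for the composite of functors, that $\mathcal{Z}(\mathcal{C})\simeq\on{End}^{\on{rex}}_{\mathcal{C}-\mathcal{D}}(\mathcal{M})$ as monoidal categories. Running the symmetric argument with the roles of $\mathcal{C}$ and $\mathcal{D}^{\on{mop}}$ exchanged gives $\mathcal{Z}(\mathcal{D}^{\on{mop}})\simeq\on{End}^{\on{rex}}_{\mathcal{C}-\mathcal{D}}(\mathcal{M})$. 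Finally, $\mathcal{Z}(\mathcal{D}^{\on{mop}})\simeq\mathcal{Z}(\mathcal{D})$, with the braiding replaced by its reverse, as is standard. Composing these yields a monoidal equivalence $\Phi:\mathcal{Z}(\mathcal{C})\simeq\mathcal{Z}(\mathcal{D})$. Concretely, $\Phi$ sends $(Z,\gamma)$ to the object $W\in\mathcal{D}$ such that $Z\lact M\cong M\ract W$ naturally in $M$, as bimodule functors.

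The main obstacle is verifying that $\Phi$ is braided, with the correct orientation of the $\mathcal{D}^{\on{mop}}$ reversal. For this we compare the two braidings on $\on{End}^{\on{rex}}_{\mathcal{C}-\mathcal{D}}(\mathcal{M})$ induced from the two sides. Take $F=Z\lact-\cong-\ract W$ and $G=Z'\lact-\cong-\ract W'$. The braiding from the $\mathcal{C}$ side is $F\circ G\Rightarrow G\circ F$, obtained from the $\mathcal{C}$-module structure of $G$ evaluated at $Z$, which is $\gamma'_Z$ up to the stated identifications. The braiding from the $\mathcal{D}$ side is the $\mathcal{D}$-module structure of $F$ evaluated at $W'$. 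Both morphisms are components of the same coherence datum: the isomorphism $Z\lact(M\ract W')\cong(Z\lact M)\ract W'$ expressing that $F$ is a bimodule functor, combined with the module structures of $G$. A diagram chase using the hexagon-type axioms of the bimodule functor then shows the two braidings coincide. It remains only to fix the mop conventions so that the reversals cancel, exactly as in \cite{EO}. Note that rigidity is never used in this step, only the equivalences defining Morita equivalence and \autoref{prop:laxrightexactstrong}.
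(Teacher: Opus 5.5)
Your proposal is correct. Its first half is exactly the paper's argument: you identify $\mathcal{Z}(\mathcal{C})$ and $\mathcal{Z}(\mathcal{D}^{\on{mop}})$ with $\on{End}^{\on{rex}}_{\mathcal{C}-\mathcal{D}}(\mathcal{M})$ as centralizers of the canonical functors, and recover half-braidings by full faithfulness. Your appeal to \autoref{prop:laxrightexactstrong} is harmless but unnecessary, since the bimodule functors there are strong by definition.

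For the braiding you take a different route. The paper picks an algebra $A$ with $\mathcal{M}\simeq\rmod$, so that $\mathcal{D}\simeq\bimod$. It then asserts that the composite equivalence is the explicit Schauenburg-type functor $Z\mapsto Z\otimes A$, whose compatibility with braidings is visible by inspection. This gives a concrete formula but leaves the identification of the abstract equivalence with $Z\mapsto Z\otimes A$ implicit. You instead stay model-free and compare the two braidings on $\on{End}^{\on{rex}}_{\mathcal{C}-\mathcal{D}}(\mathcal{M})$ directly. This explains why they match and needs no choice of $A$.

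Your comparison works, and no real diagram chase is needed. Both transformations are invariant under bimodule isomorphisms in each variable. Model $F$ as $Z\lact-$ with its canonical $\mathcal{D}$-structure and $G$ as $-\ract W'$ with its canonical $\mathcal{C}$-structure. Then both transformations are the middle associativity constraint $Z\lact(M\ract W')\cong(Z\lact M)\ract W'$ of $\mathcal{M}$ or its inverse.

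You do need to fix the orientation, because it is essential rather than cosmetic.
\begin{itemize}
\item The $\mathcal{C}$-structure of $G$ at $Z$ is $\gamma'_Z\lact M\colon G\circ F\Rightarrow F\circ G$. Read as a map $F\circ G\Rightarrow G\circ F$, it is $(\gamma'_Z)^{-1}=c_{Z',Z}^{-1}$, which is the \emph{reverse} braiding of $\mathcal{Z}(\mathcal{C})$.
\item The transported braiding of $\mathcal{Z}(\mathcal{C})$ itself is the $\mathcal{C}$-structure of $F$ at $Z'$, namely $\gamma_{Z'}$.
\item The $\mathcal{D}$-structure of $F$ at $W'$ is the transported braiding of $\mathcal{Z}(\mathcal{D}^{\on{mop}})$.
\end{itemize}
Your equality therefore reads $\mathcal{Z}(\mathcal{C})^{\on{rev}}\simeq\mathcal{Z}(\mathcal{D}^{\on{mop}})\simeq\mathcal{Z}(\mathcal{D})^{\on{rev}}$, and these are the two reversals that cancel. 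Had you used the genuine braiding $\gamma_{Z'}$ of $\mathcal{Z}(\mathcal{C})$, the two transformations would not agree, since they differ by reversal. For a check, take $\mathcal{M}=\mathcal{C}=\mathcal{D}$: the $\mathcal{D}$-structure of $Z\otimes-$ at $Z'$, transported along $\gamma'$, is $\gamma'^{-1}_Z\otimes M$.
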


\begin{proof}
Let $\mathcal{M}$ be a finite $\mathcal{C}$-$\mathcal{D}$-bimodule category witnessing the Morita equivalence.
Observe that there are equivalences of monoidal categories $$\mathcal{Z}(\mathcal{C})\simeq\on{End}^{\on{rex}}_{\mathcal{C}-\mathcal{C}}(\mathcal{C})\simeq \on{End}^{\on{rex}}_{\mathcal{C}-\mathcal{D}}(\mathcal{M})\,,$$
given that $\mathcal{C}\simeq \on{End}^{\on{rex}}_{-\mathcal{D}}(\mathcal{M})$ as pre-multitensor categories by hypothesis.
Namely, we can identify the monoidal category $\on{End}^{\on{rex}}_{\mathcal{C}-\mathcal{D}}(\mathcal{M})$ with the centralizer of the monoidal functor $\mathcal{C}\rightarrow \on{End}^{\on{rex}}_{-\mathcal{D}}(\mathcal{M})$.
Likewise, we find that
$$\mathcal{Z}(\mathcal{D})\simeq\on{End}^{\on{rex}}_{\mathcal{C}-\mathcal{D}}(\mathcal{M}),$$
from which the equivalence of monoidal categories follows.

In order to see that this equivalence is compatible with the braidings, let us write $\mathcal{M}\simeq\rmod$ for some algebra $A$ in $\mathcal{C}$. It follows that $\mathcal{D}\simeq \bimod$. Moreover, the equivalence of tensor categories discussed in the previous paragraph is realized explicitly by
$$\begin{tabular}{c c c}
$\mathcal{Z}(\mathcal{C}) $&$\xrightarrow{\simeq}$ &$\mathcal{Z}(\bimod)\,,$\\
$Z$ &$\mapsto$ &Z$\otimes A$
\end{tabular}$$
which is manifestly compatible with the braidings. This concludes the proof.
\end{proof}

Over an algebraically closed field, any two finite tensor categories are Morita equivalent if and only if their Drinfeld centers are equivalent as braided tensor categories \cite[Corollary 5.14]{OU}.\footnote{Their argument fixes a gap in the backward direction of \cite[Theorem 8.12.3]{EGNO}. The forward direction is \cite[Corollary 3.35]{EO}.}
\begin{question}
Over an algebraically closed field, is it true that two finite pre-tensor categories are Morita equivalent if and only if their Drinfeld centers are equivalent as braided monoidal categories?
\end{question}
\noindent It follows from our main theorem that this holds if the Drinfeld centers are finite tensor categories.

\section{Discrete Finite Pre-Tensor Categories}

We now introduce the technical notion of discreteness that will play a key role in the sequel.
In more detail, after having examined the combinatorial properties of discreteness, we show that a finite pre-multitensor category is discrete if its Drinfeld center is left rigid. In order to do so, we study quotients of finite module categories.

\subsection{\texorpdfstring{$\mathbb{Z}_+$}{Z+}-Pseudorings and Discreteness}

Let $\mathbb{Z}_+$ denote the set of nonnegative integers equipped with addition and multiplication. The notion of a $\mathbb{Z}_+$-ring, as discussed e.g.\ in \cite[Chapter 3]{EGNO}, plays a key role in the theory of finite tensor categories.
This is due to the fact that the Grothendieck rings of finite multitensor categories are naturally $\mathbb{Z}_+$-rings.
For our present purposes, we will need a nonunital variant of this notion, based on pseudorings, which are rings without unit.

\begin{definition}
Let $A$ be a pseudoring that is free as a $\mathbb{Z}$-module.
A $\mathbb{Z}_+$-basis of $A$ is a basis $\{b_j\}_{j\in J}$ such that $b_ib_j = \sum_kc_{i,j}^kb_k$ for some $c_{i,j}^k\in\mathbb{Z}_+$.
A $\mathbb{Z}_+$-pseudoring is a pseudoring equipped with a $\mathbb{Z}_+$-basis. A $\mathbb{Z}_+$-pseudoring is finite if its $\mathbb{Z}_+$-basis is finite.
\end{definition}

\begin{example}
Let $\mathcal{C}$ be a finite pre-multitensor category, and let us write $\mathrm{Proj}(\mathcal{C})$ for its full subcategory on the projective objects. As a consequence of \autoref{cor:tensorprojective}, we find that $\mathrm{Proj}(\mathcal{C})$ inherits a not necessarily unital tensor product from $\mathcal{C}$.
The split Grothendieck pseudoring $\on{Gr}(\mathrm{Proj}(\mathcal{C}))$ of $\mathrm{Proj}(\mathcal{C})$ is a finite $\mathbb{Z}_+$-pseudoring with basis consisting of the indecomposable projective objects. Such finite $\mathbb{Z}_+$-pseudorings satisfy an additional technical property.
\end{example}

\begin{definition}
Let $A$ be a $\mathbb{Z}_+$-pseudoring. We say that $A$ is a $\mathbb{Z}_+$-nearring if, for every basis element $b_i$ in $A$, there exist basis elements $b_j$ and $b_k$ such that $b_i$ appears with positive coefficient in the product $b_jb_ib_k$.
\end{definition}

\begin{example}
Let $\mathcal{C}$ be a finite pre-multitensor category. The split Grothendieck pseudoring $\on{Gr}(\mathrm{Proj}(\mathcal{C}))$ is a $\mathbb{Z}_+$-nearring. Namely, every indecomposable projective object $P$ in $\mathcal{C}$ is a direct summand of $P_{\mathbbm{1}}\otimes P\otimes P_{\mathbbm{1}}$, where $P_{\mathbbm{1}}$ denotes the projective cover of the monoidal unit $\mathbbm{1}\in\mathcal{C}$.
\end{example}

In a $\mathbb{Z}_+$-pseudoring, it is natural to consider two-sided ideals that play nicely with the basis.

\begin{definition}\label{zplusideals}
Let $A$ be a $\mathbb{Z}_+$-pseudoring. A two-sided $\mathbb{Z}_+$-ideal in $A$ is a two-sided ideal $I$ of $A$ that is spanned as a $\mathbb{Z}$-module by a subset $\{b_j\}_{j\in K}$ of the basis $\{b_j\}_{j\in J}$ of $A$.
\end{definition}

Let $A$ be a $\mathbb{Z}_+$-pseudoring. Any two-sided $\mathbb{Z}_+$-ideal $I$ in $A$ is completely determined by a subset $K\subseteq J$ of the basis $\{b_j\}_{j\in J}$ of $A$.
We use $I^c$ to denote the two-sided $\mathbb{Z}_+$-ideal in $A$ generated by $\{b_j\}_{j\in J\backslash K}$.
Explicitly, a basis vector $b_l$ lies in $I^c$ if it appears with positive coefficient in the product $b_ib_jb_k$ for some $j\in J\backslash K$ and $i,k\in J$.

\begin{definition}\label{def:combinatorialregular}
Let $A$ be a $\mathbb{Z}_+$-pseudoring.
We say that $A$ is {\it discrete} if for any two-sided $\mathbb{Z}_+$-ideal $I$ in $A$, we have $I\cap I^c=0$. We say that $A$ is \emph{indecomposable discrete} if $A$ has no nontrivial two-sided $\mathbb{Z}_+$-ideal.
\end{definition}

Observe that every indecomposable discrete $\mathbb{Z}_+$-pseudoring is a fortiori discrete.

\begin{example}
If $\mathcal{C}$ is a finite tensor category, then $\on{Gr}(\mathrm{Proj}(\mathcal{C}))$ is indecomposable discrete. In fact, if $\mathcal{C}$ is an indecomposable finite multitensor category, that is, if $\mathcal{C}$ cannot be expressed as the direct sum of two nonzero finite multitensor categories, then $\on{Gr}(\mathrm{Proj}(\mathcal{C}))$ is indecomposable discrete. This follows from \cite[Section 4.3]{EGNO}.
Yet more generally, for any finite multitensor category $\mathcal{C}$, we have that $\on{Gr}(\mathrm{Proj}(\mathcal{C}))$ is discrete. Namely, every finite multitensor category splits as a finite direct sum of indecomposable ones, and a direct product of indecomposable discrete $\mathbb{Z}_+$-pseudorings is a discrete $\mathbb{Z}_+$-pseudoring. Below, we broaden this last observation.
\end{example}

\begin{proposition}\label{prop:regulardirectsumstronglyregular}
A finite $\mathbb{Z}_+$-nearring $A$ is discrete if and only if it is a finite direct product of indecomposable discrete finite $\mathbb{Z}_+$-nearrings.
\end{proposition}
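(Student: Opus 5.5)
We argue both directions by hand, using only the combinatorics of two-sided $\mathbb{Z}_+$-ideals. Write $\{b_j\}_{j\in J}$ for the finite basis of $A$. For $j\in J$, let $I(j)$ denote the two-sided $\mathbb{Z}_+$-ideal generated by $b_j$. Since $A$ is a $\mathbb{Z}_+$-nearring, $b_j$ appears in some $b_k b_j b_l$. Hence $I(j)$ is spanned by the basis elements occurring with positive coefficient in products $b_k b_j b_l$, and $b_j\in I(j)$; without the nearring hypothesis one would also need the terms $b_j$, $b_kb_j$, $b_jb_l$. Define a preorder on $J$ by $i\preceq j$ if $b_i\in I(j)$, i.e. $I(i)\subseteq I(j)$.

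\emph{Backward direction.} Suppose $A=\prod_{s=1}^n A_s$ with each $A_s$ an indecomposable discrete $\mathbb{Z}_+$-nearring, the basis of $A$ being the union of the bases of the factors. Any two-sided $\mathbb{Z}_+$-ideal $I$ of $A$ intersects each $A_s$ in a two-sided $\mathbb{Z}_+$-ideal of $A_s$, because $A_sA_t=0$ for $s\neq t$. By indecomposability this intersection is $0$ or $A_s$; here the nearring property guarantees that each factor is generated as an ideal by any of its basis elements. Thus $I=\prod_{s\in S}A_s$ for some subset $S$. Then $I^c$ is generated by the remaining factors and equals $\prod_{s\notin S}A_s$. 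So $I\cap I^c=0$.

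\emph{Forward direction.} Assume $A$ is discrete.

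First we show $\preceq$ is symmetric. Suppose $i\preceq j$, and set $I=I(i)$. If $j\notin I$, then $b_j$ is a basis element outside $I$, so $I(j)\subseteq I^c$. But $b_i\in I(j)$ and $b_i\in I$, so $b_i\in I\cap I^c$, contradicting discreteness. Hence $j\in I(i)$, i.e. $j\preceq i$. So $\preceq$ is an equivalence relation, and its classes $J_1,\dots,J_n$ are exactly the index sets of the ideals $I(j)$.

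Next we show that distinct classes multiply to zero. Let $A_s$ be the span of $\{b_j\}_{j\in J_s}$; it equals $I(j)$ for any $j\in J_s$, so it is a two-sided $\mathbb{Z}_+$-ideal. For $i\in J_s$ and $j\in J_t$ with $s\neq t$, every basis constituent of $b_ib_j$ lies in $A_s\cap A_t$. This intersection is $0$ because the classes are disjoint, so $b_ib_j=0$. Hence $A=\prod_s A_s$ as pseudorings.

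Finally we check the factors. Each $A_s$ inherits the nearring property: a product $b_kb_jb_l$ containing $b_j$ with $j\in J_s$ must have $k,l\in J_s$, since otherwise it vanishes. A two-sided $\mathbb{Z}_+$-ideal of $A_s$ is also an ideal of $A$, because the other factors annihilate it. If it is nonzero, it contains some $b_j$ and hence $I(j)=A_s$. So each $A_s$ is indecomposable discrete.

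The main obstacle is the symmetry step above. It is the only place discreteness is used, and it requires identifying correctly which basis elements lie in $I^c$; this is exactly where the nearring condition matters.
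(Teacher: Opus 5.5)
Your proof is correct and follows essentially the same route as the paper. Your preorder $\preceq$ coincides with the paper's $\leq_{LR}$ by the nearring property, and your symmetry argument (taking $I=I(i)$ and producing $b_i\in I\cap I^c$) is exactly the paper's. The only differences are in how much is made explicit. You prove the converse directly, by showing that every $\mathbb{Z}_+$-ideal of a product of indecomposables is a sub-product whose $I^c$ is the complementary sub-product; the paper instead shows that non-discreteness forces $\leq_{LR}$ to be asymmetric. You also verify the vanishing of cross products and the indecomposability of the factors, which the paper simply asserts ``by construction''.
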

\begin{proof}
Given basis elements $b_i$ and $b_j$ of $A$, we write $b_i\leq_{LR} b_j$ if there exists basis elements $b_k$ and $b_l$ such that $b_i$ appears with positive coefficient in the product $b_kb_jb_l$.
The relation $\leq_{LR}$ is manifestly transitive.
It is also reflexive because we have assumed that $A$ is a $\mathbb{Z}_+$-nearring.
Thus, provided that $\leq_{LR}$ is symmetric, it supplies an equivalence relation.
In this case, we write $J_1,\ldots,J_k$ for the subsets of basis elements in the same equivalence class.
Let $A_j$ denote the $\mathbb{Z}$-submodule of $A$ spanned by the basis vectors in $J_j$. By construction, the $A_j$ are two-sided $\mathbb{Z}_+$-ideals of $A$, and we have $A=\oplus_{j}A_j$ as finite $\mathbb{Z}_+$-nearrings.
Additionally, by definition of the equivalence relation $\leq_{LR}$, we have that the $A_j$ are indecomposable discrete.
It will therefore suffice to show that $\leq_{LR}$ is symmetric if and only if $A$ is discrete.

If $\leq_{LR}$ is not symmetric, there must exist basis elements $b_i$ and $b_j$ such that $b_i\leq_{LR} b_j$ and $b_j\not\leq_{LR} b_i$.
Let $I$ denote the two-sided $\mathbb{Z}_+$-ideal of $A$ generated by $b_i$.
Given that $b_j\not\leq_{LR} b_i$, we must have $b_j\not\in I$. But then, as $b_i\leq_{LR} b_j$, we find that $b_i\in I^c$.
It follows that $b_i\in I\cap I^c$, so that $A$ is not discrete.

Conversely, assume that $A$ is not discrete. Let $I$ be a two-sided $\mathbb{Z}_+$-ideal such that $I\cap I^c\neq 0$. In particular, there exists a basis element $b_i \in I\cap I^c$.
Write $K\subseteq J$ for the subset corresponding to the $\mathbb{Z}_+$-basis of $I$.
As $b_i\in I^c$, there exists $j,l\in J$ and $k\in J\backslash K$ such that $b_i$ appears with positive coefficient in the product $b_jb_kb_l$.
Thus, we find that $b_i\leq_{LR} b_k$, whereas $b_k\not\leq_{LR} b_i$, showing that $\leq_{LR}$ is not symmetric.
\end{proof}

\begin{remark}
    The notation $\leq_{LR}$ indicates the cell-theoretic origin of this relation --- it is a variant of the relations used in the study of semigroups (\cite{Green}) and Hecke algebras (\cite{KL}). The use of such relations in the study of additive monoidal categories in \autoref{cellsubsection} builds on that of \cite{MM1}.
    
    In fact, this is one of the origins of our use of the terminology {\it discrete}, as a $\mathbb{Z}_{+}$-nearring is discrete if its poset of two-sided cell is discrete. Another motivation for this choice of terminology is that we require the complement of an ideal to be an ideal --- in an algebro-geometric setting, where ideals would correspond to closed subsets of the spectrum of a $\mathbb{Z}_{+}$-nearring, this would entail the discreteness of said spectrum.
\end{remark}

\subsection{Discrete Finite Pre-Multitensor Categories}\label{cellsubsection}

Let $\mathcal{C}$ be a finite pre-multitensor category.

\begin{definition}\label{def:regular}
We say that $\cat{C}$ is {\it discrete}, respectively \emph{indecomposable discrete}, if the split Grothendieck $\mathbb{Z}_+$-pseudoring of $\mathrm{Proj}(\mathcal{C})$ is discrete, respectively indecomposable discrete.
\end{definition}

Explicitly, a finite pre-multitensor category $\mathcal{C}$ is indecomposable discrete if for every projective objects $P$ and $Q$ in $\mathcal{C}$, there exists projective objects $R$ and $S$ in $\mathcal{C}$ such that $P$ is a summand of $R\otimes Q\otimes S$.

\begin{example}
We have seen above that every indecomposable finite multitensor category $\mathcal{C}$ is indecomposable discrete.
Let $A$ be an algebra in $\mathcal{C}$ as above. We assert that the finite pre-multitensor category $\bimod$ is indecomposable discrete. To see this, recall that every projective $A$-$A$-bimodule in $\mathcal{C}$ is a summand of a free $A$-$A$-bimodule on a projective object of $\mathcal{C}$.
Now, let $Q$ be a projective $A$-$A$-bimodule in $\mathcal{C}$. As $\mathcal{C}$ is rigid, the underlying object of $Q$ in $\mathcal{C}$ is projective. Thus, for any projective object $P$ in $\mathcal{C}$, there exists projective objects $R$ and $S$ in $\mathcal{C}$ such that $P$ is a summand in $R\otimes Q\otimes S$.
It follows that the free $A$-$A$-bimodule $A\otimes P\otimes A$ is a summand of the $A$-$A$-bimodule $A\otimes R\otimes Q\otimes S\otimes A$. This proves the claim. More generally, it follows that the finite pre-multitensor category $\bimod$ is discrete for any algebra $A$ in an arbitrary finite multitensor category $\mathcal{C}$.
\end{example}

The last example above illustrates the strong relationship between discreteness and the property of being Morita equivalent to a finite multitensor category. We find it instructive to examine a nonexample.

\begin{example}\label{ex:nonregular}
Let $D = \Bbbk[x]/(x^{2})$ be the two-dimensional, non-semisimple algebra of so-called {\it dual numbers}. Let $\mathcal{S}$ be the full subcategory of $\bimod[\mathbbm{k}][D]$ consisting of finite direct sums of $D$-bimodules of the form $D\otimes_{\Bbbk} D$ and $D$. Since 
\begin{equation}\label{eq:quasiidempotent}
(D \otimes_{\Bbbk} D) \otimes_{D} (D \otimes_{\Bbbk} D) \cong (D\otimes_{\Bbbk} D)^{\oplus 2},
\end{equation}
the category $\mathcal{S}$ is a monoidal subcategory of $\bimod[\mathbbm{k}][D]$.
Since $D \otimes_{\Bbbk} D$ is projective, it is rigid by \autoref{example:bimodka}, and hence $\mathcal{S}$ is rigid.
As a consequence, the category $\mathrm{PSh}(\cat{S})$ of finite-dimensional $\mathbbm{k}$-linear presheaves on $\mathcal{S}$ is a finite pre-tensor category. This construction will be reviewed in more detail in the next subsection.
In any case, observe that $\mathrm{PSh}(\cat{S}) \not\simeq \bimod[\mathbbm{k}][D]$. Moreover, by \autoref{eq:quasiidempotent}, $\mathrm{PSh}(\cat{S})$ is not discrete. Namely, we have $D \leq_{LR} D \otimes_{\Bbbk} D$, but $D \not\leq_{LR} D\otimes_{\Bbbk} D$. More general examples of this flavour are obtained by considering presheaves on the fiat categories considered in \cite[Section 7.3]{MM1}.
\end{example}

We now briefly discuss an alternative characterization of discreteness based on the notion of a bimodule Serre subcategory. By \autoref{zplusideals}, a two-sided $\mathbb{Z}_+$-ideal in $\on{Gr}(\mathrm{Proj}(\cat{C}))$ is determined by a collection of indecomposable projective objects of $\cat{C}$.
Recall that there is a bijection between $\on{Irr}(\cat{C})$, the set of (isomorphism classes of) simple objects of $\mathcal{C}$, and the set of (isomorphism classes of) indecomposable projective objects of $\mathcal{C}$. This bijection is given explicitly by sending a simple object $L$ to its projective cover $P_L$, and its inverse sends an indecomposable projective object $Q$ to its simple top $\on{top}(Q)$.
Further, any subset of simple objects of $\mathcal{C}$ determines a {\it Serre subcategory}, that is, a full subcategory $\cat{A}$ such that, for every short exact sequence in $\mathcal{C}$
$$0\rightarrow X \rightarrow Y \rightarrow Z \rightarrow 0\,,$$
we have $Y \in \cat{A}$ if and only if both $X,Z \in \cat{A}$. If $\Sigma$ is a subset of $\on{Irr}(\cat{C})$, the corresponding Serre subcategory of $\mathcal{C}$ consists of those objects all of whose composition factors lie in $\Sigma$.

Now, a simple calculation similar to \cite[Lemma~9]{AM} shows that a set $\Pi$ of indecomposable projective objects of $\cat{C}$ spans a two-sided $\mathbb{Z}_+$-ideal in $\on{Gr}(\mathrm{Proj}(\cat{C}))$ if and only if the set of simple objects $\Sigma := \setj{\,\on{top}(P) \; | \; P \in \Pi\,}$ satisfies the following condition:
\begin{equation}\label{SerreCondition}
L \text{ is not a composition factor of } \rightdual{P}\otimes L' \otimes \leftdual{Q} \text{ for any } P,Q \in \mathrm{Proj}(\cat{C}),\, L \in \Sigma,\, L' \in \on{Irr}(\cat{C}) \setminus \Sigma\,.
\end{equation}
Equivalently, the Serre subcategory generated by $\on{Irr}(\cat{C}) \setminus \Sigma$ is a $\cat{C}$-$\cat{C}$-bimodule subcategory of $\cat{C}$.
Similar observations hold for general Serre (bi)module subcategories, see e.g.\ \cite[Section~4]{MM1}.
Comparing condition~\eqref{SerreCondition} with \autoref{def:combinatorialregular}, we obtain a reformulation of the latter:

\begin{proposition}
   A finite pre-multitensor category $\cat{C}$ is discrete if for any $\cat{C}$-$\cat{C}$-bimodule Serre subcategory $\cat{A}$ of $\cat{C}$, the Serre subcategory of $\mathcal{C}$ generated by the set of simple objects $\on{Irr}(\cat{C})\setminus\on{Irr}(\cat{A})$ is a $\cat{C}$-$\cat{C}$-bimodule subcategory of $\cat{C}$.
\end{proposition}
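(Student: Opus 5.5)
The plan is to translate \autoref{def:combinatorialregular} into the language of Serre subcategories, using the correspondence between two-sided $\mathbb{Z}_+$-ideals of $\on{Gr}(\mathrm{Proj}(\cat{C}))$ and bimodule Serre subcategories given by condition~\eqref{SerreCondition}. By that discussion, a set $\Pi$ of indecomposable projectives spans a two-sided $\mathbb{Z}_+$-ideal $I_\Pi$ exactly when the Serre subcategory $\cat{A}_\Pi$ generated by $\on{Irr}(\cat{C})\setminus\Sigma$ is a $\cat{C}$-$\cat{C}$-bimodule subcategory. Here $\Sigma=\on{top}(\Pi)$. This gives an inclusion-reversing bijection between two-sided $\mathbb{Z}_+$-ideals and bimodule Serre subcategories, with $\on{Irr}(\cat{A}_\Pi)=\on{Irr}(\cat{C})\setminus\Sigma$.

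First, I would rephrase discreteness. For an ideal $I=I_\Pi$, we always have $I\subseteq I+I^c$. The generating set of $I^c$ is the complement $J\setminus K$, so $I^c$ is the smallest two-sided $\mathbb{Z}_+$-ideal containing all basis elements outside $\Pi$. Hence $I\cap I^c=0$ holds exactly when $I^c$ is spanned precisely by the complementary basis elements. In other words, the complementary set $\Pi^c$ must itself span a two-sided $\mathbb{Z}_+$-ideal. So $\cat{C}$ is discrete if and only if, for every ideal-spanning set $\Pi$, its complement $\Pi^c$ also spans an ideal.

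Second, I would transport this through the bijection. Suppose $\Pi^c$ spans an ideal. Then \eqref{SerreCondition} applied to $\Pi^c$ says that the Serre subcategory generated by $\on{Irr}(\cat{C})\setminus\on{top}(\Pi^c)=\Sigma=\on{Irr}(\cat{C})\setminus\on{Irr}(\cat{A}_\Pi)$ is a bimodule subcategory. As $\Pi$ ranges over ideal-spanning sets, $\cat{A}_\Pi$ ranges over all bimodule Serre subcategories $\cat{A}$. The condition in the proposition therefore says exactly that $\Pi^c$ spans an ideal whenever $\Pi$ does, and by the first step this is discreteness. This proves the stated implication, and in fact the converse as well.

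The only delicate point is the equivalence used in the first step. One must check that $I\cap I^c=0$ really forces the basis of $I^c$ to be exactly $J\setminus K$, rather than merely disjoint from $K$. This holds because $I^c$ contains $J\setminus K$ by construction, so a $\mathbb{Z}_+$-ideal disjoint from $K$ and containing $J\setminus K$ equals its span. I would also confirm that the correspondence in \eqref{SerreCondition}, which was cited from the computation analogous to \cite[Lemma~9]{AM}, genuinely characterizes every bimodule Serre subcategory. This uses the fact that Serre subcategories are determined by their sets of simple objects. I expect this verification to be routine.
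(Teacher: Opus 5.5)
Your proposal is correct and follows the same route as the paper, which obtains the statement directly by comparing condition~\eqref{SerreCondition} with \autoref{def:combinatorialregular}. You make explicit the two points the paper leaves implicit: first, $I\cap I^c=0$ amounts to the complement $\Pi^c$ of the basis of $I$ itself spanning a two-sided $\mathbb{Z}_+$-ideal; second, $\cat{A}\mapsto\{P_L : L\notin\on{Irr}(\cat{A})\}$ is a bijection between bimodule Serre subcategories and two-sided $\mathbb{Z}_+$-ideals, under which $\Pi^c$ spans an ideal exactly when the Serre subcategory generated by $\on{Irr}(\cat{C})\setminus\on{Irr}(\cat{A})$ is a bimodule subcategory.
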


Our next result may be thought of as a categorical version of \autoref{prop:regulardirectsumstronglyregular}, and will be the technical heart of the proof of our main theorem.

\begin{theorem}\label{thm:regularity}
Let $\mathcal{C}$ be a finite pre-multitensor category. If the Drinfeld center of $\mathcal{C}$ is rigid, then $\mathcal{C}$ decomposes as a finite direct sum of indecomposable discrete finite pre-multitensor categories. In particular, if $\mathcal{Z}(\mathcal{C})$ is rigid, then $\mathcal{C}$ is discrete.
\end{theorem}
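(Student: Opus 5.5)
The plan is to turn every failure of discreteness into a right exact $\mathcal{C}$-$\mathcal{C}$-bimodule endofunctor of $\mathcal{C}$ that is not exact, and then use rigidity of $\mathcal{Z}(\mathcal{C})$ to rule this out. Fix a two-sided $\mathbb{Z}_+$-ideal $I$ of $\on{Gr}(\mathrm{Proj}(\mathcal{C}))$, spanned by a set $\Pi$ of indecomposable projectives, with tops $\Sigma$. By condition \eqref{SerreCondition}, the Serre subcategory $\mathcal{A}\subseteq\mathcal{C}$ generated by $\on{Irr}(\mathcal{C})\setminus\Sigma$ is a $\mathcal{C}$-$\mathcal{C}$-bimodule subcategory. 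Consider the endofunctor
$$T_{\mathcal{A}}:\mathcal{C}\rightarrow\mathcal{C}$$
sending $X$ to its largest quotient lying in $\mathcal{A}$, which is the left adjoint of the inclusion composed with the inclusion.

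\emph{Step 1: $T_{\mathcal{A}}$ comes from a central object.} Since $\mathcal{A}$ is closed under the actions on both sides and the actions are right exact, $T_{\mathcal{A}}$ inherits an oplax bimodule structure. This structure is strong by \autoref{prop:laxrightexactstrong}, so $T_{\mathcal{A}}$ is a right exact $\mathcal{C}$-$\mathcal{C}$-bimodule functor. By \autoref{lem:centerasbimodules} it is of the form $Z\otimes-$ for some $(Z,\gamma)\in\mathcal{Z}(\mathcal{C})$. Rigidity of $\mathcal{Z}(\mathcal{C})$ gives $Z$ left and right duals inside $\mathcal{Z}(\mathcal{C})$, so $Z\otimes-$ has adjoints on both sides and is therefore exact. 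In particular $T_{\mathcal{A}}$ is left exact.

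\emph{Step 2: translate to algebra.} Choose a basic algebra $B$ with $\mathcal{C}\simeq\rmod[\mathbbm{k}][B]$ (\autoref{lem:Moritabasic}), and let $e$ be the idempotent corresponding to the projectives in $\Pi$. Then $\mathcal{A}$ consists of the $B/BeB$-modules, and $T_{\mathcal{A}}\cong -\otimes_B B/BeB$. \autoref{prop:leftexactvanishing} now gives $\on{Hom}((1-e)B,eB)=0$.

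\emph{Step 3: vanishing in the other direction.} Run the same argument with $\mathcal{C}^{\on{op}}$, or with the largest-subobject functor (the right adjoint of the inclusion), using the duals on the other side. This should give $\on{Hom}(eB,(1-e)B)=0$, although I am not yet sure which variant works cleanly. With both vanishings, $B\cong eBe\times(1-e)B(1-e)$, and $\mathcal{C}$ splits as abelian categories into $\mathcal{C}_I\oplus\mathcal{A}$, where $\mathcal{C}_I$ is the Serre subcategory generated by $\Sigma$.

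\emph{Step 4: the pieces are ideals.} For the monoidal splitting, note that $\mathcal{A}$ is a bimodule subcategory. Then $\mathcal{C}_I$ is as well: it is now the kernel of the exact bimodule functor $T_{\mathcal{A}}$, or equivalently the essential image of the exact bimodule idempotent $\mathrm{id}/T_{\mathcal{A}}$. Hence both summands are two-sided tensor ideals, and mixed products vanish because $X\otimes Y$ lies in $\mathcal{C}_I\cap\mathcal{A}=0$. It follows that $I^c$, the span of the complement of $\Pi$, equals the ideal attached to $\mathcal{A}$, so $I\cap I^c=0$ and $\mathcal{C}$ is discrete.

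\emph{Step 5: conclude.} By \autoref{prop:regulardirectsumstronglyregular}, the ideals coming from the equivalence classes of $\leq_{LR}$ give a finite decomposition of $\mathcal{C}$. By Step 4 this is a direct sum of finite pre-multitensor categories: the unit decomposes accordingly, and projectives remain rigid in each summand. Each summand is indecomposable discrete by construction.

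The main obstacle is Step 3, obtaining vanishing of Hom-spaces in both directions, together with making the identification in Step 1 honest: one must check that the adjoints of $Z\otimes-$ supplied by rigidity in $\mathcal{Z}(\mathcal{C})$ really are adjoints of the underlying endofunctor of $\mathcal{C}$.
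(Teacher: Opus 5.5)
Your Steps 1 and 2 are the paper's argument. Your $T_{\mathcal{A}}$ is the composite $\pi^*\circ L_\pi$ attached to the ideal of morphisms between projectives that factor through objects of $I$. The paper likewise uses rigidity of $\mathcal{Z}(\mathcal{C})$ to make this composite left exact, and then the basic-algebra lemma to get $eB(1-e)=\on{Hom}((1-e)B,eB)=0$. Your worry at the end of Step 1 is harmless: the forgetful functor $\mathcal{Z}(\mathcal{C})\to\mathcal{C}$ is strong monoidal, so it carries duals to duals.

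The genuine gap is Step 3, and your proof of discreteness in Step 4 depends on it. Neither suggested variant works:
\begin{itemize}
\item $\mathcal{C}^{\on{op}}$ is not a finite pre-multitensor category. Its tensor product is left exact rather than right exact, and its projectives are the injectives of $\mathcal{C}$, which need not be rigid.
\item The largest-subobject functor is left exact and only lax as a bimodule functor. So it is not an object of $\mathcal{Z}(\mathcal{C})\simeq\on{End}^{\on{rex}}_{\mathcal{C}-\mathcal{C}}(\mathcal{C})$, and rigidity of the center says nothing about it.
\end{itemize}
The reverse vanishing also does not follow formally: for a general basic algebra, $eB(1-e)=0$ does not imply $(1-e)Be=0$ (take upper triangular $2\times 2$ matrices). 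Finally, you cannot yet apply Steps 1--2 to the ``complementary ideal''. The span of the projectives outside $\Pi$ is a two-sided ideal exactly when $I\cap I^c=0$, which is what you are trying to prove.

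The missing idea is to reorder: the one-sided vanishing already yields discreteness, and discreteness then gives the other side.
\begin{itemize}
\item The condition $eB(1-e)=0$ says every projective in $\Pi$ has all its composition factors in $\Sigma$. Hence the Serre subcategory $\mathcal{C}_\Sigma$ generated by $\Sigma$ is closed under tensoring with arbitrary objects on both sides. Each $X\in\mathcal{C}_\Sigma$ is a quotient of its projective cover $P_X\in\on{add}(\Pi)$. For a projective $R\twoheadrightarrow C$, right exactness makes $C\otimes X$ a quotient of $R\otimes P_X$, which lies in $\on{add}(\Pi)$ because $I$ is an ideal. Serre subcategories are closed under quotients.
\item Now take $Q\in I^c$. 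It is a summand of $R\otimes P'\otimes S$ with $R,S$ projective and $P'$ a sum of indecomposable projectives outside $\Pi$. For $P\in I$ one has
\[
\on{Hom}(R\otimes P'\otimes S,P)\cong\on{Hom}(P',R^{\vee}\otimes P\otimes{}^{\vee}S)=0,
\]
since the target lies in $\mathcal{C}_\Sigma$ and the tops of $P'$ lie outside $\Sigma$. No projectivity of $R^{\vee}\otimes P\otimes{}^{\vee}S$ is needed.
\item Taking $Q=P$ an indecomposable projective in $I\cap I^c$ gives $\on{End}(P)=0$, so $I\cap I^c=0$.
\item By the nearring property, $I^c$ is then spanned exactly by the complement of $\Pi$, and $(I^c)^c=I$. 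Applying your Steps 1--2 to the two-sided ideal $I^c$, with idempotent $1-e$, gives $(1-e)Be=0$. This is your Step 3.
\end{itemize}
This is how the paper proceeds: it proves the vanishing $\on{Hom}(Q,P)=0$ for $Q\in I^c$, $P\in I$, and then applies it to both $I$ and $I^c$. With this reordering your Steps 4 and 5 go through.
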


In order to prove the last result above, we will need to appeal to the following technical construction.

\subsection{Quotients of Finite Module Categories}\label{sub:quotient}

To any finite abelian category $\mathcal{A}$, we can associate its full subcategory $\on{Proj}(\mathcal{A})$ on the projective objects. 
We can recover $\mathcal{A}$ from its subcategory of projective objects by considering the corresponding category of (finite dimensional $\mathbb{k}$-linear) presheaves.
More precisely, let us write $\on{PSh}(\on{Proj}(\mathcal{A}))$ for the category of $\mathbbm{k}$-linear functors $\on{Proj}(\mathcal{A})^{\on{op}}\rightarrow\mathrm{Vec}$.
It is well-known that the restricted Yoneda embedding $\mathcal{A}\rightarrow \on{PSh}(\on{Proj}(\mathcal{A}))$ is an equivalence (see e.g.\ the discussion directly preceding \cite[Lemma~10.23]{Str2} for additional details).

Let now $\mathcal{C}$ be a finite pre-multitensor category.
The category $\on{Proj}(\mathcal{C})$ inherits a \emph{semigroup} structure, that is, a potentially nonunital monoidal structure.
This semigroup structure on $\mathrm{Proj}(\mathcal{C})$ induces via \emph{Day convolution} a semigroup structure on $\on{PSh}(\mathrm{Proj}(\mathcal{C}))$ that is right exact in both variables.
Moreover, the Yoneda embedding is compatible with these monoidal structures, that is, there is an equivalence $\mathcal{C}\simeq \on{PSh}(\on{Proj}(\mathcal{C}))$ of finite pre-multitensor categories.

Let now $\mathcal{M}$ be a finite $\mathcal{C}$-module category. The category $\mathrm{Proj}(\mathcal{M})$ inherits a $\mathrm{Proj}(\mathcal{C})$-module structure.
As $\mathrm{Proj}(\mathcal{C})$ is only a semigroup category, this does not involve any kind of unitality condition.
As above, we have that $\mathrm{PSh}(\mathrm{Proj}(\mathcal{M}))$ is naturally a $\mathrm{PSh}(\mathrm{Proj}(\mathcal{C}))$-module under Day convolution.
Under the equivalence of finite pre-multitensor categories $\mathcal{C}\simeq\mathrm{PSh}(\mathrm{Proj}(\mathcal{C}))$, we have that the Yoneda embedding $\mathcal{M}\rightarrow\mathrm{PSh}(\mathrm{Proj}(\mathcal{M}))$ is an equivalence of finite $\mathcal{C}$-module categories.

This last observation is the starting point of our construction.

\begin{definition}
A $\mathrm{Proj}(\mathcal{C})$-stable ideal $\mathcal{I}$ in $\mathrm{Proj}(\mathcal{M})$ is a collection of subspaces
$$\mathcal{I}(P,Q)\subseteq \on{Hom}_{\mathrm{Proj}(\mathcal{M})}(P,Q)$$
for every $P,Q\in\mathrm{Proj}(\mathcal{M})$ that is closed both under composition with arbitrary morphisms in $\mathrm{Proj}(\mathcal{M})$ and under the action of $\mathrm{Proj}(\mathcal{C})$.
\end{definition}

Given a $\mathrm{Proj}(\mathcal{C})$-stable ideal $\mathcal{I}$ in $\mathrm{Proj}(\mathcal{M})$, we can consider the $\mathbbm{k}$-linear category $\mathrm{Proj}(\mathcal{M})/\mathcal{I}$ whose objects are the objects of $\mathrm{Proj}(\mathcal{M})$ and whose morphisms spaces are given by
\begin{equation}\label{eq:quotientprojective}
\on{Hom}_{\mathrm{Proj}(\mathcal{M})/\mathcal{I}}(P,Q):=\on{Hom}_{\mathrm{Proj}(\mathcal{M})}(P,Q)/\mathcal{I}(P,Q)\,,
\end{equation}
for every $P,Q\in\mathrm{Proj}(\mathcal{M})$.
Given our assumption that $\mathcal{I}$ is a $\mathrm{Proj}(\mathcal{C})$-stable ideal, it follows that $\mathrm{Proj}(\mathcal{M})/\mathcal{I}$ is a $\mathrm{Proj}(\mathcal{C})$-module category.
We also note that $\mathrm{Proj}(\mathcal{M})/\mathcal{I}$ has finite dimensional $\on{Hom}$-spaces, and, upon Cauchy completion, has finitely many equivalence classes of indecomposable objects.
It therefore follows that $$\mathcal{M}/\mathcal{I}:=\mathrm{PSh}(\mathrm{Proj}(\mathcal{M})/\mathcal{I})$$ is a finite abelian category.
Moreover, $\mathcal{M}/\mathcal{I}$ carries an action of $\mathcal{C}\simeq\mathrm{PSh}(\mathrm{Proj}(\mathcal{C}))$ given explicitly on $C\in\mathcal{C}$, $\Phi\in\mathcal{M}/\mathcal{I}$, and $Q\in\mathrm{Proj}(\mathcal{M})/\mathcal{I}$ by
$$(C\triangleright \Phi)(Q) = \int^{R\in\mathrm{Proj}(\mathcal{M})/\mathcal{I}}\on{Hom}_{\mathrm{Proj}(\mathcal{M})/\mathcal{I}}(Q,C\otimes R)\otimes_{\mathbbm{k}}\Phi(R)\,.$$
We emphasize that this action is, at least apriori, not unital.

\begin{proposition}\label{prop:quotientmodulecat}
Let $\mathcal{C}$ be a finite pre-multitensor category, and let $\mathcal{M}$ be a finite $\mathcal{C}$-module category.
The action of $\mathcal{C}$ on $\mathcal{M}/\mathcal{I}$ is unital, that is to say, $\mathcal{M}/\mathcal{I}$ is a finite $\mathcal{C}$-module category.
\end{proposition}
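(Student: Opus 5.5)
We need to show that the unit $\mathbbm{1}\in\mathcal{C}$ acts on $\mathcal{M}/\mathcal{I}$ by a functor isomorphic to the identity, compatibly with the associativity constraints. The plan is to reduce this to the fact that the unit acts trivially on $\mathcal{M}$ itself, using the Day convolution description and a projective presentation of $\mathbbm{1}$.

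First, we check that the action is associative and right exact in both variables. The action above is the left Kan extension along Yoneda of the $\mathrm{Proj}(\mathcal{C})$-action on $\mathrm{Proj}(\mathcal{M})/\mathcal{I}$. It is therefore cocontinuous in each variable, and the associator is inherited from the semigroup action on projectives. The only issue is unitality.

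Next, we identify $\mathbbm{1}\triangleright -$. Choose a projective presentation $P_1\xrightarrow{f}P_0\twoheadrightarrow\mathbbm{1}$ in $\mathcal{C}$. By right exactness in the $\mathcal{C}$-variable, $\mathbbm{1}\triangleright\Phi$ is the cokernel of $f\triangleright\Phi:P_1\triangleright\Phi\rightarrow P_0\triangleright\Phi$. Since both functors are right exact in $\Phi$, it suffices to produce natural isomorphisms $\mathbbm{1}\triangleright \mathrm{y}(Q)\cong \mathrm{y}(Q)$ for representables $\mathrm{y}(Q)$, $Q\in\mathrm{Proj}(\mathcal{M})/\mathcal{I}$, natural in morphisms of the quotient category, and then extend. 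On representables, $P_i\triangleright\mathrm{y}(Q)=\mathrm{y}(P_i\triangleright Q)$. So $\mathbbm{1}\triangleright\mathrm{y}(Q)$ is the cokernel of $\mathrm{y}(f\triangleright Q)$ computed in presheaves on the quotient. Explicitly, it is the presheaf sending $Q'$ to the cokernel of
$$\on{Hom}(Q',P_1\triangleright Q)/\mathcal{I}\rightarrow \on{Hom}(Q',P_0\triangleright Q)/\mathcal{I}.$$

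In $\mathcal{M}$ we have an exact sequence $P_1\triangleright Q\rightarrow P_0\triangleright Q\rightarrow Q\rightarrow 0$. Because $Q'$ is projective, applying $\on{Hom}_{\mathcal{M}}(Q',-)$ keeps it exact. The key step is to show that exactness survives passing modulo $\mathcal{I}$, namely that the induced sequence
$$\mathcal{I}(Q',P_1\triangleright Q)\rightarrow\mathcal{I}(Q',P_0\triangleright Q)\rightarrow\mathcal{I}(Q',Q)\rightarrow0$$
is right exact in the appropriate sense. Then a diagram chase (a $3\times3$/snake argument) identifies the cokernel with $\on{Hom}(Q',Q)/\mathcal{I}(Q',Q)$.

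For surjectivity $\mathcal{I}(Q',P_0\triangleright Q)\to\mathcal{I}(Q',Q)$, I would use $\mathrm{Proj}(\mathcal{C})$-stability. Given $g\in\mathcal{I}(Q',Q)$, the morphism $P_0\triangleright g$ lies in $\mathcal{I}$. Since $Q'$ is projective, the epimorphism $P_0\triangleright Q'\twoheadrightarrow Q'$ splits via some $s$. Then $(P_0\triangleright g)\circ s$ lies in $\mathcal{I}(Q',P_0\triangleright Q)$ and maps to $g$ by naturality of $P_0\triangleright-\to\mathbbm{1}\triangleright-$.

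For exactness in the middle, consider $h\in\mathcal{I}(Q',P_0\triangleright Q)$ whose image in $\on{Hom}(Q',Q)$ vanishes. Then $h$ lifts to some $h'\in\on{Hom}(Q',P_1\triangleright Q)$ by projectivity of $Q'$, but we need $h'$ modulo $\mathcal{I}$ to account for $h$. Here I would repeat the trick: choose a splitting $t$ of $P_0\triangleright Q'\to Q'$ (possibly with $P_0\otimes P_0\to P_0$) and write $h$ as $(P_0\triangleright h)\circ t$ up to the canonical map, expressing $h$ through $\mathcal{I}$-morphisms tensored with $P_1$. This middle exactness is the main obstacle. It uses both stability and projectivity of $P_0$ and $P_1$, and whether it works depends on the precise naturality with respect to the presentation.

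Finally, naturality in $Q$ modulo $\mathcal{I}$ and compatibility with the associator follow because every identification is induced by the unitor of $\mathcal{M}$, which is $\mathrm{Proj}(\mathcal{C})$-natural. The resulting unitor therefore satisfies the triangle axiom on representables, and hence everywhere by right exactness.
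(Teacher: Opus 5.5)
Your route works, but the step you single out as the main obstacle is not needed, so the argument is already complete. Write $H_i=\on{Hom}_{\mathcal{M}}(Q',P_i\triangleright Q)$ and $H=\on{Hom}_{\mathcal{M}}(Q',Q)$, and let $I_i\subseteq H_i$ and $I\subseteq H$ be the corresponding subspaces of $\mathcal{I}$.

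The cokernel of $H_1/I_1\rightarrow H_0/I_0$ is $H_0/(I_0+\operatorname{im}H_1)$. The kernel of the surjection $H_0\rightarrow H/I$ is the preimage of $I$. These coincide as soon as $I_0\rightarrow I$ is onto: if $h\mapsto g\in I$, pick $h_0\in I_0$ over $g$; then $h-h_0\in\ker(H_0\rightarrow H)=\operatorname{im}H_1$. So exactness of $I_1\rightarrow I_0\rightarrow I$ at $I_0$ plays no role. Your surjectivity argument, which is correct, already gives $\mathbbm{1}\triangleright\mathrm{y}(Q)\cong\mathrm{y}(Q)$.

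Stability is not even needed for surjectivity. The map $P_0\triangleright Q\twoheadrightarrow Q$ is an epimorphism onto a projective, so it has a section $\sigma$, and $\sigma\circ g\in I_0$ lifts $g$. In fact the whole presentation $P_1\triangleright Q\rightarrow P_0\triangleright Q\rightarrow Q\rightarrow 0$ is split exact inside $\mathrm{Proj}(\mathcal{M})$. The kernel of the second map is a projective direct summand, and the first map is an epimorphism onto it, hence split. Therefore every additive functor on $\mathrm{Proj}(\mathcal{M})$, such as $\on{Hom}_{\mathcal{M}}(Q',-)/\mathcal{I}(Q',-)$ or $\mathcal{I}(Q',-)$, sends it to an exact sequence. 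In particular, the middle exactness you worried about does hold. Stability of $\mathcal{I}$ is used only to make the action on the quotient, and the identification $P_i\triangleright\mathrm{y}(Q)\cong\mathrm{y}(P_i\triangleright Q)$, well defined.

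The paper argues differently. It uses the adjunction $L_{\pi}\dashv\pi^*$ between $\mathcal{M}$ and $\mathcal{M}/\mathcal{I}$ and observes that $L_{\pi}$ commutes with the action because coends commute. It then shows that $\pi^*$ is fully faithful by identifying it with restriction of scalars along $\on{End}_{\mathcal{M}}(Q)\twoheadrightarrow\on{End}_{\mathcal{M}}(Q)/\mathcal{I}(Q,Q)$ for an additive generator $Q$. Reflectivity then gives $\mathbbm{1}\triangleright X\cong\mathbbm{1}\triangleright L_{\pi}\pi^*X\cong L_{\pi}(\mathbbm{1}\triangleright\pi^*X)\cong L_{\pi}\pi^*X\cong X$, transporting unitality from $\mathcal{M}$.

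Your computation is the representable-level version of this. It amounts to the right exact functor $L_{\pi}$, which sends $\mathrm{y}(Q)$ to $\mathrm{y}(\pi Q)$, preserving the presentation of $Q$. Your argument is more elementary and explicit: it needs neither Kan extensions nor the description by quotient algebras. The paper's argument instead produces the reflective adjunction itself, with $L_{\pi}$ a module functor and $\pi^*$ exact and fully faithful. That is exactly what is reused in the proof of \autoref{thm:regularity}.
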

\begin{proof}
Write $\pi:\mathrm{Proj}(\mathcal{M})\rightarrow\mathrm{Proj}(\mathcal{M})/\mathcal{I}$ for the quotient functor. This is a $\mathrm{Proj}(\mathcal{C})$-module functor.
Arising from $\pi$, there is a pair of adjoint functors: 
\begin{equation}\label{eq:leftKan}
\begin{tikzcd}
	{L_{\pi}: \mathcal{M}} & {\mathcal{M}/\mathcal{I}:\pi^*\,.}
	\arrow[""{name=0, anchor=center, inner sep=0}, shift left=2, from=1-1, to=1-2]
	\arrow[""{name=1, anchor=center, inner sep=0}, shift left=2, from=1-2, to=1-1]
	\arrow["\dashv"{anchor=center, rotate=-90}, draw=none, from=0, to=1]
\end{tikzcd}
\end{equation}
More precisely, $\pi^*$ is the pullback functor, whereas $L_{\pi}$ is the left Kan extension of $\pi^{\on{op}}$. Explicitly, for any $Q\in\mathrm{Proj}(\mathcal{M})/\mathcal{I}$ and $M\in\mathcal{M}$, we have
$$(L_{\pi}(M))(Q) := \int^{P\in\mathrm{Proj}(\mathcal{M})}\on{Hom}_{\mathrm{Proj}(\mathcal{M})/\mathrm{I}}(Q,\pi(P))\otimes_{\mathbbm{k}}\on{Hom}_{\mathcal{M}}(P,M)\,.$$
In particular, it follows that $L_{\pi}$ is a $\mathcal{C}$-module functor as coends commute.
We claim that $\pi^*$ is fully faithful.
To see this, let $Q\in\mathrm{Proj}(\mathcal{M})$ be an additive generator.
There is an equivalence of linear categories $\mathcal{M}\simeq\rmod[\mathbbm{k}][\on{End}_{\mathcal{M}}(Q)]$. Moreover, recall that $\mathcal{I}(Q,Q)\subseteq\on{End}_{\mathcal{M}}(Q)$ is a two-sided ideal, and, because $Q$ is a generator of $\mathrm{Proj}(\mathcal{M})/\mathcal{I}$, we have
$$\mathcal{M}/\mathcal{I}\simeq\rmod[\mathbbm{k}][\on{End}_{\mathcal{M}}(Q)/\mathcal{I}(Q,Q)]\,.$$
Under these identifications, the functor $\pi^*$ is identified with the functor
$$\rmod[\mathbbm{k}][\on{End}_{\mathcal{M}}(Q)/\mathcal{I}(Q,Q)]\rightarrow\rmod[\mathbbm{k}][\on{End}_{\mathcal{M}}(Q)]$$
induced by the homomorphism of $\mathbbm{k}$-algebras. Such functors are fully faithful, so the claim follows.

As the right adjoint $\pi^*$ is fully faithful, we have that $\mathcal{M}/\mathcal{I}$ is a reflective subcategory of $\mathcal{M}$. Thus, for any object $X\in\mathcal{M}/\mathcal{I}$, we have a natural isomorphism $X\cong L_{\pi}(\pi^*(X))$.
We therefore have (natural) isomorphisms:
$$\mathbbm{1}\triangleright X\cong\mathbbm{1}\triangleright L_{\pi}(\pi^*(X))\cong L_{\pi}(\mathbbm{1}\triangleright \pi^*(X))\cong L_{\pi}(\pi^*(X))\cong X\,.$$
The second isomorphism supplied by the left $\mathcal{C}$-module structure of the functor $L_{\pi}$.
The third isomorphism comes from the fact that the action of $\mathcal{C}$ on $\mathcal{M}$ is unital by hypothesis. This concludes the proof.
\end{proof}

\subsection{Synthesis}

\begin{proof}[Proof of \autoref{thm:regularity}]
Let $\mathcal{C}$ be a finite pre-multitensor category such that $\mathcal{Z}(\mathcal{C})$ is rigid.
Let $I$ be an arbitrary nontrivial two-sided $\mathbb{Z}_+$-ideal in the split Grothendieck pseudoring of $\mathrm{Proj}(\mathcal{C})$.
We show that
\begin{equation}\label{eq:KEYregularity}
\on{Hom}_{\mathcal{C}}(Q,P) = 0\,, \textrm{ for every }Q\in I^c \textrm{ and } P\in I\,.
\end{equation}
Manifestly, this implies that $I\cap I^c = 0$, so that $\mathcal{C}$ is discrete. It follows from \autoref{prop:regulardirectsumstronglyregular} that the $\mathbb{Z}_+$-nearring $\on{Gr}(\mathrm{Proj}(\mathcal{C}))$ splits as a direct sum of indecomposable discrete $\mathbb{Z}_+$-nearrings $A_1,\ldots, A_n$.
Let $\mathcal{C}_j$ be the Serre subcategory of $\mathcal{C}$ generated by the indecomposable projective objects in $A_j$.
We have that $\mathcal{C}_j$ is an indecomposable discrete finite pre-multitensor category as $A_j$ is an indecomposable discrete finite $\mathbb{Z}_+$-nearring.
Moreover, it follows from \autoref{eq:KEYregularity} that there is an equivalence of finite pre-multitensor categories
$$\mathcal{C}\simeq\bigoplus_{j=1}^n \mathcal{C}_j\,.$$
Namely, \autoref{eq:KEYregularity} holds both for $I$ and for $I^c$!
It remains to establish \autoref{eq:KEYregularity}.

Let $I$ be a nontrivial two-sided $\mathbb{Z}_+$-ideal in the split Grothendieck pseudoring of $\mathrm{Proj}(\mathcal{C})$.
We can consider the $\on{Proj}(\mathcal{C})$-$\on{Proj}(\mathcal{C})$-stable ideal $\mathcal{I}$ of $\mathrm{Proj}(\mathcal{C})$ defined by
$$\on{Hom}_{\mathcal{I}}(P,Q):=\{f:P\rightarrow Q\,|\,f\textrm{ factors though an object in }I\}\,,$$
for all $P,Q\in \on{Proj}(\mathcal{C})$.
Given that $I$ is a two-sided ideal, it follows that $\mathcal{I}\subseteq \on{Proj}(\mathcal{C})$ is a $\on{Proj}(\mathcal{C})$-$\on{Proj}(\mathcal{C})$-stable ideal.
In particular, the quotient functor
$$\pi:\on{Proj}(\mathcal{C})\rightarrow\on{Proj}(\mathcal{C})/\mathcal{I}$$
is a $\on{Proj}(\mathcal{C})$-$\on{Proj}(\mathcal{C})$-bimodule functor. By \autoref{eq:leftKan}, we then have a pair of adjoint functor $L_{\pi}:\mathcal{C}\leftrightarrows\mathcal{C}/\mathcal{I}:\pi^*$.
Besides, recall from the proof of \autoref{prop:quotientmodulecat} that $\pi^*$ is fully faithful and exact, and that $L_{\pi}$ is a $\mathcal{C}$-$\mathcal{C}$-bimodule functor. Thence, it follows from \autoref{prop:laxrightexactstrong} that $\pi^*$ is a $\mathcal{C}$-$\mathcal{C}$-bimodule functor as well.
We may therefore consider the composite right exact $\mathcal{C}$-$\mathcal{C}$-bimodule functor
$$\pi^*\circ L_{\pi}:\mathcal{C}\rightarrow\mathcal{C}\,,$$
which we may think of as an object of $\mathcal{Z}(\mathcal{C})$ by \autoref{lem:centerasbimodules}.
But $\mathcal{Z}(\mathcal{C})$ is left rigid by assumption, so that $\pi^*\circ L_{\pi}$ has a left adjoint.
By the special adjoint functor theorem, this last property is equivalent to $\pi^*\circ L_{\pi}$ being left exact. But, the functor $\pi^*$ reflects exactness as it is faithful and exact. Thus, we have that $L_{\pi}$ is left exact.

Now, as reviewed in \autoref{lem:Moritabasic} above, we can identify $$\mathcal{C}\simeq\on{PSh}(\on{Proj}(\mathcal{C}))\simeq
\rmod[\mathbbm{k}][A]\,,\quad A:=\on{End}_{\mathcal{C}}(\oplus_j P_j)\,,$$ as abelian categories where the direct sum runs over the equivalence classes of indecomposable projective objects in $\mathcal{C}$. In particular, the finite dimensional $\mathbbm{k}$-algebra $A$ is basic.
Let us write $e$ for the sum of the primitive idempotents corresponding to the indecomposable projective objects in the two-sided $\mathbb{Z}_+$-ideal $I$.
In other words, if $K\subset I$ denote the subset corresponding to the $\mathbb{Z}_+$-basis of $I$, the right $A$-module $eA$ is identified with $\oplus_{j\in K} P_j$.
Observe that an element $a\in A$, that is an endomorphism of $\oplus_j P_j$ in $\mathcal{C}$, factors through $I$ if and only if it lies in $AeA$.
It therefore follows from \autoref{eq:quotientprojective} that
$$\on{PSh}(\on{Proj}(\mathcal{C})/\mathcal{I})\simeq\rmod[\mathbbm{k}][\,(A/AeA)]\,.$$
Moreover, the functor $\pi^*$ is then identified with the functor $\rmod[\mathbbm{k}][\,(A/AeA)]\rightarrow\rmod[\mathbbm{k}][A]$ induced by the map of $\mathbbm{k}$-algebras $A\rightarrow A/AeA$. It follows that its left adjoint $L_{\pi}$ is identified with $$(-)\otimes_AA/AeA:\rmod[\mathbbm{k}][A]\rightarrow\rmod[\mathbbm{k}][\,(A/AeA)]\,.$$
But we have argued above that $L_{\pi}$ is left exact, so that $(-)\otimes_AA/AeA$ is exact.
Thus, thanks to \autoref{prop:leftexactvanishing}, we find that $$\on{Hom}_{-A}((1-e)A, eA)=0\,.$$
On the one hand, the right $A$-module $eA$ corresponds to the projective object $\oplus_{j\in K}P_j$ in $\mathcal{C}$, that is, to the direct sum of the indecomposable projective objects in $I$.
On the other hand, the right $A$-module $(1-e)A$ corresponds to the projective object $\oplus_{j\in J\backslash K}P_j$ in $\mathcal{C}$, that is, to the direct sum of the indecomposable projective objects of $\mathcal{C}$ not in $I$.
We claim that this establishes \autoref{eq:KEYregularity}.
Namely, if $Q$ is a projective object in $I^c$, there exists projective objects $R,S$ in $\mathcal{C}$ such that $Q$ is a direct summand of $R\otimes (\oplus_{j\in J\backslash K}P_j)\otimes S$.
But we have
$$\on{Hom}_{\mathcal{C}}(R\otimes (\oplus_{j\in J\backslash K}P_j)\otimes S, P)\cong \on{Hom}(\oplus_{j\in J\backslash K}P_j, {R^{\vee}}\otimes P \otimes {^{\vee}S}) = 0\,,$$
so that $\on{Hom}_{\mathcal{C}}(Q,P)=0$ as claimed.
This concludes the proof.
\end{proof}

\section{Exact Module Categories}

\subsection{Definition and Elementary Properties}

We now turn our attention to so-called exact module categories, first introduced in \cite{EO} in the setting of finite tensor categories. Throughout, we fix a finite pre-multitensor category $\mathcal{C}$.

\begin{definition}
A finite $\mathcal{C}$-module category $\mathcal{M}$ is said to be {\it exact} if, for any projective object $P \in \mathcal{C}$ and any arbitrary object $M \in \mathcal{M}$, the object $P \lact M$ is projective in $\mathcal{M}$.
\end{definition}

\begin{example}
 Let $\mathcal{C}$ be a finite multitensor category and let $A$ be an algebra in $\mathcal{C}$. The category $\lmod$ of left $A$-modules in $\mathcal{C}$ is an exact left module category over $\bimod$, with action given by ${{}_{A}M_{A}} \lact {{}_{A}N} := M \otimes_{A} N$. Indeed, any projective object in $\bimod$ is a direct summand of $A \otimes P \otimes A$ for $P$ projective in $\mathcal{C}$.
 Moreover, we have $(A \otimes P \otimes A) \otimes_{A} N \cong A \otimes P \otimes N$, which is projective since $P \otimes N$ is a projective object as $\mathcal{C}$ is rigid.
\end{example}

\begin{proposition}\label{exactetymology}
 Let $\mathcal{C}$ be a finite pre-multitensor category, let $\mathcal{M}$ be an exact $\mathcal{C}$-module category and let $\mathcal{N}$ be an abelian $\mathcal{C}$-module category. Any $\mathcal{C}$-module functor $F: \mathcal{M} \rightarrow \mathcal{N}$ is exact.
\end{proposition}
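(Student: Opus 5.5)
The plan is to adapt the classical argument from \cite{EO} to the pre-multitensor setting. Any $\mathcal{C}$-module functor into an abelian module category is additive. So exactness reduces to showing that $F$ sends short exact sequences in $\mathcal{M}$ to short exact sequences in $\mathcal{N}$. The key observation is that in an exact module category, acting by a projective object of $\mathcal{C}$ splits every short exact sequence.

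First, fix a short exact sequence $0\to X\to Y\to Z\to 0$ in $\mathcal{M}$ and a projective object $P\in\mathcal{C}$. Since $P$ is rigid, the functor $P\lact -$ has both a left and a right adjoint, namely $P^{\vee}\lact -$ and ${}^{\vee}P\lact -$ up to the paper's handedness conventions, so it is exact. Hence $0\to P\lact X\to P\lact Y\to P\lact Z\to 0$ is exact. By exactness of $\mathcal{M}$, the object $P\lact Z$ is projective, so this sequence splits. Applying the additive functor $F$ and the module structure isomorphisms $F(P\lact -)\cong P\lact F(-)$, we obtain a split, hence exact, sequence
\[0\to P\lact F(X)\to P\lact F(Y)\to P\lact F(Z)\to 0\]
in $\mathcal{N}$.

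Second, remove $P$. Take $P=P_{\mathbbm 1}$, the projective cover of the unit, with its epimorphism $p:P_{\mathbbm 1}\twoheadrightarrow\mathbbm{1}$. The claim is that $P_{\mathbbm 1}\lact -:\mathcal{N}\to\mathcal{N}$ is exact and faithful, so that it reflects exactness of sequences. Exactness holds as above by rigidity of $P_{\mathbbm 1}$. For faithfulness, note that $p\lact N: P_{\mathbbm 1}\lact N\to \mathbbm{1}\lact N\cong N$ is an epimorphism, because the action is right exact in the $\mathcal{C}$-variable. So if $P_{\mathbbm 1}\lact f=0$, naturality gives $f\circ(p\lact N)=(p\lact N')\circ(P_{\mathbbm 1}\lact f)=0$, and therefore $f=0$. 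An exact faithful functor between abelian categories reflects exactness: it reflects zero objects, hence zero homology. Applying this to the sequence $F(X)\to F(Y)\to F(Z)$ extended by zeros, the exactness after applying $P_{\mathbbm 1}\lact -$ yields exactness of $0\to F(X)\to F(Y)\to F(Z)\to 0$.

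The main obstacle is a bookkeeping point: the naturality of the module structure of $F$ must be used to identify the image of the split sequence with $P_{\mathbbm 1}\lact$ applied to the $F$-sequence, including compatibility with the maps. I also need $\mathcal{N}$'s action to be right exact in $\mathcal{C}$, and I will assume this is part of ``abelian $\mathcal{C}$-module category''. Otherwise I would use that $P_{\mathbbm 1}\lact-$ has an exact left adjoint via rigidity, and deduce faithfulness from $\on{Hom}(\mathbbm{1}, {}^\vee P_{\mathbbm 1}\otimes P_{\mathbbm 1})\neq 0$ through the coevaluation. Neither route requires exactness of $F$ a priori, nor even right exactness.
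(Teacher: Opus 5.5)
Your argument is correct and is essentially the paper's proof: act by a projective $P$ with an epimorphism $P\twoheadrightarrow\mathbbm{1}$ so that $P\lact-$ splits short exact sequences in $\mathcal{M}$, and transport this along the module structure of $F$. You then conclude, as the paper does, because $P\lact_{\mathcal{N}}-$ is exact (it has both adjoints) and faithful, hence reflects exactness; faithfulness comes from the epimorphism $P\lact_{\mathcal{N}}-\twoheadrightarrow\mathrm{Id}_{\mathcal{N}}$, which relies on right exactness of the action on $\mathcal{N}$ exactly as the paper implicitly does. Your fallback through a nonzero map $\mathbbm{1}\to{}^{\vee}P_{\mathbbm 1}\otimes P_{\mathbbm 1}$ is unnecessary and would not work as stated: faithfulness of $P\lact-$ amounts to the unit $N\to P^{\vee}\lact P\lact N$ of $P\lact-\dashv P^{\vee}\lact-$ being monic, which the mere existence of such a map does not provide.
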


\begin{proof}
 Let $P$ be a projective object in $\mathcal{C}$ such that there exists an epimorphism $\pi: P \twoheadrightarrow \mathbb{1}$. Given that there is an epimorphism $\pi \lact_{\mathcal{N}}-: P \lact_{\mathcal{N}} - \twoheadrightarrow \on{Id}_{\mathcal{N}}$, the functor $P \lact_{\mathcal{N}}-$ is faithful.
 Moreover, the functor $P \lact_{\mathcal{N}} -$ admits both a left and a right adjoint, so that it is exact.
 Since $P \lact_{\mathcal{M}}-$ is exact and maps any object to a projective object by assumption, it sends any exact sequence in $\mathcal{M}$ to a split exact sequence therein. Thus, the functor $F\circ (P \lact_{\mathcal{M}} -)$ is exact as $F$ preserves split exact sequences. Since $F$ is a $\mathcal{C}$-module functor, we have $F \circ (P \lact_{\mathcal{M}} -) \cong (P \lact_{\mathcal{N}} -)\circ F$, from which we find that $F$ is exact. Namely, $P \lact_{\mathcal{N}}-$ reflects exactness as it is exact and faithful.
\end{proof}

\begin{proposition}\label{prop:dualtensorcat}
Let $\mathcal{C}$ be a finite pre-multitensor category and let $\mathcal{M}$ be an exact $\mathcal{C}$-module category. Then $\on{End}^{\on{rex}}_{\mathcal{C}}(\mathcal{M})$ is a finite multitensor category.
\end{proposition}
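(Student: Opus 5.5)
We need to show that $\mathcal{D}:=\on{End}^{\on{rex}}_{\mathcal{C}}(\mathcal{M})$ is a finite abelian category with right exact tensor product in which every object is rigid. The plan is to reduce everything to the statement that every object of $\mathcal{D}$ is exact and has exact adjoints which are again module functors.

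\textbf{Finiteness.} By \autoref{thm:finitemodulemodule} and \autoref{lem:finitemodulegenerator}, write $\mathcal{M}\simeq\rmod$ for an algebra $A$ in $\mathcal{C}$. Then \autoref{lem:EilenbergWatts} identifies $\mathcal{D}$ with $(\bimod)^{\on{mop}}$. The first paragraph of the proof of \autoref{prop:bimodulepretensor}, which uses monadicity and does not need the $\mathcal{C}\boxtimes\mathcal{C}^{\on{mop}}$-projectivity hypothesis, shows that $\bimod$ is finite abelian. It also shows that $\otimes_A$ is right exact in both variables. Hence $\mathcal{D}$ is a finite abelian monoidal category with right exact tensor product, and only rigidity remains.

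\textbf{Rigidity.} Let $F\in\mathcal{D}$.
\begin{itemize}
\item[(i)] By \autoref{exactetymology}, $F$ is exact, since $\mathcal{M}$ is exact.
\item[(ii)] Being exact between finite abelian categories, $F$ has both a left adjoint $F^{L}$ and a right adjoint $F^{R}$. Both exist as $\mathbbm{k}$-linear functors $\mathcal{M}\to\mathcal{M}$.
\item[(iii)] By \autoref{doctrinal}, the strong module structure of $F$, viewed as lax, gives $F^{L}$ an oplax $\mathcal{C}$-module structure. Viewed as oplax, it gives $F^{R}$ a lax $\mathcal{C}$-module structure.
\item[(iv)] $F^{L}$ is right exact, being a left adjoint, so \autoref{prop:laxrightexactstrong} makes it a strong $\mathcal{C}$-module functor. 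Thus $F^{L}\in\mathcal{D}$.
\item[(v)] For $F^{R}$ the same proposition requires right exactness, which is where exactness of $\mathcal{M}$ enters. Since $F$ is exact, it preserves epimorphisms and projective covers. Its right adjoint $F^{R}$ is left exact, and it is right exact exactly when $F$ preserves projectives.
\end{itemize}

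\textbf{Main obstacle: $F$ preserves projectives.} I would argue this directly. Choose a projective $P\in\mathcal{C}$ with an epimorphism $P\twoheadrightarrow\mathbbm{1}$. Every projective object $X$ of $\mathcal{M}$ is a quotient of $P\triangleright X$. That quotient splits because $X$ is projective, so $X$ is a direct summand of $P\triangleright X$. Then $F(X)$ is a summand of $F(P\triangleright X)\cong P\triangleright F(X)$, which is projective by exactness of $\mathcal{M}$. Hence $F^{R}$ is exact. It is then strong by \autoref{prop:laxrightexactstrong}, so $F^{R}\in\mathcal{D}$.

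\textbf{Conclusion.} The adjunctions $F^{L}\dashv F\dashv F^{R}$ now take place inside $\mathcal{D}$. The units and counits are module natural transformations by the construction in doctrinal adjunction. So $F^{L}$ and $F^{R}$ are the left and right duals of $F$ with respect to composition; which one is which depends on the composition convention. It follows that $\mathcal{D}$ is rigid, hence a finite multitensor category.
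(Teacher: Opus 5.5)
Your proposal is correct and follows essentially the same route as the paper: finiteness via Eilenberg--Watts and the monadicity paragraph of \autoref{prop:bimodulepretensor}, exactness of $F$ from \autoref{exactetymology}, strength of the left adjoint via \autoref{doctrinal} and \autoref{prop:laxrightexactstrong}, and right exactness of the right adjoint from $F$ preserving projectives, using a projective $P\twoheadrightarrow\mathbbm{1}$. The only difference is in that last step: you exhibit each projective $X$ as a summand of $P\triangleright X$, whereas the paper checks projectivity on the single projective generator $P\triangleright Q'$. One small slip: your aside that an exact functor ``preserves projective covers'' is false in general, but the argument never uses it.
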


\begin{proof}
That $\on{End}^{\on{rex}}_{\mathcal{C}}(\mathcal{M})$ is a finite monoidal category with right exact tensor product follows from the first paragraph of the proof of \autoref{prop:bimodulepretensor} thanks to \autoref{lem:finitemodulegenerator}.
Let $F$ be a right exact $\mathcal{C}$-module endofunctor on $\mathcal{M}$. Then, by \autoref{exactetymology}, we find that $F$ is exact. Hence it admits a left adjoint $G$, which is right exact. Thus, by \autoref{doctrinal}, there is a canonical oplax $\mathcal{C}$-module structure on $G$, which is automatically strong thanks to \autoref{prop:laxrightexactstrong}. This shows that $F$ is left rigid.

We now want to show that the right adjoint of $F$ is right exact. In order to do so, it will suffice to show that $F$ sends projective objects to projective objects. Clearly, it suffices to show that there is a projective generator $Q$ in $\mathcal{M}$ such that $F(Q)$ is projective. 
Let $P \in \mathcal{C}$ be a projective object such that there exists an epimorphism $P \twoheadrightarrow \mathbb{1}$, and let $Q'$ be a projective generator for $\mathcal{M}$. Then $P \lact Q'$ is a projective generator for $\mathcal{M}$, and $F(P \lact Q') \cong P \lact F(Q')$, where the latter is a projective object by exactness of $\mathcal{M}$. Thus, the right adjoint of $F$ is right exact and hence its induced lax $\mathcal{C}$-module structure arising from \autoref{doctrinal} is actually strong thanks to \autoref{prop:laxrightexactstrong}. This establishes the right rigidity of $F$.
\end{proof}

\subsection{Radicals and Exactness}

We succinctly discuss a generalization of the main results of \cite{CSZ} to which we refer the reader for additional details. Firstly, given an additive $\mathbbm{k}$-linear category $\mathcal{A}$. Recall that its \emph{radical}, denoted by $\on{Rad}(\mathcal{A})$, is the unique ideal in $\mathcal{A}$ such that, for every object $A$ of $\mathcal{A}$ we have that $\on{End}_{\on{Rad}(\mathcal{A})}(A)\subseteq \on{End}_{\mathcal{A}}(A)$ is the Jacobson radical.

\begin{definition}
Let $\mathcal{C}$ be a finite pre-multitensor category, and let $\mathcal{M}$ be a finite $\mathcal{C}$-module category. The $\mathcal{C}$-module radical of $\mathcal{M}$ is the ideal $\on{Rad}^{\mathcal{C}}(\mathcal{M})$ in $\on{Proj}(\mathcal{M})$ defined by
$${\on{Rad}^{\mathcal{C}}(\mathcal{M})}(P,Q):=\{f\in\on{Hom}_{\mathcal{M}}(P,Q)\,|\,R\otimes f\in \on{Rad}(\mathcal{M})\mathrm{\ for\ all\ }R\in\on{Proj}(\mathcal{C})\}\,.$$
\end{definition}

\noindent Note that, by construction, the ideal $\on{Rad}^{\mathcal{C}}(\mathcal{M})$ is stable under the left action of $\on{Proj}(\mathcal{C})$ on $\on{Proj}(\mathcal{M})$.

The above definition of $\mathcal{C}$-module radical leads to a useful notion of radical for algebras within a finite pre-multitensor category. The connection is achieved through the next result.

\begin{proposition}[{\cite[Theorem 5.9]{CSZ}}]
There is an isomorphism between the lattice of $\on{Proj}(\mathcal{C})$-stable ideals in $\on{Proj}(\rmod)$ and that of two-sided ideals in $A$, which respects multiplication of ideals.
\end{proposition}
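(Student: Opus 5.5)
We construct mutually inverse, inclusion-preserving maps between the two lattices and then check compatibility with multiplication. Write $\mathcal{M}=\rmod$, and note that the free modules $P\otimes A$ with $P\in\on{Proj}(\mathcal{C})$ form, up to summands, all of $\on{Proj}(\rmod)$ by \autoref{lem:finitemodulegenerator}. Fix a projective generator $R$ of $\mathcal{C}$.

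\emph{From ideals of $A$ to stable ideals.} Given a two-sided ideal $J\subseteq A$ in $\mathcal{C}$, i.e.\ a subobject closed under left and right multiplication, define $\mathcal{I}_J(P\otimes A,Q\otimes A)$ as follows. By freeness,
$$\on{Hom}_{-A}(P\otimes A,Q\otimes A)\cong\on{Hom}_{\mathcal{C}}(P,Q\otimes A)\,.$$
Let $\mathcal{I}_J(P\otimes A,Q\otimes A)$ be the subspace $\on{Hom}_{\mathcal{C}}(P,Q\otimes J)$. This is a subspace because $P$ is projective and $Q\otimes-$ is exact, the latter since $Q$ is rigid. Extend $\mathcal{I}_J$ to summands via idempotents. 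Closure under composition follows from $J$ being a two-sided ideal: composition uses the multiplication $A\otimes A\to A$, which sends $J\otimes A$ and $A\otimes J$ into $J$. Stability under the left $\on{Proj}(\mathcal{C})$-action is immediate, because $S\otimes-$ carries $\on{Hom}(P,Q\otimes J)$ to $\on{Hom}(S\otimes P,S\otimes Q\otimes J)$.

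\emph{From stable ideals to ideals of $A$.} Given a stable ideal $\mathcal{I}$, take $J_{\mathcal{I}}$ to be the sum of the images of all morphisms $f\colon P\to A$, with $P$ projective, whose corresponding morphism $P\otimes A\to A$ of right $A$-modules lies in $\mathcal{I}$. It suffices to take $P=R$. Then $J_{\mathcal{I}}$ is a right ideal because $\mathcal{I}$ is closed under precomposition, which lets us multiply on the right by $A$. It is a left ideal because of $\on{Proj}(\mathcal{C})$-stability: tensoring such an $f$ with $S$ and postcomposing with the multiplication $S\otimes A\to A$, induced by a map $S\to A$, lands again in $\mathcal{I}$.

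\emph{The two maps are inverse.} The identity $J_{\mathcal{I}_J}=J$ holds because $J$ is a quotient of a projective object that maps into $J$. For the other identity $\mathcal{I}_{J_{\mathcal{I}}}=\mathcal{I}$, we argue as follows. Any morphism $g\colon P\otimes A\to Q\otimes A$ in $\mathcal{I}$ is recovered from the family of its composites with maps $Q\otimes A\to A$; this works after replacing $Q\otimes A$ by $Q^\vee\otimes Q\otimes A\to A$, which uses rigidity of $Q$. We then use stability to move $Q$ across via the duality adjunction, and this identifies $\mathcal{I}(P\otimes A,Q\otimes A)$ with $\mathcal{I}(Q^\vee\otimes P\otimes A,A)$, and hence with $\on{Hom}(P,Q\otimes J_{\mathcal{I}})$. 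Both maps are clearly order preserving, so we obtain a lattice isomorphism.

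\emph{Compatibility with multiplication.} The product $J_1J_2$ is the image of $J_1\otimes J_2\to A$. A composite of morphisms in $\mathcal{I}_{J_2}$ and $\mathcal{I}_{J_1}$ has components factoring through $Q\otimes J_1\otimes J_2\to Q\otimes J_1J_2$, which gives one inclusion. For the reverse inclusion, every map $P\to J_1J_2$ lifts through $J_1\otimes J_2$ by projectivity of $P$, and can therefore be written as such a composite.

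The main obstacle is the step $\mathcal{I}=\mathcal{I}_{J_{\mathcal{I}}}$. Here the stable ideal is only known to be closed under the left action, not the right one, so the duality trick has to be carried out carefully on the correct side, using only rigidity of the projective object $Q$. This is exactly where the pre-tensor hypothesis that projective objects are rigid replaces full rigidity in \cite{CSZ}.
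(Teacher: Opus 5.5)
There is a genuine gap. Your definition of $J_{\mathcal{I}}$, your left-ideal argument, and your proof of $\mathcal{I}=\mathcal{I}_{J_{\mathcal{I}}}$ all apply $\mathcal{I}$ to morphisms whose codomain is the regular module $A_A=\mathbbm{1}\otimes A$. The last of these composes with maps $Q\otimes A\to A$ and identifies the result with $\mathcal{I}(Q^{\vee}\otimes P\otimes A,A)$.

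A $\on{Proj}(\mathcal{C})$-stable ideal is only defined on $\on{Proj}(\rmod)$. As you note yourself, this consists of summands of $P\otimes A$ with $P$ projective. In general $A_A$ is not among them, because $\mathbbm{1}$ is not projective: take $A=\mathbbm{1}$ in any non-semisimple finite tensor category, where $\rmod=\mathcal{C}$. So $\mathcal{I}(P\otimes A,A)$ is simply undefined.

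For the same reason, "using stability to move $Q$ across" is not available as stated. It tensors morphisms of $\mathcal{I}$ with a dual of $Q$, which is not a projective object. Moreover $Q^{\vee}\otimes P$ need not be projective; with the paper's conventions the relevant object is ${}^{\vee}Q\otimes P$. The step you call obvious, $J_{\mathcal{I}_J}=J$, only looks trivial because you implicitly take $Q=\mathbbm{1}$.

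To repair this, stay inside $\on{Proj}(\rmod)$. Use $\on{Hom}_{-A}(P\otimes A,Q\otimes A)\cong\on{Hom}_{\mathcal{C}}(P,Q\otimes A)\cong\on{Hom}_{\mathcal{C}}({}^{\vee}Q\otimes P,A)$, where ${}^{\vee}Q\otimes P$ is projective by \autoref{cor:tensorprojective}. Define $J_{\mathcal{I}}$ as the sum of the images of the adjuncts $\hat g$ of all $g\in\mathcal{I}(P\otimes A,Q\otimes A)$. Then:
\begin{itemize}
\item Two-sidedness of $J_{\mathcal{I}}$ follows from closure of $\mathcal{I}$ under composition.
\item $\mathcal{I}\subseteq\mathcal{I}_{J_{\mathcal{I}}}$ follows from the zigzag identity $\tilde g=(Q\otimes\hat g)\circ(\mathrm{coev}_Q\otimes P)$, where $\tilde g\colon P\to Q\otimes A$ corresponds to $g$ and $\mathrm{coev}_Q\colon\mathbbm{1}\to Q\otimes{}^{\vee}Q$.
\item $J\subseteq J_{\mathcal{I}_J}$ is now the real point. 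For $\phi\colon S\twoheadrightarrow J$ with $S$ projective, the adjunct of $Q\otimes\phi$ is $\phi\circ(\mathrm{ev}_Q\otimes S)$. So you need $\mathrm{ev}_Q\colon{}^{\vee}Q\otimes Q\to\mathbbm{1}$ to be an epimorphism.
\end{itemize}

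CSZ use that $\mathrm{ev}_Q$ is an epimorphism for every nonzero projective in a finite tensor category. This fails once $\mathbbm{1}$ decomposes. It does hold for a projective generator $Q$: its cokernel $C$ satisfies $Q\otimes C=0$ by the zigzag, hence $C=0$ since $Q$ generates $\mathbbm{1}$. This is exactly the modification the paper makes to \cite[Lemma~5.3 and Proposition~5.7]{CSZ}.

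Likewise, for $\mathcal{I}_{J_{\mathcal{I}}}\subseteq\mathcal{I}$ you need $X\lact g\in\mathcal{I}$ for non-projective $X$ such as $Q'\otimes{}^{\vee}Q$. This is legitimate only because all objects involved are built from ${}^{\vee}P\otimes Q$ or $P\otimes Q^{\vee}$ and hence are projective, which is the paper's second check. Then, for a projective cover $S\twoheadrightarrow X$, the epimorphisms $S\lact M\twoheadrightarrow X\lact M$ split, reducing to $S\lact g$.

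Your multiplicativity argument is fine once $J_1$ is covered by a projective, so that the intermediate object is projective. So the real obstacle is not left versus right action, as your last paragraph suggests. It is the non-projectivity of $A_A$ and of duals of projectives, together with the failure of $\mathrm{ev}_Q$ to be epi for arbitrary nonzero projective $Q$.
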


\begin{proof}
 All the results of \cite[Section~4, Section~5]{CSZ} carry over to the pre-multitensor setting. The only proofs in \cite{CSZ} that require modification are those of \cite[Lemma~5.3]{CSZ} and \cite[Proposition~5.7]{CSZ}. In both of these proofs, one uses the fact that for any non-zero projective object $P$ of a finite tensor category $\cat{D}$, the morphism $\eta: \leftdual{P} \otimes P \rightarrow \mathbb{1}$ is an epimorphism. This fails already for finite multitensor categories.
 The following modification can be used to correct both proofs. Replace the zigzag associated to the chosen projective object $P$ with the morphism obtained by first including $P$ as a direct summand of a projective generator $Q$ of the finite pre-multitensor category $\cat{C}$, continuing with the analogous zigzag for $Q$, and finally projecting back to $P$ using a section of the aforementioned inclusion. While this composite evaluates, much like the zigzag itself, to the identity, it allows us to conclude that the composite morphisms used in the proofs of \cite[Lemma~5.3]{CSZ} and \cite[Proposition~5.7]{CSZ} have the desired form.

Additionally, it is important to verify that all the objects in the domains and codomains of the $\on{Hom}$-spaces appearing in the proofs of \cite{CSZ} remain projective in the pre-multitensor case --- for example, the proof of \cite[Lemma~5.3]{CSZ} contains an unfortunate typo:\ the relation
$\xi \in \mathfrak{J}(P' \otimes Q, P' \otimes \leftdual{P'})$ should be corrected to $\xi \in \mathfrak{J}(P' \otimes Q, P' \otimes \rightdual{P'})$, as the morphism $\xi$ defined in \cite[Figure 1]{CSZ} is obtained via precomposition with the coevaluation morphism $ \epsilon: \mathbb{1} \rightarrow \rightdual{(P')}\otimes P'$. Fortunately, the objects appearing in these domains and codomains are always of the form $\leftdual{P} \otimes Q$ or $P \otimes \rightdual{Q}$ with $P,Q$ projective objects of $\mathcal{C}$, and such tensor products are projective by \autoref{cor:tensorprojective}.
\end{proof}

\begin{definition}\label{def:radicalalgebra}
Let $A$ be an algebra in a finite pre-multitensor category $\mathcal{C}$. The radical of $A$ is the two-sided ideal $\on{Rad}(A)\subseteq A$ corresponding to the $\on{Proj}(\mathcal{C})$-stable ideal $\on{Rad}^{\mathcal{C}}(\rmod)$.
\end{definition}

\noindent As in \cite[Proposition 6.12]{CSZ}, one shows that $\on{Rad}^{\mathcal{C}}(\mathcal{M})$ is the largest nilpotent $\on{Proj}(\mathcal{C})$-stable ideal in $\on{Proj}(\mathcal{M})$. It follows that the radical of $A$ is also the largest nilpotent two-sided ideal of $A$.

The next result explains the relationship between exactness and the vanishing of the module radical. We wish to draw the attention of the reader to the discreteness hypothesis featured in the statement of the next result.

\begin{theorem}[{\cite[Theorem 6.16]{CSZ}}]\label{thm:CSZpretensor}
Let $A$ be an algebra in a discrete finite pre-multitensor category $\mathcal{C}$. Then, $A$ is exact if and only if its radical is trivial.
\end{theorem}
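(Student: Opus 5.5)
Here "$A$ is exact" means that $\rmod$ is an exact $\mathcal{C}$-module category. The plan is to transport the argument of \cite[Theorem~6.16]{CSZ} to the discrete pre-multitensor setting, using the ideal correspondence (the analogue of \cite[Theorem~5.9]{CSZ}) and the characterization of $\on{Rad}^{\mathcal{C}}(\mathcal{M})$ as the largest nilpotent $\on{Proj}(\mathcal{C})$-stable ideal. Throughout, write $\mathcal{M}=\rmod$. By the correspondence, $\on{Rad}(A)=0$ is equivalent to $\on{Rad}^{\mathcal{C}}(\mathcal{M})=0$. So we must show that $\mathcal{M}$ is exact if and only if every morphism $f$ between projectives with $R\lact f\in\on{Rad}(\mathcal{M})$ for all $R\in\on{Proj}(\mathcal{C})$ is zero.

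\emph{Exact implies radical trivial.} Suppose $\mathcal{M}$ is exact and let $f\in\on{Rad}^{\mathcal{C}}(\mathcal{M})(P,Q)$. Pick $R\in\on{Proj}(\mathcal{C})$ with an epimorphism $R\twoheadrightarrow\mathbbm{1}$, for instance $R=P_{\mathbbm{1}}$. Then $R\lact -$ is exact and faithful, as in the proof of \autoref{exactetymology}, and it lands in projectives. I would use its left adjoint ${}^{\vee}R\lact -$ (using that $R$ is rigid), together with the (co)evaluation maps, to write $f$ as a composite $P\to {}^{\vee}R\lact(R\lact P)\to\cdots\to Q$ through $R\lact f$. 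Exactness of $\mathcal{M}$ makes all intermediate objects projective, so this expresses $f$ as lying in the ideal generated by $R\lact f$. Iterating this, powers of $\on{Rad}^{\mathcal{C}}$ reproduce $f$, so nilpotency forces $f=0$. This is the exact mechanism of \cite{CSZ}. Discreteness is not needed here, except to ensure ${}^{\vee}R\otimes R'$ projective, which is \autoref{cor:tensorprojective}.

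\emph{Radical trivial implies exact.} This is the direction where discreteness enters, and I expect it to be the main obstacle. Assume $\on{Rad}^{\mathcal{C}}(\mathcal{M})=0$. Fix projectives $R\in\mathcal{C}$ and $M\in\mathcal{M}$. It suffices to show that $R\lact X$ is projective for simple $X$, and then to bootstrap via the projective cover $P_X\twoheadrightarrow X$ and the kernel, arguing that $R\lact(-)$ is exact and that its image objects split. Following \cite{CSZ}, the vanishing of the stable radical means that for each nonzero radical morphism $g$ between indecomposable projectives there is $R'$ with $R'\lact g$ non-radical, i.e.\ having an isomorphism component. One then shows that the semisimple quotient behaves like a $\mathcal{C}$-module on which projectives act by projectives.

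The difficulty is that in \cite{CSZ} one uses $\mathcal{C}$ indecomposable rigid, so that every projective $R$ "reaches" every other, giving that $\on{Proj}(\mathcal{C})\lact\on{Proj}(\mathcal{M})$ closes up. Here I would first reduce, via \autoref{prop:regulardirectsumstronglyregular} and the decomposition in the proof of \autoref{thm:regularity} (which used only \autoref{eq:KEYregularity}), to the indecomposable discrete case. I would also decompose $\mathcal{M}$ accordingly, using that the unit splits into orthogonal idempotent summands. In the indecomposable discrete case, for any projectives $P,Q$ there exist $R,S$ with $P$ a summand of $R\otimes Q\otimes S$. This is precisely the substitute for the epimorphism $\eta:{}^{\vee}P\otimes P\twoheadrightarrow\mathbbm{1}$ used in \cite[Lemma~5.3, Proposition~5.7]{CSZ}, and it allows the transitivity argument to run. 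Concretely, I would show that the two-sided ideal of $\on{Proj}(\mathcal{M})$-morphisms factoring through $\on{Proj}(\mathcal{C})\lact\mathcal{M}$ coincides with everything. I would then argue that a non-projective $R\lact X$ produces a nonzero nilpotent stable ideal, namely the morphisms killed after acting by the whole indecomposable two-sided cell, contradicting $\on{Rad}^{\mathcal{C}}(\mathcal{M})=0$.
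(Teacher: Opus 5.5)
Your overall plan (reduce to the indecomposable discrete case, then transport \cite{CSZ}) matches the paper's, but both directions, as sketched, have genuine gaps.

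\emph{Exact $\Rightarrow$ trivial radical.} Your factorization is $f=(\varepsilon\triangleright Q)\circ\big(({}^{\vee}R\otimes R)\triangleright f\big)\circ s$, with $s$ a section of the epimorphism $\varepsilon\triangleright P$. It exists in \emph{every} finite $\mathcal{C}$-module category: ${}^{\vee}R\otimes R$ is projective by \autoref{cor:tensorprojective}, and projectives of $\mathcal{C}$ always send projectives of $\mathcal{M}$ to projectives. So exactness plays no role, and the factorization only shows again that $f\in\on{Rad}^{\mathcal{C}}(\mathcal{M})$. ``Iterating'' never places $f$ in $\on{Rad}^{\mathcal{C}}(\mathcal{M})^{2}$. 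If it did, it would give $\on{Rad}^{\mathcal{C}}(\mathcal{M})=0$ for all $\mathcal{M}$. That already fails for $\mathcal{C}=\mathrm{Vec}$ and $\mathcal{M}$ the category of $\mathbbm{k}[x]/(x^{2})$-modules, where multiplication by $x$ on the regular module is a nonzero element.

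Exactness has to be used to see that $R\triangleright f$ is \emph{split}. Factor $f=i\circ p$ through its image. The functor $R\triangleright-$ is exact (as $R$ is rigid) and lands in projectives. Hence $R\triangleright p$ is a split epimorphism, and $R\triangleright i$ is a split monomorphism because its cokernel $R\triangleright\on{coker}(i)$ is projective. A split morphism lying in $\on{Rad}(\mathcal{M})$ is zero, so $R\triangleright f=0$, and $f=0$ by faithfulness of $R\triangleright-$. Alternatively, combine \autoref{prop:dualtensorcat} with \autoref{lem:bimoduletensorradical}: exactness of $-\otimes_{A}A/J$ forces $J=J^{2}$, and nilpotency finishes. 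Neither route needs discreteness.

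\emph{Trivial radical $\Rightarrow$ exact.} Your sketch does not yet contain an argument.
\begin{itemize}
\item That every morphism of $\on{Proj}(\mathcal{M})$ factors through $\on{Proj}(\mathcal{C})\triangleright\mathcal{M}$ holds in any finite module category, since each projective $P$ is a summand of $P_{\mathbbm{1}}\triangleright P$. So this cannot be where discreteness enters.
\item In the indecomposable discrete case, the ideal of morphisms ``killed'' by the whole cell is zero, by faithfulness of $P_{\mathbbm{1}}\triangleright-$. If you mean ``made radical'', it is just $\on{Rad}^{\mathcal{C}}(\mathcal{M})$, and its non-vanishing for non-exact $\mathcal{M}$ is exactly what must be proved.
\item You also misplace discreteness. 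The failure of ${}^{\vee}P\otimes P\twoheadrightarrow\mathbbm{1}$ in \cite[Lemma~5.3, Proposition~5.7]{CSZ} is repaired in the paper by routing the zigzag through a projective generator, with no discreteness. Those results belong to the ideal correspondence of \cite[Section~5]{CSZ}.
\end{itemize}
In the paper, once $\mathcal{C}$ is indecomposable discrete, the arguments of \cite[Section~6]{CSZ} run verbatim. The single non-formal input is \cite[Proposition~6.13]{CSZ}, which rests on \cite[Lemma~4.16]{Str} and ultimately \cite[Proposition~18(i)]{KM}. The essential hypothesis there is that $\on{Proj}(\mathcal{C})$ is $\mathcal{J}$-cell trivial, which holds because $\mathcal{C}$ is indecomposable discrete. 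That cell-theoretic input is the missing idea.

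Finally, your reduction step coincides with the paper's, which cites \autoref{thm:regularity}. Note, however, that \autoref{eq:KEYregularity} was derived there from rigidity of $\mathcal{Z}(\mathcal{C})$, not from discreteness. So your parenthetical does not by itself supply the decomposition.
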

\begin{proof}
Thanks to \autoref{thm:regularity}, it is enough to consider the case when $\mathcal{C}$ is indecomposable discrete.
Under this hypothesis, all the arguments in \cite[Section 6]{CSZ} go through seamlessly.
The only part that is not formal is the generalization of \cite[Proposition 6.13]{CSZ} because it relies on \cite[Lemma 4.16]{Str}.
This is however not a cause for concern because \cite[Lemma 4.16]{Str} generalizes without difficulty to $\mathrm{Proj}(\mathcal{C})$ provided that $\mathcal{C}$ is indecomposable discrete, as in that case, the category $\mathrm{Proj}(\cat{C})$ is {\it $\cat{J}$-cell trivial} in the terminology of \cite[Definition~4.15]{Str}, which is the only assumption of \cite[Lemma~4.16]{Str} crucially used in the result it builds on, namely \cite[Proposition~18(i)]{KM}.
\end{proof}

Given that $\on{Rad}^{\mathcal{C}}(\mathcal{M})$ is a $\mathrm{Proj}(\mathcal{C})$-stable ideal in $\mathrm{Proj}(\mathcal{M})$, we can consider the corresponding quotient $\mathcal{M}/\on{Rad}^{\mathcal{C}}(\mathcal{M})$, as in \autoref{sub:quotient}, which is a finite $\mathcal{C}$-module category thanks to \autoref{prop:quotientmodulecat}.

\begin{corollary}\label{cor:exactC/Jl}
Let $\mathcal{C}$ be a discrete finite pre-multitensor category, and let $\mathcal{M}$ be a finite $\mathcal{C}$-module category. The $\mathcal{C}$-module category $\mathcal{M}/\on{Rad}^{\mathcal{C}}(\mathcal{M})$ is exact.
\end{corollary}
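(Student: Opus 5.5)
We reduce the statement to \autoref{thm:CSZpretensor}. By \autoref{thm:finitemodulemodule} and \autoref{lem:finitemodulegenerator}, the finite $\mathcal{C}$-module category $\mathcal{N}:=\mathcal{M}/\on{Rad}^{\mathcal{C}}(\mathcal{M})$ from \autoref{prop:quotientmodulecat} has a $\mathcal{C}$-projective generator $P$, and $\mathcal{N}\simeq\rmod[\mathcal{C}][B]$ for the algebra $B:=\underline{\on{End}}(P)$ in $\mathcal{C}$. Exactness is invariant under equivalence of module categories. Since $\mathcal{C}$ is discrete, \autoref{thm:CSZpretensor} then says that $\mathcal{N}$ is exact if and only if $\on{Rad}(B)=0$. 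By \autoref{def:radicalalgebra}, this holds if and only if $\on{Rad}^{\mathcal{C}}(\mathcal{N})=0$. It therefore suffices to prove that the $\mathcal{C}$-module radical of the quotient vanishes.

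To show this, I would use the characterization recalled after \autoref{def:radicalalgebra}: $\on{Rad}^{\mathcal{C}}(-)$ is the largest nilpotent $\on{Proj}(\mathcal{C})$-stable ideal. Write $\mathcal{I}:=\on{Rad}^{\mathcal{C}}(\mathcal{M})$ and $\pi:\on{Proj}(\mathcal{M})\to\on{Proj}(\mathcal{M})/\mathcal{I}$. After Cauchy completion, the projectives of $\mathcal{N}$ are summands of images of projectives of $\mathcal{M}$, and $\pi$ is full, surjective on objects up to summands, and a $\on{Proj}(\mathcal{C})$-module functor. Let $\mathcal{J}$ be a nilpotent $\on{Proj}(\mathcal{C})$-stable ideal in $\on{Proj}(\mathcal{N})$. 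I would take its preimage $\tilde{\mathcal{J}}:=\pi^{-1}(\mathcal{J})$ in $\on{Proj}(\mathcal{M})$. This is an ideal that contains $\mathcal{I}$, and it is $\on{Proj}(\mathcal{C})$-stable because $\pi$ is a module functor. If $\mathcal{J}^n=0$, then $\tilde{\mathcal{J}}^n\subseteq\mathcal{I}$. Since $\mathcal{I}$ is nilpotent, say $\mathcal{I}^m=0$, we get $\tilde{\mathcal{J}}^{nm}=0$. Maximality of $\mathcal{I}$ then forces $\tilde{\mathcal{J}}=\mathcal{I}$, so $\mathcal{J}=0$. Hence $\on{Rad}^{\mathcal{C}}(\mathcal{N})=0$.

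The main point needing care is the identification of $\on{Proj}(\mathcal{N})$ with the idempotent completion of $\on{Proj}(\mathcal{M})/\mathcal{I}$, together with the check that ideals and stability pass correctly through this completion. Ideals in an additive category correspond bijectively to ideals in its Karoubi envelope, and nilpotency and $\on{Proj}(\mathcal{C})$-stability are preserved under this correspondence. To be concrete, one can also argue at the level of algebras. With $Q$ an additive generator of $\on{Proj}(\mathcal{M})$, we have $\on{End}_{\mathcal{N}}(\pi Q)=\on{End}(Q)/\mathcal{I}(Q,Q)$, and the lifting argument above becomes the statement that a quotient of an algebra by its largest nilpotent stable ideal has no nonzero nilpotent stable ideals.

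Alternatively, one could check directly from the definition that $R\otimes f\in\on{Rad}(\mathcal{N})$ for all projective $R$ implies $f\in\mathcal{I}$. This route uses that $\pi$ maps $\on{Rad}(\on{Proj}\mathcal{M})$ onto $\on{Rad}(\on{Proj}\mathcal{N})$, since a surjection of finite-dimensional algebras maps the Jacobson radical onto the Jacobson radical. I would keep the nilpotency argument as the primary route.
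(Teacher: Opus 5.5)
Your proposal is correct and takes essentially the paper's route: the corollary is stated without a separate proof, as a direct consequence of \autoref{thm:CSZpretensor} once $\mathcal{M}/\on{Rad}^{\mathcal{C}}(\mathcal{M})$ is seen to have vanishing $\mathcal{C}$-module radical. Your preimage argument, using that $\on{Rad}^{\mathcal{C}}$ is the largest nilpotent $\on{Proj}(\mathcal{C})$-stable ideal, just makes explicit the step $\on{Rad}^{\mathcal{C}}\big(\mathcal{M}/\on{Rad}^{\mathcal{C}}(\mathcal{M})\big)=0$, which the paper leaves implicit.
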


\begin{example}
We warn the reader that \autoref{cor:exactC/Jl} fails if $\mathcal{C}$ is not discrete. Recall the non-discrete finite pre-tensor category $\mathrm{PSh}(\mathcal{S})$ from \autoref{ex:nonregular}. By construction, its full subcategory on the projective objects is given by $\mathcal{S}$. The left $\mathcal{S}$-module radical of $\mathcal{S}$ consists of those morphisms which remain radical upon tensoring with $D \otimes D$ on the left. In particular, not all of the morphisms in $\on{Hom}_{\mathcal{S}}(D,D\otimes D)$ lie in $\on{Rad}^{\mathcal{S}}(\mathcal{S})$. The $\mathcal{S}$-module subcategory $\on{add}\setj{D\otimes D}$ of $\mathcal{S}/\on{Rad}^{\mathcal{S}}(\mathcal{S})$ thus gives rise to a module Serre subcategory of $\mathrm{Psh}(\mathcal{S}/\on{Rad}^{\mathcal{S}}(\mathcal{S}))$, for which the left adjoint of the inclusion is a (strong) module functor that is not left exact. 
Here, the non-vanishing of $\on{Hom}_{\mathcal{S}/\on{Rad}^{\mathcal{S}}(\mathcal{S})}(D,D\otimes D)$ entails the non-exactness of the inclusion functor by an argument similar to that appearing in the proof of \autoref{thm:regularity}, using \autoref{prop:leftexactvanishing}. This shows that the $\mathrm{PSh}(\mathcal{S})$-module category $\mathrm{Psh}(\mathcal{S}/\on{Rad}^{\mathcal{S}}(\mathcal{S}))$ is not exact.
\end{example}

\begin{remark}
The example above illustrates that \autoref{thm:CSZpretensor} fails in the absence of discreteness.
Even in the presence of discreteness, we want to draw the reader's attention to the fact that the full extent of \cite[Theorem 7.1]{CSZ} does not, at least a priori, hold.
More precisely, \cite[Theorem 7.1]{CSZ} establishes that an algebra in a finite (multi)tensor category is exact if and only if it is finite semisimple.
It is not clear that this holds for algebras in discrete finite pre-multitensor categories. Specifically, the proof that exactness implies finite semisimplicity given in \cite[Proposition~7.4]{CSZ} relies on the fact that, in an exact module category $\cat{M}$ over a multitensor category $\cat{D}$, for any morphism $f \in \cat{M}$ and $Q \in \mathrm{Proj}(\cat{D})$, the morphism $Q \lact f$ is split.
This last property holds for discrete pre-multitensor categories in which the duals of projective objects are again projective, but it is not clear to us whether it holds in an arbitrary discrete pre-multitensor category.
That being said, it does follow from \autoref{thm:CSZpretensor} that any finite semisimple algebra in a discrete finite pre-multitensor category is exact.
\end{remark}

Finally, we record the following technical result, which will be used subsequently.

\begin{lemma}\label{lem:bimoduletensorradical}
Let $\mathcal{C}$ be a finite pre-multitensor category, and let $A$ be an algebra in $\mathcal{C}$. If $\on{bimod}_{\mathcal{C}}(A)$ is a finite multitensor category, then the radical of $A$ is trivial.
\end{lemma}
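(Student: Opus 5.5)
The plan is to exploit the fact that, in a finite multitensor category, the monoidal unit is semisimple (see \cite[Section 4.3]{EGNO}). Here the unit of $\on{bimod}_{\mathcal{C}}(A)$ is $A$ itself. We then play this off against the characterization of $\on{Rad}(A)$ as the largest nilpotent two-sided ideal of $A$, recorded after \autoref{def:radicalalgebra}. Throughout I regard $\on{Rad}(A)\subseteq A$ as a sub-$A$-$A$-bimodule in $\mathcal{C}$, i.e.\ a subobject of the unit of $\on{bimod}_{\mathcal{C}}(A)$.

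\textbf{Step 1: $\on{Rad}(A)$ is a summand of the unit.} Assume $\on{bimod}_{\mathcal{C}}(A)$ is a finite multitensor category. By \cite[Theorem~4.3.1]{EGNO} its unit $A$ decomposes as a finite direct sum $A\cong\bigoplus_i \mathbbm{1}_i$ of pairwise non-isomorphic simple objects, and the $\mathbbm{1}_i$ are orthogonal idempotents: $\mathbbm{1}_i\otimes_A\mathbbm{1}_j\cong\delta_{ij}\mathbbm{1}_i$. Since $A$ is semisimple, every subobject of $A$ is a direct summand. Moreover, because the $\mathbbm{1}_i$ are pairwise non-isomorphic simples, every subobject is exactly $\mathbbm{1}_S:=\bigoplus_{i\in S}\mathbbm{1}_i$ for some subset $S$. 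Hence $\on{Rad}(A)=\mathbbm{1}_S$ as a sub-bimodule of $A$.

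\textbf{Step 2: multiplication on $\mathbbm{1}_S$.} Compare the multiplication map of the ideal, $\on{Rad}(A)\otimes_A\on{Rad}(A)\to\on{Rad}(A)\subseteq A$, whose image is $\on{Rad}(A)^2$, with the unitor. In $\on{bimod}_{\mathcal{C}}(A)$ the unitor $A\otimes_A X\to X$ is induced by the multiplication of $A$. So for sub-bimodules $X,Y\subseteq A$, the map $X\otimes_A Y\to A$ induced by multiplication is the composite
\[
X\otimes_A Y\hookrightarrow X\otimes_A A\cong X\hookrightarrow A .
\]
For $X=Y=\mathbbm{1}_S$, the first map is the inclusion of a direct summand, because $\mathbbm{1}_S\otimes_A-$ is exact by rigidity and $\mathbbm{1}_S$ is a summand of $A$. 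Moreover $\mathbbm{1}_S\otimes_A\mathbbm{1}_S\cong\mathbbm{1}_S$. Hence the composite is a monomorphism with image $\mathbbm{1}_S$, which gives $\on{Rad}(A)^2=\on{Rad}(A)$.

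\textbf{Step 3: conclude.} Iterating Step 2 gives $\on{Rad}(A)^n=\on{Rad}(A)$ for all $n\geq 1$. Nilpotency of $\on{Rad}(A)$ forces $\on{Rad}(A)^n=0$ for large $n$, so $\on{Rad}(A)=0$.

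The main obstacle is Step 2: making precise that the product of two-sided ideals in the sense of \cite{CSZ} (the image of the multiplication map) agrees with the image of $\mathbbm{1}_S\otimes_A\mathbbm{1}_S$ under the unit isomorphism. I expect this to be a direct check of the definitions. The relevant inputs are that the unitors of $\on{bimod}_{\mathcal{C}}(A)$ come from the multiplication of $A$, and that $\mathbbm{1}_S\otimes_A-$ is exact on $\on{bimod}_{\mathcal{C}}(A)$ by rigidity, so that it preserves the monomorphism $\mathbbm{1}_S\hookrightarrow A$.
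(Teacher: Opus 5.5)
Your proof is correct, but it reaches the key identity $J^2=J$ (with $J=\on{Rad}(A)$) by a different route from the paper.

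The paper does not use the structure of the unit object. It only uses that $-\otimes_A A/J$ is exact, which holds because $A/J$ is rigid in $\on{bimod}_{\mathcal{C}}(A)$. It applies this functor to $0\to J\to A\to A/J\to 0$ and obtains $0\to J/J^2\to A/J\xrightarrow{\ \cong\ }A/J\to 0$, which forces $J/J^2=0$. Nilpotency then gives $J=0$, exactly as in your Step 3.

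You instead import the structure theorem for the unit of a multitensor category (semisimple, multiplicity-free, with orthogonal idempotent summands). From it you see that $J$ is a bimodule summand of $A$, and deduce idempotency from $\mathbbm{1}_S\otimes_A\mathbbm{1}_S\cong\mathbbm{1}_S$.

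What the paper's version buys is economy. It needs rigidity only of the single object $A/J$, or really just exactness of tensoring with it. It is essentially the one special case of the semisimplicity-of-the-unit argument that is actually needed.

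Your version gives the conceptual picture: $J$ is a summand of the unit, hence idempotent. It can also be trimmed in two ways:
\begin{itemize}
\item Exactness of $\mathbbm{1}_S\otimes_A-$ is superfluous, since any additive functor preserves split monomorphisms.
\item Orthogonality of the $\mathbbm{1}_i$ is not needed. If $A=J\oplus K$ as bimodules, then $JK\subseteq J\cap K=0$. So the monomorphism $J\otimes_A A\cong J\hookrightarrow A$ kills the summand $J\otimes_A K$, giving $J\otimes_A K=0$ and $J\otimes_A J\cong J$ via multiplication.
\end{itemize}

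Finally, the point you flag as the main obstacle, that the image of $J\otimes_A J\to A$ is the ideal product $J^2$, is used equally in the paper's identification $J\otimes_A A/J\cong J/J^2$. It holds because multiplication $J\otimes J\to A$ factors through the epimorphism $J\otimes J\twoheadrightarrow J\otimes_A J$.
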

\begin{proof}
Write $J:=\on{Rad}(A)$ for the radical of $A$. The canonical $\mathcal{C}$-module functor 
$\rmod[\cat{C}][A/J]\rightarrow\rmod$ has a left adjoint given by $-\otimes_A A/J$. 
Observe that the $\mathcal{C}$-module functor $-\otimes_A A/J:\rmod\rightarrow\rmod$ is exact as $\bimod$ is a finite multitensor category by hypothesis.
In particular, the image of the short exact sequence $$0\rightarrow J\rightarrow A\rightarrow A/J\rightarrow 0$$ under the functor $-\otimes_A A/J$ is the short exact sequence $$0\rightarrow J/J^2\rightarrow A/J\rightarrow A/J\rightarrow 0\,.$$ The second map is an isomorphism, so that this latter sequence is exact if and only if $J/J^2 = 0$. As $J$ is nilpotent by a variant of \cite[Proposition 6.12]{CSZ}, this holds only if $J=0$.
\end{proof}

\begin{corollary}\label{cor:characterizationexactness}
Let $\mathcal{C}$ be a discrete finite pre-multitensor category. A finite $\mathcal{C}$-module category $\mathcal{M}$ is exact if and only if $\on{End}^{\on{rex}}_{\mathcal{C}}(\mathcal{M})$ is a finite multitensor category.
\end{corollary}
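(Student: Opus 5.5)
We reduce to algebras and then read off both directions from results already established. By \autoref{lem:finitemodulegenerator} and \autoref{thm:finitemodulemodule}, we may write $\mathcal{M}\simeq\rmod$ for some algebra $A$ in $\mathcal{C}$. By \autoref{lem:EilenbergWatts}, $\on{End}^{\on{rex}}_{\mathcal{C}}(\mathcal{M})\simeq(\bimod)^{\on{mop}}$ as monoidal categories. So the statement becomes: $\rmod$ is exact if and only if $\bimod$ is a finite multitensor category.

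The forward direction is \autoref{prop:dualtensorcat}: if $\mathcal{M}$ is exact, then $\on{End}^{\on{rex}}_{\mathcal{C}}(\mathcal{M})$ is a finite multitensor category. Discreteness is not needed for this direction.

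For the converse, suppose $\on{End}^{\on{rex}}_{\mathcal{C}}(\mathcal{M})$, equivalently $\bimod$, is a finite multitensor category. Then \autoref{lem:bimoduletensorradical} gives $\on{Rad}(A)=0$. Since $\mathcal{C}$ is discrete, \autoref{thm:CSZpretensor} says $A$ is exact, meaning $\rmod\simeq\mathcal{M}$ is an exact $\mathcal{C}$-module category. This is the only place discreteness enters.

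The main point needing care is consistency of conventions in this converse. First, \autoref{thm:CSZpretensor}'s ``$A$ is exact'' must mean exactness of $\rmod$ (as opposed to, say, $\lmod$). Second, the radical in \autoref{lem:bimoduletensorradical} must be the same ideal as the radical of \autoref{def:radicalalgebra}, namely the one corresponding to $\on{Rad}^{\mathcal{C}}(\rmod)$. Both lemmas are phrased with right $A$-modules and left $\mathcal{C}$-action, so they match. The monoidal opposite from \autoref{lem:EilenbergWatts} is harmless, because being a finite multitensor category is invariant under passing to $(-)^{\on{mop}}$.

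Alternatively, one can avoid the radical lemma by arguing directly. Since every object of $\on{End}^{\on{rex}}_{\mathcal{C}}(\mathcal{M})$ has a left dual, each such functor has a right exact right adjoint. One would then apply this to the reflection $L_\pi$ onto $\mathcal{M}/\on{Rad}^{\mathcal{C}}(\mathcal{M})$ and use \autoref{cor:exactC/Jl}. I would still present the first route, since it only chains existing results.
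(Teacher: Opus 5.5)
Your proposal is correct and follows the paper's proof: the forward direction is \autoref{prop:dualtensorcat}, and the backward direction uses \autoref{lem:bimoduletensorradical} to get $\on{Rad}(A)=0$ after the Eilenberg--Watts identification, then \autoref{thm:CSZpretensor}, which is the only place discreteness enters. Your checks on conventions (right $A$-modules, and passing to the monoidal opposite) are accurate and spell out what the paper leaves implicit.
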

\begin{proof}
The forward direction is \autoref{prop:dualtensorcat}. The backward direction is \autoref{lem:bimoduletensorradical} thanks to \autoref{thm:CSZpretensor}.
\end{proof}

\subsection{Proof of \autoref{mainthm}}

We are now ready to prove the backward direction of \autoref{mainthm}. Towards this end, fix $\mathcal{C}$ a discrete finite pre-multitensor category.
We begin by observing that the rigidity of the Drinfeld center of $\mathcal{C}$ is equivalent to the vanishing of the $\mathcal{C}\boxtimes\mathcal{C}^{\on{mop}}$-module radical of $\mathcal{C}$.
For the sake of brevity, we use $\on{Rad}^{lr}(\mathcal{C})$ to denote the radical of $\mathcal{C}$ as a $\mathcal{C}\boxtimes\mathcal{C}^{\on{mop}}$-module category.

\begin{proposition}\label{prop:twosidedradicalcenter}
Let $\mathcal{C}$ be a discrete finite pre-multitensor category. We have that $\on{Rad}^{lr}(\mathcal{C})=0$ if and only if $\mathcal{Z}(\mathcal{C})$ is rigid.
\end{proposition}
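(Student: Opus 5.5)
We regard $\mathcal{C}$ as a finite left module category over $\mathcal{E}:=\mathcal{C}\boxtimes\mathcal{C}^{\on{mop}}$. This is a finite pre-multitensor category by the lemma on Deligne products. Then \autoref{lem:centerasbimodules} identifies $\mathcal{Z}(\mathcal{C})\simeq\on{End}^{\on{rex}}_{\mathcal{E}}(\mathcal{C})$, and the statement becomes an instance of \autoref{cor:characterizationexactness} combined with \autoref{thm:CSZpretensor}. The argument therefore has three steps.

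\textbf{Step 1: reduce to a statement about the algebra.} By \autoref{thm:finitemodulemodule} and \autoref{lem:finitemodulegenerator}, we may write $\mathcal{C}\simeq\rmod[\mathcal{E}][B]$ for the algebra $B=\underline{\on{End}}(G)$ in $\mathcal{E}$, where $G$ is a projective generator of $\mathcal{C}$. By \autoref{def:radicalalgebra}, the condition $\on{Rad}^{lr}(\mathcal{C})=0$ is then exactly $\on{Rad}(B)=0$.

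\textbf{Step 2: the two directions.} For the backward direction, assume $\on{Rad}(B)=0$. \autoref{thm:CSZpretensor} says $\mathcal{C}$ is an exact $\mathcal{E}$-module category, and \autoref{prop:dualtensorcat} then makes $\on{End}^{\on{rex}}_{\mathcal{E}}(\mathcal{C})\simeq\mathcal{Z}(\mathcal{C})$ a finite multitensor category, in particular rigid. For the forward direction, assume $\mathcal{Z}(\mathcal{C})$ is rigid. It is always finite abelian with right exact tensor product, by the first paragraph of the proof of \autoref{prop:bimodulepretensor}, so it is a finite multitensor category. \autoref{lem:bimoduletensorradical}, applied to $B$ in $\mathcal{E}$ via \autoref{lem:EilenbergWatts}, then gives $\on{Rad}(B)=0$. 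This direction does not use discreteness.

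\textbf{Step 3 (main obstacle): discreteness of $\mathcal{E}$.} The backward direction needs $\mathcal{E}$, not just $\mathcal{C}$, to be discrete in order to invoke \autoref{thm:CSZpretensor}. Indecomposable projectives of $\mathcal{E}$ are $P\boxtimes Q$ with $P,Q$ indecomposable projective in $\mathcal{C}$. This uses that $\mathbbm{k}$ is perfect, so that $A\otimes B$ has radical $\on{Rad}(A)\otimes B+A\otimes\on{Rad}(B)$. I expect the Grothendieck pseudoring of $\mathrm{Proj}(\mathcal{E})$ to be the tensor product of that of $\mathrm{Proj}(\mathcal{C})$ with its opposite, with indecomposables decomposing further only in a controlled way. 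The relation $\leq_{LR}$ on $\mathcal{E}$ is then the product of the relations on $\mathcal{C}$ and on $\mathcal{C}^{\on{mop}}$. Symmetry of $\leq_{LR}$, which by \autoref{prop:regulardirectsumstronglyregular} is equivalent to discreteness, should follow from its symmetry on each factor.

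The point to check carefully is the effect of non-split endomorphism division algebras: over a perfect field, $P\boxtimes Q$ may split into several indecomposables. I would first verify that its summands are all $\leq_{LR}$-equivalent, using multiplication by suitable $P'\boxtimes Q'$. If that is awkward, the fallback is to run the argument of \autoref{thm:regularity} directly for $\mathcal{E}$, bypassing the general transfer of discreteness to Deligne products.
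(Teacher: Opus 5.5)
Your Steps 1 and 2 are exactly the paper's proof. The backward direction is \autoref{thm:CSZpretensor} followed by \autoref{prop:dualtensorcat}, for $\mathcal{C}$ viewed as a $\mathcal{C}\boxtimes\mathcal{C}^{\on{mop}}$-module. The forward direction writes $\mathcal{Z}(\mathcal{C})$ as the monoidal opposite of bimodules over an algebra in $\mathcal{C}\boxtimes\mathcal{C}^{\on{mop}}$ and applies \autoref{lem:bimoduletensorradical}, with no discreteness used. The paper invokes \autoref{thm:CSZpretensor} over $\mathcal{E}=\mathcal{C}\boxtimes\mathcal{C}^{\on{mop}}$ without comment, so you are right that discreteness of $\mathcal{E}$ is a hidden input. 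But your Step 3 does not establish it, and both routes you sketch fail. First, the summands of $P\boxtimes Q$ need not be $\leq_{LR}$-equivalent. Take $\mathcal{C}=\mathrm{Vec}_{\mathbb{C}}$ regarded as an $\mathbb{R}$-linear finite tensor category. Then $\mathcal{E}$ is modules over $\mathbb{C}\otimes_{\mathbb{R}}\mathbb{C}\cong\mathbb{C}\times\mathbb{C}$, and the unit $\mathbb{1}\boxtimes\mathbb{1}$ splits as $\mathbb{1}_1\oplus\mathbb{1}_2$ along the two orthogonal idempotents. These are the only simples and $\mathbb{1}_1\otimes\mathbb{1}_2=0$, so they are $\leq_{LR}$-incomparable. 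Hence $\leq_{LR}$ on $\mathcal{E}$ is not a product relation, the indecomposable projectives of $\mathcal{E}$ are not just the $P\boxtimes Q$ (perfectness does not give this), and indecomposable discreteness does not pass to $\mathcal{E}$; only discreteness can. Second, your fallback is circular. The proof of \autoref{thm:regularity} obtains exactness of $L_{\pi}$ from a left adjoint supplied by rigidity of the Drinfeld center (here of $\mathcal{E}$). In the backward direction you only assume $\on{Rad}^{lr}(\mathcal{C})=0$, and rigidity of a center is what you are trying to prove.

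The missing lemma is true, and base change proves it. Over an algebraically closed field your idea works verbatim: the indecomposable projectives of $\mathcal{E}$ are the $P\boxtimes Q$, and $P\boxtimes Q\leq_{LR}P'\boxtimes Q'$ if and only if $P\leq_{LR}P'$ and $Q\leq_{LR}Q'$. In general, since $\mathbbm{k}$ is perfect, choose a finite Galois extension $K/\mathbbm{k}$ splitting every division algebra $\on{End}(P)/\on{Rad}(\on{End}(P))$, for $P$ indecomposable projective in $\mathcal{C}$. The same argument then shows that $\mathcal{E}_K\simeq\mathcal{C}_K\boxtimes_K\mathcal{C}_K^{\on{mop}}$ is discrete whenever $\mathcal{C}_K$ is. It remains to show that a finite pre-multitensor category $\mathcal{D}$ is discrete if and only if $\mathcal{D}_K$ is, and to apply this to $\mathcal{D}=\mathcal{C}$ and then to $\mathcal{D}=\mathcal{E}$. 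For $X$ indecomposable projective in $\mathcal{D}$, the indecomposable summands of $X_K$ form a single $\on{Gal}(K/\mathbbm{k})$-orbit, and distinct $X$ give disjoint orbits. Moreover, $X\leq_{LR}X'$ if and only if every (equivalently, some) summand of $X_K$ is $\leq_{LR}$ some summand of $X'_K$. So suppose $\mathcal{D}$ is discrete and $Y\leq_{LR}Y'$ in $\mathcal{D}_K$. Then $Y'\leq_{LR}\sigma^*Y$ for some $\sigma$, hence $Y\leq_{LR}\sigma^*Y$. Applying powers of $\sigma^*$ gives $\sigma^*Y\leq_{LR}(\sigma^*)^2Y\leq_{LR}\cdots\leq_{LR}Y$, so $Y'\leq_{LR}\sigma^*Y\leq_{LR}Y$. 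The converse is immediate. Finally, $\mathcal{E}$ is in general only discrete, so note that the reduction to the indecomposable case in \autoref{thm:CSZpretensor} needs only discreteness, not \autoref{thm:regularity}. If $I$ and its complement $I'$ are both two-sided $\mathbb{Z}_+$-ideals, then $P\otimes P'=0=P'\otimes P$ for $P\in I$ and $P'\in I'$. So any morphism $f$ between such objects satisfies $f\otimes P_{\mathbb{1}}=0$, hence $f=0$ because $-\otimes P_{\mathbb{1}}\twoheadrightarrow-\otimes\mathbb{1}$ is an epimorphism. Thus the category splits accordingly.
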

\begin{proof}
By \autoref{lem:centerasbimodules}, we have that $\mathcal{Z}(\mathcal{C})\simeq \on{End}^{\on{rex}}_{\mathcal{C}-\mathcal{C}}(\mathcal{C})$.
But the $\mathcal{C}$-$\mathcal{C}$-stable ideal $\on{Rad}^{lr}(\mathcal{C})$ is by definition the radical of $\mathcal{C}$ as a left $\mathcal{C}\boxtimes\mathcal{C}^{\on{mop}}$-module category.
In particular, if $\on{Rad}^{lr}(\mathcal{C})=0$, then $\mathcal{C}$ is exact as a left $\mathcal{C}\boxtimes\mathcal{C}^{\on{mop}}$-module category thanks to \autoref{thm:CSZpretensor}. It then follows from \autoref{prop:dualtensorcat} that $\mathcal{Z}(\mathcal{C})\simeq \on{End}^{\on{rex}}_{\mathcal{C}-\mathcal{C}}(\mathcal{C})$ is a finite multitensor category.

Conversely, it follows from \autoref{lem:finitemodulegenerator} that there exists an algebra $A$ in $\mathcal{C}\boxtimes\mathcal{C}^{\on{mop}}$ such that $\mathcal{C}\simeq\rmod[\mathcal{C}\boxtimes\mathcal{C}^{\on{mop}}][A]$. By the Eilenberg-Watts theorem, we therefore have 
$$\mathcal{Z}(\mathcal{C})\simeq\on{End}^{\on{rex}}_{\mathcal{C}-\mathcal{C}}(\mathcal{C})\simeq(\bimod[\mathcal{C}\boxtimes\mathcal{C}^{\on{mop}}][A])^{\on{mop}}\,,$$
as finite pre-multitensor categories.
Given that we have assumed that $\mathcal{Z}(\mathcal{C})$ is rigid, \autoref{lem:bimoduletensorradical} establishes that the radical of $A$ is trivial. By definition (see \autoref{def:radicalalgebra}), this is equivalent to the vanishing of $\on{Rad}^{lr}(\mathcal{C})$, thereby concluding the proof.
\end{proof}

From now on, we will assume that $\mathcal{C}$ is a discrete finite pre-multitensor category such that $\on{Rad}^{lr}(\mathcal{C})=0$ or, equivalently thanks to \autoref{prop:twosidedradicalcenter}, whose Drinfeld center is rigid.
In order to establish the backward direction of \autoref{mainthm}, we will show that the exact left $\mathcal{C}$-module $\mathcal{C}/\on{Rad}^{\mathcal{C}}(\mathcal{C})$ witnesses a Morita equivalence between $\mathcal{C}$ and the monoidal opposite of $\on{End}_{\mathcal{C}}^{\on{rex}}(\mathcal{C}/\on{Rad}^{\mathcal{C}}(\mathcal{C}))$. 
We begin by establishing the following technical result.

\begin{proposition}\label{prop:Matmagic}
For any projective objects $P,Q \in \mathcal{C}$ and any $f \in \on{Hom}_{\mathcal{C}}(P,Q)$, the morphism $f$ is sent to zero by the canonical tensor functor
$$\begin{tabular}{rccc}
  $\on{can}:$ & $\mathcal{C}$ & $\rightarrow$ & $\on{End}^{\on{rex}}_{\on{End}_{\mathcal{C}}^{\on{rex}}(\mathcal{C}/\on{Rad}^{\mathcal{C}}(\mathcal{C}))}(\mathcal{C}/\on{Rad}^{\mathcal{C}}(\mathcal{C}))\,.$\\
  & $C$ & $\mapsto$ & $C \triangleright -$
\end{tabular}$$
if and only if $f \in \on{Rad}^{lr}(\mathcal{C})$.
\end{proposition}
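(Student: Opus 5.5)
Write $\mathcal{N}:=\mathcal{C}/\on{Rad}^{\mathcal{C}}(\mathcal{C})$ and $\mathcal{D}:=\on{End}^{\on{rex}}_{\mathcal{C}}(\mathcal{N})$. The plan is to reduce both conditions to vanishing statements about morphisms in the quotient $\on{Proj}(\mathcal{C})/\on{Rad}^{\mathcal{C}}(\mathcal{C})$, and then identify these with $\on{Rad}^{lr}(\mathcal{C})$.

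\emph{Step 1: $\on{can}(f)=0$ is equivalent to vanishing of all left translates.} The target of $\on{can}$ is a category of endofunctors of $\mathcal{N}$, and the forgetful functor to $\on{End}^{\on{rex}}(\mathcal{N})$ is faithful. Hence $\on{can}(f)=0$ if and only if $f\triangleright X=0$ for every $X\in\mathcal{N}$. Since both source and target are right exact and $\mathcal{N}=\mathrm{PSh}(\on{Proj}(\mathcal{C})/\on{Rad}^{\mathcal{C}}(\mathcal{C}))$, it suffices to test on the representables $\pi(S)$ with $S\in\on{Proj}(\mathcal{C})$. By construction of the action, the component $f\triangleright\pi(S)$ is the class of the morphism $f\otimes S\colon P\otimes S\to Q\otimes S$ in the quotient, where $P\otimes S$ and $Q\otimes S$ are projective by \autoref{cor:tensorprojective}. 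So $\on{can}(f)=0$ if and only if $f\otimes S\in\on{Rad}^{\mathcal{C}}(\mathcal{C})$ for all projective $S$. Unwinding the definition, this means $R\otimes f\otimes S\in\on{Rad}(\mathcal{C})$ for all projective $R,S$.

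\emph{Step 2: identify this condition with $\on{Rad}^{lr}(\mathcal{C})$.} The $\mathcal{C}\boxtimes\mathcal{C}^{\on{mop}}$-module radical is, by definition, the set of $f$ with $T\triangleright f\in\on{Rad}(\mathcal{C})$ for all projective $T\in\mathcal{C}\boxtimes\mathcal{C}^{\on{mop}}$. Every such $T$ is a summand of some $R\boxtimes S$ with $R,S$ projective, and $\on{Rad}(\mathcal{C})$ is an ideal closed under direct summands. The condition therefore reduces exactly to $R\otimes f\otimes S\in\on{Rad}(\mathcal{C})$ for all projective $R,S$, which is the condition from Step 1. This gives both directions.

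\emph{Main obstacle.} The delicate point is Step 1: justifying that the $\mathcal{C}$-action on $\mathcal{N}$, evaluated on representables, really is $\pi(f\otimes S)$. This uses the Day-convolution formula and the Yoneda lemma, together with $L_\pi$ being a $\mathcal{C}$-module functor that sends the representable on $S$ to $\pi(S)$. One must also check that detecting $\on{can}(f)=0$ on the generators $\pi(S)$ suffices; this follows because $-\triangleright X$ is right exact in $X$ and every object of $\mathcal{N}$ is a cokernel of maps between representables. I expect the hypothesis $\on{Rad}^{lr}(\mathcal{C})=0$ and discreteness not to be needed for this proposition itself; they enter only afterwards, to conclude that $\on{can}$ is faithful.
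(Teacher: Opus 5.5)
Your proposal is correct and follows essentially the same route as the paper. Both arguments reduce $\on{can}(f)=0$ to the condition $f\otimes S\in\on{Rad}^{\mathcal{C}}(\mathcal{C})$ for all projective $S$, and then unwind this to $R\otimes f\otimes S$ being radical for all projective $R,S$, which is exactly membership in $\on{Rad}^{lr}(\mathcal{C})$. You spell out two points that the paper folds into ``unpacking the definition'': detecting vanishing on the representables $\pi(S)$, and the reduction from arbitrary projectives of $\mathcal{C}\boxtimes\mathcal{C}^{\on{mop}}$ to objects of the form $R\boxtimes S$.
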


\begin{proof}
 Unpacking the definition, for any projective objects $Q,Q'$ in $\mathcal{C}$, we have:
 \[
      \on{Hom}_{\on{Rad}^{\mathcal{C}}(\mathcal{C})}(Q,Q') = \setj{f \in \on{Hom}_{\mathcal{C}}(Q,Q') \; | \; P \otimes f \text{ is radical, for all }P \in \on{Proj}(\mathcal{C})}\,,
 \]
 \[
      \on{Hom}_{\on{Rad}^{lr}(\mathcal{C})}(Q,Q') = \setj{f \in \on{Hom}_{\mathcal{C}}(Q,Q') \; | \; P \otimes f \otimes P' \text{ is radical, for all }P, P' \in \on{Proj}(\mathcal{C})}\,.
 \]

Let $f:Q\rightarrow Q'$ be a morphism in $\mathcal{C}$.
Since $f \otimes - = 0$ implies $f = 0$, for $f$ to be in the kernel of the canonical tensor functor $\on{can}$, it must satisfy $f \otimes g \in \on{Rad}^{\mathcal{C}}(\mathcal{C})$ for all morphisms $g$. Equivalently, we have $f \otimes P' \in \on{Rad}^{\mathcal{C}}(\mathcal{C})$ for all $P'$. Spelling out this condition, we find that, for all $P'\in \on{Proj}(\mathcal{C})$, we have $P \otimes (f \otimes P')$ is radical, for all $P \in \on{Proj}(\mathcal{C}).$
  This is exactly saying that $f \in \on{Rad}^{lr}(\mathcal{C})$.
\end{proof}

\begin{theorem}\label{thm:mainthmmulti}
Let $\mathcal{C}$ be a finite pre-multitensor category. Then, $\mathcal{C}$ is Morita equivalent to a finite multitensor category if and only if $\mathcal{Z}(\mathcal{C})$ is rigid.
\end{theorem}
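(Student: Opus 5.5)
Forward direction. Suppose $\mathcal{C}$ is Morita equivalent to a finite multitensor category $\mathcal{D}$. By \autoref{prop:centerMorita}, $\mathcal{Z}(\mathcal{C})\simeq\mathcal{Z}(\mathcal{D})$. For $\mathcal{D}$ multitensor, $\mathcal{D}$ is an exact $\mathcal{D}\boxtimes\mathcal{D}^{\on{mop}}$-module category, since tensoring a projective with anything gives a projective. \autoref{prop:dualtensorcat} combined with \autoref{lem:centerasbimodules} then shows that $\mathcal{Z}(\mathcal{D})$ is rigid. Hence $\mathcal{Z}(\mathcal{C})$ is rigid.

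Backward direction. Assume $\mathcal{Z}(\mathcal{C})$ is rigid. By \autoref{thm:regularity}, $\mathcal{C}$ is discrete, and by \autoref{prop:twosidedradicalcenter}, $\on{Rad}^{lr}(\mathcal{C})=0$. Set $\mathcal{M}:=\mathcal{C}/\on{Rad}^{\mathcal{C}}(\mathcal{C})$. This is a finite $\mathcal{C}$-module category by \autoref{prop:quotientmodulecat}, and it is exact by \autoref{cor:exactC/Jl}. \autoref{prop:dualtensorcat} then shows that $\mathcal{D}:=\on{End}^{\on{rex}}_{\mathcal{C}}(\mathcal{M})^{\on{mop}}$ is a finite multitensor category.

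It remains to show that $\mathcal{M}$ is a Morita equivalence. Write $\mathcal{M}\simeq\rmod$ for an algebra $A\in\mathcal{C}$, using \autoref{thm:finitemodulemodule}. By \autoref{lem:EilenbergWatts}, the functor $\mathcal{D}^{\on{mop}}\to\on{End}^{\on{rex}}_{\mathcal{C}}(\mathcal{M})$ is an equivalence by construction. So we only need $\on{can}:\mathcal{C}\to\mathcal{C}^{\star\star}_A$ to be an equivalence, and I would get this from \autoref{thm:generalizedMorita}. That requires two hypotheses.

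\textbf{Faithfulness of $A$.} By \autoref{prop:Matmagic}, a morphism between projectives is killed by $\on{can}$ if and only if it lies in $\on{Rad}^{lr}(\mathcal{C})=0$. So $\on{can}$ is faithful on $\on{Proj}(\mathcal{C})$. Since $\on{can}$ is right exact, it should then be faithful on all of $\mathcal{C}$. To see this, take a morphism $f:X\to Y$ and choose projective covers $P_X\twoheadrightarrow X$ and $P_Y\twoheadrightarrow Y$. The composite $P_X\to Y$ is nonzero whenever $f\neq 0$. I would reduce to a morphism between projectives by using \autoref{lem:faithfulnesstechnical}: it suffices to check faithfulness of $A\otimes R\otimes -$, which is exact in the relevant sense, and test it on projective presentations. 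This reduction needs care.

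\textbf{$\mathcal{C}\boxtimes\mathcal{C}^{\on{mop}}$-projectivity of $A$.} By \autoref{lem:CCprojectiveexact}, this amounts to exactness of $A\otimes P\otimes -$ for projective $P$. Here I would use exactness of $\mathcal{M}$: the object $P\lact A$ is projective in $\rmod$, so $P\otimes A$ is a projective right $A$-module. Hence $P\otimes A\otimes Q$ is a summand of sums of $A\otimes Q'\otimes A\otimes\cdots$. I expect a cleaner route is to observe that $A$ can be chosen as $\underline{\on{End}}(G)$ for $G$ the image of a projective generator. Then $Q\otimes A\otimes P\cong\underline{\on{Hom}}(G,Q\lact G\otimes\cdots)$, which should be projective since $\underline{\on{Hom}}(G,-)$ is exact on an exact module category and preserves projectives by adjunction with $-\lact G$.

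\textbf{Main obstacle.} I expect the $\mathcal{C}\boxtimes\mathcal{C}^{\on{mop}}$-projectivity step to be the hardest. If the direct argument fails, the fallback is to verify directly that $\on{can}$ is full and essentially surjective. I would do this via the identification in the proof of \autoref{thm:generalizedMorita}, comparing $\mathcal{C}^{\star\star}_A$ with bimodules over $\underline{\on{End}}(A\otimes R)$, and using $\on{Rad}^{lr}(\mathcal{C})=0$ together with the fact that $\mathcal{C}^{\star\star}_A$ is itself a dual of an exact module category.

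Finally, if the pre-tensor version of the statement is wanted, the pre-tensor case of \autoref{mainthm} follows once one notes that indecomposability of $\mathbbm{1}$ is preserved.
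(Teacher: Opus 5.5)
\textbf{There is a genuine gap at the $\mathcal{C}\boxtimes\mathcal{C}^{\on{mop}}$-projectivity of $A$.} Your forward direction is fine. The skeleton of your backward direction is exactly the paper's: discreteness from \autoref{thm:regularity}, $\on{Rad}^{lr}(\mathcal{C})=0$, the exact module $\mathcal{M}=\mathcal{C}/\on{Rad}^{\mathcal{C}}(\mathcal{C})$, and then \autoref{thm:generalizedMorita}. The problem is that you try to extract bimodule projectivity of $A$ from the exactness of the one-sided module $\mathcal{M}$, and neither of your arguments works.

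\emph{First argument.} That $P\otimes A$ is a projective right $A$-module holds for every algebra, since it is the free module on $P$. It says nothing about whether $P\otimes A$, or $P\otimes A\otimes Q$, is projective as an object of $\mathcal{C}$.

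\emph{Second argument.} The adjunction $-\lact G\dashv\underline{\on{Hom}}(G,-)$, together with exactness of $\underline{\on{Hom}}(G,-)$, shows that the \emph{left} adjoint $-\lact G$ preserves projectives. It shows nothing about $\underline{\on{Hom}}(G,-)$. In fact $\underline{\on{Hom}}(G,Q\lact G)\cong Q\otimes A$ for projective $Q$, so your claim already presupposes half of the conclusion. Moreover, $\mathcal{M}$ has no right action, so the right-hand factor $P$ can only be moved into the first variable. This gives $Q\otimes A\otimes P\cong\underline{\on{Hom}}(P'\lact G,Q\lact G)$ with $P'$ a dual of $P$, and $P'$ need not be projective in a pre-tensor category (cf.\ the remark after \autoref{cor:exactC/Jl}). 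Your fallback of checking fullness and essential surjectivity by hand is not an argument.

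\textbf{The missing idea} is to use the hypothesis on the centre a second time, on the two-sided module $\mathcal{C}$ rather than on $\mathcal{M}$; this is what the paper does. By definition, $\on{Rad}^{lr}(\mathcal{C})$ is the radical of $\mathcal{C}$ as a $\mathcal{C}\boxtimes\mathcal{C}^{\on{mop}}$-module category. Its vanishing therefore gives, via \autoref{thm:CSZpretensor} (as in the proof of \autoref{prop:twosidedradicalcenter}), that $\mathcal{C}$ is an exact $\mathcal{C}\boxtimes\mathcal{C}^{\on{mop}}$-module. That is, $Q\otimes V\otimes P$ is projective for every $V\in\mathcal{C}$ and all projective $P,Q$. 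By the first observation in the proof of \autoref{lem:CCprojectiveexact}, \emph{every} object of $\mathcal{C}$, in particular $A$, is then $\mathcal{C}\boxtimes\mathcal{C}^{\on{mop}}$-projective. Nothing needs to be checked about $A$ or $\mathcal{M}$.

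\textbf{This also repairs your faithfulness step.} Your heuristic ``right exact and faithful on projectives implies faithful'' is false in general. For example, the Nakayama functor of the path algebra of $1\to 2$ is fully faithful on projectives but kills the simple injective module. With bimodule exactness the argument goes through:
\begin{itemize}
\item Suppose $\on{can}(f)=0$ for some $f:X\to Y$. For projective $Q,Z$, the morphism $Q\otimes f\otimes Z$ lies in $\on{Proj}(\mathcal{C})$ by bimodule exactness.
\item Since $L_\pi$ is a module functor, $L_\pi(Q\otimes f\otimes Z)\cong Q\lact\big(f\lact L_\pi(Z)\big)=0$. Hence $Q\otimes f\otimes Z\in\on{Rad}^{\mathcal{C}}(\mathcal{C})$ for all projective $Q,Z$.
\item Replacing $Q,Z$ by $P\otimes Q$ and $Z\otimes P'$ shows that $P\otimes Q\otimes f\otimes Z\otimes P'$ is radical for all projective $P,P'$. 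So $Q\otimes f\otimes Z\in\on{Rad}^{lr}(\mathcal{C})=0$.
\item Take $Q=Z=P_{\mathbbm{1}}$. The natural epimorphism $P_{\mathbbm{1}}\otimes X\otimes P_{\mathbbm{1}}\twoheadrightarrow X$ makes $P_{\mathbbm{1}}\otimes-\otimes P_{\mathbbm{1}}$ faithful, so $f=0$.
\end{itemize}
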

\begin{proof}
Let $\mathcal{C}$ be a discrete finite pre-multitensor category such that $\on{Rad}^{lr}(\mathcal{C})$ vanishes or, equivalently thanks to \autoref{prop:twosidedradicalcenter}, whose Drinfeld center is rigid.
In order to establish the backward direction of \autoref{mainthm}, we show that the exact left $\mathcal{C}$-module $\mathcal{C}/\on{Rad}^{\mathcal{C}}(\mathcal{C})$ witnesses a Morita equivalence between $\mathcal{C}$ and $\on{End}_{\mathcal{C}}^{\on{rex}}(\mathcal{C}/\on{Rad}^{\mathcal{C}}(\mathcal{C}))^{\on{mop}}$. The latter being a finite multitensor category by \autoref{prop:dualtensorcat}, this will conclude the proof.

By \autoref{lem:finitemodulegenerator},
there exists an algebra $A$ in $\mathcal{C}$ such that $\mathcal{C}/\on{Rad}^{\mathcal{C}}(\mathcal{C})\simeq\rmod$ as left $\mathcal{C}$-module categories.
It follows that $\on{End^{rex}_{\mathcal{C}}}(\mathcal{C}/\on{Rad}^{\mathcal{C}}(\mathcal{C}))^{\on{mop}}\simeq\bimod$.
We wish to appeal to \autoref{thm:generalizedMorita}. In order to do so, we must check that $A$ is faithful and that the underlying object of $A$ is $\mathcal{C}\boxtimes\mathcal{C}^{\on{mop}}$-projective. On the one hand, faithfulness of $A$ follows from \autoref{prop:Matmagic} via the Eilenberg-Watts theorem. On the other hand, that $A$ is $\mathcal{C}\boxtimes\mathcal{C}^{\on{mop}}$-projective is a consequence of the more general fact that $\mathcal{C}$ is exact as a $\mathcal{C}\boxtimes\mathcal{C}^{\on{mop}}$-module category, which is implied by \autoref{thm:CSZpretensor} and \autoref{prop:twosidedradicalcenter}.
\end{proof}

\begin{corollary}
Let $\mathcal{C}$ be a finite pre-multitensor category over an algebraically closed field. If the Drinfeld center of $\mathcal{C}$ is trivial, then $\mathcal{C}$ is Morita equivalent to $\mathrm{Vec}$. In particular, there is an equivalence of pre-multitensor categories $\mathcal{C}\simeq \bimod[\mathbbm{k}][A]$ for some finite dimensional $\mathbbm{k}$-algebra $A$.
\end{corollary}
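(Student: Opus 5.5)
Since $\mathcal{Z}(\mathcal{C})\simeq\mathrm{Vec}$ is a finite tensor category, hence rigid, \autoref{thm:mainthmmulti} applies. Its proof gives more than an abstract Morita equivalence: the exact module category $\mathcal{M}:=\mathcal{C}/\on{Rad}^{\mathcal{C}}(\mathcal{C})$ exhibits a Morita equivalence between $\mathcal{C}$ and the finite multitensor category $\mathcal{D}:=\on{End}^{\on{rex}}_{\mathcal{C}}(\mathcal{M})^{\on{mop}}$. By \autoref{prop:centerMorita}, we get $\mathcal{Z}(\mathcal{D})\simeq\mathcal{Z}(\mathcal{C})\simeq\mathrm{Vec}$ as braided monoidal categories. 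The plan is therefore to show that $\mathcal{D}\simeq\mathrm{Vec}$, and then to read off the bimodule description of $\mathcal{C}$ from the Morita equivalence.

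\textbf{Step 1: $\mathcal{D}\simeq\mathrm{Vec}$.} First I would show that $\mathcal{D}$ is a tensor category, not merely multitensor. If $\mathcal{D}=\bigoplus_{i,j}\mathcal{D}_{ij}$ with $\mathbbm{1}=\bigoplus_i\mathbbm{1}_i$ split into several summands, then the idempotents $\mathbbm{1}_i$ lift to central objects. This produces a copy of $\mathrm{Vec}^{\oplus n}$ as a (non-unital) summand structure inside $\mathcal{Z}(\mathcal{D})$, whose unit is then decomposable. That contradicts $\mathcal{Z}(\mathcal{D})\simeq\mathrm{Vec}$; alternatively one can reduce to indecomposable components, whose centers coincide. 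So $\mathbbm{1}$ is simple and $\mathcal{D}$ is a finite tensor category over an algebraically closed field.

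Next, by \cite[Corollary 5.14]{OU}, $\mathcal{Z}(\mathcal{D})\simeq\mathcal{Z}(\mathrm{Vec})$ implies that $\mathcal{D}$ is Morita equivalent to $\mathrm{Vec}$. Concretely, $\mathcal{D}\simeq\on{End}^{\on{rex}}(\mathcal{N})$ for some finite semisimple category $\mathcal{N}$ with $n$ simples, so $\mathcal{D}\simeq\on{Mat}_n(\mathrm{Vec})$. This is a tensor category only when $n=1$, so $\mathcal{D}\simeq\mathrm{Vec}$. An alternative route is the FPdim formula $\mathrm{FPdim}\,\mathcal{Z}(\mathcal{D})=\mathrm{FPdim}(\mathcal{D})^2$, which forces $\mathrm{FPdim}\,\mathcal{D}=1$. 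Either way, this step is where the hypothesis that the field is algebraically closed enters, and I expect it to be the main obstacle.

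\textbf{Step 2: identify $\mathcal{C}$.} Now $\mathcal{M}$ is a finite $\mathcal{C}$-$\mathrm{Vec}$-bimodule category exhibiting a Morita equivalence. In particular,
\[
\mathcal{C}\simeq\on{End}^{\on{rex}}_{\mathrm{Vec}}(\mathcal{M})=\on{End}^{\on{rex}}(\mathcal{M})
\]
as finite pre-multitensor categories. As a finite abelian category, $\mathcal{M}\simeq\rmod[\mathbbm{k}][A]$ for a finite dimensional algebra $A$, and one may take $A$ basic by \autoref{lem:Moritabasic}. The classical Eilenberg--Watts theorem (\autoref{lem:EilenbergWatts} with $\mathcal{C}=\mathrm{Vec}$) then gives $\on{End}^{\on{rex}}(\rmod[\mathbbm{k}][A])\simeq(\bimod[\mathbbm{k}][A])^{\on{mop}}$. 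Finally, $(\bimod[\mathbbm{k}][A])^{\on{mop}}\simeq\bimod[\mathbbm{k}][A^{\on{op}}]$ via $M\mapsto M$ with the left and right actions swapped. Replacing $A$ by $A^{\on{op}}$ yields $\mathcal{C}\simeq\bimod[\mathbbm{k}][A]$, and the Morita equivalence with $\mathrm{Vec}$ is witnessed by $\mathcal{M}$.
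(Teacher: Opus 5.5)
Your overall route is the paper's. You apply \autoref{thm:mainthmmulti} to get a finite multitensor $\mathcal{D}$ Morita equivalent to $\mathcal{C}$, transport the trivial center via \autoref{prop:centerMorita}, use that over an algebraically closed field $\mathrm{Vec}$ is the only finite \emph{tensor} category with trivial center (the paper cites \cite[Theorem 7.16.6]{EGNO}), and finish with Eilenberg--Watts.

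However, Step 1 contains a false claim: $\mathcal{Z}(\mathcal{D})\simeq\mathrm{Vec}$ does not force the unit of $\mathcal{D}$ to be simple. The standard counterexample is $\on{Mat}_n(\mathrm{Vec})\simeq\on{End}^{\on{rex}}(\mathrm{Vec}^{\oplus n})$, whose center is $\mathrm{Vec}$ by Morita invariance. The summands $E_{ii}$ of its unit admit no half-braiding, since $E_{ii}\otimes E_{ij}\cong E_{ij}$ while $E_{ij}\otimes E_{ii}=0$ for $i\neq j$. A partial sum of the $\mathbbm{1}_i$ lifts to $\mathcal{Z}(\mathcal{D})$ only if it cuts out a direct-sum decomposition of $\mathcal{D}$. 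So a trivial center only tells you that $\mathcal{D}$ is \emph{indecomposable}, not that it is tensor.

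This is not a harmless technicality: your $\mathcal{D}$ really does have decomposable unit in typical cases. For $\mathcal{C}=\on{Mat}_2(\mathrm{Vec})$, which is semisimple, you have $\on{Rad}^{\mathcal{C}}(\mathcal{C})=0$, so $\mathcal{M}=\mathcal{C}$ and $\mathcal{D}\simeq\on{Mat}_2(\mathrm{Vec})$. More generally, for $\mathcal{C}=\bimod[\mathbbm{k}][A]$ with $A$ having $n$ simple modules, $\mathcal{M}$ is a direct sum of $n$ copies of the category of $A$-modules and $\mathcal{D}\simeq\on{Mat}_n(\mathrm{Vec})$. Consequently your conclusion ``$n=1$, so $\mathcal{D}\simeq\mathrm{Vec}$'' fails whenever $n\geq 2$. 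So does the assertion in Step 2 that $\mathcal{M}$ itself is a $\mathcal{C}$-$\mathrm{Vec}$ Morita equivalence with $\mathcal{C}\simeq\on{End}^{\on{rex}}(\mathcal{M})$.

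The repair is straightforward, and your intermediate observation $\mathcal{D}\simeq\on{Mat}_n(\mathrm{Vec})$ already points to it.
\begin{enumerate}
\item Since the center of a direct sum is the direct sum of the centers, $\mathcal{D}$ is indecomposable.
\item For any summand $\mathbbm{1}_i$ of the unit, the finite tensor category $\mathcal{D}_{ii}$ is then Morita equivalent to $\mathcal{D}$. Hence it has center $\mathrm{Vec}$, and so it is $\mathrm{Vec}$ by FPdim or \cite{OU}; this is where algebraic closure enters. Both results are stated for tensor categories, so this reduction is needed in any case.
\item Thus $\mathcal{D}$ is Morita equivalent to $\mathrm{Vec}$. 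By transitivity (composition of invertible 1-morphisms in $\mathrm{Mor}_1^{\on{pre}}$, \autoref{lem:Moritahighercat}), $\mathcal{C}$ is Morita equivalent to $\mathrm{Vec}$.
\item Concretely, $\mathcal{M}\simeq\mathcal{X}\boxtimes\mathrm{Vec}^{\oplus n}$ as a right $\on{Mat}_n(\mathrm{Vec})$-module category. Hence $\mathcal{C}\simeq\on{End}^{\on{rex}}_{\mathcal{D}}(\mathcal{M})\simeq\on{End}^{\on{rex}}(\mathcal{X})$, and $\mathcal{X}$ is the actual $\mathcal{C}$-$\mathrm{Vec}$ Morita equivalence.
\end{enumerate}
Your Step 2 then goes through verbatim with $\mathcal{X}$ in place of $\mathcal{M}$.
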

\begin{proof}
If $\mathbbm{k}$ is algebraically closed, then $\mathrm{Vec}$ is the only finite tensor category whose Drinfeld center is $\mathrm{Vec}$. This follows for instance from \cite[Theorem 7.16.6]{EGNO}
\end{proof}

\begin{remark}
If $\mathbbm{k}$ is not algebraically closed, then the statement of \autoref{maincor1} may need to be modified depending on the ground field $\mathbbm{k}$. Namely, it was shown in \cite{SS} that the Morita equivalence classes of finite tensor categories with Drinfeld center $\mathrm{Vec}$ are classified by the third Galois cohomology group $H^3(\on{Gal}(\overline{\mathbbm{k}}/\mathbbm{k}),\overline{\mathbbm{k}}^\times)$.
\end{remark}

\section{Applications}

We now discuss some applications of our results to the study of the dualizability properties of (higher) Morita categories of (braided) pre-tensor categories.
Our main motivation stems from the cobordism hypothesis \cite{BD,L}, which posits an equivalence between \emph{fully dualizable} objects in a symmetric monoidal $n$-category $\mathscr{C}$ and $n$-dimensional topological field theories valued in $\mathscr{C}$.
In low dimensions, these topological field theories are expected to be related to classical constructions in quantum topology.
For instance, the Morita 3-category $\on{Mor}_1^{\on{tens}}$ of finite multitensor categories was extensively studied in \cite{DSPS:book}.
They prove that the fully dualizable objects in $\on{Mor}_1^{\on{tens}}$ are precisely \emph{separable} tensor categories, which are conjecturally closely related to the $3$-dimensional topological field theories obtained via the Turaev-Viro construction.
Using our main theorem, we characterize the fully dualizable objects of the Morita 3-category $\on{Mor}_1^{\on{pre}}$ of finite pre-multitensor categories.
Going up in dimension, there is a higher Morita 4-category $\on{Mor}_2^{\on{pre}}$ of finite braided pre-multitensor categories --- a variant of the Morita 4-category introduced in \cite{BJS}.
We give a necessary criterion for an object of $\on{Mor}_2^{\on{pre}}$ to be fully dualizable. This complements the sufficient criterion given in \cite{Dec:relative}.
Finally, in a somewhat different direction, we briefly discuss how our main theorem can be used to characterize Witt equivalence between (enriched) finite braided tensor categories thereby providing a non-semisimple version of \cite[Proposition 5.6]{DNO}.

\subsection{The Morita Theory of Finite Pre-Tensor Categories}\label{sub:Mor1}

Recall from \cite{Dec:relative} that there is a symmetric monoidal Morita 3-category $\on{Mor}_1^{\on{pre}}$ whose structure may be succinctly described as:
\begin{itemize}
\item Objects are finite pre-multitensor categories (over a fixed perfect field $\mathbbm{k}$);
\item 1-Morphisms are finite bimodule categories;
\item 2-Morphisms are right exact bimodule functors;
\item 3-Morphisms are bimodule natural transformations.
\end{itemize}
The symmetric monoidal structure on $\on{Mor}_1^{\on{pre}}$ is given by the Deligne tensor product.
In particular, the full subcategory $\on{Mor}_1^{\on{tens}}$ of $\on{Mor}_1^{\on{pre}}$ on the finite multitensor categories is symmetric monoidal. The symmetric monoidal 3-category $\on{Mor}_1^{\on{tens}}$ was extensively studied in \cite{DSPS:book}. We begin by giving a higher categorical interpretation of the notion of Morita equivalence between finite pre-multitensor categories.

\begin{lemma}\label{lem:Moritahighercat}
Two finite pre-multitensor categories are Morita equivalent if and only if they are equivalent as objects of the 3-category $\on{Mor}_1^{\on{pre}}$.
\end{lemma}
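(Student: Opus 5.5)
The plan is to show that a finite $\mathcal{C}$-$\mathcal{D}$-bimodule category $\mathcal{M}$ is a Morita equivalence exactly when it is an invertible 1-morphism in $\on{Mor}_1^{\on{pre}}$. Recall that composition of 1-morphisms is the relative Deligne tensor product $\boxtimes_{\mathcal{D}}$, and identities are the regular bimodules.

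For the forward direction, suppose $\mathcal{M}$ is a Morita equivalence. As a candidate inverse I take $\mathcal{N}:=\on{Fun}^{\on{rex}}_{\mathcal{C}}(\mathcal{M},\mathcal{C})$, which is a $\mathcal{D}$-$\mathcal{C}$-bimodule category. Using \autoref{lem:finitemodulegenerator} and \autoref{thm:finitemodulemodule}, I write $\mathcal{M}\simeq\rmod$ for an algebra $A$ in $\mathcal{C}$. By \autoref{lem:EilenbergWatts}, $\mathcal{D}\simeq\bimod$, and then $\mathcal{N}\simeq\lmod$.

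The two composites then need to be computed.
\begin{itemize}
\item For the first composite, the standard description of the relative Deligne product as bimodules gives $\rmod\boxtimes_{\bimod}\lmod\simeq\on{End}^{\on{rex}}_{\bimod}(\rmod)$, up to the appropriate handedness. By the Morita assumption this is $\mathcal{C}$ as a $\mathcal{C}$-$\mathcal{C}$-bimodule. I would check that the composite 2-morphism is the canonical one, so that the identification is through $\on{can}$.
\item For the second composite, $\lmod\boxtimes_{\mathcal{C}}\rmod\simeq\bimod\simeq\mathcal{D}$. This is the relative Deligne product over $\mathcal{C}$ computed as modules over $A\otimes A^{\on{op}}$-type algebras, as in \cite{DSPS:book}.
\end{itemize}
Together these exhibit $\mathcal{M}$ as an equivalence in the 3-category.

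For the converse, suppose $\mathcal{M}$ is invertible with inverse $\mathcal{N}$. The functor $\mathcal{M}\boxtimes_{\mathcal{D}}-$ is then an equivalence of 2-categories from $\mathcal{D}$-$\mathcal{C}$-bimodules to $\mathcal{C}$-$\mathcal{C}$-bimodules. Applying it gives $\on{End}^{\on{rex}}_{\mathcal{C}}(\mathcal{M})\simeq\on{End}^{\on{rex}}_{\mathcal{D}}(\mathcal{D})$, since $\mathcal{N}\boxtimes_{\mathcal{C}}\mathcal{M}\simeq\mathcal{D}$ and endomorphisms of the unit bimodule are preserved. More precisely, I would use the adjunction $\on{Fun}_{\mathcal{C}}(\mathcal{M}\boxtimes_{\mathcal{D}}\mathcal{X},\mathcal{Y})\simeq\on{Fun}_{\mathcal{D}}(\mathcal{X},\on{Fun}_{\mathcal{C}}(\mathcal{M},\mathcal{Y}))$ with $\mathcal{X}=\mathcal{D}$ and $\mathcal{Y}=\mathcal{M}$. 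Then I would check that the resulting equivalence $\mathcal{D}^{\on{mop}}\simeq\on{End}^{\on{rex}}_{\mathcal{C}}(\mathcal{M})$ is the canonical functor, and symmetrically for $\mathcal{C}$.

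The main obstacle is the identification of the relative Deligne products in the pre-multitensor setting. In \cite{DSPS:book} these rely on rigidity, for example through exactness of the balancing and through dualizability of bimodules. Here I would instead work purely algebraically with modules over algebras in $\mathcal{C}$. The tools are the Eilenberg--Watts theorem, which holds by \autoref{lem:EilenbergWatts}, and the fact that right exact lax module functors are strong (\autoref{prop:laxrightexactstrong}). These should let every relative product be computed as a category of bimodules, avoiding rigidity.
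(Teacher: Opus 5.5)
The gap is in your first composite. The ``standard description'' of a relative Deligne product requires presenting one factor as modules over an algebra in the category you tensor over.

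You did this correctly for the second composite. There $A$ lives in $\mathcal{C}$, and you get $\lmod\boxtimes_{\mathcal{C}}\rmod\simeq\bimod$, i.e.\ $\on{Fun}^{\on{rex}}_{\mathcal{C}}(\mathcal{M},\mathcal{C})\boxtimes_{\mathcal{C}}\mathcal{M}\simeq\on{End}^{\on{rex}}_{\mathcal{C}}(\mathcal{M})$.

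The first composite is a product over $\mathcal{D}=\bimod$. The mirror of the same formula gives $\mathcal{M}\boxtimes_{\mathcal{D}}\on{Fun}^{\on{rex}}_{-\mathcal{D}}(\mathcal{M},\mathcal{D})\simeq\on{End}^{\on{rex}}_{\mathcal{D}}(\mathcal{M})$. This pairs $\mathcal{M}$ with its dual relative to $\mathcal{D}$, not with your $\mathcal{N}$, which is the dual relative to $\mathcal{C}$.

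To compute $\rmod\boxtimes_{\bimod}\lmod$ directly, you would need $\lmod$ written as modules over an algebra in $\bimod$. That is exactly the equivalence \eqref{eq:beloved} in the proof of Theorem~\ref{thm:generalizedMorita}. It is the nontrivial input, and it uses that $A$ is faithful and $\mathcal{C}\boxtimes\mathcal{C}^{\on{mop}}$-projective. Both are available here: the first because $\on{can}$ is an equivalence, the second by Proposition~\ref{prop:bimodulepretensor}, since $\bimod\simeq\mathcal{D}$ is pre-multitensor.

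Nor can you identify the two duals for free. Already for finite tensor categories both are $\mathcal{M}^{\on{op}}$, but with actions that differ by double-dual twists: they are the right and left adjoints of $\mathcal{M}$, which differ by Serre automorphisms. So their agreement is a consequence of the invertibility you are proving, not an input.

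The cheapest repair avoids this. Set $\mathcal{N}':=\on{Fun}^{\on{rex}}_{-\mathcal{D}}(\mathcal{M},\mathcal{D})$. Your second-composite computation and its mirror give, as bimodule categories,
$\mathcal{N}\boxtimes_{\mathcal{C}}\mathcal{M}\simeq\on{End}^{\on{rex}}_{\mathcal{C}}(\mathcal{M})\simeq\mathcal{D}$ and $\mathcal{M}\boxtimes_{\mathcal{D}}\mathcal{N}'\simeq\on{End}^{\on{rex}}_{\mathcal{D}}(\mathcal{M})\simeq\mathcal{C}$.
The last identifications are exactly the two canonical functors. A 1-morphism with both a left and a right inverse is invertible.

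This is what the paper does in one stroke. It cites \cite[Corollary 5.5]{BJS} for the fact that every finite bimodule category is adjunctible in $\on{Mor}_1^{\on{pre}}$, with the two canonical functors as unit and counit. It then cites \cite[Lemma 2.24]{BJSS} for the invertibility criterion, which also gives the converse immediately.

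Your converse is right in substance, but you should apply $\mathcal{M}\boxtimes_{\mathcal{D}}-$ to one-sided modules, $\mathbf{Mod}(\mathcal{D})\to\mathbf{Mod}(\mathcal{C})$. There it sends $\mathcal{D}\mapsto\mathcal{M}$, and hence identifies $\mathcal{D}^{\on{mop}}\simeq\on{End}^{\on{rex}}_{\mathcal{D}}(\mathcal{D})$ with $\on{End}^{\on{rex}}_{\mathcal{C}}(\mathcal{M})$ via the canonical functor. On $\mathcal{D}$-$\mathcal{C}$-bimodules, as you wrote it, it sends $\mathcal{N}$ to $\mathcal{C}$ and only yields a statement about Drinfeld centers. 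Also, the Hom-adjunction with $\mathcal{X}=\mathcal{D}$ and $\mathcal{Y}=\mathcal{M}$ is tautological and does not help.
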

\begin{proof}
Recall, for instance from \cite[Lemma 2.24]{BJSS}, that a 1-morphism in a higher category is invertible if and only if it is adjunctible and the unit and counit 2-morphisms witnessing the adjunction are both equivalences.
But, it follows from \cite[Corollary 5.5]{BJS} (see also \cite[Section 3.2.1]{DSPS:book}) that any 1-morphism in $\on{Mor}_1^{\on{pre}}$, that is, any finite $\mathcal{C}$-$\mathcal{D}$-bimodule category $\mathcal{M}$, is adjunctible with unit and counit given by the functors
$$\mathcal{D}^{\mathrm{mop}}\rightarrow \on{End^{rex}_{\mathcal{C}}}(\mathcal{M})\quad\mathrm{and}\quad\mathcal{C}\rightarrow \on{End^{rex}_{\mathcal{D}}}(\mathcal{M})\,.$$
By definition, these two functors are equivalences if and only if $\mathcal{M}$ witnesses a Morita equivalence.
\end{proof}

It is well-known that Morita equivalence between $\mathbbm{k}$-algebras can be reformulated as an equivalence between the categories of modules. This fact generalizes to our setting. More precisely, given a finite pre-multitensor category $\mathcal{C}$, let us write $\mathbf{Mod}(\mathcal{C})$ for the 2-category of finite left $\mathcal{C}$-module categories and right exact $\mathcal{C}$-module functors.

\begin{corollary}\label{cor:Morita2catmodules}
Let $\mathcal{M}$ be a finite $\mathcal{C}$-$\mathcal{D}$-bimodule category witnessing a Morita equivalence between $\mathcal{C}$ and $\mathcal{D}$. Then, there is an equivalence of 2-categories
$$\mathcal{M}\boxtimes_{\mathcal{D}}-:\mathbf{Mod}(\mathcal{D})\xrightarrow{\simeq} \mathbf{Mod}(\mathcal{C})\,.$$
If, in addition, $\mathcal{C}$ or $\mathcal{D}$ is discrete, the above restricts to an equivalence between the 2-categories of exact module categories.
\end{corollary}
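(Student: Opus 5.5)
The plan is to deduce the first assertion from \autoref{lem:Moritahighercat} and then handle exactness by a characterization of exact module categories that is invariant under the equivalence. By \autoref{lem:Moritahighercat}, $\mathcal{M}$ is an invertible 1-morphism in $\on{Mor}_1^{\on{pre}}$, with inverse the opposite bimodule category $\mathcal{M}^{\on{op}}$ (or, concretely, $\on{Fun}^{\on{rex}}_{\mathcal{C}}(\mathcal{M},\mathcal{C})$). Since $\mathbf{Mod}(\mathcal{C})\simeq\on{Hom}_{\on{Mor}_1^{\on{pre}}}(\mathrm{Vec},\mathcal{C})$ with 2-categorical composition given by the relative Deligne product, postcomposition with $\mathcal{M}$ yields $\mathcal{M}\boxtimes_{\mathcal{D}}-$. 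Postcomposition with the inverse 1-morphism is then a quasi-inverse, because the unit and counit equivalences $\mathcal{M}^{\on{op}}\boxtimes_{\mathcal{C}}\mathcal{M}\simeq\mathcal{D}$ and $\mathcal{M}\boxtimes_{\mathcal{D}}\mathcal{M}^{\on{op}}\simeq\mathcal{C}$ induce the required invertible pseudonatural transformations. Functoriality on right exact module functors and on natural transformations follows from the universal property of $\boxtimes_{\mathcal{D}}$, so the first statement is formal.

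For the exactness statement, I want an intrinsic characterization. By symmetry, and since Morita equivalence preserves the center (\autoref{prop:centerMorita}), I would first show that both $\mathcal{C}$ and $\mathcal{D}$ are discrete. The second assertion says one of them is. I expect the center to be rigid here, but I cannot assume that. Instead, I would argue that discreteness transfers: the projective objects of $\mathcal{D}\simeq\on{End}^{\on{rex}}_{\mathcal{C}}(\mathcal{M})^{\on{mop}}$ are controlled by $\mathcal{C}$-projectivity data. If transferring discreteness proves too hard, I would instead use an asymmetric argument that only needs discreteness on one side, as follows.

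Given a discrete side, say $\mathcal{D}$, \autoref{cor:characterizationexactness} shows that $\mathcal{N}\in\mathbf{Mod}(\mathcal{D})$ is exact if and only if $\on{End}^{\on{rex}}_{\mathcal{D}}(\mathcal{N})$ is a finite multitensor category. The equivalence of 2-categories gives a monoidal equivalence
\[
\on{End}^{\on{rex}}_{\mathcal{D}}(\mathcal{N})\simeq\on{End}^{\on{rex}}_{\mathcal{C}}(\mathcal{M}\boxtimes_{\mathcal{D}}\mathcal{N})\,,
\]
and rigidity is invariant under monoidal equivalence. To conclude on the $\mathcal{C}$ side, I then need the same characterization for $\mathcal{C}$. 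The forward direction, exact implies rigid dual, is \autoref{prop:dualtensorcat} and requires no discreteness. Hence exact $\mathcal{C}$-modules go to exact $\mathcal{D}$-modules under the inverse equivalence, since exactness over $\mathcal{C}$ gives a rigid dual, and the rigid dual gives exactness over the discrete $\mathcal{D}$.

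The main obstacle is the converse: showing that if $\mathcal{N}$ is exact over $\mathcal{D}$, then $\mathcal{M}\boxtimes_{\mathcal{D}}\mathcal{N}$ is exact over $\mathcal{C}$ when $\mathcal{C}$ need not be known to be discrete. I would try to prove directly that $\mathcal{C}$ inherits discreteness. Using the exact $\mathcal{D}$-module $\mathcal{D}/\on{Rad}^{\mathcal{D}}(\mathcal{D})$ (\autoref{cor:exactC/Jl}), its image in $\mathbf{Mod}(\mathcal{C})$ has a multitensor dual. Following \autoref{thm:regularity}, I would then use this to show that the quotient functors attached to $\mathbb{Z}_+$-ideals of $\mathcal{C}$ are left exact, which reduces to \autoref{prop:leftexactvanishing}. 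If this step fails, the fallback is a direct check: for $P\in\on{Proj}(\mathcal{C})$, $P\triangleright(M\boxtimes_{\mathcal{D}}N)$ is projective. I would express projectives of $\mathcal{C}$ via $\on{End}^{\on{rex}}_{\mathcal{D}}(\mathcal{M})$, and use that $\mathcal{D}$-projective objects of $\mathcal{M}$ tensored with $\mathcal{N}$ are projective by exactness of $\mathcal{N}$.
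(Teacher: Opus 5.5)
Your first part is the paper's argument: by \autoref{lem:Moritahighercat}, $\mathcal{M}$ is invertible in $\mathrm{Mor}_1^{\mathrm{pre}}$, and post-composition with it is an equivalence of Hom 2-categories. For the exactness part you use the same tool as the paper, \autoref{cor:characterizationexactness}. You correctly notice that if only $\mathcal{D}$ is known to be discrete, this gives just one inclusion: exact $\mathcal{C}$-modules go to exact $\mathcal{D}$-modules, via \autoref{prop:dualtensorcat} on the $\mathcal{C}$-side and the converse part of \autoref{cor:characterizationexactness} on the $\mathcal{D}$-side.

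The paper's one-line proof also tacitly uses discreteness on both sides, so your worry is legitimate. However, the reverse inclusion is exactly what your proposal leaves unproved, and neither fallback closes it as written.
\begin{itemize}
\item Your first fallback (transferring discreteness to $\mathcal{C}$) has the right goal but no mechanism. In \autoref{thm:regularity}, left exactness of $L_\pi$ comes from $\pi^*\circ L_\pi$ being an object of the left rigid category $\mathcal{Z}(\mathcal{C})$. Here $\mathcal{Z}(\mathcal{C})\simeq\mathcal{Z}(\mathcal{D})$ is not known to be rigid. Knowing that the single module category $\mathcal{M}\boxtimes_{\mathcal{D}}(\mathcal{D}/\on{Rad}^{\mathcal{D}}(\mathcal{D}))$ has a multitensor dual says nothing about this two-sided functor, which is not an object of that dual.
\item Your second fallback only tests pure tensors $M\boxtimes_{\mathcal{D}}N$. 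These do not exhaust $\mathcal{M}\boxtimes_{\mathcal{D}}\mathcal{N}$, and projectivity does not pass to cokernels. It also uses $\mathcal{D}$-projectivity where actual projectivity is needed. For the identity Morita equivalence $\mathcal{M}=\mathcal{C}=\mathcal{D}$, the unit is $\mathcal{D}$-projective, yet $\mathbbm{1}\boxtimes_{\mathcal{D}}N\cong N$ need not be projective.
\end{itemize}

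The direct check does work, and needs no discreteness at all, once every object of $\mathcal{M}\boxtimes_{\mathcal{D}}\mathcal{N}$ is modelled as a module.
\begin{itemize}
\item Present $\mathcal{M}\simeq{}_{B}\mathcal{D}$ (left $B$-modules in $\mathcal{D}$) as a right $\mathcal{D}$-module category.
\item The Morita hypothesis and Eilenberg--Watts give $\mathcal{C}\simeq\on{bimod}_{\mathcal{D}}(B)$, acting by $\otimes_{B}$. Moreover $\mathcal{M}\boxtimes_{\mathcal{D}}\mathcal{N}\simeq{}_{B}\mathcal{N}$.
\item Projectives of $\on{bimod}_{\mathcal{D}}(B)$ are summands of $B\otimes X\otimes B$ with $X\in\mathrm{Proj}(\mathcal{D})$. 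For every $N\in{}_{B}\mathcal{N}$,
\[
(B\otimes X\otimes B)\otimes_{B}N\cong B\triangleright(X\triangleright N)\,,
\]
which is the free $B$-module on $X\triangleright N$.
\item $X\triangleright N$ is projective in $\mathcal{N}$ because $\mathcal{N}$ is exact. Free modules on projectives are projective because the forgetful functor is exact.
\end{itemize}
Hence $\mathcal{M}\boxtimes_{\mathcal{D}}\mathcal{N}$ is exact. The same computation for the inverse bimodule category gives the converse. This is just the example following the definition of exact module categories, with rigidity of $\mathcal{C}$ replaced by exactness of $\mathcal{N}$.

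If you prefer to keep the route through \autoref{cor:characterizationexactness}, the missing lemma is Morita invariance of discreteness. I expect it to go through combinatorially rather than via $\mathcal{D}/\on{Rad}^{\mathcal{D}}(\mathcal{D})$, though I have only sketched it. For $\mathcal{D}\simeq\on{bimod}_{\mathcal{C}}(A)$, the assignments $X\mapsto A\otimes X\otimes A$ and $M\mapsto R\otimes M\otimes S$ (with $R,S$ projective) should induce an isomorphism between the lattices of two-sided $\mathbb{Z}_{+}$-ideals of $\mathrm{Proj}(\mathcal{C})$ and $\mathrm{Proj}(\mathcal{D})$. Discreteness says precisely that this lattice is Boolean, so it would transfer.
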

\begin{proof}
The first part follows from the observation that $\mathbf{Mod}(\mathcal{C}) = \on{Hom}_{\on{Mor}_1^{\on{pre}}}(\mathcal{C},\mathrm{Vec})$. The second part is a consequence of \autoref{cor:characterizationexactness}.
\end{proof}

In a different direction, recall that an object of a symmetric monoidal $3$-category is $3$-dualizable if it is left and right rigid, the corresponding evaluation and coevaluation 1-morphisms have all possible adjoints, and the corresponding unit and counit 2-morphisms have all possible adjoints. We refer the reader to \cite[Chapter 1]{DSPS:book} for additional details and illuminating pictures.
Fully dualizable objects in $\on{Mor}_1^{\on{tens}}$ are completely characterized via the following algebraic property.

\begin{definition}
A finite multitensor category $\mathcal{C}$ is \emph{separable} if its Drinfeld center $\mathcal{Z}(\mathcal{C})$ is finite semisimple.
\end{definition}

\begin{theorem}[\cite{DSPS:book}]
A finite multitensor category $\mathcal{C}$ is fully dualizable as an object of $\on{Mor}_1^{\on{tens}}$ if and only if it is separable.
\end{theorem}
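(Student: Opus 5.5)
The statement is the theorem of \cite{DSPS:book} characterizing fully dualizable objects of $\on{Mor}_1^{\on{tens}}$ as the separable finite multitensor categories. The plan follows their strategy: reduce full dualizability to the existence of adjoints for a small number of explicit 1- and 2-morphisms, and translate each such adjoint condition into an algebraic property of $\mathcal{C}$.

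\textbf{Reduction to 3-dualizability conditions.} Every object $\mathcal{C}$ of $\on{Mor}_1^{\on{tens}}$ is 1-dualizable, with dual $\mathcal{C}^{\on{mop}}$. The evaluation and coevaluation are $\mathcal{C}$ regarded as a $(\mathcal{C}\boxtimes\mathcal{C}^{\on{mop}})$-$\mathrm{Vec}$ and a $\mathrm{Vec}$-$(\mathcal{C}\boxtimes\mathcal{C}^{\on{mop}})$ bimodule category, respectively. As in the proof of \autoref{lem:Moritahighercat}, every 1-morphism in the Morita 3-category admits adjoints, using \cite[Corollary 5.5]{BJS} in the rigid setting. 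Hence 2-dualizability is automatic, and full dualizability reduces to one requirement: the unit and counit 2-morphisms of the adjunctions $\mathrm{ev}\dashv\mathrm{ev}^R$ and $\mathrm{coev}\dashv\mathrm{coev}^R$, which are bimodule functors, must themselves admit left and right adjoints that are again bimodule functors.

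\textbf{Identifying the relevant 2-morphisms.} Composing $\mathrm{ev}$ with its adjoint produces the relative Deligne product $\mathcal{C}\boxtimes_{\mathcal{C}\boxtimes\mathcal{C}^{\on{mop}}}\mathcal{C}$. This is the trace (Hochschild homology) category, and its companion is $\mathcal{Z}(\mathcal{C})\simeq\on{End}^{\on{rex}}_{\mathcal{C}-\mathcal{C}}(\mathcal{C})$ from \autoref{lem:centerasbimodules}. The counit becomes a functor of the form $\mathcal{Z}(\mathcal{C})\to\mathrm{Vec}$ or a tensor-product functor $T:\mathcal{C}\boxtimes\mathcal{C}^{\on{mop}}\to\mathcal{C}$ seen relative to these structures.

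\textbf{The two conditions.} A right exact functor between finite categories has an exact right adjoint if and only if it preserves projectives, and has a left adjoint if and only if it is left exact. Both conditions must hold for the (co)unit functors.

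For $T$ and its relatives over $\mathcal{C}$, these conditions translate into \emph{semisimplicity of $\mathcal{C}$}: the tensor product must split epimorphisms, so $\mathbbm{1}$ must be projective.

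For the 2-morphisms involving the center, the conditions translate into exactness of the forgetful/induction adjunction $\mathcal{Z}(\mathcal{C})\rightleftarrows\mathcal{C}$ together with the existence of adjoints of $\mathcal{Z}(\mathcal{C})\to\mathrm{Vec}$-type traces. Together these amount to semisimplicity of $\mathcal{Z}(\mathcal{C})$.

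Conversely, suppose $\mathcal{Z}(\mathcal{C})$ is semisimple. Then $\mathcal{C}$ is semisimple, since $\mathcal{C}$ is a module over $\mathcal{Z}(\mathcal{C})$ via an exact, faithful induction functor. In the semisimple setting every right exact functor is exact and has two-sided adjoints. Bimodule structures on these adjoints follow from \autoref{moduleadjoints}, provided that the relevant relative products are again semisimple. Semisimplicity of $\mathcal{Z}(\mathcal{C})$ is exactly what guarantees this.

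\textbf{Main obstacle.} The hardest step is the forward direction at the level of the center. The task is to show that existence of adjoints forces $\mathcal{Z}(\mathcal{C})$, and not merely $\mathcal{C}$, to be semisimple; this is the distinction that matters in positive characteristic. My first attempt would be to show that the adjunction data produce a splitting of the multiplication map of the canonical algebra $\underline{\on{End}}(\mathbbm{1})$ in $\mathcal{C}\boxtimes\mathcal{C}^{\on{mop}}$ as a bimodule map. That splitting makes this algebra separable, and separability of this algebra is equivalent to semisimplicity of its bimodule category $\mathcal{Z}(\mathcal{C})$.
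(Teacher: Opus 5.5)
The paper does not reprove this statement; it quotes it from \cite{DSPS:book}. The only related argument it gives is the one it borrows from there in the proof of \autoref{thm:fdMorita3catpre}. Your reduction to adjointability of the unit and counit 2-morphisms is acceptable, and so is your sketch of the backward direction. The forward direction, however, has a genuine gap, and the route you propose for it cannot work.

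First, adjointability conditions on $T:\mathcal{C}\boxtimes\mathcal{C}^{\on{mop}}\to\mathcal{C}$, or on the evaluation functor $E$, carry no information when $\mathcal{C}$ is rigid. Indeed, $T$ is exact by \autoref{exactetymology} and sends $P\boxtimes Q$ to the projective object $P\otimes Q$. Hence it has a left adjoint and a right exact right adjoint, and both are bimodule functors by \autoref{moduleadjoints}. Likewise, the forgetful/induction adjunction between $\mathcal{Z}(\mathcal{C})$ and $\mathcal{C}$ is exact for every finite multitensor category. So nothing in your argument actually produces semisimplicity of $\mathcal{C}$, and ``the tensor product must split epimorphisms'' does not follow from anything you have.

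Second, separability of $A=\underline{\on{End}}(\mathbbm{1})\in\mathcal{C}\boxtimes\mathcal{C}^{\on{mop}}$ is not equivalent to semisimplicity of $\mathcal{Z}(\mathcal{C})$. Take $\mathcal{C}=\on{Rep}(G)$ with $G=\mathbb{Z}/p$ over a field of characteristic $p$. Then $A$ is the algebra of functions on $(G\times G)/\Delta G$ in $\on{Rep}(G\times G)$. It is separable, since $\delta_x\mapsto\delta_x\otimes\delta_x$ is an equivariant bimodule splitting of the multiplication. Yet $\mathcal{Z}(\mathcal{C})$ is not semisimple, because $\mathcal{C}$ is not (see below). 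Separability of $A$ only makes $A$-$A$-bimodules semisimple relative to $\mathcal{C}\boxtimes\mathcal{C}^{\on{mop}}$. You would additionally need $\mathcal{C}$ to be semisimple, which is exactly the input your first step fails to supply.

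The missing idea is to locate the one 2-morphism whose adjointability is a real condition. Write $\mathcal{Z}(\mathcal{C})\simeq\on{Fun}^{\on{rex}}_{\mathcal{C}\boxtimes\mathcal{C}^{\on{mop}}}(\mathcal{C},\mathcal{C}\boxtimes\mathcal{C}^{\on{mop}})\boxtimes_{\mathcal{C}\boxtimes\mathcal{C}^{\on{mop}}}\mathcal{C}$, as in the proof of \autoref{thm:fdMorita3catpre}. The adjunction whose counit is $E$ has as unit the functor $\mathrm{Vec}\to\mathcal{Z}(\mathcal{C})$ picking out $\on{Id}_{\mathcal{C}}$, i.e.\ the unit object. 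For this 2-morphism to have a right adjoint in $\on{Mor}_1^{\on{tens}}$, the functor $\on{Hom}_{\mathcal{Z}(\mathcal{C})}(\mathbbm{1},-)$ must be right exact. So $\mathbbm{1}$ is projective in the finite multitensor category $\mathcal{Z}(\mathcal{C})$, and then every $X\cong X\otimes\mathbbm{1}$ is projective by \autoref{lem:rigidtensorprojective}.

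Equivalently, as the paper does following \cite{DSPS:book}: the underlying category of $\mathcal{Z}(\mathcal{C})$ is a composite of two fully adjunctible 1-morphisms $\mathrm{Vec}\to\mathcal{C}\boxtimes\mathcal{C}^{\on{mop}}\to\mathrm{Vec}$. It is therefore a fully dualizable finite category, hence semisimple by \cite[Theorem A.22]{BDSPV}. No separability detour is needed.

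Finally, in your backward direction, semisimplicity of $\mathcal{C}$ does not follow from the existence of an exact faithful functor $\mathcal{C}\to\mathcal{Z}(\mathcal{C})$. The forgetful functor $\on{Rep}(G)\to\mathrm{Vec}$ is exact and faithful into a semisimple category, for instance. The correct reason is that the forgetful functor $\mathcal{Z}(\mathcal{C})\to\mathcal{C}$ has an exact right adjoint, hence preserves projectives, and sends $\mathbbm{1}$ to $\mathbbm{1}$.
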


Thanks to our main theorem, we obtain a complete characterization of the fully dualizable objects in $\on{Mor}_1^{\on{pre}}$. More precisely, it turns out that they all reside within the essential image of $\on{Mor}_1^{\on{tens}}\hookrightarrow\on{Mor}_1^{\on{pre}}$. By way of \autoref{lem:Moritahighercat}, this follows from \autoref{cor:fdpre}, which we now prove.

\begin{theorem}\label{thm:fdMorita3catpre}
A finite pre-multitensor category is fully dualizable as an object of $\mathrm{Mor_1^{pre}}$ if and only if its Drinfeld center is a multifusion category if and only if it is Morita equivalent to a separable multitensor category.
\end{theorem}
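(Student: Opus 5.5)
We prove a cycle of implications: (1) fully dualizable in $\mathrm{Mor_1^{pre}}$ $\Rightarrow$ (2) $\mathcal{Z}(\mathcal{C})$ multifusion $\Rightarrow$ (3) Morita equivalent to a separable multitensor category $\Rightarrow$ (1). The main input is \autoref{thm:mainthmmulti}, together with the invariance of the Drinfeld center under Morita equivalence (\autoref{prop:centerMorita}) and the characterization of separability from \cite{DSPS:book}.

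(2) $\Rightarrow$ (3): if $\mathcal{Z}(\mathcal{C})$ is multifusion, it is in particular rigid. So \autoref{thm:mainthmmulti} gives a finite multitensor category $\mathcal{D}$ Morita equivalent to $\mathcal{C}$. By \autoref{prop:centerMorita}, $\mathcal{Z}(\mathcal{D})\simeq\mathcal{Z}(\mathcal{C})$ is finite semisimple, so $\mathcal{D}$ is separable by definition.

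(3) $\Rightarrow$ (1): by \autoref{lem:Moritahighercat}, $\mathcal{C}$ is equivalent in $\mathrm{Mor_1^{pre}}$ to a separable $\mathcal{D}$. By \cite{DSPS:book}, $\mathcal{D}$ is fully dualizable in $\mathrm{Mor_1^{tens}}$. The inclusion $\mathrm{Mor_1^{tens}}\hookrightarrow\mathrm{Mor_1^{pre}}$ is symmetric monoidal, so it preserves duality data and all adjunctions. Hence $\mathcal{D}$, and therefore $\mathcal{C}$, is fully dualizable in $\mathrm{Mor_1^{pre}}$, since full dualizability is invariant under equivalence.

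(1) $\Rightarrow$ (2) is the main obstacle. I would start from the fact that full dualizability passes to $\Omega$-loops in the following sense. The 2-morphisms given by unit and counit adjoints in $\mathrm{Mor_1^{pre}}$ are right exact bimodule functors. Requiring them to have right exact adjoints of every order forces certain canonical functors to be exact with exact adjoints.

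Concretely, I would identify the composite of the evaluation and coevaluation of $\mathcal{C}$ (the ``dimension'' circle) with $\mathcal{C}$ regarded as a $\mathcal{C}\boxtimes\mathcal{C}^{\on{mop}}$-module. Its endomorphism category is $\mathcal{Z}(\mathcal{C})\simeq\on{End}^{\on{rex}}_{\mathcal{C}-\mathcal{C}}(\mathcal{C})$. Using the existence of all adjoints of the unit of the ambidextrous evaluation/coevaluation adjunction (as in \cite[Chapter 3]{DSPS:book}), I would deduce the following: the multiplication $T:\mathcal{C}\boxtimes\mathcal{C}\to\mathcal{C}$ and its bimodule right adjoint $T^R$ from \autoref{rem:BJSrigid} both admit further adjoints as bimodule functors. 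This makes $T^R$ exact and makes $\mathcal{C}$ an exact $\mathcal{C}\boxtimes\mathcal{C}^{\on{mop}}$-module. By \autoref{prop:dualtensorcat}, $\mathcal{Z}(\mathcal{C})$ is then rigid.

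With rigidity in hand, \autoref{thm:mainthmmulti} makes $\mathcal{C}$ Morita equivalent to a multitensor $\mathcal{D}$, and $\mathcal{D}$ is fully dualizable in $\mathrm{Mor_1^{pre}}$. Since $\mathrm{Mor_1^{tens}}$ is a full sub-3-category closed under the monoidal product, the duals and adjoints of $\mathcal{D}$ lie in it, so $\mathcal{D}$ is fully dualizable in $\mathrm{Mor_1^{tens}}$. Hence $\mathcal{D}$ is separable by \cite{DSPS:book}, and $\mathcal{Z}(\mathcal{C})\simeq\mathcal{Z}(\mathcal{D})$ is multifusion. The delicate point is the first part, extracting rigidity of $\mathcal{Z}(\mathcal{C})$ from the adjunctibility conditions. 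An alternative I would try is to verify directly that the duality data of $\mathcal{C}$ involves only the module categories $\mathcal{C}$ over $\mathcal{C}^{\on{mop}}\boxtimes\mathcal{C}$. Then 3-dualizability demands that $\mathcal{C}$ be a dualizable 1-morphism whose unit/counit functors have right exact adjoints, which amounts to exactness of $\mathcal{C}$ as a bimodule.
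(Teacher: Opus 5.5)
Your implications (2)$\Rightarrow$(3) and (3)$\Rightarrow$(1) are correct. So is the second half of your (1)$\Rightarrow$(2): once $\mathcal{Z}(\mathcal{C})$ is known to be rigid, you pass to a Morita equivalent multitensor $\mathcal{D}$ and use that $\mathrm{Mor}_1^{\on{tens}}$ is a full sub-3-category containing all the duality data. The gap is the step meant to produce rigidity.

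The functors $T:\mathcal{C}\boxtimes\mathcal{C}\to\mathcal{C}$ and $T^{\on{R}}$ are the counit data of the adjunction between the regular modules ${}_{\mathcal{C}}\mathcal{C}$ and $\mathcal{C}_{\mathcal{C}}$. These 1-morphisms are not among those whose full adjunctibility follows from full dualizability of $\mathcal{C}$. The rigid-case computation in \cite{DSPS:book}, which expresses the duality 2-morphisms through $T$ and $T^{\on{R}}$, identifies $\on{Fun}^{\on{rex}}_{\mathcal{C}\boxtimes\mathcal{C}^{\on{mop}}}(\mathcal{C},\mathcal{C}\boxtimes\mathcal{C}^{\on{mop}})$ with a twisted copy of $\mathcal{C}$ using duals of all objects. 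That identification is unavailable here, and your claim is in fact false in the pre-multitensor setting.

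Take $\mathcal{C}=\bimod[\mathbbm{k}][A]$ with $A=\mathbbm{k}[x]/(x^2)$. It is Morita equivalent to $\mathrm{Vec}$ by \autoref{thm:generalizedMorita}, hence fully dualizable. Yet $T(M\boxtimes N)=M\otimes_A N$ is not left exact, so $T$ has no left adjoint. Also, $T^{\on{R}}$ sends the projective bimodule $A\otimes_{\mathbbm{k}}A$ to $\on{Hom}_{\mathbbm{k}}(A,\mathbbm{k})\otimes_{\mathbbm{k}}A\otimes_{\mathbbm{k}}A$. This is projective only if $\on{Hom}_{\mathbbm{k}}(A,\mathbbm{k})\cong A$ is a projective $A$-bimodule, i.e.\ only if $A$ is separable, so $T^{\on{R}}$ has no right exact right adjoint.

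Moreover, exactness of $T^{\on{R}}=\underline{\on{Hom}}(\mathbbm{1},-)$ holds for \emph{every} finite pre-multitensor category by \autoref{rem:BJSrigid}. It only says that $\mathbbm{1}$ is $\mathcal{C}\boxtimes\mathcal{C}^{\on{mop}}$-projective. Exactness of $\mathcal{C}$ over $\mathcal{C}\boxtimes\mathcal{C}^{\on{mop}}$ needs every object to be so. For discrete $\mathcal{C}$ this is equivalent to rigidity of $\mathcal{Z}(\mathcal{C})$ by \autoref{cor:characterizationexactness}, so it is the conclusion you are after, not a consequence of adjoints of $T$.

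Your fallback has the same defect. Write $\mathcal{C}\simeq\rmod[\mathcal{C}\boxtimes\mathcal{C}^{\on{mop}}][B]$. Then the counit $E$ of the adjunction for $ev$ is $-\otimes_B-$. It has a right exact right adjoint exactly when $B$ is $(\mathcal{C}\boxtimes\mathcal{C}^{\on{mop}})\boxtimes(\mathcal{C}\boxtimes\mathcal{C}^{\on{mop}})^{\on{mop}}$-projective. By \autoref{prop:bimodulepretensor}, this means $\mathcal{Z}(\mathcal{C})\simeq(\bimod[\mathcal{C}\boxtimes\mathcal{C}^{\on{mop}}][B])^{\on{mop}}$ is pre-multitensor. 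That gives rigidity of the \emph{projective} objects of $\mathcal{Z}(\mathcal{C})$, not exactness of $\mathcal{C}$.

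The missing ingredient is semisimplicity of $\mathcal{Z}(\mathcal{C})$, which the paper extracts from a different consequence of full dualizability. The composite of $ev$ with its adjoint is a fully adjunctible 1-morphism $\mathrm{Vec}\to\mathrm{Vec}$ with underlying category $\mathcal{Z}(\mathcal{C})$. Hence $\mathcal{Z}(\mathcal{C})$ is fully dualizable as a finite category, and so finite semisimple by \cite[Theorem A.22]{BDSPV}. Combine this with the pre-multitensor property coming from $E^{\on{R}}$, whiskered to a right exact bimodule right adjoint of the tensor product of $\mathcal{Z}(\mathcal{C})$. Then every object of $\mathcal{Z}(\mathcal{C})$ is projective and hence rigid, and \autoref{thm:mainthmmulti} finishes the argument.
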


\begin{proof}
Let $\mathcal{C}$ be a finite pre-multitensor category that is fully dualizable as an object of $\mathrm{Mor}_1^{\on{pre}}$.
As in \cite[Section 3.2.2]{DSPS:book}, the underlying category of $\mathcal{Z}(\mathcal{C})$ can be expressed as the composite of two fully adjunctible 1-morphisms
\begin{equation}\label{eq:fullyadjunctible1morphisms}
\mathcal{Z}(\mathcal{C})\simeq\on{End}^{\on{rex}}_{\mathcal{C}\boxtimes\mathcal{C}^{\on{mop}}}(\mathcal{C}) \simeq\on{Fun}^{\on{rex}}_{\mathcal{C}\boxtimes\mathcal{C}^{\on{mop}}}(\mathcal{C},\mathcal{C}\boxtimes\mathcal{C}^{\on{mop}})\boxtimes_{\mathcal{C}\boxtimes\mathcal{C}^{\on{mop}}}\mathcal{C}\,.
\end{equation}
In particular, we find that $\mathcal{Z}(\mathcal{C})$ is a fully adjunctible 1-morphism from $\mathrm{Vec}$ to $\mathrm{Vec}$ in $\mathrm{Mor}_1^{\on{pre}}$.
Said differently, the underlying category of $\mathcal{Z}(\mathcal{C})$ is fully dualizable as an object of the symmetric monoidal 2-category of finite categories.
It follows from \cite[Theorem A.22]{BDSPV} that $\mathcal{Z}(\mathcal{C})$ is finite semisimple.
We claim that $\mathcal{Z}(\mathcal{C})$ is a pre-multitensor category. Given that $\mathcal{Z}(\mathcal{C})$ is finite semisimple, it follows that it is rigid. Thanks to \autoref{mainthm}, this will conclude the proof.

It remains to show that $\mathcal{Z}(\mathcal{C})$ is a pre-multitensor category.
Thanks to \autoref{rem:BJSrigid}, we will equivalently check that the monoidal product functor $T:\mathcal{Z}(\mathcal{C})\boxtimes\mathcal{Z}(\mathcal{C})\rightarrow\mathcal{Z}(\mathcal{C})$ has a right adjoint as a bimodule functor.
Using the equivalence of \autoref{eq:fullyadjunctible1morphisms} above, the functor $T$ is induced by the $\mathcal{C}\boxtimes\mathcal{C}^{\on{mop}}$-bimodule functor 
$$E:\mathcal{C}\boxtimes\on{Fun}^{\on{rex}}_{\mathcal{C}\boxtimes\mathcal{C}^{\on{mop}}}(\mathcal{C},\mathcal{C}\boxtimes\mathcal{C}^{\on{mop}})\rightarrow\mathcal{C}\boxtimes\mathcal{C}^{\on{mop}}$$
serving as the counit of the adjunction between the two 1-morphisms of \autoref{eq:fullyadjunctible1morphisms}. Explicitly, the functor $E$ is given by evaluation.
More precisely, we have that $$T := \on{Fun}^{\on{rex}}_{\mathcal{C}\boxtimes\mathcal{C}^{\on{mop}}}(\mathcal{C},\mathcal{C}\boxtimes\mathcal{C}^{\on{mop}})\boxtimes_{\mathcal{C}\boxtimes\mathcal{C}^{\on{mop}}} E\boxtimes_{\mathcal{C}\boxtimes\mathcal{C}^{\on{mop}}}\mathcal{C}\,.$$
But, as $\mathcal{C}$ is fully dualizable by assumption, $E$ must have a right exact right adjoint $E^{\on{R}}$ as a $\mathcal{C}\boxtimes\mathcal{C}^{\on{mop}}$-bimodule functor. It follows that $T$ has a right exact right adjoint $T^{\on{R}}:\mathcal{Z}(\mathcal{C})\rightarrow\mathcal{Z}(\mathcal{C})\boxtimes\mathcal{Z}(\mathcal{C})$ defined by
$$T^{\on{R}}:=\on{Fun}^{\on{rex}}_{\mathcal{C}\boxtimes\mathcal{C}^{\on{mop}}}(\mathcal{C},\mathcal{C}\boxtimes\mathcal{C}^{\on{mop}})\boxtimes_{\mathcal{C}\boxtimes\mathcal{C}^{\on{mop}}} E^{\on{R}}\boxtimes_{\mathcal{C}\boxtimes\mathcal{C}^{\on{mop}}}\mathcal{C}\,.$$
Unpacking definitions, we find that there are natural isomorphisms
\begin{equation}\label{eq:ZCmodule}
(\mathcal{Z}(\mathcal{C})\boxtimes T)\circ(T^{\on{R}}\boxtimes\mathcal{Z}(\mathcal{C})) \cong T^{\on{R}}\circ T\cong (T\boxtimes\mathcal{Z}(\mathcal{C}))\circ(\mathcal{Z}(\mathcal{C})\boxtimes T^{\on{R}})\,.
\end{equation}
Plainly, the two natural isomorphisms above arise from the following isomorphisms induced by the universal property of the relative Deligne tensor product:
$$\Big(\mathcal{C}\boxtimes\on{Fun}^{\on{rex}}_{\mathcal{C}\boxtimes\mathcal{C}^{\on{mop}}}(\mathcal{C},\mathcal{C}\boxtimes\mathcal{C}^{\on{mop}})\boxtimes_{\mathcal{C}\boxtimes\mathcal{C}^{\on{mop}}}E\Big)\circ \Big(E^{\on{R}}\boxtimes_{\mathcal{C}\boxtimes\mathcal{C}^{\on{mop}}}\mathcal{C}\boxtimes\on{Fun}^{\on{rex}}_{\mathcal{C}\boxtimes\mathcal{C}^{\on{mop}}}(\mathcal{C},\mathcal{C}\boxtimes\mathcal{C}^{\on{mop}})\Big)\cong E^{\on{R}}\circ E\,,$$
$$E^{\on{R}}\circ E\cong \Big(E\boxtimes_{\mathcal{C}\boxtimes\mathcal{C}^{\on{mop}}}\mathcal{C}\boxtimes\on{Fun}^{\on{rex}}_{\mathcal{C}\boxtimes\mathcal{C}^{\on{mop}}}(\mathcal{C},\mathcal{C}\boxtimes\mathcal{C}^{\on{mop}})\Big)\circ \Big(\mathcal{C}\boxtimes\on{Fun}^{\on{rex}}_{\mathcal{C}\boxtimes\mathcal{C}^{\on{mop}}}(\mathcal{C},\mathcal{C}\boxtimes\mathcal{C}^{\on{mop}})\boxtimes_{\mathcal{C}\boxtimes\mathcal{C}^{\on{mop}}}E^{\on{R}}\Big)\,.$$
It is straightforward to check that the isomorphisms of \autoref{eq:ZCmodule} are coherent, so that they supply $T^{\on{R}}$ with the structure of a $\mathcal{Z}(\mathcal{C})$-bimodule functor as desired. Furthermore, one checks via a similar argument that the unit and counit of the adjunction between $T$ and $T^{\on{R}}$ are compatible with these bimodule structures.
Alternatively, the natural isomorphisms of \autoref{eq:ZCmodule} are instances of the interchange law for 2-morphisms in a 3-category, so that the bimodule coherence axioms follows from the coherence axioms of a 3-category.
\end{proof}

\subsection{The Higher Morita Theory of Finite Braided Pre-Tensor Categories}

We begin by recalling from \cite{Dec:relative} a variant of the higher Morita 4-category studied in \cite{BJS}. Namely, there is a symmetric monoidal Morita 4-category $\on{Mor}_2^{\on{pre}}$ whose structure may be succinctly described as:
\begin{itemize}
\item Objects are finite braided pre-multitensor categories;
\item 1-Morphisms are finite central pre-multitensor categories;
\item 2-Morphisms are finite central bimodule categories;
\item 3-Morphisms are right exact centered bimodule functors;
\item 4-Morphisms are bimodule natural transformations.
\end{itemize}
More precisely, given two finite braided pre-multitensor categories $\mathcal{A}$ and $\mathcal{B}$, a 1-morphisms consists of a finite pre-multitensor category $\mathcal{C}$ equipped with a right exact braided monoidal functor $\mathcal{A}\boxtimes\mathcal{B}^{\on{rev}}\rightarrow\mathcal{Z}(\mathcal{C})$. We also say that $\mathcal{C}$ is an $\mathcal{A}$-$\mathcal{B}$-central finite pre-multitensor category.
We refer the reader to \cite[Section 3]{BJS} for a thorough unpacking of the structure of 2- and 3-morphisms.
The dualizability properties of $\on{Mor}_2^{\on{pre}}$ have been extensively studied in \cite{BJSS,BJS}. The most general result concerning fully dualizability is the following.

\begin{theorem}[\cite{Dec:relative}]
A finite braided multitensor category $\mathcal{B}$ is fully dualizable as an object of $\on{Mor}_2^{\on{pre}}$ if its symmetric center $\mathcal{Z}_{(2)}(\mathcal{B})$ is separable.
\end{theorem}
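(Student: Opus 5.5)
The plan is the standard strategy for full dualizability in a higher Morita category: first establish $1$-dualizability, then reduce everything to the adjunctibility of a few explicit $2$-morphisms, and finally identify the obstruction with the symmetric center. Since this result is cited from \cite{Dec:relative}, what follows reconstructs that argument rather than reproduces it.

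\textbf{Step 1: $1$-dualizability.} Every object $\mathcal{B}$ of $\on{Mor}_2^{\on{pre}}$ is $1$-dualizable with dual $\mathcal{B}^{\on{rev}}$. The evaluation and coevaluation are both given by $\mathcal{B}$ itself, regarded as a $\mathcal{B}\boxtimes\mathcal{B}^{\on{rev}}$-central finite pre-multitensor category via the canonical braided functor $\mathcal{B}\boxtimes\mathcal{B}^{\on{rev}}\rightarrow\mathcal{Z}(\mathcal{B})$. The zigzag $2$-morphisms are supplied by $\mathcal{B}$ viewed as a bimodule category over itself, as in \cite{BJS}.

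\textbf{Step 2: reducing to $2$-morphisms.} By the usual inductive criterion (see \cite[Chapter 1]{DSPS:book} and \cite{BJSS}), $\mathcal{B}$ is fully dualizable once the evaluation $1$-morphism and its unit and counit $2$-morphisms admit all adjoints. Adjoints at the top levels are automatic: as in the proof of \autoref{lem:Moritahighercat} and \cite[Corollary 5.5]{BJS}, every finite bimodule category is adjunctible as a $1$-morphism in the relevant Morita $3$-category of pre-multitensor categories. It therefore remains to show two things:
\begin{enumerate}
\item the $1$-morphism $\mathcal{B}\colon \mathcal{B}\boxtimes\mathcal{B}^{\on{rev}}\rightarrow\mathrm{Vec}$ has left and right adjoints;
\item the resulting unit and counit $2$-morphisms, which are central bimodule categories, have adjoints.
\end{enumerate}
For (1), I would take the candidate adjoint to be $\mathcal{B}$ with the reversed centrality. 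The unit and counit are then computed via relative Deligne tensor products such as $\mathcal{B}\boxtimes_{\mathcal{B}\boxtimes\mathcal{B}^{\on{rev}}}\mathcal{B}$, together with the multiplication functors relating these to $\mathcal{B}$.

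\textbf{Step 3: identifying the obstruction with $\mathcal{Z}_{(2)}(\mathcal{B})$.} The key computation is that the relevant relative tensor product, equivalently the category of $\mathcal{B}\boxtimes\mathcal{B}^{\on{rev}}$-centered endofunctors of $\mathcal{B}$, is governed by the centralizer of the image of $\mathcal{B}\boxtimes\mathcal{B}^{\on{rev}}$ in $\mathcal{Z}(\mathcal{B})$. That centralizer is exactly the symmetric center $\mathcal{Z}_{(2)}(\mathcal{B})$. Adjunctibility of the unit and counit then amounts to the statement that the multiplication functor $\mathcal{Z}_{(2)}(\mathcal{B})\boxtimes\mathcal{Z}_{(2)}(\mathcal{B})\rightarrow\mathcal{Z}_{(2)}(\mathcal{B})$, tensored up to $\mathcal{B}$, admits a right exact right adjoint that is a bimodule functor, with unit and counit compatible with these bimodule structures.

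Separability of $\mathcal{Z}_{(2)}(\mathcal{B})$ provides exactly this. Its canonical algebra is then a separable algebra in $\mathcal{Z}(\mathcal{Z}_{(2)}(\mathcal{B}))$, so the multiplication has a bimodule splitting. Inducing along $\mathcal{Z}_{(2)}(\mathcal{B})\rightarrow\mathcal{B}$ then gives the required bimodule adjoints, and their right exactness follows as in \autoref{prop:laxrightexactstrong}.

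I expect Step 3 to be the main obstacle. The identification of the relative tensor product with something controlled by $\mathcal{Z}_{(2)}(\mathcal{B})$ must be carried out in the non-semisimple, pre-tensor setting, where relative Deligne products need care. My first attempt would be to use the monadic description of relative Deligne products, with an algebra in $\mathcal{B}\boxtimes\mathcal{B}^{\on{rev}}$ representing $\mathcal{B}$, and reduce to an explicit statement about bimodules over a separable algebra.
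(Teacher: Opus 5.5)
\textbf{There is a genuine gap: every condition your argument reduces to holds for all finite braided multitensor categories, so it cannot detect separability.} The paper gives no proof of this statement (it is quoted from \cite{Dec:relative}), so your proposal is judged on its own.

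First, the levels are misplaced. In $\on{Mor}_2^{\on{pre}}$ every object is already $3$-dualizable (the finite version of \cite{BJS}). So your items (1) and (2) in Step 2 are automatic; you even assert (2) yourself, since those $2$-morphisms are finite central bimodule categories. The only non-automatic layer is the one below. The units and counits of the adjunctions between these $2$-morphisms are right exact centered bimodule functors. Full dualizability asks that they admit left and right adjoints which are again right exact centered bimodule functors. Step 2 never identifies these $3$-morphisms, and ``adjoints at the top levels are automatic'' gets the levels backwards.

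Second, the condition in Step 3 is vacuous. You say adjunctibility ``amounts to'' the tensor product functor of $\mathcal{E}:=\mathcal{Z}_{(2)}(\mathcal{B})$, induced up to $\mathcal{B}$, having a right exact right adjoint as a bimodule functor with compatible unit and counit. By \autoref{rem:BJSrigid}, this holds for every finite pre-multitensor $\mathcal{E}$, separable or not, and inducing along $\mathcal{E}\rightarrow\mathcal{B}$ preserves adjunctions. Your argument would therefore show that every finite braided multitensor category is fully dualizable. That contradicts \autoref{thm:Mor2fullydualizbaility}: a non-semisimple finite symmetric tensor category (e.g.\ $\on{Rep}(\mathbb{Z}/p)$ when $\on{char}\mathbbm{k}=p$) is its own symmetric center, which is not separable.

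Note the internal mismatch. What separability actually gives you is a bimodule \emph{splitting} of the multiplication, a genuinely stronger condition than the one you reduced to. Also, \autoref{prop:laxrightexactstrong} only upgrades right exact lax module functors to strong ones; it does not make any adjoint right exact.

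What is missing is a mechanism that transports the dualizability data of $\mathcal{B}$ to that of $\mathcal{E}$. Identifying $\mathcal{E}$ with the centered endofunctors of $\mathcal{B}$ (equivalently $\on{End}^{\on{rex}}_{\mathrm{HC}(\mathcal{B})}(\mathcal{B})$, as in this paper's proof of the necessary condition) only computes a single endomorphism category. To conclude, you need everything transverse to $\mathcal{E}$ to be invertible, i.e.\ $\mathcal{B}$ must be non-degenerate relative to $\mathcal{E}$. For instance, the canonical braided functor $\mathcal{B}\boxtimes_{\mathcal{E}}\mathcal{B}^{\on{rev}}\rightarrow\mathcal{Z}(\mathcal{B},\mathcal{E})$ should be an equivalence, making $\mathcal{B}$ invertible in an $\mathcal{E}$-relative Morita $4$-category. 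This is a relative version of the result of \cite{BJSS} that non-degenerate finite braided tensor categories are invertible. Only then can full dualizability be transferred to $\mathcal{B}$ from $\mathcal{E}$, which is separable and hence fully dualizable by \cite{BJS}. This $\mathcal{E}$-relative input is what the citation to \cite{Dec:relative} is standing in for.

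This step is also where the rigidity of $\mathcal{B}$ enters, through non-degeneracy and centralizer results for $\mathcal{Z}(\mathcal{B})$ in the non-semisimple setting. Your proposal never uses that $\mathcal{B}$ is multitensor rather than merely pre-multitensor. An argument that ignores this hypothesis would also cover braided pre-multitensor categories, for which the statement is not known.
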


Our main theorem allows to give a necessary condition on a finite braided pre-multitensor category to be fully dualizable as an object of $\on{Mor}_2^{\on{pre}}$.

\begin{proposition}
Let $\mathcal{B}$ be a finite braided pre-multitensor category.
If $\mathcal{B}$ is fully dualizable as an object of $\mathrm{Mor_2^{pre}}$, then its symmetric center is separable.
\end{proposition}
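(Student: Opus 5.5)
We use the 3-dualizability of $\mathcal{B}$ to get a fully dualizable object of $\mathrm{Mor}_1^{\on{pre}}$ whose Drinfeld center is the symmetric center, and then apply \autoref{thm:fdMorita3catpre}. Recall from \cite{BJS,BJSS} that for a fully dualizable object $\mathcal{B}$ of $\mathrm{Mor}_2^{\on{pre}}$, the unit and counit of its duality data give 1-morphisms $\mathrm{Vec}\to\mathcal{B}\boxtimes\mathcal{B}^{\on{rev}}$ and $\mathcal{B}^{\on{rev}}\boxtimes\mathcal{B}\to\mathrm{Vec}$. Both have underlying finite pre-multitensor category $\mathcal{B}$, viewed as $\mathcal{B}\boxtimes\mathcal{B}^{\on{rev}}$-central via the canonical braided functor $\mathcal{B}\boxtimes\mathcal{B}^{\on{rev}}\to\mathcal{Z}(\mathcal{B})$. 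Their composite $\mathrm{Vec}\to\mathrm{Vec}$ is computed by the relative tensor product over $\mathcal{B}\boxtimes\mathcal{B}^{\on{rev}}$. This yields a finite pre-multitensor category, the ``factorization homology'' $\int_{S^1\times\mathbb{R}}\mathcal{B}\simeq\mathcal{B}\boxtimes_{\mathcal{B}\boxtimes\mathcal{B}^{\on{rev}}}\mathcal{B}$, which is the annulus category of $\mathcal{B}$.

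Full dualizability of $\mathcal{B}$ means every such 1-morphism is fully adjunctible. The first key step is therefore the following: the endomorphism 1-morphism $\mathcal{B}\boxtimes_{\mathcal{B}\boxtimes\mathcal{B}^{\on{rev}}}\mathcal{B}$ of $\mathrm{Vec}$ is a fully dualizable object of $\on{End}_{\mathrm{Mor}_2^{\on{pre}}}(\mathrm{Vec})\simeq\mathrm{Mor}_1^{\on{pre}}$, since it is a composite of fully adjunctible 1-morphisms. Rather than the annulus, it is cleaner to use the lower-dimensional piece. The Drinfeld center relative to $\mathcal{B}$, namely $\mathcal{Z}_{(2)}(\mathcal{B})\simeq\on{End}_{\mathcal{B}\boxtimes\mathcal{B}^{\on{rev}}\text{-central}}(\mathcal{B})$, is the endomorphism 2-morphism of the identity 1-morphism on $\mathcal{B}$. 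This gives the following plan, in analogy with the proof of \autoref{thm:fdMorita3catpre}:

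\begin{enumerate}
\item Consider $\mathcal{B}$ as a 1-morphism $\mathcal{B}\boxtimes\mathcal{B}^{\on{rev}}\to\mathrm{Vec}$. Full dualizability of $\mathcal{B}$ makes it fully adjunctible.
\item The composite of this 1-morphism with its right adjoint yields an object $\mathcal{A}$ of $\mathrm{Mor}_1^{\on{pre}}$, a finite pre-multitensor category that is fully dualizable there. Concretely, we expect $\mathcal{A}\simeq\mathcal{B}\boxtimes_{\mathcal{B}\boxtimes\mathcal{B}^{\on{rev}}}\mathcal{B}^{\on{op}}$ up to Morita equivalence.
\item Apply \autoref{thm:fdMorita3catpre}: $\mathcal{Z}(\mathcal{A})$ is multifusion, and $\mathcal{A}$ is Morita equivalent to a separable multitensor category.
\item Identify $\mathcal{Z}(\mathcal{A})$ with $\mathcal{Z}_{(2)}(\mathcal{B})$.
\end{enumerate}

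For the identification, I would use the description of a center as $\on{End}^{\on{rex}}_{\mathcal{A}-\mathcal{A}}(\mathcal{A})$ from \autoref{lem:centerasbimodules}. Transporting along the relative tensor product, as in \autoref{prop:centerMorita}, this becomes the category of right exact $\mathcal{B}$-central bimodule endofunctors of $\mathcal{B}$, which is $\mathcal{Z}_{(2)}(\mathcal{B})$, with matching braiding, now symmetric. A fusion symmetric center is then automatically separable, because its own Drinfeld center is a fully dualizable image; alternatively, one checks separability using \cite[Theorem A.22]{BDSPV} applied to its Drinfeld center, exactly as in the first paragraph of the proof of \autoref{thm:fdMorita3catpre}.

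A more economical route, if the identification in the last step is hard, is the following. Full dualizability implies that the identity 2-morphism of $\on{id}_\mathcal{B}$ has fully adjunctible endomorphisms. The 3-morphism category $\on{End}(\on{id}_{\on{id}_\mathcal{B}})\simeq\mathcal{Z}_{(2)}(\mathcal{B})$ is then a fully dualizable object of $\mathrm{Mor}_1^{\on{pre}}$, being the looping of a fully dualizable object twice. Then \autoref{thm:fdMorita3catpre} gives that $\mathcal{Z}(\mathcal{Z}_{(2)}(\mathcal{B}))$ is multifusion, hence in particular $\mathcal{Z}_{(2)}(\mathcal{B})$ is Morita equivalent to a separable one. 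Since $\mathcal{Z}_{(2)}(\mathcal{B})$ is itself finite semisimple by the \cite{BDSPV} argument applied to its underlying category, it is separable.

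The main obstacle is the step that produces a fully dualizable object of $\mathrm{Mor}_1^{\on{pre}}$ whose center, or whose underlying category, is exactly $\mathcal{Z}_{(2)}(\mathcal{B})$. This requires carefully unpacking the 4-categorical looping $\Omega^2$ of $(\mathrm{Mor}_2^{\on{pre}},\mathcal{B})$ and checking that full dualizability descends to the relevant hom-3-category.
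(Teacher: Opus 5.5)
Your steps 1--3 match the paper's opening move. Full dualizability of $\mathcal{B}$ makes $\mathrm{HC}(\mathcal{B})=\mathcal{B}\boxtimes_{\mathcal{B}\boxtimes\mathcal{B}^{\on{rev}}}\mathcal{B}^{\on{mop}}$ fully dualizable in $\mathrm{Mor}_1^{\on{pre}}$ (the paper cites \cite[Theorem 5.16.1]{BJS}), hence Morita equivalent to a separable multitensor category by \autoref{thm:fdMorita3catpre}.

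Step 4, however, is false. By \autoref{prop:centerMorita}, $\mathcal{Z}(\mathrm{HC}(\mathcal{B}))$ is braided equivalent to the Drinfeld center of a finite multitensor category. Such centers are non-degenerate, i.e.\ have trivial symmetric center. But $\mathcal{Z}_{(2)}(\mathcal{B})$ is symmetric, hence equal to its own symmetric center, so the braided identification you propose would force $\mathcal{Z}_{(2)}(\mathcal{B})$ to be trivial. A counterexample is $\mathcal{B}=\mathrm{Rep}(G)$ over $\mathbb{C}$ with $G\neq 1$: it is fully dualizable, yet $\mathcal{Z}_{(2)}(\mathcal{B})=\mathrm{Rep}(G)$.

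The correct relation, which the paper uses, is that $\mathcal{Z}_{(2)}(\mathcal{B})$ is not the center of $\mathrm{HC}(\mathcal{B})$ but its \emph{dual with respect to the module category} $\mathcal{B}$. By \cite[Theorem 3.11]{BZBJ}, braided $\mathcal{B}$-module categories are the same as $\mathrm{HC}(\mathcal{B})$-module categories, so $\mathcal{Z}_{(2)}(\mathcal{B})\simeq\on{End}^{\on{rex,br}}_{\mathcal{B}}(\mathcal{B})\simeq\on{End}^{\on{rex}}_{\mathrm{HC}(\mathcal{B})}(\mathcal{B})$. Your ``$\mathcal{B}$-central endofunctors of $\mathcal{B}$'' are this dual, not $\on{End}^{\on{rex}}_{\mathcal{A}-\mathcal{A}}(\mathcal{A})$. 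Two ingredients missing from your proposal are then needed:
\begin{itemize}
\item Exactness of $\mathcal{B}$ as an $\mathrm{HC}(\mathcal{B})$-module category. The paper checks that the functor $\mathrm{HC}(\mathcal{B})\rightarrow\mathcal{B}$, given by the bimodule $A=\underline{\on{End}}(\mathbbm{1})$ over $\mathcal{B}\boxtimes\mathcal{B}^{\on{rev}}$, preserves projectives.
\item \autoref{cor:Morita2catmodules}, which transports this exact module across the Morita equivalence to an exact module over a separable multitensor category. Its dual is then separable.
\end{itemize}
Without exactness, the dual need not even be rigid.

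Your fallback route does not close the gap either. There is no principle by which a double looping of a fully dualizable object of $\mathrm{Mor}_2^{\on{pre}}$ becomes a fully dualizable object of $\mathrm{Mor}_1^{\on{pre}}$. Indeed, by \autoref{thm:fdMorita3catpre}, full dualizability of the tensor category $\mathcal{Z}_{(2)}(\mathcal{B})$ in $\mathrm{Mor}_1^{\on{pre}}$ is equivalent to its separability, so this step assumes the conclusion. What the duality data legitimately give, as in the first paragraph of the proof of \autoref{thm:fdMorita3catpre}, is that the underlying finite category of $\mathcal{Z}_{(2)}(\mathcal{B})$ is finite semisimple.

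In positive characteristic, semisimplicity is strictly weaker than separability. For example, $\mathrm{Vec}_{\mathbb{Z}/p}$ with trivial braiding in characteristic $p$ is a symmetric fusion category whose Drinfeld center is not semisimple. The same example refutes your remark that a fusion symmetric center is ``automatically separable.'' Controlling separability, rather than mere semisimplicity, is exactly what the passage through exact $\mathrm{HC}(\mathcal{B})$-modules achieves in the paper.
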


\begin{proof}
Let $\mathcal{B}$ be a finite braided pre-multitensor category. Assume that $\mathcal{B}$ is fully dualizable as an object of $\mathrm{Mor}_2^{\on{pre}}$.
It therefore follows, for instance from \cite[Theorem 5.16.1]{BJS}, that $$\mathrm{HC}(\mathcal{B}):=\mathcal{B}\boxtimes_{\mathcal{B}\boxtimes\mathcal{B}^{\on{rev}}}\mathcal{B}^{\on{mop}}$$
is fully dualizable as an object of $\mathrm{Mor}_1^{\on{pre}}$.
Thanks to \autoref{thm:fdMorita3catpre}, we have that $\mathrm{HC}(\mathcal{B})$ is Morita equivalent to a separable multitensor category. We will employ this observation below.

Given that $\mathcal{B}$ is a braided pre-tensor category, we may canonically view $\mathcal{B}$ as a braided $\mathcal{B}$-module category.
In particular, we find by direct inspection that $\on{End}^{\on{rex,br}}_{\mathcal{B}}(\mathcal{B})$, the category of right exact \emph{braided} $\mathcal{B}$-module endofunctors on $\mathcal{B}$, is identified with the symmetric center of $\mathcal{B}$:
$$\on{End}^{\on{rex,br}}_{\mathcal{B}}(\mathcal{B})\simeq\mathcal{Z}_{(2)}(\mathcal{B})$$
On the other hand, it follows from \cite[Theorem 3.11]{BZBJ} that there is an equivalence of 2-categories
$$\mathbf{Mod}^{\on{br}}(\mathcal{B})\simeq\mathbf{Mod}(\on{HC}(\mathcal{B}))\,,$$
between finite braided $\mathcal{B}$-module categories and finite $\on{HC}(\mathcal{B})$-module categories.
Specifically, associated to the canonical braided $\mathcal{B}$-module category $\mathcal{B}$ is the finite $\mathrm{HC}(\mathcal{B})$-module category $\mathcal{B}$ with action given by the tensor functor $\mathrm{HC}(\mathcal{B})\rightarrow\mathcal{B}$ induced by the tensor product functor $\mathcal{B}\boxtimes\mathcal{B}^{\on{mop}}\rightarrow\mathcal{B}$. We claim that $\mathcal{B}$ is an exact $\mathrm{HC}(\mathcal{B})$-module category.
As $\mathrm{HC}(\mathcal{B})$ is Morita equivalent to a separable multitensor category, it follows from \autoref{cor:Morita2catmodules} that $\on{End}_{\mathrm{HC}(\mathcal{B})}^{\on{rex}}(\mathcal{B})$ is a separable multitensor category.
Given that
$$\on{End}^{\on{rex}}_{\mathrm{HC}(\mathcal{B})}(\mathcal{B})\simeq\on{End}^{\on{rex,br}}_{\mathcal{B}}(\mathcal{B})\simeq\mathcal{Z}_{(2)}(\mathcal{B})\,,$$ this implies that $\mathcal{Z}_{(2)}(\mathcal{B})$ is a separable multitensor category as desired.

It remains to show that $\mathcal{B}$ is an exact $\mathrm{HC}(\mathcal{B})$-module category.
Viewing $\mathcal{B}$ as a left $\mathcal{B}\boxtimes\mathcal{B}^{\on{rev}}$-module category, we can consider the algebra $A:=\underline{\on{End}}(\mathbbm{1})$ in $\mathcal{B}\boxtimes\mathcal{B}^{\on{rev}}$. This algebra is such that $\mathcal{B}\simeq \rmod[\mathcal{B}\boxtimes\mathcal{B}^{\on{rev}}][A]$. Moreover, by construction, the object $\mathbbm{1}\in\mathcal{B}$ corresponds to the right $A$-module $A$.
We can therefore identify the functor $\mathrm{HC}(\mathcal{B})\rightarrow\mathcal{B}$ with the functor
$$\rmod[\mathcal{B}\boxtimes\mathcal{B}^{\on{rev}}][\,(A^{\on{op}}\otimes A)]\rightarrow\rmod[\mathcal{B}\boxtimes\mathcal{B}^{\on{rev}}][A]$$
corresponding to the $A^{\on{op}}\otimes A$-$A$-bimodule $A$.
This functor sends projective $A^{\on{op}}\otimes A$-modules to projective $A$-modules, so $\mathcal{B}$ is an exact $\mathrm{HC}(\mathcal{B})$-module category. This concludes the proof.
\end{proof}

In particular, a complete characterization of the fully dualizable objects of $\on{Mor}_2^{\on{pre}}$ seems within reach. Namely, it only remains to answer the following question.

\begin{question}
Let $\mathcal{B}$ be a braided finite pre-multitensor category. If $\mathcal{B}$ is fully dualizable as an object of $\on{Mor}_2^{\on{pre}}$, is $\mathcal{B}$ necessarily a \emph{multitensor} category?
\end{question}

\subsection{Witt Equivalence between Enriched Finite Braided Tensor Categories}

Fix $\mathcal{E}$ a finite symmetric (multi)tensor category. A \emph{faithfully flat} $\mathcal{E}$-enriched finite multitensor category is a multitensor category equipped with a fully faithful tensor embedding $\mathcal{E}\hookrightarrow\mathcal{Z}(\mathcal{C})$. If $\mathcal{C}$ is a faithfully flat $\mathcal{E}$-enriched finite braided multitensor category, we write $\mathcal{Z}(\mathcal{C},\mathcal{E})$ for the centralizer of $\mathcal{E}\hookrightarrow\mathcal{Z}(\mathcal{C})$, that is, $\mathcal{Z}(\mathcal{C},\mathcal{E})$ is the full tensor subcategory of $\mathcal{Z}(\mathcal{C})$ on those objects that double braid trivially with $\mathcal{E}$. An $\mathcal{E}$-non-degenerate finite braided multitensor category is a finite braided multitensor category equipped with an equivalence $\mathcal{E}\xrightarrow{\sim}\mathcal{Z}_{(2)}(\mathcal{B})$. The next definition is the non-semisimple generalization of \cite[Definition 5.1]{DNO}.

\begin{definition}
Let $\mathcal{A}$ and $\mathcal{B}$ be $\mathcal{E}$-non-degenerate finite braided multitensor categories. We say that $\mathcal{A}$ and $\mathcal{B}$ are \emph{Witt equivalent} if there exists faithfully flat $\mathcal{E}$-enriched finite multitensor categories $\mathcal{C}$ and $\mathcal{D}$ and an equivalence of braided multitensor categories 
\begin{equation}\label{eq:Wittequivalence}
\mathcal{A}\boxtimes_{\mathcal{E}}\mathcal{Z}(\mathcal{C},\mathcal{E})\simeq\mathcal{B}\boxtimes_{\mathcal{E}}\mathcal{Z}(\mathcal{D},\mathcal{E})\,.
\end{equation}
\end{definition}

\begin{proposition}\label{prop:Wittequivalence}
Let $\mathcal{A}$ be an $\mathcal{E}$-non-degenerate finite braided multitensor category. The Witt equivalence class of $\mathcal{A}$ is trivial if and only if there exists a faithfully flat $\mathcal{E}$-enriched finite multitensor category $\mathcal{D}$ such that $\mathcal{A}\simeq\mathcal{Z}(\mathcal{D},\mathcal{E})$, as finite braided multitensor categories.
\end{proposition}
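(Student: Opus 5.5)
The backward direction is a direct specialization of the definition. Suppose $\mathcal{A}\simeq\mathcal{Z}(\mathcal{D},\mathcal{E})$ as braided multitensor categories for some faithfully flat $\mathcal{E}$-enriched $\mathcal{D}$. We take $\mathcal{C}:=\mathcal{E}$, viewed as $\mathcal{E}$-enriched via $\mathcal{E}\hookrightarrow\mathcal{Z}(\mathcal{E})$. For this choice $\mathcal{Z}(\mathcal{E},\mathcal{E})\simeq\mathcal{E}$, since the centralizer of $\mathcal{E}$ in $\mathcal{Z}(\mathcal{E})\simeq \mathcal{E}\boxtimes_{\mathcal{E}}\ldots$ is $\mathcal{E}$ itself; this is standard, but we should check it carefully in the non-semisimple setting. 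Equation~\eqref{eq:Wittequivalence} then reads $\mathcal{A}\boxtimes_{\mathcal{E}}\mathcal{E}\simeq\mathcal{E}\boxtimes_{\mathcal{E}}\mathcal{Z}(\mathcal{D},\mathcal{E})$. Both sides reduce to $\mathcal{A}$, using $\mathcal{X}\boxtimes_{\mathcal{E}}\mathcal{E}\simeq\mathcal{X}$, so $\mathcal{A}$ is Witt equivalent to the unit class $\mathcal{E}$.

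For the forward direction, we assume $\mathcal{A}\boxtimes_{\mathcal{E}}\mathcal{Z}(\mathcal{C},\mathcal{E})\simeq\mathcal{Z}(\mathcal{D}',\mathcal{E})$. This is trivial Witt class, with $\mathcal{B}=\mathcal{E}$. We must build $\mathcal{D}$ with $\mathcal{A}\simeq\mathcal{Z}(\mathcal{D},\mathcal{E})$, and the plan follows \cite[Proposition 5.6]{DNO}. The idea is to "cancel" the factor $\mathcal{Z}(\mathcal{C},\mathcal{E})$ by multiplying with its reverse. Concretely, we use the relative analogue of $\mathcal{Z}(\mathcal{C})^{\on{rev}}\boxtimes\mathcal{Z}(\mathcal{C})\simeq\mathcal{Z}(\mathcal{C}\boxtimes\mathcal{C}^{\on{mop}})$, namely
\[
\mathcal{Z}(\mathcal{C},\mathcal{E})^{\on{rev}}\boxtimes_{\mathcal{E}}\mathcal{Z}(\mathcal{C},\mathcal{E})\simeq\mathcal{Z}(\mathcal{C}\boxtimes_{\mathcal{E}}\mathcal{C}^{\on{mop}},\mathcal{E}),
\]
together with the $\mathcal{E}$-non-degeneracy of $\mathcal{Z}(\mathcal{C},\mathcal{E})$. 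Tensoring both sides with $\mathcal{Z}(\mathcal{C},\mathcal{E})^{\on{rev}}$ then gives
\[
\mathcal{A}\boxtimes_{\mathcal{E}}\mathcal{Z}(\mathcal{C}\boxtimes_{\mathcal{E}}\mathcal{C}^{\on{mop}},\mathcal{E})\simeq\mathcal{Z}(\mathcal{D}'\boxtimes_{\mathcal{E}}\mathcal{C}^{\on{mop}},\mathcal{E}).
\]
In this way we reduce to the case where $\mathcal{A}\boxtimes_{\mathcal{E}}\mathcal{Z}(\mathcal{K},\mathcal{E})$ is itself a relative center, with $\mathcal{K}$ having a canonical "Lagrangian" algebra. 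The key observation is that $\mathcal{Z}(\mathcal{K},\mathcal{E})$ contains a commutative connected étale-type algebra $L$, namely the image of the unit under the right adjoint of the forgetful functor to $\mathcal{K}$. For $L$ we have $\mathcal{Z}(\mathcal{K},\mathcal{E})_L^{\on{loc}}\simeq\mathcal{E}$. Then $\mathcal{A}\simeq(\mathcal{A}\boxtimes_{\mathcal{E}}\mathcal{Z}(\mathcal{K},\mathcal{E}))_{L}^{\on{loc}}$, and the task becomes identifying local modules over the algebra $L$ in $\mathcal{Z}(\mathcal{D}'',\mathcal{E})$ with $\mathcal{Z}(\mathcal{D},\mathcal{E})$ for $\mathcal{D}:=(\mathcal{D}'')_{L}$, the category of $L$-modules in $\mathcal{D}''$. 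This is the non-semisimple relative version of \cite[Corollary 3.30]{DMNO}. Here the main theorem would enter: $\mathcal{D}$ is a priori only a finite pre-multitensor category, since $L$ need not be separable. However, its Drinfeld center is Morita-invariantly tied to $\mathcal{Z}(\mathcal{D}'')$, which is rigid. So by \autoref{thm:mainthmmulti} we may replace $\mathcal{D}$ by a Morita equivalent finite multitensor category. By \autoref{prop:centerMorita}, this does not change the (relative) center, and faithful flatness is transported along the same equivalence.

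The main obstacle I expect is the identification $\mathcal{Z}(\mathcal{D}'',\mathcal{E})^{\on{loc}}_L\simeq\mathcal{Z}((\mathcal{D}'')_L,\mathcal{E})$ without semisimplicity. In particular, $L$ is not separable, so $(\mathcal{D}'')_L$ is not obviously rigid. My first attempt would be to bypass rigidity entirely. I would prove the equivalence at the level of finite pre-multitensor categories, using the Morita-invariance of centers from \autoref{prop:centerMorita}, and then invoke \autoref{thm:mainthmmulti} as above to upgrade $(\mathcal{D}'')_L$ to a genuine multitensor category.
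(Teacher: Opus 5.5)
Your backward direction (take $\mathcal{C}=\mathcal{E}$ and use $\mathcal{Z}(\mathcal{E},\mathcal{E})\simeq\mathcal{E}$) is fine. Your forward direction has the same skeleton as the paper's: take local modules over the algebra $L\in\mathcal{Z}(\mathcal{C},\mathcal{E})$ with $\mathcal{Z}(\mathcal{C},\mathcal{E})_L\simeq\mathcal{C}$, identify local $L$-modules in the relative center of $\mathcal{D}'$ with the relative center of $(\mathcal{D}')_L$, and upgrade the pre-multitensor category $(\mathcal{D}')_L$ via \autoref{thm:mainthmmulti}.

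The cancellation by $\mathcal{Z}(\mathcal{C},\mathcal{E})^{\on{rev}}$ is superfluous. The hypothesis $\mathcal{A}\boxtimes_{\mathcal{E}}\mathcal{Z}(\mathcal{C},\mathcal{E})\simeq\mathcal{Z}(\mathcal{D}',\mathcal{E})$ is already of the form you reduce to, with $\mathcal{K}=\mathcal{C}$ and $\mathcal{D}''=\mathcal{D}'$. The detour only brings in unproved non-semisimple facts, such as rigidity of $\mathcal{C}\boxtimes_{\mathcal{E}}\mathcal{C}^{\on{mop}}$ and a product formula for relative centers.

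The genuine gap is the hypothesis of \autoref{thm:mainthmmulti}, namely rigidity of $\mathcal{Z}((\mathcal{D}')_L)$. You justify it by saying this center is ``Morita-invariantly tied'' to the rigid category $\mathcal{Z}(\mathcal{D}')$. You also plan to get the identification with local modules from \autoref{prop:centerMorita}. Neither works, because $(\mathcal{D}')_L$ is in general not Morita equivalent to $\mathcal{D}'$. Its center is not $\mathcal{Z}(\mathcal{D}')$ but, by \cite[Corollary 4.5]{Sch}, the category of local modules $\mathcal{Z}(\mathcal{D}')^{\on{loc}}_L$. Already for $\mathcal{E}=\mathcal{A}=\mathrm{Vec}$ and $\mathcal{D}'=\mathcal{C}$ one gets $\mathcal{Z}(\mathcal{C}_L)\simeq\mathrm{Vec}$, whereas $\mathcal{Z}(\mathcal{C})$ is typically much larger. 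Rigidity does not pass to local modules for free when $L$ is not separable.

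The missing idea is that the relevant property of $L$ is exactness, not étaleness:
\begin{itemize}
\item $L$ is a commutative exact algebra in $\mathcal{Z}(\mathcal{C},\mathcal{E})$.
\item Exactness is preserved along the fully faithful braided functor $\mathcal{Z}(\mathcal{C},\mathcal{E})\hookrightarrow\mathcal{Z}(\mathcal{D}',\mathcal{E})\subseteq\mathcal{Z}(\mathcal{D}')$.
\item \cite[Theorem 5.5]{SY} then gives rigidity of $\mathcal{Z}(\mathcal{D}')^{\on{loc}}_L\simeq\mathcal{Z}((\mathcal{D}')_L)$.
\end{itemize}

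Likewise, the step you flag as the main obstacle, $\mathcal{Z}(\mathcal{D}',\mathcal{E})^{\on{loc}}_L\simeq\mathcal{Z}((\mathcal{D}')_L,\mathcal{E})$, is not a Morita-invariance statement. It follows from Schauenburg's theorem together with the centralizer argument of \cite[Proposition 4.5]{DNO}, and neither needs semisimplicity. With these inputs, \autoref{thm:mainthmmulti} and \autoref{prop:centerMorita} finish the proof as you intended.
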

\begin{proof}
The backward direction is immediate.
Conversely, if the Witt class of $\mathcal{A}$ is trivial, we have by definition
\begin{equation}\label{eq:technicalWitt}
\mathcal{A}\boxtimes_{\mathcal{E}}\mathcal{Z}(\mathcal{C},\mathcal{E})\simeq\mathcal{Z}(\mathcal{D},\mathcal{E})\,.
\end{equation}
Let $L$ be the commutative exact algebra in $\mathcal{Z}(\mathcal{C},\mathcal{E})$ such that $\rmod[\mathcal{Z}(\mathcal{C},\mathcal{E})][L]\simeq \mathcal{C}$ as finite multitensor categories.
We write
$\modloc[{\mathcal{Z}(\mathcal{C},\mathcal{E})}][L]$ for the finite braided multitensor category of \emph{local}, also called dyslectic, $L$-modules in $\mathcal{Z}(\mathcal{C},\mathcal{E})$. By construction of $L$, we have $\modloc[\mathcal{Z}(\mathcal{C},\mathcal{E})][L]\simeq\mathcal{E}$.
Consequently, upon taking categories of local $L$-module on the two sides of \autoref{eq:technicalWitt}, we find
$$
\mathcal{A}\simeq \modloc[\mathcal{Z}(\mathcal{D},\mathcal{E})][L]\,.
$$
Recall from \cite[Corollary 4.5]{Sch}, that there is an equivalence of finite braided multitensor categories
$$\modloc[\mathcal{Z}(\mathcal{D})][L]\simeq \mathcal{Z}(\rmod[\mathcal{D}][L])\,.$$
As commutative exact algebras are preserved by fully faithful braided tensor functors, see \cite[Proof of Proposition 2.1.3]{Dec:relative}, it follows from \cite[Theorem 5.5]{SY} that $\modloc[\mathcal{Z}(\mathcal{D})][L]$ is rigid.
Furthermore, as in \cite[Proposition 4.5]{DNO}, we deduce an equivalence of finite braided multitensor categories
$$\modloc[\mathcal{Z}(\mathcal{D},\mathcal{E})][L]\simeq \mathcal{Z}(\rmod[\mathcal{D}][L],\mathcal{E})\,.$$
Thus, the proof will be completed provided that we can show that $\mathcal{Z}(\rmod[\mathcal{D}][L])$ is the Drinfeld center of a finite \emph{multitensor} category.
But we have already seen that $\mathcal{Z}(\rmod[\mathcal{D}][L])$ is rigid, so that \autoref{thm:mainthmmulti} implies that the finite pre-multitensor category $\rmod[\mathcal{D}][L]$ is Morita equivalent to a finite multitensor category. Thanks to \autoref{prop:centerMorita}, it follows that $\mathcal{Z}(\rmod[\mathcal{D}][L])$ is the Drinfeld center of a finite multitensor category as desired. This last step can also be deduced from \cite[Corollary 5.16]{OU}, and finishes the proof.
\end{proof}

\begin{remark}
Taking $\mathcal{E}=\mathrm{Vec}$ in \autoref{prop:Wittequivalence}, we recover \cite[Proposition 5.17]{OU}, which positively answered \cite[Question 7.21]{SY}.
In fact, thanks to \autoref{thm:mainthmmulti}, we obtain a higher categorical refinement of this observation.
Namely, it follows from our main theorem that a finite braided multitensor category is trivial, that is, equivalent to $\mathrm{Vec}$, as an object of $\mathrm{Mor}_2^{\on{pre}}$ if and only if it is the Drinfeld center of a finite multitensor category.
This answers positively a variant of \cite[Question 4.4]{BJSS}. As a consequence, the Picard group of $\mathrm{Mor}_2^{\on{pre}}$, that is, its group of equivalence classes of invertible objects, coincides with the group of Witt equivalence class of finite braided tensor categories. We warn the reader that these last observations do not, at least a priori, extend to the case when $\mathcal{E}\not\simeq\mathrm{Vec}$. In particular, we do not provide an answer to \cite[Conjecture 4.2.4]{Dec:relative} --- in order to do so, we would need to prove \autoref{prop:Wittequivalence} assuming only that $\mathcal{D}$ is finite pre-multitensor.
\end{remark}

%%%%%%%%%%%%%%%%%%%%%%%%%%%%%%%%%%%%%%%%%%%%%%%%%%%%%%%%%%%%%

\end{document}